\documentclass[12pt]{amsart}
\usepackage{graphicx}
\usepackage{mathrsfs}
\usepackage{epsfig}
\usepackage{amsmath}
\usepackage{amsfonts}
\usepackage{amssymb}
\usepackage{amssymb,amscd}
\usepackage{tikz}
\usepackage{verbatim}
\usepackage{booktabs}
\usepackage[all]{xy}
\usepackage{mathtools}
 \usepackage{makecell}

\usepackage{overpic}
\usepackage{hyperref}

\usepackage[scr=boondox,  cal=esstix]{mathalpha}
\usepackage{adjustbox}

\usepackage{geometry}
\usepackage{cite}
\usepackage{hyperref}

\usepackage{amsthm}
\usepackage{amssymb}
\usepackage{amsfonts}
\usepackage{amsmath}
\usepackage{mathtools}
\usepackage[shortlabels]{enumitem}
\usepackage{adjustbox}
\usepackage{url} % insert url via \url{}
\usepackage[T1]{fontenc}
\usepackage{float}

\usepackage[T1]{fontenc}
\usepackage[utf8]{inputenc}
\usepackage[justification=centering]{caption}

\DeclareFontFamily{U}{mathx}{}
\DeclareFontShape{U}{mathx}{m}{n}{<-> mathx10}{}
\DeclareSymbolFont{mathx}{U}{mathx}{m}{n}
\DeclareMathAccent{\widehat}{0}{mathx}{"70}
\DeclareMathAccent{\widecheck}{0}{mathx}{"71}

\newtheorem{thm}{Theorem}[section]

\newtheorem{lem}[thm]{Lemma}
\newtheorem{prop}[thm]{Proposition}

\theoremstyle{definition}
\newtheorem{defn}[thm]{Definition}
\theoremstyle{remark}
\newtheorem{rem}[thm]{Remark}
\numberwithin{equation}{section}
\newcommand{\HdC}{\mathbf H^{2}_{\mathbb C}}

\renewcommand{\Re}{\mbox{\rm Re}\,}
\renewcommand{\Im}{\mbox{\rm Im}\,}
\renewcommand{\leq}{\leqslant}
\renewcommand{\geq}{\geqslant}

\newcommand{\hc}{\mathbf{H}^2_{\mathbb{C}}}

\begin{document}

	\title[]{Spherical CR uniformizations of a sequence of hyperbolic 3-manifolds}
	\author[J. Ma]{Jiming Ma}
	\address{School of Mathematical Sciences, Fudan University, Shanghai, China}
	\email{majiming@fudan.edu.cn}
	
	\author[B. Xie]{Baohua Xie}
	\address{School of Mathematics, Hunan University, Changsha, China}
	\address{Greater Bay Area Institute for Innovation, Hunan University, Guangzhou, China}
	\email{xiexbh@hnu.edu.cn}

	\keywords{Complex hyperbolic geometry, spherical CR uniformization, triangle groups, cusped hyperbolic 3-manifolds, Dehn fillings.}
	
	\subjclass[2010]{20H10, 57M50, 22E40, 51M10.}
	
	\thanks{J. Ma was  supported by National Natural Science Foundation of China (No.12171092). B. Xie was supported  by Guangdong Basic and Applied Basic Research Foundation (No. 2025A1515011486) and National Natural Science Foundation of China (No.12271148).  }
	
    \begin{abstract}
    	   	   	
Let $s782$ be the 2-cusped hyperbolic 3-manifold in the SnapPy census. 
Its spherical CR uniformization was established in \cite{JWX2023} using the Ford domain of the complex hyperbolic triangle group $\Delta_{4,4,\infty;\infty}$. 
By comparing the combinatorial structures of the Ford domain of $\Delta_{4,4,\infty;\infty}$ and the Dirichlet domain of $\Delta_{4,4,n;\infty}$, 
we prove that for each $n \geq 5$, the Dehn filling of $s782$ along the slope $(n-1)\mathcal{m}_1 + \mathcal{l}_1$ on its second cusp admits a spherical CR uniformization, 
where $(\mathcal{m}_1, \mathcal{l}_1)$ denotes the meridian-longitude system of a cusp in SnapPy notation.

    \end{abstract}

\maketitle
\tableofcontents

\section{Introduction}\label{section:intro}   

Thurston's work on $3$-manifolds has underscored the fundamental role of geometry in the study of their topology. This geometric viewpoint also provides the framework for our discussion of spherical CR-structures below.

A \emph{spherical CR-structure} on a smooth $3$-manifold $M$ is a maximal collection
 of distinguished charts modeled on the boundary $\partial \mathbf{H}^2_{\mathbb{C}}$
  of complex hyperbolic space $\mathbf{H}^2_{\mathbb{C}}$, with transition maps given
   by restrictions of elements of $\mathbf{PU}(2,1)$. Equivalently, 
   a spherical CR-structure is a $(G,X)$-structure with $G=\mathbf{PU}(2,1)$ 
   and $X=\mathbb{S}^{3}$. 

In contrast to other geometric structures on $3$-manifolds, such as hyperbolic or
 Seifert structures, relatively few examples of spherical CR-structures are known.
  Determining whether a given $3$-manifold admits a spherical CR-structure is, 
  in general, a subtle and difficult problem. For instance, $3$-manifolds with
   $Nil^{3}$-geometry naturally admit such structures, whereas 
   Goldman~\cite{Goldman:1983} showed that no closed $3$-manifold with Euclidean or $Sol^{3}$-geometry admits a spherical CR-structure.

Among spherical CR-structures, we focus on the important class of uniformizable structures. 
A spherical CR-structure on a 3-manifold $M$ is called \emph{uniformizable} if it can be realized as 
$M = \Gamma \backslash \Omega_{\Gamma}$, where $\Gamma$ is a discrete subgroup of $\mathbf{PU}(2,1)$, 
$\Omega_{\Gamma} \subset \partial \mathbf{H}^2_{\mathbb{C}} = \mathbb{S}^3$ is the domain of discontinuity of the action of $\Gamma$ 
on $\partial \mathbf{H}^2_{\mathbb{C}}$, and $M$ is known as the \emph{3-manifold at infinity} of $\Gamma$. The construction of discrete subgroups of $\mathbf{PU}(2,1)$ thus provides a direct method for producing spherical CR-structures on 3-manifolds.

In general, establishing the discreteness of a subgroup $\Gamma \subset \mathbf{PU}(2,1)$ is a formidable task. Even when discreteness is known for arithmetic reasons, understanding the corresponding 3-manifold at infinity of $\Gamma$ remains challenging. There exist discrete and faithful representations of surface groups into $\mathbf{PU}(2,1)$, such as $\mathbb{R}$-Fuchsian and $\mathbb{C}$-Fuchsian surface groups. Consequently, constructing Seifert 3-manifolds with uniformizable spherical CR-structures is relatively straightforward. However, the situation for hyperbolic 3-manifolds is considerably more difficult. The first example of a cusped hyperbolic 3-manifold admitting a uniformizable spherical CR-structure 
was given by Schwartz \cite{Schwartz:2001acta}, who showed that the Whitehead link complement admits 
such a structure. A second explicit example was provided by Deraux and Falbel \cite{Deraux:2015}, who 
established this property for the figure-8 knot complement. Additional explicit, though ad hoc, 
examples appear in \cite{Deraux:2015scr},\cite{Deraux:2016}, \cite{JWX2023}, 
\cite{MaXie2020}, and \cite{MaXie2021}.

Schwartz showed that if a cusped $3$-manifold $M$ admits a uniformizable 
spherical CR-structure satisfying certain additional hypotheses, then infinitely 
many Dehn fillings of $M$ also admit uniformizable spherical 
CR-structures~\cite{Schwartz:2007}. However, Schwartz's argument relies
 on a compactness argument and is therefore not effective; in particular, 
 it does not identify which specific Dehn fillings of $M$ admit such structures. 
 To the authors' knowledge, the work of Acosta~\cite{Acosta2019} is the only
  result to date that explicitly constructs infinitely many Dehn fillings of a
   concrete cusped $3$-manifold. Namely, explicitly infinitely many Dehn fillings of the Whitehead link complement admit
    uniformizable spherical CR-structures. While this represents a significant
	 breakthrough, the broader question of whether other nontrivial cusped $3$-manifolds support infinitely many such Dehn fillings has remained open. The principal contribution of the present work is to demonstrate explicitly that infinitely many Dehn fillings of the cusped $3$-manifold $s782$ admit uniformizable spherical CR-structures. This not only enlarges the class of cusped $3$-manifolds known to admit infinitely many spherical CR-structured Dehn fillings,  but also confirms that Acosta's example is not an isolated phenomenon confined to the Whitehead link complement.

Let $T_{p,q,r}$ denote the abstract $(p,q,r)$ reflection triangle group with presentation
\[
\langle \sigma_1, \sigma_2, \sigma_3 \mid \sigma^2_1 = \sigma^2_2 = \sigma^2_3 = (\sigma_2\sigma_3)^p = (\sigma_3\sigma_1)^q = (\sigma_1\sigma_2)^r = \mathrm{id} \rangle,
\]
where $p, q, r$ are positive integers or $\infty$ satisfying $1/p + 1/q + 1/r < 1$. We assume $p \leq q \leq r$. 
If any of $p, q, r$ equals $\infty$, the corresponding relation is omitted.
A \emph{complex hyperbolic $(p,q,r)$ triangle group} is a representation $\rho: T_{p,q,r} \to \mathbf{PU}(2,1)$ 
in which the generators fix complex lines. We write $I_i = \rho(\sigma_i)$ for $i = 1, 2, 3$. 
It is well known that the deformation space of $(p,q,r)$ complex hyperbolic triangle groups has real dimension one when $3 \leq p \leq q \leq r$.
We denote by $\Delta_{p,q,r;n}$ the image of a representation of $T_{p,q,r}$ into $\mathbf{PU}(2,1)$ 
for which the element $I_1I_3I_2I_3$ (in the image group) has order $n$. 
Similarly, $\Delta_{p,q,r;\infty}$ denotes the image group when $I_1I_3I_2I_3$ is parabolic.

Let $\Sigma = \langle I_1I_2, I_2I_3 \rangle$ be the even subgroup of the complex hyperbolic triangle group $\Delta_{4,4,\infty;\infty}$. 
Jiang, Wang and Xie \cite{JWX2023} established that the 3-manifold $\mathscr{M}_{4,4,\infty;\infty}$ at infinity of $\Sigma$ corresponds to the 3-manifold $s782$ in the SnapPy census. The fundamental group of $s782$ has the presentation:
\[
\pi_1(s782) = \left\langle a, b, c \ \middle|\ 
\begin{array}{l}
	a^2 c b^4 c, \,
	a b c a^{-1} b^{-1} c^{-1}
\end{array} \right\rangle.
\]
The manifold $s782$ has two cusps. In SnapPy notation, the first cusp $C_0$ has meridian-longitude system
\[
(\mathcal{m}_0, \mathcal{l}_0) = (c^{-1}b^{-1}a^{-2},\ c^{-1}b^{-1}ca),
\]
and the second cusp $C_1$ has meridian-longitude system
\[
(\mathcal{m}_1, \mathcal{l}_1) = (bc,\ ba).
\]
We note that in SnapPy's convention, the $k$-th cusp of a cusped 3-manifold is labeled $C_{k-1}$ rather than $C_k$.
Let $\rho: T_{4,4,n} \to \mathbf{PU}(2,1)$ be a representation such that each $\rho(\sigma_i) = I_i$ is a complex reflection 
fixing a complex line, and $I_1 I_3 I_2 I_3$ is parabolic. We denote the image group by $\Delta_{4,4,n;\infty}$.
Our main result is the following:
\begin{thm} \label{thm:44np}
	For each $n \geq 5$, let $\rho: T_{4,4,n} \to \mathbf{PU}(2,1)$ be the representation with image group $\Delta_{4,4,n;\infty}$ as above. Then:
	\begin{enumerate}[(i)]
		\item \label{item:44np1} $\rho$ is a discrete embedding;
		\item \label{item:44np2} The 3-manifold at infinity of the even subgroup $\langle I_1I_2, I_2I_3\rangle$ of $\Delta_{4,4,n;\infty}$ 
		is the 1-cusped hyperbolic 3-manifold $\mathscr{M}_{4,4,n;\infty}$, which is obtained from the 2-cusped 3-manifold 
		$s782$ by Dehn filling on the second cusp along the slope $(n-1)\mathcal{m}_1 + \mathcal{l}_1$.
	\end{enumerate}
\end{thm}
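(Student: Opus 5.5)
We will follow the strategy that has become standard for complex hyperbolic triangle groups (Deraux--Falbel, Acosta, Jiang--Wang--Xie): build an explicit fundamental domain, check it with the Poincar\'e polyhedron theorem, and read off the manifold at infinity from the combinatorics of the domain's ideal boundary. First we parametrize the representation. Up to conjugation, a $(4,4,n)$ triangle group of complex reflections is determined by the angular invariant $\theta$. We fix $\theta=\theta(n)$ by requiring that $\operatorname{tr}(I_1I_3I_2I_3)=3$. Writing explicit matrices $I_1,I_2,I_3\in\mathbf{SU}(2,1)$ for this $\theta$, we also record that:
\begin{itemize}
\item $I_1I_2$ is elliptic of order $n$;
\item $I_2I_3$ and $I_3I_1$ are elliptic of order $4$;
\item $I_1I_3I_2I_3$ is unipotent, and non-elliptic for $n\geq 5$.
\end{itemize}
As $n\to\infty$ these matrices converge to those of $\Delta_{4,4,\infty;\infty}$ treated in \cite{JWX2023}.

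Next we construct a Dirichlet domain $D_n$ for $\Delta_{4,4,n;\infty}$. It is centered at a point $p_0$ fixed by the order-$n$ elliptic $I_1I_2$, for instance the intersection point of the mirrors of $I_1$ and $I_2$. The candidate face set $\mathcal{S}$ consists of the images, under powers of $I_1I_2$, of the finitely many words whose bisectors carried the Ford domain of $\Delta_{4,4,\infty;\infty}$. The guiding idea is that the Ford domain centered at the parabolic fixed point of $I_1I_2$ degenerates from the Dirichlet domain at $p_0$. We then prove three things.
\begin{enumerate}
\item Each bisector $\mathcal{B}(p_0,gp_0)$ with $g\in\mathcal{S}$ contributes a ridge structure combinatorially identical to the corresponding face of the Ford domain. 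The Ford side pairings of $\Delta_{4,4,\infty;\infty}$ become side pairings of $D_n$ here.
\item The ridges are exactly the pairwise intersections predicted by the Ford model, and no unexpected intersections occur. This is checked with Giraud's theorem and explicit parametrizations of bisector intersections (spinal coordinates), with the inequalities verified as functions of $n$.
\item The ridge cycle conditions hold. Cycles of cusp type now close up to the relation $(I_1I_2)^n=\mathrm{id}$ with angle sum $2\pi$. The cusp at infinity coming from $I_1I_3I_2I_3$ gives a consistent parabolic cusp group.
\end{enumerate}
The Poincar\'e polyhedron theorem then shows that $\Delta_{4,4,n;\infty}$ is discrete with presentation $\langle I_1,I_2,I_3\mid I_i^2,(I_2I_3)^4,(I_3I_1)^4,(I_1I_2)^n\rangle$. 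This gives faithfulness of $\rho$ and proves (i).

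For (ii) we take the ideal boundary $\partial_\infty D_n$ and the induced side pairings. Passing to the index-two even subgroup, we glue two copies of $\partial_\infty D_n$ (swapped by $I_1$) and obtain a triangulated or polyhedral model of $\mathscr{M}_{4,4,n;\infty}$. The comparison with \cite{JWX2023} goes as follows. In the Ford domain of $\Delta_{4,4,\infty;\infty}$, the neighborhood of the fixed point of $I_1I_2$ was a cusp of $s782$, namely $C_1$. In $D_n$ this cusp is replaced by a solid torus neighborhood of the fixed circle of $I_1I_2$, on which $I_1I_2$ acts with order $n$. Hence $\mathscr{M}_{4,4,n;\infty}$ is the complement of the remaining cusp glued to that solid torus, i.e. a Dehn filling of $s782$ along $C_1$. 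To identify the slope, we track the curve on $\partial C_1$ that bounds a disk in the filling. That disk is the meridian disk around the fixed circle, whose boundary represents $(I_1I_2)^n$. We then express this class in the generators $a,b,c$ through the isomorphism of \cite{JWX2023} between $\pi_1(s782)$ and $\Sigma$. Writing it in terms of $\mathcal{m}_1=bc$ and $\mathcal{l}_1=ba$ should give $(n-1)\mathcal{m}_1+\mathcal{l}_1$; the shift by one comes from the relation between the peripheral element $I_1I_2$ and SnapPy's chosen basis. As a check, we will compute $\pi_1$ of the quotient from the presentation, compare it with SnapPy's filling for small $n$, and confirm hyperbolicity of the result.

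The main obstacle is step (2): proving, uniformly for all $n\geq5$, that the Dirichlet domain has exactly the Ford domain's combinatorics. This requires controlling infinitely many configurations by inequalities in $\theta(n)$. We would first attempt to reduce each needed inequality to a sign condition on an explicit trigonometric polynomial in $\cos(\pi/n)$, verified analytically for large $n$ and numerically for small $n$. The smallest case $n=5$ is the one most likely to need separate treatment.
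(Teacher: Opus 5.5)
Your route to (i) is the paper's: a Dirichlet domain centred at the fixed point $o$ of $A=I_1I_2$, with the $4n$ faces coming from $A^kS^{\pm1}A^{-k}$, $A^kRA^{-k}$ and $A^kQA^{-k}$, bisector-intersection estimates, and the Poincar\'e polyhedron theorem. Two minor points. Since $I_1$ and $I_2$ both fix $o$, the Dirichlet domains of $\Delta_{4,4,n;\infty}$ and of $\Sigma$ at $o$ are the same set, so nothing gets ``doubled''; one simply quotients $\mathcal{D}_\infty$ by the free $\langle A\rangle$-action. Also, $A^n=\mathrm{id}$ enters through the stabilizer of the centre, not through a ridge cycle.

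The genuine gap is the slope in (ii). $A$ is regular elliptic, conjugate to $\mathrm{diag}(e^{2\pi i/n},e^{-2\pi i/n},1)$, so it fixes only $o$ and acts freely on $\partial\mathbf{H}^2_{\mathbb{C}}$. There is no fixed circle. If there were one, the quotient would be an orbifold with cone angle $2\pi/n$ along the core of the filling, not a manifold. Your meridian ``representing $(I_1I_2)^n$'' is exactly that orbifold filling. Indeed $I_1I_2$ corresponds to $f_2$ and $\phi(f_2)=bc=\mathcal{m}_1$, so you get the non-primitive class $n\mathcal{m}_1$. No change of basis turns this into $(n-1)\mathcal{m}_1+\mathcal{l}_1$, so the ``shift by one'' is asserted, not derived.

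The root confusion is between $\pi_1(\mathscr{M}_{4,4,\infty;\infty})\cong\pi_1(s782)$ and the holonomy group $\Sigma$. The isomorphism $\phi$ is with the former; $\Sigma\cong\mathbb{Z}_4*\mathbb{Z}_4$ has torsion. The peripheral group of the filled cusp is $\langle f_1,f_2\rangle\cong\mathbb{Z}^2$, where $f_1$ (the pairing $D_-\to D_+$) is the loop encircling the tube $\partial\mathcal{F}_\infty$, and $f_1$ has trivial holonomy. Hence every class $n[f_2]+k[f_1]$ has holonomy $A^n=\mathrm{id}$, and the holonomy of the meridian cannot detect the slope.

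What is needed is an explicit curve. $\mathcal{D}_\infty$ is the solid torus complementary to the ring of spinal spheres, so its meridian runs once around the ring. The paper's curve $\mathscr{C}_n$ follows $u_1,w_1,x_1,u_2,\dots,u_n$ through all $n$ translates and closes up through $v_n,t_n,y_n$. An Acosta-type argument shows it bounds an essential disk.

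The closing detour through $v_n,t_n,y_n$ contributes one loop around the tube. So after projecting to $\partial T_{\mathcal{D}}$ and transporting through the cell structure shared with $\partial\mathcal{F}_\infty$, the curve represents $n[f_2]+[f_1]$. (The paper's text writes this word as $f_2f_1^n$, but $n[f_2]+[f_1]$ is the class that yields the stated slope.) Since $\phi(f_2)=\mathcal{m}_1$ and $\phi(f_2f_1)=ba=\mathcal{l}_1$, this class is $n\mathcal{m}_1+(\mathcal{l}_1-\mathcal{m}_1)=(n-1)\mathcal{m}_1+\mathcal{l}_1$.

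The sign of the $f_1$-term is what separates this from $(n+1)\mathcal{m}_1-\mathcal{l}_1$, and it can only be read off the explicit curve. The paper cross-checks $n=5$ by a direct $\pi_1$ computation. Finally, $\phi$ must be set up for the Ford domain centred at the fixed point of $I_1I_2$. The labels of \cite{JWX2023}, whose domain is centred at the fixed point of $I_1I_3I_2I_3$, cannot be used directly.
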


We note that Theorem~\ref{thm:44np} \ref{item:44np1} confirms a special case of a conjecture 
of Schwartz \cite{Schwartz:2002icm}. 
Moreover, Theorem~\ref{thm:44np} \ref{item:44np2}  can be viewed as a complex hyperbolic analogue of Thurston's hyperbolic Dehn surgery theorem: 
it shows that certain explicit Dehn fillings of uniformizable spherical CR 3-manifolds yield new uniformizable spherical CR 3-manifolds 
\cite{Schwartz:2007, Thurston:1979}.

Our proof of Theorem~\ref{thm:44np} is inspired by the work of Schwartz~\cite{Schwartz:2007} 
and Acosta~\cite{Acosta2019}. The key insight is that for each $n \geq 5$, the Dirichlet domain
 $\mathcal{D}$ of $\Delta_{4,4,n;\infty}$ and the Ford domain $\mathcal{F}$ of $\Delta_{4,4,\infty;\infty}$ share the same local combinatorics and topology. Hence, $\mathscr{M}_{4,4,n; \infty}$ is obtained by Dehn filling one cusp of $\mathscr{M}_{4,4,\infty; \infty}$. However, identifying the specific Dehn filling slope in Theorem~\ref{thm:44np} requires additional analysis. To clarify the relationship between these groups, we compare in Table~\ref{table:notation} the notations and methods used to identify the 3-manifolds at infinity of $\Delta_{4,4,\infty;\infty}$ and $\Delta_{4,4,n;\infty}$. These notations will be defined in Sections~\ref{section:forddomain}, \ref{section:dirichlet} and ~\ref{section:proof}, respectively.

\begin{table}[!htbp]
	\caption{Sketch of the difference between the 3-manifolds at infinity of 
$\Delta_{4,4,\infty;\infty}$ and $\Delta_{4,4,n;\infty}$. 
Here, $W_\mathcal{F} \cong \mathbb{T}^2 \times [1,\infty)$ while 
$W_\mathcal{D} \cong \mathbb{D}^2 \times \mathbb{S}^1$, 
so the difference corresponds to a Dehn filling.}
	\centering
	\renewcommand{\arraystretch}{2}
	\begin{tabular}{|c|c|c|}
		\hline
		\textbf{$\Delta_{4,4,\infty;\infty}$} & 
		\textbf{$\Delta_{4,4,n;\infty}$} & \textbf{remark} \\
		\hline 
		Ford domain $\mathcal{F}$ & Dirichlet domain $\mathcal{D}$ & 4-dimension \\
		\hline 
		$\mathcal{F}_{\infty}=\mathcal{F} \cap \partial \hc$ & $\mathcal{D}_{\infty}=\mathcal{D} \cap \partial \hc$ & 3-dimension \\
		\hline
		{$\!\begin{aligned}
				T_\mathcal{F}&=\mathcal{F}_{\infty}/\langle I_1I_2\rangle\\ 
				&\cong \mathbb{T}^2\times[0,\infty)\\
				&=V_\mathcal{F}\cup W_\mathcal{F}
			\end{aligned}$} 
		& {$\!\begin{aligned}
				T_\mathcal{D}&=\mathcal{D}_{\infty}/\langle I_1I_2\rangle\\
				&\cong \mathbb{D}^2 \times \mathbb{S}^1\\
				&=V_\mathcal{D}\cup W_\mathcal{D}
			\end{aligned}$}
		& {$\!\begin{aligned}
				V_\mathcal{F} &\cong V_\mathcal{D} \cong \mathbb{T}^2 \times [0,1], \\
				W_\mathcal{F} &\cong \mathbb{T}^2 \times [1,\infty), \\
				W_\mathcal{D} &\cong \mathbb{D}^2 \times \mathbb{S}^1
			\end{aligned}$} \\
		\hline
		$\partial T_\mathcal{F}=\partial _1 V_\mathcal{F}$ & $\partial T_\mathcal{D}= \partial_1 V_\mathcal{D}$ & the same side-pairing pattern \\
		\hline
		$\mathscr{M}_{4,4,\infty; \infty}=T_\mathcal{F}/\sim$ & $\mathscr{M}_{4,4,n; \infty}=T_\mathcal{D}/\sim$ & the difference is a Dehn filling \\
		\hline
	\end{tabular}
	\label{table:notation}
\end{table}

%The intersection of balanced pairs of bisectors in Lemma \ref{lem1:bisector-balanced} (Section \ref{section:dirichlet}) is also one of the new features of this paper, which allows us to rigorously prove the local combinatorics of the Dirichlet domain of $\Delta_{4,4,n;\infty}$.

\textbf{Outline of the paper}: Section~\ref{sec:Preliminary} reviews the necessary background on the complex hyperbolic plane. Section~\ref{subsection:1-dim} presents a 1-dimensional moduli space of complex hyperbolic triangle groups, which contains both $\Delta_{4,4,\infty;\infty}$ and $\Delta_{4,4,n;\infty}$ for each $n \geq 5$.
In Section~\ref{section:forddomain}, we summarize the combinatorial structure of the Ford domain for $\Delta_{4,4,\infty;\infty}$ following \cite{JWX2023}. We also outline the proof of an isomorphism between the fundamental group of the 3-manifold at infinity of $\Delta_{4,4,\infty;\infty}$ and $\pi_1(s782)$. This isomorphism is re-derived explicitly, as we require a precise concordance between the Dirichlet domain of $\Delta_{4,4,n;\infty}$ and the Ford domain of $\Delta_{4,4,\infty;\infty}$.
Section~\ref{section:dirichlet} forms the core of the paper, where we show that the Ford domain of $\Delta_{4,4,\infty;\infty}$ and the Dirichlet domain of $\Delta_{4,4,n;\infty}$ share the same local combinatorics and topology.
The proof of Theorem~\ref{thm:44np} is completed in Section~\ref{section:proof}.
Finally, Section~\ref{section:group} provides a direct computation of the fundamental group of the 3-manifold at infinity of $\Delta_{4,4,5;\infty}$, serving as an independent verification of part of Theorem~\ref{thm:44np} \ref{item:44np2}.

\section{Preliminary}\label{sec:Preliminary}
In this section, we review the basic geometry of the complex hyperbolic plane. 
For further details, we refer the reader to \cite{Go:1999}, \cite{Parker:2003} and \cite{Mostow:1980}.

Let $\mathbb{C}^{2,1}$ denote the complex vector space $\mathbb{C}^{3}$ equipped with the canonical Hermitian form 
$\langle \cdot, \cdot \rangle$ of signature~$(2,1)$.  
Consider the following subsets of $\mathbb{C}^{3}$:
\begin{align*}
	V_{-} &= \{\, \mathbf{z} \in \mathbb{C}^{2,1} \setminus \{0\} \mid \langle \mathbf{z}, \mathbf{z} \rangle < 0 \,\}, \\
	V_{0} &= \{\, \mathbf{z} \in \mathbb{C}^{2,1} \setminus \{0\} \mid \langle \mathbf{z}, \mathbf{z} \rangle = 0 \,\}, \\
	V_{+} &= \{\, \mathbf{z} \in \mathbb{C}^{2,1} \setminus \{0\} \mid \langle \mathbf{z}, \mathbf{z} \rangle > 0 \,\}.
\end{align*}

Let $\mathbb{P} : \mathbb{C}^{2,1} \setminus \{0\} \to \mathbb{CP}^{2}$ be the canonical projectivization.  
The complex hyperbolic plane $\mathbf{H}^2_{\mathbb{C}}$ is defined as $\mathbb{P}(V_{-})$, and its ideal boundary is given by $\partial \mathbf{H}^2_{\mathbb{C}} = \mathbb{P}(V_{0})$.

Let $\mathbf{U}(2,1)$ denote the group of linear automorphisms of $\mathbb{C}^{2,1}$ preserving the Hermitian form 
$\langle \cdot, \cdot \rangle$, and let $\mathbf{I}$ denote the identity matrix in~$\mathbf{U}(2,1)$.  
The \emph{Bergman metric} $\rho$ on $\mathbf{H}^2_{\mathbb{C}}$ is defined by
\begin{equation*}
	\cosh^{2}\!\left( \frac{\rho(p,q)}{2} \right)
	= \frac{\langle \mathbf{p}, \mathbf{q} \rangle \, \langle \mathbf{q}, \mathbf{p} \rangle}
	{\langle \mathbf{p}, \mathbf{p} \rangle \, \langle \mathbf{q}, \mathbf{q} \rangle},
\end{equation*}
where $\mathbf{p}$ and $\mathbf{q}$ are lifts of $p,q \in \mathbf{H}^2_{\mathbb{C}}$, respectively.

The image $\mathbf{PU}(2,1)$ of $\mathbf{U}(2,1)$ in $\mathbf{PGL}(\mathbb{C}^{2,1})$ is the full group of biholomorphic isometries of $\mathbf{H}^2_{\mathbb{C}}$.  
We will also frequently use the subgroup $\mathbf{SU}(2,1) \subset \mathbf{U}(2,1)$ consisting of matrices of determinant~$1$.  
It is connected and is a triple cover of~$\mathbf{PU}(2,1)$:
\[
\mathbf{PU}(2,1) = \mathbf{SU}(2,1) \big/ \{\mathbf{I}, \,\omega \mathbf{I}, \,\omega^{2} \mathbf{I}\},
\]
where $\omega = \frac{-1 + i\sqrt{3}}{2}$ is a primitive cube root of unity.

\subsection{Two models of the complex hyperbolic plane}

Among the various models of \(\hc\), we describe the following two models, which will be most useful for our purposes.

\subsubsection{The Ball Model}
If the Hermitian form \(\langle \cdot, \cdot \rangle\) has matrix
\[
J_1 = \begin{pmatrix}
    1 & 0 & 0 \\
    0 & 1 & 0 \\
    0 & 0 & -1
\end{pmatrix},
\]
then we obtain the \emph{ball model} of the complex hyperbolic plane. In this model, \(\hc = \mathbb{P}(V_{-})\) is entirely contained in the affine chart \(z_3 = 1\) of \(\mathbb{CP}^2\), and can be identified with the unit ball in \(\mathbb{C}^2\):
\[
\hc = \{ (z_1, z_2) \in \mathbb{C}^2 \mid |z_1|^2 + |z_2|^2 < 1 \}.
\]
We observe that \(\hc\) corresponds to the ball \(\mathbb{B}^4\) in \(\mathbb{C}^2\) and \(\partial\hc\) to the sphere \(\mathbb{S}^3\). The ball model for the complex hyperbolic plane is the direct analogue of the Poincar\'e  disk model for the complex hyperbolic line.

\subsubsection{The Siegel Model}
If the Hermitian form \(\langle \cdot, \cdot \rangle\) has matrix
\[
J_2 = \begin{pmatrix}
    0 & 0 & 1 \\
    0 & 1 & 0 \\
    1 & 0 & 0
\end{pmatrix},
\]
then we obtain the \emph{Siegel model} of the complex hyperbolic plane. Note that the matrices \(J_1\) and \(J_2\) representing the Hermitian form are conjugate via the Cayley matrix
\[
\frac{1}{\sqrt{2}} \begin{pmatrix}
    1 & 0 & 1 \\
    0 & \sqrt{2} & 0 \\
    1 & 0 & -1
\end{pmatrix}.
\]

In the Siegel model, \(\hc = \mathbb{P}(V_{-})\) is entirely contained in the affine chart \(z_3 = 1\) of \(\mathbb{CP}^2\), and can be represented as the following domain in \(\mathbb{C}^2\):
\[
\hc = \left\{ (z_1, z_2) \in \mathbb{C}^2 \mid |z_1|^2 + 2\Re(z_2) < 0 \right\}.
\]

The boundary of \(\hc\) is given by
\[
\partial\hc = \left\{ \left(-\frac{1}{2}(|z|^2 + it), z, 1\right)^{\top} \mid (z,t) \in \mathbb{C} \times \mathbb{R} \right\}
\cup \{(1,0,0)^{\top}\}.
\]
This model serves as an analogue for the complex hyperbolic plane of the upper half-plane model for the complex 
hyperbolic line. We can thus identify \(\partial\hc\) with \(\mathbb{C} \times \mathbb{R} \cup \{\infty\}\), 
where the point at infinity corresponds to $(1,0,0)^{\top}$.
We denote by \(\mathbf{q}_{\infty}\) the lift of this point at infinity in \(\mathbb{C}^{2,1}\), 
i.e. \(\mathbf{q}_{\infty} = (1,0,0)^{\top}\).
Removing the point at infinity, we obtain the Heisenberg group, defined as \(\mathbb{C} \times \mathbb{R}\) 
equipped with the group law
\[
(z_1, t_1) \star (z_2, t_2) = (z_1 + z_2, t_1 + t_2 + 2\Im(z_1\bar{z}_2)).
\]

Let \(\mathbf{m}\) be a vector in \(V_{+}\) and \(\mathbf{m}^\perp\) its orthogonal complement with respect to the
 Hermitian form \(\langle \cdot, \cdot \rangle\). A complex reflection  with respect to \(\mathbf{m}^\perp\) is the linear automorphism of \(\mathbb{C}^{2,1}\) defined by:
\[
\mathbf{z} \mapsto -\mathbf{z} + 2 \frac{\langle\mathbf{z}, \mathbf{m} \rangle}{\langle \mathbf{m}, \mathbf{m} \rangle} \mathbf{m}.
\]
This defines an involutory element of \(\mathbf{SU}(2,1)\) that fixes \(\mathbf{m}^\perp\) pointwise.

\subsection{Isometries of complex hyperbolic plane}

The isometries of \( \hc \) are classified into three types. If \( h \) is an isometry of \( \hc \),
 we say that:
\begin{itemize}
	\item \( h \) is \emph{elliptic} if it has a fixed point in \( \hc \);
	\item \( h \) is \emph{loxodromic} if it has exactly two fixed points in \( \partial \hc \);
	\item \( h \) is \emph{parabolic} if it has exactly one fixed point in \( \partial \hc \).
\end{itemize}

Let \( h \) be an elliptic isometry of \(\hc \), and let \( \mathbf{h} \) be a lift of \( h \) in \( \mathbf{SU}(2, 1) \).
 We say that:
\begin{itemize}
	\item \( h \) is \emph{regular elliptic} if  $h$  has three distinct eigenvalues, each of modulus 1;
	\item \( h \) is \emph{special elliptic} if  $h$ has two equal eigenvalues.
\end{itemize}

\subsection{Totally geodesic 2-dimensional subspaces of complex hyperbolic plane}

In \(\mathbf{H}^2_\mathbb{C}\), there are exactly two conjugacy classes under the action of 
\(\mathbf{PU}(2,1)\) of totally geodesic real surfaces of dimension 2: the \(\mathbb{R}\)-planes 
and the \(\mathbb{C}\)-planes.

Let \( \mathbf{H}^2_\mathbb{R} = \mathbb{P}(\{z \in \mathbb{R}^{2,1} : \langle z, z \rangle < 0\}) \) and
 \( \mathbf{H}^1_\mathbb{C} = \mathbb{P}(\{z \in \mathbb{C}^{1,1} : \langle z, z \rangle < 0\}) \) be 
 the standard models of the real hyperbolic plane and complex hyperbolic line, respectively, embedded 
 in \(\mathbf{H}^2_\mathbb{C}\) in the natural way.
An \emph{\(\mathbb{R}\)-plane} in \(\mathbf{H}^2_\mathbb{C}\) is defined as the image under \(\mathbb{P}\) of 
a real 3-dimensional subspace \(W \subset \mathbb{C}^{2,1}\) that is Lagrangian with respect to the 
symplectic form \(\omega(u,v) = \Im\langle u, v \rangle\) associated to the Hermitian form 
\(\langle \cdot, \cdot \rangle\). Equivalently, all \(\mathbb{R}\)-planes arise as images of 
\(\mathbf{H}^2_\mathbb{R}\) under the action of \(\mathbf{PU}(2,1)\).
A \emph{\(\mathbb{C}\)-plane (or complex line)} in \(\mathbf{H}^2_\mathbb{C}\) is the image under 
\(\mathbb{P}\) of a complex 2-dimensional subspace of \(\mathbb{C}^{2,1}\). All such 
\(\mathbb{C}\)-planes are obtained as images of \(\mathbf{H}^1_\mathbb{C}\) under elements of 
\(\mathbf{PU}(2,1)\).
 Each of the \(\mathbb{R}\)-planes 
 and the \(\mathbb{C}\)-planes  is isometric to the real hyperbolic plane 
\(\mathbf{H}^2_\mathbb{R}\), and their boundaries are topologically embedded circles in 
\(\partial\mathbf{H}^2_\mathbb{C} \cong \mathbb{S}^3\).

%In \(\mathbf{H}^2_\mathbb{C}\), there are exactly two conjugacy classes under the action of 
%\(\mathbf{PU}(2,1)\) of totally geodesic real surfaces of dimension 2: the %\(\mathbb{R}\)-planes 
%and the \(\mathbb{C}\)-planes. Each of these surfaces is isometric to the real hyperbolic plane 
%\(\mathbf{H}^2_\mathbb{R}\), and their boundaries are topologically embedded circles in 
%\(\partial\mathbf{H}^2_\mathbb{C} \cong \mathbb{S}^3\).

\subsection{Isometric spheres, bisectors,  Ford domain and Dirichlet domain}

We now introduce a concept that will play an important role in the constructions to follow.

%\begin{defn}
%    Let \( z_1 \) and \( z_2 \) be two distinct points in \( \hc \). The \emph{spinal hypersurface} or \emph{bisector} of \( \{z_1, z_2\} \) is the subset \( \mathcal{B}(\{z_1, z_2\}) \) defined by:
%    \[
%    \mathcal{B}(\{z_1, z_2\}) = \{ z \in \hc \mid \rho(z_1, z) = \rho(z_2, z) \}.
%    \]
%\end{defn}

%Let \( \Sigma \subset \hc \) be the complex geodesic generated by \( z_1 \) and \( z_2 \). 
%We call \( \Sigma \) the \emph{complex spine} of \( \mathcal{B}(\{z_1, z_2\}) \). The \emph{spine} of 
%\( \mathcal{B}(\{z_1, z_2\}) \) is given by:
%\[
%\sigma(\{z_1, z_2\}) = \mathcal{B}(\{z_1, z_2\}) \cap \Sigma = \{ z \in \Sigma \mid \rho(z_1, z) = \rho(z_2, z) \}.
%\]

\begin{defn} \label{def:bisector}
	For $z_1,z_2\in\hc$, the \emph{bisector} between $z_1$ and $z_2$ is the set  
	$$\mathcal{B}(z_1,z_2)=\{ p \in\overline{\hc}: |\langle \bf{p}, \bf{z}_1 \rangle | = |\langle \bf{p}, \bf{z}_2 \rangle|\}.$$
\end{defn} 

Let \( \Sigma \subset \hc \) be the complex geodesic generated by \( z_1 \) and \( z_2 \). 
We call \( \Sigma \) the \emph{complex spine} of \( \mathcal{B}(z_1, z_2) \). The \emph{spine} of 
\( \mathcal{B}(z_1, z_2) \) is given by:
\[
\sigma(z_1, z_2) = \mathcal{B}(z_1, z_2) \cap \Sigma = \{  |\langle \bf{p}, \bf{z}_1 \rangle | = |\langle \bf{p}, \bf{z}_2 \rangle| \}.
\]

\begin{defn}
    Let \( \mathcal{B} \) be a bisector. The set \( S = \mathcal{B} \cap \partial \hc \) is called a \emph{spinal sphere}. If \( \sigma \) is the spine of \( \mathcal{B} \), the two points of \( \sigma \cap \partial \hc \) are called the \emph{vertices} of the bisector.
\end{defn}

Since the orthogonal projection of \( \hc \) onto \( \sigma \) is real analytic, we deduce the following:

\begin{prop}
    A bisector is a smooth hypersurface of \( \hc \) that is diffeomorphic to \( \mathbb{R}^3 \). 
    A spinal sphere is a smooth hypersurface of \( \partial \hc \) that is diffeomorphic to \( \mathbb{S}^2 \).
\end{prop}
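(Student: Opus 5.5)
We prove the statement by normalizing coordinates and then using the slice decomposition of the bisector along its spine, which is the orthogonal projection mentioned just before the proposition.

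\textbf{Normalization.} Since $\mathbf{PU}(2,1)$ acts transitively on pairs of points at a given distance, we may conjugate so that $z_1,z_2$ lie in the complex line $\Sigma$ spanned by the first and third coordinates in the ball model, symmetric about the origin. Concretely, take lifts $\mathbf{z}_1=(r,0,1)^{\top}$ and $\mathbf{z}_2=(-r,0,1)^{\top}$ with $0<r<1$. The defining equation $|\langle \mathbf{p},\mathbf{z}_1\rangle|=|\langle \mathbf{p},\mathbf{z}_2\rangle|$, with $\mathbf{p}=(w_1,w_2,1)$, becomes $|r w_1-1|=|r w_1+1|$, that is, $\Re(w_1)=0$. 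Hence
\[
\mathcal{B}=\{(w_1,w_2): \Re w_1=0,\ |w_1|^2+|w_2|^2<1\}.
\]
This is the intersection of a real hyperplane through the origin of $\mathbb{C}^2\cong\mathbb{R}^4$ with the open unit ball, so it is an open $3$-ball. Therefore $\mathcal{B}\cap\hc$ is a smooth hypersurface diffeomorphic to $\mathbb{R}^3$. Its closure meets $\partial\hc=\mathbb{S}^3$ in $\{\Re w_1=0\}\cap \mathbb{S}^3$, which is a round $2$-sphere, smooth because the hyperplane is transverse to the sphere.

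\textbf{Conceptual route.} To match the paper's remark about the projection, we would also give the intrinsic argument. The spine $\sigma=\{w_2=0,\ \Re w_1=0\}$ is a geodesic. For each $s\in\sigma$, the fiber of the orthogonal projection $\Pi_\Sigma$ over $s$ is a complex slice, namely the complex line orthogonal to $\Sigma$ through $s$, which here is $\{w_1=s\}$. The Slice Decomposition writes $\mathcal{B}=\Pi_\Sigma^{-1}(\sigma)$. The map $\Pi_\Sigma$ is a real-analytic submersion, and its fibers over $\sigma\cong\mathbb{R}$ are discs. Thus $\mathcal{B}$ is a smooth $\mathbb{R}^2$-bundle over $\mathbb{R}$, hence diffeomorphic to $\mathbb{R}^3$. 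On the boundary, each slice contributes a circle, and the two endpoints of $\sigma$ (the vertices) are added. The result is a circle fibration over an open interval compactified by two points, which is $\mathbb{S}^2$.

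\textbf{Main obstacle.} The only real care needed is smoothness of the spinal sphere at the two vertices, where the slice circles degenerate to points. The intrinsic argument alone does not settle this. We resolve it with the explicit normalization: the spinal sphere is the transverse intersection of a hyperplane with $\mathbb{S}^3$, so it is smooth at the vertices as well.
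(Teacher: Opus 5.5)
Your proof is correct, but your main argument is not the paper's. The paper's only justification is the remark that the orthogonal projection is real analytic. In effect this is Goldman's slice decomposition $\mathcal{B}=\Pi_\Sigma^{-1}(\sigma)$, which is what your ``conceptual route'' reproduces.

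Your main argument takes a different route. You use two-point homogeneity of $\mathbf{PU}(2,1)$ to move $z_1,z_2$ to $(\pm r,0)$ in the ball model. Your lifts have equal norm $r^2-1$, so the lift-dependent equation of Definition~\ref{def:bisector} really is the equidistance condition. The normalizing isometry is a real-analytic diffeomorphism of $\overline{\hc}$ carrying bisectors to bisectors, so nothing is lost. After this normalization the bisector is the linear slice $\{\Re w_1=0\}\cap\mathbb{B}^4$, and the spinal sphere is the great sphere $\{\Re w_1=0\}\cap\mathbb{S}^3$.

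This gives both claims in a few lines. It also settles smoothness of the spinal sphere at the two vertices, which the projection argument alone does not. The boundary extension $(w_1,w_2)\mapsto w_1$ of $\Pi_\Sigma$ fails to be a submersion on $\mathbb{S}^3$ along $\partial\Sigma=\{w_2=0\}$. That circle contains the vertices, and it is exactly where your slice circles degenerate.

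What the projection viewpoint buys in return is the intrinsic fibration of $\mathcal{B}$ by complex slices over the spine. That is the structure the paper actually exploits later, for example in Proposition~\ref{prop:complex-bisector} and in the proof of Proposition~\ref{prop:cross1}.
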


\begin{defn} \label{def:Dirichlet} 
	The \emph{Dirichlet domain} $\mathcal{D}_{\Gamma}$ for a discrete group $\Gamma \subset \mathbf{PU}(2,1)$
	 centered on $q_0\in\hc$ is defined as
	$$\mathcal{D}_{\Gamma}=\{p\in \overline{\hc}: |\langle \mathbf{p},\mathbf{q}_0\rangle|\leq|\langle 
		\mathbf{p},g(\mathbf{q}_0)\rangle|
	\ \forall g\in \Gamma \ \mbox{with} \ g(\mathbf{q}_0)\neq \mathbf{q}_0\}.$$
\end{defn}

Let $g=(g_{ij})$ be an element of $\mathbf{PU}(2,1)$ not fixing $\mathbf{q}_{\infty}$. 
This implies that $g_{31}\neq 0$.
\begin{defn} \label{def:isosphere}
	The \emph{isometric sphere} of $g$, denoted by $\mathcal{I}(g)$, is the set
	\begin{equation}\label{eq:isom-sphere}
	\mathcal{I}(g)=\{ p \in\overline{\hc}: |\langle {\bf{p}}, {\bf{q}}_{\infty} \rangle | = |\langle {\bf{p}}, g({\bf{q}}_{\infty}) \rangle| \}.
	\end{equation}
\end{defn}

An isometric sphere is a real 3-dimensional hypersurface in $\overline{\hc}$, 
diffeomorphic to $\mathbb{R}^3$.
Equivalently, $\mathcal{I}(g)$ is the bisector $\mathcal{B}(\mathbf{q}_{\infty}, g(\mathbf{q}_{\infty}))$.
The \emph{spinal sphere} of $\mathcal{I}(g)$ or $g$ is defined as $\mathcal{I}(g)\cap\partial \hc$, denoted by $\partial_{\infty}\mathcal{I}(g)$, which is a 2-sphere.

\begin{defn}The \emph{Ford domain} $\mathcal{F}_{\Gamma}$ for a discrete group $\Gamma \subset \mathbf{PU}(2,1)$ centred at $q_{\infty}$ is defined as
	$$\mathcal{F}_{\Gamma}=\{p\in \overline{\hc}: |\langle \mathbf{p},\mathbf{q}_{\infty}\rangle|\leq|\langle \mathbf{p},g(\mathbf{q}_{\infty})\rangle|
	\ \forall g\in \Gamma \ \mbox{with} \ g(\mathbf{q}_{\infty})\neq \mathbf{q}_{\infty}\}.$$
\end{defn}

\textbf{Important remark on the definition of an isometric sphere:} 
For an element $g$, the conventional definition of the isometric sphere $\mathcal{I}(g)$ 
(as found in \cite{ParkerWill:2017, MaXie2021}) corresponds to $\mathcal{I}(g^{-1})$ in our notation. 
The two definitions of isometric spheres are essentially equivalent in the study of 
Ford domains. We choose the one in Definition~\ref{def:isosphere}, which is 
consistent with Definitions~\ref{def:bisector} and~\ref{def:Dirichlet}, 
as it is more convenient for the proofs in Section \ref{section:proof}.

\section{A 1-dimensional moduli space of complex hyperbolic  triangle groups }
\label{subsection:1-dim}

%\section{A 1-dimensional moduli space of complex hyperbolic  hyperbolic triangle group 
%	\texorpdfstring{$\Delta_{4,4,\infty}$}{}}
%\label{subsection:1-dim}

In this section, we give the one-dimensional moduli space of three complex reflections $I_1$, $I_2$ and $I_3$ such that $$(I_2I_3)^4=id, \quad (I_3I_1)^4=id \quad \text{and} \quad I_1I_3I_2I_3 \quad \text{is parabolic}.$$
Here $I_1I_2$ may be loxodromic, parabolic or elliptic. This space contains the group $\Delta_{4,4,\infty;\infty}$ and $\Delta_{4,4,n;\infty}$ for each $n \geq 5$. 
We use the Siegel model of the complex hyperbolic plane.  Our parameterization  is a conjugacy of the one in 	\cite{JWX2023}.

Let $I_i$ be the reflection along  complex line $C_i$ for $i=1,2,3$. We see that $C_{2}$ and $C_{1}$ both meet $C_{3}$ at the angle $\pi/4$.
Suppose that the polar vectors $n_1$, $n_2$ and $n_3$ of the complex lines $C_1$, $C_2$ and $C_3$ are given by 
\begin{equation*}
n_1=\left[
\begin{array}{c}
	z_1 \\
	1 \\
	0 \\
\end{array}
\right], \quad \quad \quad  n_2=\left[
\begin{array}{c}
0 \\
-1 \\
z_2 \\
\end{array}
\right],  \quad  {\rm and}  \quad n_3=\left[
\begin{array}{c}
1 \\
0 \\
1 \\
\end{array}
\right].
\end{equation*}
Since $(I_2I_3)^4=(I_3I_1)^4=id$, then $tr(I_2I_3)=1$, $tr(I_3I_1)=1$, we have $|z_1|=|z_2|=1$. 
With the condition  $tr(I_1I_3I_2I_3)=3$, the corresponding complex reflections $I_1$, $I_2$   and  $I_3$ are given by 
\begin{equation*} \label{mtrixs:I1I2}
I_1=\left[
\begin{array}{ccc}
	-1 & 2 e^{\theta i} & 2 \\
	0 & 1 & 2 e^{-\theta i} \\
	0 & 0 & -1 \\
\end{array}
\right], \quad  I_2=\left[
\begin{array}{ccc}
	-1 & 0 & 0 \\
	 2 e ^{\theta i}  & 1 & 0 \\
	2 &  2 e ^{-\theta i}  & -1 \\
\end{array}
\right], \quad {\rm and}  \quad I_3=\left[
\begin{array}{ccc}
	0 & 0  & 1 \\
	  0 & -1 & 0 \\
	1 &  0 & 0 \\
\end{array}
\right]
\end{equation*} 
for $\theta \in [0, \pi/2]$. 
It is straightforward to check that $(I_3I_1)^4=id$, $(I_3I_2)^4=id$ and $I_1I_3I_2I_3$ is parabolic.  
When $\theta =0$, we have a classical $\mathbb{R}$-Fuchsian group.

The trace of $I_1I_2$ is $8 \cos(2 \theta)+7$:
\begin{itemize}
\item  When $\theta\in [0, \pi/3)$, $I_1I_2$ is loxodromic.
\item  When $\theta= \pi/3$, $I_1I_2$ is parabolic. We have the group $\Delta_{4,4,\infty;\infty}$.
\item   When $\theta\in (\pi/3, \pi/2]$, $I_1I_2$ is elliptic. In particular,  when $\theta= \arccos(\frac{ \cos(\pi/n)}{2})$, $tr(I_1I_2)=1+2 \cos(2 \pi/n)$, and $I_1I_2$ is elliptic with order $n$. We have the group $\Delta_{4,4,n;\infty}$. 

\end{itemize}

%So in the space of groups %parameterizied by $[0, \pi/2]$,  at the point $\theta_{\infty}=\pi/3$, $I_1I_2$ is parabolic, at the point $\theta_{n}=\arccos(\frac{ \cos(\pi/n)}{2})$, $I_1I_2$ is elliptic. 

For each $n \geq 5$, consider the Dirichlet domain $\mathcal{D}$ of the even subgroup of the group $\Delta_{4,4,n;\infty}$, centered at the fixed point of $I_1I_2$ when $\theta=\arccos(\frac{\cos(\pi/n)}{2})$. 
We also consider the Ford domain $\mathcal{F}$ of the even subgroup of the group $\Delta_{4,4,\infty;\infty}$, centered at the fixed point of $I_1I_2$ when $\theta=\pi/3$. 
We show that $\mathcal{D}$ and $\mathcal{F}$
have the same combinatorics and local topology when $n \geq 5$ in Sections~\ref{section:forddomain} and~\ref{section:dirichlet}.
This is a key step to demonstrate that the 3-manifold at infinity of the even subgroup of $\Delta_{4,4,n;\infty}$ can be obtained from the 3-manifold at infinity of the even subgroup of $\Delta_{4,4,\infty;\infty}$ by Dehn filling on the second cusp.
Although the explicit parameterization will not be used in the detailed computations of Sections~\ref{section:forddomain} and~\ref{section:dirichlet}, it is essential for establishing the existence of the required representations and for fixing the correspondence between the parameter $\theta$ and the Dehn filling slope $n$.

%\textcolor{red}{This section is complete}

\section{Ford domain of the even subgroup of \texorpdfstring{$\Delta_{4,4,\infty;\infty}$}{}}
\label{section:forddomain}

For the even subgroup of the group $\Delta_{4,4,\infty;\infty}$ described in 
Section~\ref{subsection:1-dim}, we give its Ford domain $\mathcal{F}$ following Jiang, Wang and Xie\cite{JWX2023}.  
However, we use different definitions and notations in order to maintain consistency between $\mathcal{F}$ and the Dirichlet domain $\mathcal{D}$ of the even subgroup of $\Delta_{4,4,n;\infty}$.
%, the isometric spheres in this section are not the same as in \cite{JWX2023}. 
%Consequently, the labels of the foiswing isometric spheres and the points are different from those in \cite{JWX2023}. 
Although all the results in this section are essentially proved in~\cite{JWX2023}, we outline them in some detail to prepare for Sections \ref{section:dirichlet} and \ref{section:proof}. 
All the properties in this section for $\mathcal{F}$ have counterparts for $\mathcal{D}$ in Section \ref{section:dirichlet}.
            
Let   
$$A=I_1I_2, ~ S=I_2I_3, ~ R=(I_2I_3)^2, ~ Q=(AS)^2=(I_1I_3)^2$$
be elements in $\Delta_{4,4,\infty;\infty}$ .
Then $A$ is parabolic and $S$, $R$, $Q$ are elliptic elements of orders 4, 2, 2, respectively. 
%Moreover, the element $A=I_1I_2$ is parabolic in $\Delta_{4,4,\infty;\infty}$.
We may conjugate $\Delta_{4,4,\infty;\infty}$ such that the fixed point of $A$ is ${\bf{q}}_{\infty}=(1,0,0)^{\top}$ in the Siegel model. Moreover, let $\Sigma=\langle I_1I_2, I_2I_3 \rangle$ be the even subgroup of $\Delta_{4,4,\infty;\infty}$.
 
 \subsection{The combinatorics of the Ford domain}

%  \begin{defn}\label{def:isospheres}
% We consider some isometric spheres for the group $\In addition,% \begin{enumerate}[(1)]
% \item Let $\mathcal{I}^{+}_{k}$  be the isometric sphere of $A^{k}SA^{-k}$, $\partial_{\infty}\mathcal{I}^{+}_{k}$ be the spinal sphere of $A^{k}SA^{-k}$.
% \item  Let $\mathcal{I}^{-}_{k}$  be the isometric sphere of $A^{k}S^{-1}A^{-k}$, $\partial_{\infty} \mathcal{I}^{-}_{k}$ be the spinal sphere of $A^{k}S^{-1}A^{-k}$.
% \item  Let $\mathcal{I}^{\star}_{k}$  be the isometric  sphere of $A^{k}RA^{-k}$, $\partial_{\infty} \mathcal{I}^{\star}_{k}$ be the spinal sphere of $A^{k}RA^{-k}$.
% \item  Let $\mathcal{I}^{\diamond}_{k}$  be the isometric sphere of $A^{k}QA^{-k}$, $\partial_{\infty} \mathcal{I}^{\diamond}_{k}$ be the spinal sphere of $A^{k}QA^{-k}$.
% \end{enumerate}
% \end{defn}

\begin{defn}\label{def:isospheres} For each $k \in \mathbb{Z}$, we set 
    \begin{equation*}
       \mathcal{I}^{+}_{k}=\mathcal{I}(A^{k}SA^{-k}),~
       \mathcal{I}^{-}_{k}=\mathcal{I}(A^{k}S^{-1}A^{-k}),~
       \mathcal{I}^{\star}_{k}=\mathcal{I}(A^{k}RA^{-k}),~
       \mathcal{I}^{\diamond}_{k}=\mathcal{I}(A^{k}QA^{-k})
    \end{equation*} to be the isometric spheres in the complex hyperbolic plane, 
    and use $\partial_{\infty}\mathcal{I}^{+}_{k}$,
    $\partial_{\infty}\mathcal{I}^{-}_{k}$,
    $\partial_{\infty}\mathcal{I}^{\star}_{k}$,
    $\partial_{\infty}\mathcal{I}^{\diamond}_{k}$ to denote their boundaries.
\end{defn}

\begin{defn} \label{def:Folddomain44ii} 
The infinite polyhedron $\mathcal{F}$ is the intersection of the closures of the exteriors
 of all isometric spheres in $\{\mathcal{I}_{k}^{+}, \mathcal{I}_{k}^{-},\mathcal{I}^{\star}_{k},
  \mathcal{I}^{\diamond}_{k}: k\in \mathbb{Z}\}$. That is,
\[
\mathcal{F}=\Bigl\{
  p \in \overline{\hc} :
  \, 
  |\langle \mathbf{p}, \mathbf{q}_{\infty} \rangle|
  \le 
  |\langle \mathbf{p}, A^{k} g(\mathbf{q}_{\infty}) \rangle|
  \ \text{for all } g \in \{ S, S^{-1}, R, Q \},\ k \in \mathbb{Z}
\Bigr\}.
\]

%We also define $\mathcal{F}_{\infty}=\mathcal{F} \cap \partial \hc$.
We use $\mathcal{F}_{\infty}$ to denote $\mathcal{F} \cap \partial \hc$ and
 $\partial\mathcal{F}_{\infty}$ to denote the boundary of $\mathcal{F}_{\infty}$.
\end{defn}
Jiang, Wang and Xie proved that $\mathcal{F}$ is a Ford domain of $\Sigma$ in \cite{JWX2023}.
\begin{defn}
    For $k\in \mathbb{Z}$, let $\mathfrak{s}_{k}^{+}$, $\mathfrak{s}_{k}^{-}$, $\mathfrak{s}^{\star}_{k}$ and $\mathfrak{s}^{\diamond}_{k}$ denote the 3-sides of $\mathcal{F}$ contained in the isometric spheres $\mathcal{I}_{k}^{+}$, $\mathcal{I}_{k}^{-}$, $\mathcal{I}^{\star}_{k}$ and $\mathcal{I}^{\diamond}_{k}$, respectively. 
    Besides, let $\partial _{\infty}\mathfrak{s}_{k}^{+}$, $\partial _{\infty} \mathfrak{s}_{k}^{-}$, $\partial _{\infty} \mathfrak{s}^{\star}_{k}$ and $\partial _{\infty} \mathfrak{s}^{\diamond}_{k}$ denote   $\mathfrak{s}_{k}^{+} \cap \partial \hc$, $\mathfrak{s}_{k}^{-} \cap \partial \hc$, $\mathfrak{s}^{\star}_{k} \cap \partial \hc$ and $\mathfrak{s}^{\diamond}_{k} \cap \partial \hc$, respectively. 
\end{defn}

\begin{defn}
    A \emph{ridge} is a 2-dimensional connected intersection of two sides.
\end{defn}

Figure \ref{figure:44iiford} illustrates a combinatorial picture of $\partial\mathcal{F}_{\infty}$, which should be compared with Figure 9 of \cite{JWX2023}.  
Here, consider a set of points  
$$\{p_i, q_i, r_i, s_i, t_i, u_i,v_i, w_i, x_i, y_i\}_{ i \in \mathbb{Z}},$$ 
in $\mathcal{F}_{\infty}$, 
such that $$A(p_i)=p_{i+1}, \quad A(q_i)=q_{i+1}, \quad  \cdots, \quad A(y_i)=y_{i+1}$$
for each $i \in \mathbb{Z}$. 
Figure \ref{figure:44iiford} is an infinite annulus with a $\mathbb{Z}$-action in $\mathbb{R}^3$, 
consisting of triangles, quadrangles, and octagons, each of which forms part  of 
 $\partial_{\infty} \mathfrak{s}_{k}^{j}$ for $k \in \mathbb{Z}$ and
  $j\in\{ +,-,\star,\diamond\}$. 
The infinite annulus is isotopic to the boundary of a neighborhood of an infinite line in the Heisenberg space  $\mathbb{C} \times \mathbb{R}$. 
The region outside the annulus (viewed   in  $\mathbb{C} \times \mathbb{R}$) in Figure \ref{figure:44iiford} is $\mathcal{F}_{\infty}$.  
Thus, the annulus in Figure \ref{figure:44iiford} is a combinatorial picture of the boundary 
 at infinite  of $\mathcal{F}$. 
The red pinched annulus is $\partial_{\infty}\mathfrak{s}_{0}^{+}$, which is the union of 
a quadrangle and an octagon. Similarly, the blue colored pinched annulus represents $\partial_{\infty} \mathfrak{s}_{0}^{-}$.

\begin{figure}[htbp]
	\begin{center}
		\begin{tikzpicture}
		\node at (0,0) {\includegraphics[width=14cm,height=6.0cm]{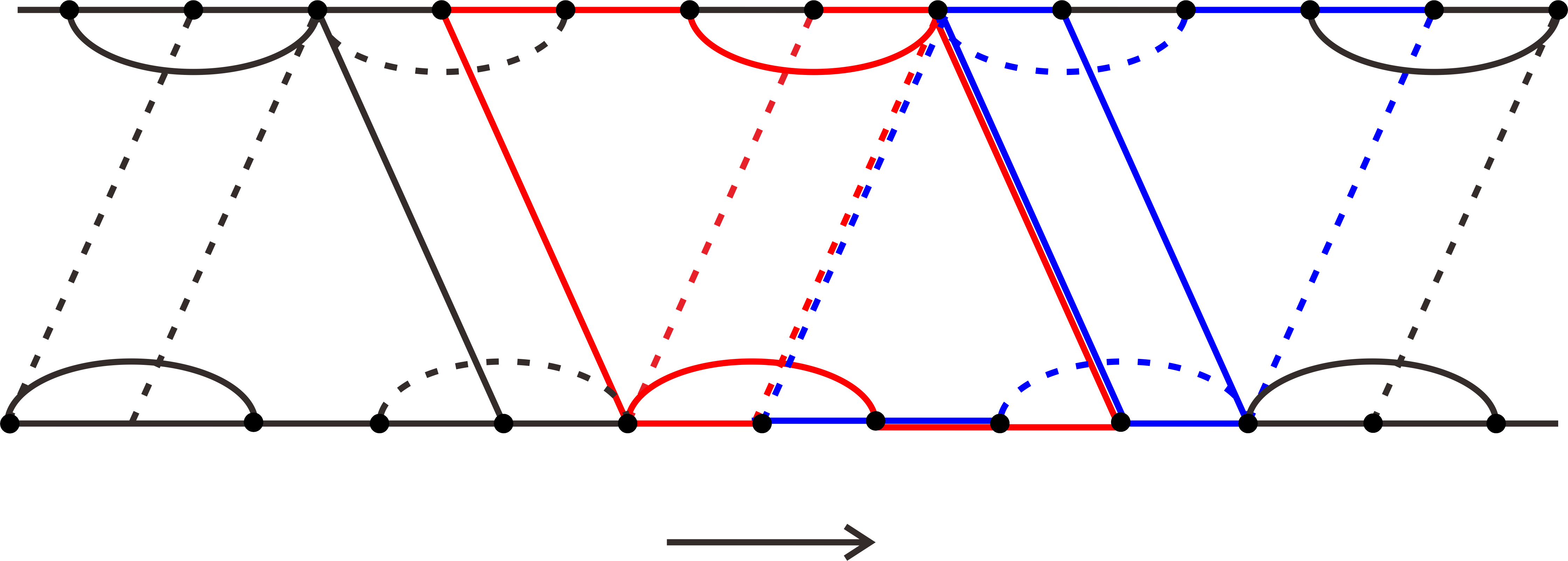}};

\node at (-6.3,3.3){\large $r_0$};
\node at (-5.1,3.3){\large $s_0$};
\node at (-4,3.3){\large $t_0$};
\node at (-2.8,3.3){\large $p_1$};
\node at (-1.9,3.3){\large $q_1$};
\node at (-0.9,3.3){\large $r_1$};
\node at (0.5,3.3){\large $s_1$};
\node at (1.5,3.3){\large $t_1$};
  		
\node at (2.3,3.3){\large $p_2$};
\node at (3.5,3.3){\large $q_2$};
\node at (4.5,3.3){\large $r_2$};
\node at (5.6,3.3){\large $s_2$};
\node at (6.7,3.3){\large $t_2$};

\node at (-6.8,-2.1){\large $u_0$};
\node at (-2.4,-2.1){\large $y_0$};
\node at (-1.3,-2.1){\large $u_1$};
\node at (-0.3,-2.1){\large $v_1$};
\node at (0.8,-2.1){\large $w_1$};
\node at (1.8,-2.1){\large $x_1$};
\node at (2.9,-2.1){\large $y_1$};
\node at (4,-2.1){\large $u_2$};
\node at (5.1,-2.1){\large $v_2$};
\node at (6.2,-2.1){\large $w_2$};

\draw[->] (0.2,-1.3)--(3.5,-2.5);
\draw[->] (3.2,-1.3)--(3.7,-2.5);
\node at (3.8,-2.75){$\partial_{\infty}\mathfrak{s}_{0}^{\star}$};

\draw[->] (-5.5,-1.3)--(-5.6,-2.5);
\draw[->] (-2.8,-1.3)--(-5.3,-2.5);
\node at (-5.6,-2.75){$\partial_{\infty}\mathfrak{s}_{-1}^{\star}$};

%\node at (-1.32,4.15){\tiny $A^{-1}QA$};
\node at (-1.32,4.0){ $\partial_{\infty}\mathfrak{s}_{-1}^{\diamond}$};
\draw[->] (-2.3,2.7)--(-1.5,3.7);
\draw[->] (0.0,2.7)--(-1.2,3.7);
  
%\node at (3.9,4.02){\tiny $Q$};
\node at (3.85,4.0){ $\partial_{\infty}\mathfrak{s}_0^{\diamond}$};
\draw[->] (2.8,2.7)--(3.7,3.7);
\draw[->] (5.1,2.7)--(4.0,3.7);

\node at (-0.2,-3.25){\small $A$-\text{action}};
\end{tikzpicture}
\end{center}
\caption{An abstract picture of $\mathcal{F}_{\infty}$. %the Ford domain of the even subgroup of $\Delta_{4,4,\infty;\infty}$.  
For example, the red region $\partial_{\infty}\mathfrak{s}_{0}^{+}$ and the blue region $\partial_{\infty}\mathfrak{s}_{0}^{-}$ are parts of the spinal spheres $\partial_{\infty}\mathcal{I}_0^{+}$ and $\partial_{\infty}\mathcal{I}_0^{-}$, respectively. 
}
\label{figure:44iiford}
\end{figure}

%In \cite{JWX2023}, Jiang-Xie-Wang proved  Proposition \ref{prop:pair-disjoint44pp}. 
%Readers may compare it with Figure \ref{figure:44iiford}.

We have the following result.

\begin{prop}[Corollary~4.11 in \cite{JWX2023}]
\label{prop:pair-disjoint44pp}
For any $k \in \mathbb{Z}$, the following hold:
\begin{enumerate}[(i)]

\item \label{item:Ikplus0k} 
$\mathcal{I}_{0}^{+} \cap \mathcal{I}_{k}^{+} = \emptyset$ when $k \neq 0$;

\item \label{item:Ikminus}  
$\mathcal{I}_{0}^{-} \cap \mathcal{I}_{k}^{-} = \emptyset$ when $k \neq 0$;

\item \label{item:IKplusminus}  
$\mathcal{I}_{0}^{+} \cap \mathcal{I}_{k}^{-} = \emptyset$ when $k \neq 0, -1$;

\item \label{item:IKplusstar}  
$\mathcal{I}_{0}^{+} \cap \mathcal{I}_{k}^{\star} = \emptyset$ when $k \neq 0, -1$.  
Moreover, $\mathcal{I}_{0}^{+}$ is tangent to $\mathcal{I}_{-1}^{\star}$ at the point $u_{1}$;

\item \label{item:IKplusdiamond} 
$\mathcal{I}_{0}^{+} \cap \mathcal{I}_{k}^{\diamond} = \emptyset$ when $k \neq 0, -1$.  
Moreover, $\mathcal{I}_{0}^{+}$ is tangent to $\mathcal{I}_{0}^{\diamond}$ at the point $t_{1}$;

\item 
$\mathcal{I}_{0}^{-} \cap \mathcal{I}_{k}^{\star} = \emptyset$ when $k \neq 0, 1$.  
Moreover, $\mathcal{I}_{0}^{-}$ is tangent to $\mathcal{I}_{1}^{\star}$ at the point $u_{2}$;

\item 
$\mathcal{I}_{0}^{-} \cap \mathcal{I}_{k}^{\diamond} = \emptyset$ when $k \neq 0, -1$.  
Moreover, $\mathcal{I}_{0}^{-}$ is tangent to $\mathcal{I}_{-1}^{\diamond}$ at the point $t_{1}$;

\item 
$\mathcal{I}_{0}^{\star} \cap \mathcal{I}_{k}^{\star} = \emptyset$ when $k \neq 1, -1$.  
Moreover, $\mathcal{I}_{0}^{\star}$ is tangent to both $\mathcal{I}_{1}^{\star}$ and $\mathcal{I}_{-1}^{\star}$ at the points $u_{2}$ and $u_{1}$, respectively, which are the fixed points of $AR$ and $A^{-1}R$;

\item \label{item:Ikstardiamond} 
$\mathcal{I}_{0}^{\star} \cap \mathcal{I}_{k}^{\diamond} = \emptyset$ for all $k$;

\item 
$\mathcal{I}_{0}^{\diamond} \cap \mathcal{I}_{k}^{\diamond} = \emptyset$ when $k \neq 1, -1$.  
Moreover, $\mathcal{I}_{0}^{\diamond}$ is tangent to both $\mathcal{I}_{1}^{\diamond}$ and $\mathcal{I}_{-1}^{\diamond}$ at the points $t_{2}$ and $t_{1}$, respectively, which are the fixed points of $AQ$ and $A^{-1}Q$.

\end{enumerate}
\end{prop}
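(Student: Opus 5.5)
The plan is to reduce every item to a statement about a finite, explicit set of isometric spheres in the Siegel model at $\theta=\pi/3$ and then verify each case by direct computation. First I would write down the matrices of $A$, $S$, $S^{-1}$, $R$, $Q$ from the parametrization of Section~\ref{subsection:1-dim} with $\theta=\pi/3$, after conjugating so that $A$ fixes $\mathbf{q}_\infty$. Next I would normalize $A$ to a Heisenberg translation, or screw-translation, of the form $(z,t)\mapsto (z,t)\star(z_A,t_A)$. For each $g\in\{S,S^{-1},R,Q\}$ I would then compute $g(\mathbf{q}_\infty)$. By Definition~\ref{def:isosphere}, $\mathcal{I}(A^kgA^{-k})$ is the bisector $\mathcal{B}(\mathbf{q}_\infty, A^kg(\mathbf{q}_\infty))$, because $A^{-k}$ fixes $\mathbf{q}_\infty$. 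So each family $\mathcal{I}^{j}_k$ is the image of $\mathcal{I}^{j}_0$ under $A^k$. Concretely, it is a Cygan sphere with centre $A^k(c_g)$ and radius $r_g=|g_{31}|^{-1/2}$, where $c_g$ is the Heisenberg projection of $g(\mathbf{q}_\infty)$.

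Second, I would use Cygan metric comparisons to dispose of all but finitely many $k$. Two isometric spheres with centres $c,c'$ and radii $r,r'$ are disjoint whenever the Cygan distance $\rho_0(c,c')>r+r'$, since the Cygan metric extends to $\overline{\hc}$ minus $\infty$ and isometric spheres are exactly Cygan spheres. Because $A$ is a nontrivial Heisenberg translation, $\rho_0(c,A^k c')$ grows like $|k|^{1/2}$ (or linearly in the horizontal part if $z_A\neq0$). This yields an explicit $K$ such that every pair in items (i)--(x) is disjoint for $|k|>K$. The remaining finitely many $k$ still have to be checked individually.

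Third, for the finitely many remaining indices I would test disjointness or tangency directly. When the Cygan triangle inequality is not sharp enough, I would parametrize one bisector by spinal (slice) coordinates and show that the function $|\langle \mathbf{p},\mathbf{q}_\infty\rangle|^2-|\langle\mathbf{p},A^kg(\mathbf{q}_\infty)\rangle|^2$ has constant sign on the other isometric sphere. Via Goldman's criterion on the complex spines and real spines of the pair of bisectors, this becomes a polynomial inequality in two or three real variables, which I would verify by completing squares or an interval argument.

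For the tangency statements I would proceed as follows. The points $u_1,u_2$ are the fixed points in $\partial\hc$ of $A^{\mp1}R$, and $t_1,t_2$ those of $A^{\mp1}Q$; these are parabolic, which is forced by the relation structure, e.g.\ $I_1I_3I_2I_3$ parabolic. For each such point I would show that it lies on both spheres. For example, $A^{-1}R$ fixes $u_1$ and exchanges $\mathcal{I}(R)$ with $\mathcal{I}(A^{-1}RA)$ up to the inverse convention, giving $u_1\in\mathcal{I}^\star_0\cap\mathcal{I}^\star_{-1}$. I would then show that the sign function above is $\ge0$ with equality only at that point, which gives tangency with a single common point. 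The main obstacle is this last step: controlling the sign function near the tangency point, where it degenerates to second order. There I would expand in Heisenberg coordinates centred at the parabolic fixed point, using the fact that conjugating this point to $\infty$ turns the two spheres into two parallel vertical half-spaces meeting only at $\infty$, which shows the intersection is exactly one point.
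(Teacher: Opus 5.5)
The paper does not prove this proposition. It quotes it from Corollary~4.11 of \cite{JWX2023}, so your outline has to stand on its own.

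The first part of your outline does stand. With Definition~\ref{def:isosphere}, $\mathcal{I}^j_k=A^k(\mathcal{I}^j_0)$ is a sphere for the extended Cygan metric on $\overline{\hc}\setminus\{q_\infty\}$, and the triangle inequality eliminates all but finitely many $k$. Two small corrections there:
\begin{itemize}
\item With the paper's normalization of the Siegel model, standard lifts satisfy $|\langle\mathbf{p},\mathbf{q}\rangle|=\tfrac12\rho_0(p,q)^2$. So the radius of $\mathcal{I}(g)$ is $\sqrt{2/|g_{31}|}$, not $|g_{31}|^{-1/2}$, and this changes your explicit $K$.
\item The incidence $u_1\in\mathcal{I}^\star_0\cap\mathcal{I}^\star_{-1}$ does not follow from $A^{-1}R$ carrying one sphere to the other; that only gives $u_1\in\mathcal{I}^\star_0\Leftrightarrow u_1\in\mathcal{I}^\star_{-1}$. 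It follows because $\mathcal{I}^\star_0=\mathcal{I}(G^{-1})$ and $\mathcal{I}^\star_{-1}=\mathcal{I}(G)$ for $G=A^{-1}R$, together with $G\mathbf{u}_1=\lambda\mathbf{u}_1$, $|\lambda|=1$. This gives $|\langle\mathbf{u}_1,G^{\pm1}\mathbf{q}_\infty\rangle|=|\langle\mathbf{u}_1,\mathbf{q}_\infty\rangle|$.
\end{itemize}

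The genuine gap is the step you yourself flag: showing that the spheres meet \emph{only} at $u_1,u_2,t_1,t_2$. That is the entire content of the tangency claims, and your resolution transplants a real-hyperbolic picture that is false here.
\begin{itemize}
\item After the point is moved to $\infty$, the isometric spheres become bisectors whose spinal spheres pass through $\infty$. Such a spinal sphere is never a vertical plane, because a vertical plane is not foliated by disjoint chains.
\item Bisectors are not totally geodesic, and two bisectors through a common ideal point can meet in a whole Giraud disk.
\end{itemize}
The configuration at hand already shows this. By Proposition~5.2 of \cite{JWX2023}, quoted in Section~\ref{section:forddomain}, $u_1$ lies on $\mathcal{I}^+_0$, $\mathcal{I}^\star_0$ and $\mathcal{I}^\star_{-1}$. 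Yet $\mathcal{I}^+_0\cap\mathcal{I}^\star_0$ is a Giraud disk (Proposition~\ref{prop:44ppGirauddisk}), while $\mathcal{I}^+_0\cap\mathcal{I}^\star_{-1}$ is supposed to be $\{u_1\}$. Likewise, for finite $n$ the analogue $\mathcal{B}^+_0\cap\mathcal{B}^\star_{n-1}$ is a nonempty Giraud disk (Proposition~\ref{prop:bisector-S0plus-empty1}). So sharing an ideal point decides nothing, and ``meeting only at $\infty$'' has to be proved pair by pair.

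Here is how the gap can be closed.
\begin{itemize}
\item For items (viii) and (x), the pairs are $\{\mathcal{I}(G),\mathcal{I}(G^{-1})\}$ with $G=A^{\pm1}R$ or $A^{\pm1}Q$ unipotent, and a correct version of your idea works. Send $\mathrm{Fix}(G)$ to $\infty$ and $q_\infty$ to the origin (lift $\mathbf{q}_0$). Then $G$ becomes a Heisenberg translation $T$ by $\tau=(z_\tau,t_\tau)\neq0$. At a point with horospherical coordinates $(z,t,u)$, adding the two identities $|\langle\mathbf{p},T^{\pm1}\mathbf{q}_0\rangle|^2-|\langle\mathbf{p},\mathbf{q}_0\rangle|^2=0$ gives, up to a positive factor,
$2|z_\tau|^2(|z|^2+u)+4(\Re(z\bar z_\tau))^2+|z_\tau|^4+(t_\tau+2\Im(z\bar z_\tau))^2=0$.
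This is impossible, and it is the Ford analogue of Proposition~\ref{prop:Phillips}.
\item The mixed tangencies in (iv)--(vii), for example $\mathcal{I}^+_0$ and $\mathcal{I}^\star_{-1}$ at $u_1$, are not of the form $\{\mathcal{I}(G),\mathcal{I}(G^{-1})\}$ with $G\in\Sigma$. Even the incidences $u_1\in\mathcal{I}^+_0$, $u_2\in\mathcal{I}^-_0$ and $t_1\in\mathcal{I}^{\pm}_0$ need computation.
\end{itemize}
The natural tool for the mixed cases is coequidistance: all isometric spheres are equidistant from $q_\infty$. So, when $q_\infty$, $g(q_\infty)$, $h(q_\infty)$ are not on a common chain, $\mathcal{I}(g)\cap\mathcal{I}(h)$ is exactly the set of points
$p(u,v)=(\mathbf{q}_\infty+e^{iu}g\mathbf{q}_\infty)\boxtimes(\mathbf{q}_\infty+e^{iv}h\mathbf{q}_\infty)$ with $\langle p(u,v),p(u,v)\rangle\le 0$.
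This is the parametrization used in Propositions~\ref{prop:bisector-S0plus-Gdisk} and~\ref{prop:cross1}. Disjointness becomes strict positivity of this trigonometric polynomial on the torus, and tangency becomes nonnegativity with a single zero. The second-order degeneracy is then a compact two-variable problem, with no change of model needed.
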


\begin{prop}[Proposition 4.10 in \cite{JWX2023}]
\label{prop:44ppGirauddisk}
For the isometric spheres $\mathcal{I}_{0}^{+}$, $\mathcal{I}_{0}^{-}$, $\mathcal{I}_{1}^{+}$, $\mathcal{I}^{\star}_{0}$ and $\mathcal{I}^{\diamond}_{0}$, the following hold:
\begin{enumerate}[(i)]
\item Each of the intersections 
\[
\mathcal{I}_{0}^{+} \cap \mathcal{I}^{-}_{0},\qquad  
\mathcal{I}_{0}^{+} \cap \mathcal{I}^{\star}_{0},\qquad 
\mathcal{I}_{0}^{-} \cap \mathcal{I}^{\diamond}_{0}
\]
is a Giraud disk;

\item The triple intersection 
\[
\mathcal{I}_{0}^{+} \cap \mathcal{I}^{-}_{0} \cap \mathcal{I}^{\star}_{0}
\]
consists of two geodesics intersecting at the fixed point of $S$, with endpoints lying on $\partial\hc$;

\item The triple intersection 
\[
\mathcal{I}_{1}^{+} \cap \mathcal{I}^{-}_{0} \cap \mathcal{I}^{\diamond}_{0}
\]
consists of two geodesics intersecting at the fixed point of $Q$, with endpoints lying on $\partial\hc$.
\end{enumerate}
\end{prop}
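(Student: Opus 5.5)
The statement just above is Proposition~\ref{prop:44ppGirauddisk}, and I would prove it by direct computation in the Siegel model with $\theta=\pi/3$. I would first write explicit matrices for $S=I_2I_3$, $S^{-1}$, $R=S^2$, $Q=(AS)^2$ and $A=I_1I_2$, and then the points $S(\mathbf{q}_\infty)$, $S^{-1}(\mathbf{q}_\infty)$, $R(\mathbf{q}_\infty)$, $Q(\mathbf{q}_\infty)$, $A S^{-1}(\mathbf{q}_\infty)$. Each isometric sphere $\mathcal{I}(g)=\mathcal{B}(\mathbf{q}_\infty,g(\mathbf{q}_\infty))$ is then a Cygan sphere. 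Its centre is $g(\mathbf{q}_\infty)$ in Heisenberg coordinates and its radius is $r_g=1/\sqrt{|g_{31}|}$. So each sphere is written concretely as $\{|\langle \mathbf{p},\mathbf{q}_\infty\rangle| = |\langle \mathbf{p},g(\mathbf{q}_\infty)\rangle|\}$.

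For part (i), each pair of spheres has distinct centres, so the two bisectors are not cospinal, and the structure theory of Goldman applies. By that theory, the intersection of two non-cospinal bisectors with a common slice, or with coequidistant structure, is a smooth disk, namely a Giraud disk, exactly when the pair is coequidistant. Every pair here, $\mathcal{I}^+_0\cap\mathcal{I}^-_0$, $\mathcal{I}^+_0\cap\mathcal{I}^\star_0$ and $\mathcal{I}^-_0\cap\mathcal{I}^\diamond_0$, consists of bisectors of the form $\mathcal{B}(\mathbf{q}_\infty,x)$ and $\mathcal{B}(\mathbf{q}_\infty,y)$ with the common point $\mathbf{q}_\infty$, so each intersection lies in the Giraud torus $\{|\langle\mathbf{p},\mathbf{q}_\infty\rangle|=|\langle\mathbf{p},x\rangle|=|\langle\mathbf{p},y\rangle|\}$. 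I would check that each intersection is nonempty in $\hc$, for instance by exhibiting an interior point such as the fixed point of $S$ (respectively of $Q$, or of $R$). I would also check that the three points $\mathbf{q}_\infty$, $x$, $y$ are not in a common complex line, which amounts to a nonvanishing $3\times 3$ determinant. With these two checks, Giraud's theorem gives that the intersection is a smooth disk whose boundary is an unknotted circle on $\partial\hc$.

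For parts (ii) and (iii), I would use the symmetry of the configuration. The elliptic element $S$ fixes a point $p_S$, and it permutes the three points $\mathbf{q}_\infty$, $S(\mathbf{q}_\infty)$, $S^{-1}(\mathbf{q}_\infty)$ up to the $R$-image, since $S^2=R$. Equivalently, $\{\mathbf{q}_\infty, S^{\pm1}(\mathbf{q}_\infty), R(\mathbf{q}_\infty)\}$ is an orbit of the order-4 group $\langle S\rangle$. The triple intersection is the locus where $|\langle\mathbf{p},\cdot\rangle|$ is equal on four points of this orbit, and it is therefore $\langle S\rangle$-invariant. I would parametrize points by the normalized Hermitian products and reduce the equations to a real algebraic variety. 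The expected answer is the union of two geodesics through $p_S$: one would be the fixed set of an antiholomorphic involution, or a real line in the $\mathbb{C}$-plane fixed by $R$, and the second its image under $S$. The same argument applied to $Q$ and the points $\mathbf{q}_\infty$, $Q(\mathbf{q}_\infty)$, $AS^{-1}(\mathbf{q}_\infty)$ should give (iii), possibly after checking that $AS$ conjugates the relevant configuration.

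The main obstacle is showing that the triple intersection is \emph{exactly} the two geodesics, with no extra components. Within the Giraud disk from (i), I would restrict the third equation using the spinal coordinates (Goldman's parametrization of the Giraud torus by $(\alpha_1,\alpha_2)$ arguments). This should turn the third equation into a single trigonometric equation that factors into two linear factors. Each factor would then be identified with a geodesic, and I would check that its endpoints lie on $\partial\hc$.
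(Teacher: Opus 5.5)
The paper only quotes this statement from \cite{JWX2023}, but your plan matches its proofs of the Dirichlet counterparts (Propositions~\ref{prop:bisector-S0plus-Gdisk} and~\ref{prop:cross1}). For (i) that means coequidistance plus one point of negative norm; the fixed point of $S$, respectively of $Q$, serves. For (ii) it means the Giraud-torus parametrization $p(u,v)=(\mathbf{q}_\infty+e^{iu}S\mathbf{q}_\infty)\boxtimes(\mathbf{q}_\infty+e^{iv}S^{-1}\mathbf{q}_\infty)$. Part (i) is fine.

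In (ii), however, what you predict the computation gives is wrong in a way that matters. Because $S$ has eigenvalues $1,\pm i$, one has $I+S^2=S+S^{-1}$. Hence on the torus $|\langle p,R\mathbf{q}_\infty\rangle|=|1+e^{iu}+e^{iv}|\,|\langle p,\mathbf{q}_\infty\rangle|$, and the third equation becomes
\[
1+\cos u+\cos v+\cos(u-v)=4\cos\tfrac{u}{2}\cos\tfrac{v}{2}\cos\tfrac{u-v}{2}=0 .
\]
This has three factors, not two. The family $u-v=\pi$ is not a geodesic, and you must discard it by showing its points are positive vectors. For $x=\mathbf{q}_\infty+e^{iu}S\mathbf{q}_\infty$ and $y=\mathbf{q}_\infty-e^{iu}S^{-1}\mathbf{q}_\infty$ one finds $\langle x,x\rangle\langle y,y\rangle-|\langle x,y\rangle|^2=-8\langle\mathbf{q}_\infty,S\mathbf{q}_\infty\rangle^2<0$, using the reality noted below. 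So ``each factor is a geodesic'' is false as stated, and this third family is exactly the extra component you were worried about.

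You also give no working mechanism for recognising $\{u=\pi\}$ and $\{v=\pi\}$ as geodesics, and the guess you offer fails. $R=S^2$ acts as $-1$ on $p_S^\perp$, so $p_S$ is its only fixed point in $\overline{\hc}$, and there is no $\mathbb{C}$-plane fixed by $R$. The argument that works is the paper's.
\begin{itemize}
\item $\{u=\pi\}=C_1\cap\mathcal{I}^-_0$, where $C_1=(\mathbf{q}_\infty-S\mathbf{q}_\infty)^\perp$ is a complex slice of $\mathcal{I}^+_0$.
\item $C_1$ is orthogonal to the slice $C_2=(\mathbf{q}_\infty-S^{-1}\mathbf{q}_\infty)^\perp$ of $\mathcal{I}^-_0$. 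Proposition~\ref{prop:complex-bisector} then makes $\{u=\pi\}$ a real geodesic, and likewise $\{v=\pi\}$.
\item The orthogonality holds because the inner product of the two polar vectors is $2i\,\Im\langle\mathbf{q}_\infty,S\mathbf{q}_\infty\rangle$. This vanishes: $\mathbf{q}_\infty$ lies on the boundary of the mirror of $I_2$ (since $A=I_1I_2$ is parabolic there), so $\langle\mathbf{q}_\infty,S\mathbf{q}_\infty\rangle=-\langle\mathbf{q}_\infty,I_3\mathbf{q}_\infty\rangle\in\mathbb{R}$.
\item The two geodesics cross at $p(\pi,\pi)=p_S$.
\end{itemize}

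Finally, in (iii) the relevant centres are $AS(\mathbf{q}_\infty)$ and $S^{-1}(\mathbf{q}_\infty)$, not $AS^{-1}(\mathbf{q}_\infty)$, which is the centre of $\mathcal{I}^-_1$. Since $A\mathbf{q}_\infty=\mathbf{q}_\infty$, the three spheres are $\mathcal{I}(AS)$, $\mathcal{I}((AS)^{-1})$ and $\mathcal{I}((AS)^2)$. Moreover $AS=I_1I_3$ again has eigenvalues $1,\pm i$, with $\mathbf{q}_\infty$ on the mirror of $I_1$. So (iii) is (ii) verbatim with $S$ replaced by $AS$, and the crossing point is $\mathrm{Fix}(AS)=\mathrm{Fix}(Q)$.
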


Readers may refer to Figures \ref{figure:s0plusminus} and \ref{figure:s0stardiamond} before 
reading Propositions \ref{prop:44pp3face0plus}, \ref{prop:44pp3face0minus}, \ref{prop:44pp3face0star}, and \ref{prop:44pp3face0diamond}.

\begin{prop}[Propositions 4.15 and 5.3 in \cite{JWX2023}]
\label{prop:44pp3face0plus}
For the 3-side $\mathfrak{s}_{0}^{+}$ of $\mathcal{F}$, the following hold:
\begin{enumerate}[(i)]

\item 
Each of 
\[
\mathfrak{s}_{0}^{+} \cap \mathfrak{s}_{0}^{-}, \qquad
\mathfrak{s}_{0}^{+} \cap \mathfrak{s}_{-1}^{-}, \qquad
\mathfrak{s}_{0}^{+} \cap \mathfrak{s}^{\star}_{0}, \qquad
\mathfrak{s}_{0}^{+} \cap \mathfrak{s}^{\diamond}_{-1}
\]
is topologically the union of two sectors;

\item
The ideal boundary $\partial_{\infty} \mathfrak{s}_{0}^{+}$ is a union of a quadrangle and an octagon, intersecting at exactly two points.  
The quadrangle has vertices cyclically
\[
\{t_1, v_1, u_1, s_1\},
\]
and the octagon has vertices cyclically
\[
\{t_1, y_1, x_1, w_1, u_1, p_1, q_1, r_1\};
\]

\item
$\mathfrak{s}_{0}^{+}$ is a 3-ball in ${\overline{\hc}}$.  
Its boundary $\partial \mathfrak{s}_{0}^{+}$ is the union of 
\[
\partial_{\infty} \mathfrak{s}_{0}^{+},\quad
\mathfrak{s}_{0}^{+} \cap \mathfrak{s}_{0}^{-},\quad
\mathfrak{s}_{0}^{+} \cap \mathfrak{s}_{-1}^{-},\quad
\mathfrak{s}_{0}^{+} \cap \mathfrak{s}^{\star}_{0},\quad
\mathfrak{s}_{0}^{+} \cap \mathfrak{s}^{\diamond}_{-1}.
\]

\end{enumerate}
\end{prop}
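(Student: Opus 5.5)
The statement to prove is Proposition~\ref{prop:44pp3face0plus}, the description of the 3-side $\mathfrak{s}_{0}^{+}$. My approach is to determine $\mathfrak{s}_{0}^{+}$ as the part of $\mathcal{I}_{0}^{+}$ lying outside all other isometric spheres in Definition~\ref{def:Folddomain44ii}. By Proposition~\ref{prop:pair-disjoint44pp}, only finitely many of these spheres meet $\mathcal{I}_{0}^{+}$ in more than a point: $\mathcal{I}_{0}^{-}$, $\mathcal{I}_{-1}^{-}$, $\mathcal{I}_{0}^{\star}$, $\mathcal{I}_{-1}^{\diamond}$. The remaining spheres are either disjoint from $\mathcal{I}_{0}^{+}$ or tangent to it at a single point; the tangencies are with $\mathcal{I}_{-1}^{\star}$ at $u_1$ and with $\mathcal{I}_{0}^{\diamond}$ at $t_1$. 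So $\mathfrak{s}_{0}^{+}$ is cut out of $\mathcal{I}_{0}^{+}\cong$ a closed 3-ball (including its ideal boundary) by just four bisectors.

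I will fix geodesic (spinal) coordinates on $\mathcal{I}_{0}^{+}$ in which its slices are complex lines. Then I use Proposition~\ref{prop:44ppGirauddisk} to handle the intersections with the four bisectors.
\begin{itemize}
\item Each pairwise intersection is a Giraud disk. For $\mathcal{I}_{0}^{+}\cap\mathcal{I}_{-1}^{-}$ and $\mathcal{I}_{0}^{+}\cap\mathcal{I}_{-1}^{\diamond}$, I first apply a suitable power of $A$ or an element such as $S$ to reduce to the listed cases, e.g.\ $\mathcal{I}_{1}^{+}\cap\mathcal{I}_{0}^{-}$ conjugated by $A^{-1}$.
\item The triple intersections $\mathcal{I}_{0}^{+}\cap\mathcal{I}_{0}^{-}\cap\mathcal{I}_{0}^{\star}$ and its $A^{-1}$-translate $\mathcal{I}_{0}^{+}\cap\mathcal{I}_{-1}^{-}\cap\mathcal{I}_{-1}^{\diamond}$ each consist of two crossing geodesics. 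These cut each Giraud disk into four sectors.
\item Of these four sectors, exactly two (opposite ones) lie on the exterior side of the third bisector. This gives part (i).
\end{itemize}
To verify which sectors survive, I evaluate the defining inequalities $|\langle\mathbf{p},\mathbf{q}_\infty\rangle|\le|\langle\mathbf{p},g(\mathbf{q}_\infty)\rangle|$ at one test point in each sector. The sign is constant on each sector by connectedness.

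For part (ii), I restrict to the spinal sphere $\partial_\infty\mathcal{I}_{0}^{+}\cong\mathbb{S}^2$. The boundaries of the four Giraud disks are circles on this sphere. The ideal endpoints of the triple-intersection geodesics are the points where two circles cross; labeling them against Figure~\ref{figure:44iiford} gives $s_1,v_1,p_1,q_1,r_1,w_1,x_1,y_1$. The two tangency points $u_1,t_1$ are where the region pinches. I then trace the arcs of the circles between consecutive crossings, keeping only the arcs that bound the exterior region. This yields a quadrangle $t_1v_1u_1s_1$ and an octagon $t_1y_1x_1w_1u_1p_1q_1r_1$ meeting only at $t_1$ and $u_1$. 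The cyclic orders are checked by explicit coordinates at $\theta=\pi/3$.

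For part (iii), I take the surviving region of the 3-ball $\mathcal{I}_{0}^{+}$ and show it is a 3-ball. I use the slice decomposition: each complex slice meets the surviving region in a disk or is empty, varying continuously in the spine parameter. Its boundary then consists of the ideal part together with the four sector-pairs from (i), which assembles $\partial\mathfrak{s}_{0}^{+}$ as claimed.

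The main obstacle is the global claim in part (iii) that the region is a single ball. The pinched, disconnected-looking ideal boundary requires checking that the quadrangle and octagon pieces are joined through the interior of $\mathcal{H}^2_{\mathbb{C}}$. I would first try the slice argument and fall back on a careful case analysis of the sector signs.
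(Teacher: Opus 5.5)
\textbf{Verdict: the plan for (i) and (ii) is sound, but (iii) has a gap.} The paper does not reprove this proposition; it imports it from \cite{JWX2023}. Your plan for (i) and (ii) is the expected route:
\begin{itemize}
\item isolate the four relevant spheres using Proposition~\ref{prop:pair-disjoint44pp};
\item identify the Giraud disks and the pairs of crossing geodesics from Proposition~\ref{prop:44ppGirauddisk};
\item test signs on the sectors;
\item read off the arrangement of the four boundary circles on $\partial_\infty\mathcal{I}_0^+$.
\end{itemize}
This also mirrors how Section~\ref{section:dirichlet} treats the Dirichlet analogue.

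\textbf{The gap in (iii).} Your key claim is that every complex slice of $\mathcal{I}_0^+$ meets $\mathfrak{s}_0^+$ in a single disk or not at all. Nothing you cite implies this, and it is not automatic. In a slice $L$, the trace of another isometric sphere $\mathcal{I}(g)$ is $\{|\langle\mathbf{p},\mathbf{u}\rangle|=|\langle\mathbf{p},\mathbf{w}\rangle|\}$, where $\mathbf{u},\mathbf{w}$ are the projections of $\mathbf{q}_\infty$ and $g(\mathbf{q}_\infty)$ to $L$. This is an arc of a round circle in the disk model of $L$, not a geodesic in general. The traces of $\mathcal{I}_0^-$ and $\mathcal{I}_0^\star$ meet exactly in $L\cap(\text{the two geodesics through }\kappa)$. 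So a priori they can meet twice, and then the two excised caps together can cut $L$ into two pieces. Excluding this requires knowing how the triple-intersection geodesics sit relative to the slices, which Proposition~\ref{prop:44ppGirauddisk} does not provide. Even with connected slices, you would still have to rule out the slice disks degenerating at interior spine parameters before ``continuously varying disks'' gives a ball. Your fallback, checking sector signs, recovers only $\partial\mathfrak{s}_0^+$, not the ball.

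\textbf{How to close it.} Statement (iii) follows from (i) and (ii) without any slicing.
\begin{itemize}
\item Faces: the quadrangle and octagon of (ii), plus the eight sectors of (i), whose vertices are those in Table~\ref{s0+}.
\item Vertices: $p_1,\dots,y_1$ together with $\kappa$ and $\tilde{\varkappa}$, twelve in all.
\item Edges: the twelve ideal arcs, plus the eight half-geodesics issuing from $\kappa$ and $\tilde{\varkappa}$.
\end{itemize}
Each edge lies in exactly two faces. Each vertex link is a single cycle; for example, at $t_1$ it runs through the quadrangle, the sector $\kappa y_1t_1v_1$, the octagon, and the sector $\tilde{\varkappa}r_1t_1s_1$. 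Since $12-20+10=2$, $\partial\mathfrak{s}_0^+$ is a piecewise smooth $2$-sphere in the closed ball $\overline{\mathcal{I}_0^+}$, so it bounds a $3$-ball.

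Concretely, the four sectors around $\kappa$ form a properly embedded disk with boundary $t_1v_1u_1w_1x_1y_1t_1$. The four sectors around $\tilde{\varkappa}$ form another, with boundary $u_1p_1q_1r_1t_1s_1u_1$. These two disks meet only at $t_1$ and $u_1$. Each cuts a half-ball off $\overline{\mathcal{I}_0^+}$, and $\mathfrak{s}_0^+$ is the middle piece. This also answers your worry about how the quadrangle and octagon are joined through the interior: they are joined through these two disks.

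\textbf{Two smaller slips.}
\begin{itemize}
\item The set $\mathcal{I}_0^+\cap\mathcal{I}_{-1}^-\cap\mathcal{I}_{-1}^\diamond$ is the $A^{-1}$-image of $\mathcal{I}_1^+\cap\mathcal{I}_0^-\cap\mathcal{I}_0^\diamond$, not of the first triple intersection. Also, $\mathcal{I}_1^+\cap\mathcal{I}_0^-$ is not among the listed Giraud disks. Use $(AS)^{-1}$, which maps it onto $\mathcal{I}_0^-\cap\mathcal{I}_0^\diamond$.
\item The pinching at $t_1$ and $u_1$ does not come from $\mathcal{I}_0^+$ being tangent to $\mathcal{I}_0^\diamond$ and $\mathcal{I}_{-1}^\star$; those tangencies remove nothing. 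It comes from $\mathcal{I}_0^-$ touching $\mathcal{I}_{-1}^\diamond$ at $t_1$, and $\mathcal{I}_{-1}^-$ touching $\mathcal{I}_0^\star$ at $u_1$. As a result, the corresponding boundary circles on $\partial_\infty\mathcal{I}_0^+$ are tangent at those points.
\end{itemize}
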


\begin{prop}[Propositions 4.15 and 5.4 in \cite{JWX2023}]
\label{prop:44pp3face0minus}
For the 3-side $\mathfrak{s}_{0}^{-}$ of $\mathcal{F}$, the following hold:
\begin{enumerate}[(i)]

\item 
Each of 
\[
\mathfrak{s}_{0}^{-} \cap \mathfrak{s}_{0}^{+}, \qquad
\mathfrak{s}_{0}^{-} \cap \mathfrak{s}_{1}^{+}, \qquad
\mathfrak{s}_{0}^{-} \cap \mathfrak{s}^{\star}_{0}, \qquad
\mathfrak{s}_{0}^{-} \cap \mathfrak{s}^{\diamond}_{0}
\]
is topologically the union of two sectors;

\item 
The ideal boundary $\partial_{\infty} \mathfrak{s}_{0}^{-}$ is a union of a quadrangle and an octagon, intersecting in exactly two points.  
The quadrangle has vertices cyclically
\[
\{t_1, p_2, u_2, y_1\},
\]
and the octagon has vertices cyclically
\[
\{t_1, q_2, r_2, s_2, u_2, x_1, w_1, v_1\};
\]

\item 
$\mathfrak{s}_{0}^{-}$ is a 3-ball in $\overline{\hc}$.  
Its boundary $\partial \mathfrak{s}_{0}^{-}$ is the union of 
\[
\partial_{\infty} \mathfrak{s}_{0}^{-},\quad
\mathfrak{s}_{0}^{+} \cap \mathfrak{s}_{0}^{-},\quad
\mathfrak{s}_{0}^{-} \cap \mathfrak{s}_{1}^{+},\quad
\mathfrak{s}_{0}^{-} \cap \mathfrak{s}^{\diamond}_{0}.
\]

\end{enumerate}
\end{prop}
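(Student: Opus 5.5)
The plan is to deduce the statement from Proposition~\ref{prop:44pp3face0plus} using the side-pairing of $\mathcal{F}$. I will only do local verifications where this transfer does not suffice, rather than redoing the full bisector analysis of \cite{JWX2023}.

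With the convention of Definition~\ref{def:isosphere}, for any $g$ we have
\[
g^{-1}\bigl(\mathcal{I}(g)\bigr)=\mathcal{B}\bigl(g^{-1}(\mathbf{q}_\infty),\mathbf{q}_\infty\bigr)=\mathcal{I}(g^{-1}).
\]
In particular $S^{-1}$ maps $\mathcal{I}_0^+$ onto $\mathcal{I}_0^-$. Since \cite{JWX2023} shows that $\mathcal{F}$ is a Ford domain of $\Sigma$, the side-pairing property gives $S^{-1}(\mathfrak{s}_0^+)=\mathfrak{s}_0^-$. This map is a homeomorphism of $\overline{\hc}$ preserving $\partial\hc$. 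So Proposition~\ref{prop:44pp3face0plus}(iii) immediately gives that $\mathfrak{s}_0^-$ is a 3-ball, and Proposition~\ref{prop:44pp3face0plus}(ii) gives that $\partial_\infty\mathfrak{s}_0^-$ is a quadrangle and an octagon meeting in two points.

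What remains is to identify the ridges and the vertex labels.

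\textbf{Ridges.} For each of $\mathfrak{s}_0^-$, $\mathfrak{s}_{-1}^-$, $\mathfrak{s}_0^\star$, $\mathfrak{s}_{-1}^\diamond$, the ridge $\mathfrak{s}_0^+\cap\mathfrak{s}$ is sent by $S^{-1}$ to a ridge of $\mathfrak{s}_0^-$. I will determine its partner side by running the ridge cycles: if a ridge lies in $\mathcal{I}(S)\cap\mathcal{I}(h)$, its image lies in $\mathcal{I}(S^{-1})\cap\mathcal{I}(hS)$, by the same computation as above. So for each $h\in\{S^{-1},\,AS^{-1}A^{-1},\,R,\,A^{-1}QA\}$ one rewrites $hS$ as $A^{k}gA^{-k}$ with $g\in\{S^{\pm1},R,Q\}$. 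This uses the relations $S^4=R^2=Q^2=\mathrm{id}$ and $Q=(AS)^2$. This reproduces the list $\mathfrak{s}_0^+$, $\mathfrak{s}_1^+$, $\mathfrak{s}_0^\star$, $\mathfrak{s}_0^\diamond$ in (i).
\begin{itemize}
\item From $h=A^{-1}QA$: one gets $hS=A^{-1}Q\,AS=A^{-1}(AS)^{2}(AS)\,A^{-1}A=\dots$, and the reduction should land on $A^{-1}$-conjugates of $S^{\pm1}$ or $A$-conjugates of $Q$. These should account for the neighbours $\mathfrak{s}_1^+$ and $\mathfrak{s}_0^\diamond$.
\item From $h=R$: $RS=S^3=S^{-1}$. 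This shows the $\star$-ridge of $\mathfrak{s}_0^+$ is carried to a ridge shared with an isometric sphere of $S^{-1}$ itself. So I will instead use the relation that the triple intersection in Proposition~\ref{prop:44ppGirauddisk}(ii) is a single ridge cycle of length three, and read off $\mathfrak{s}_0^-\cap\mathfrak{s}_0^\star$ directly from it.
\end{itemize}
Where the cycle bookkeeping is ambiguous, I will fall back on Proposition~\ref{prop:pair-disjoint44pp}. That proposition says $\mathcal{I}_0^-$ can meet only $\mathcal{I}_0^+,\mathcal{I}_1^+$ among the $\pm$-spheres, only $\mathcal{I}_0^\star$ (tangent to $\mathcal{I}^\star_1$), and only $\mathcal{I}_0^\diamond$ (tangent to $\mathcal{I}^\diamond_{-1}$). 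This pins down (i) and the union in (iii), and the "two sectors" shape is inherited through $S^{-1}$ from Proposition~\ref{prop:44pp3face0plus}(i).

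\textbf{Vertex labels in (ii).} Apply $S^{-1}$ to the cyclic lists $\{t_1,v_1,u_1,s_1\}$ and $\{t_1,y_1,x_1,w_1,u_1,p_1,q_1,r_1\}$. Each labelled point is a fixed point or tangency point whose $S^{-1}$-image is forced by the ridges it lies on. For example, $t_1$ is the tangency point of $\mathcal{I}_0^+$ with $\mathcal{I}_0^\diamond$, and $u_1$ is the tangency point with $\mathcal{I}_{-1}^\star$, which is the fixed point of $A^{-1}R$. Tracking these should yield $\{t_1,p_2,u_2,y_1\}$ and $\{t_1,q_2,r_2,s_2,u_2,x_1,w_1,v_1\}$, matching Figure~\ref{figure:44iiford}.

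\textbf{Main obstacle.} The delicate step is this label matching. The pinch points of $\partial_\infty\mathfrak{s}_0^-$ are $t_1$ and $u_2$, whereas $S^{-1}$ a priori sends the pinch points $t_1,u_1$ of $\partial_\infty\mathfrak{s}_0^+$ elsewhere. So the cyclic correspondence between the two polygon pictures must be checked vertex by vertex against the tangency data in Proposition~\ref{prop:pair-disjoint44pp}, and if necessary by direct evaluation of $|\langle\mathbf{p},\mathbf{q}_\infty\rangle|$ versus $|\langle\mathbf{p},g(\mathbf{q}_\infty)\rangle|$ at the explicit points for $\theta=\pi/3$. If the $S^{-1}$ transfer becomes unwieldy, I would instead use a symmetry of $\Sigma$ fixing $\mathbf{q}_\infty$, such as conjugation by $I_1$, which inverts $A$, to relate the two sides.
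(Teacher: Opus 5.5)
Your overall route, transporting Proposition~\ref{prop:44pp3face0plus} to $\mathfrak{s}^-_0$ by the side-pairing $S^{-1}$, does give the whole statement once Theorem~\ref{thm:44pFord} is granted. The paper itself gives no argument and simply cites \cite{JWX2023}. However, the step you use to identify the ridges is wrong as written. With Definition~\ref{def:isosphere}, $f(\mathcal{I}(h))=\mathcal{B}(f(\mathbf{q}_\infty),fh(\mathbf{q}_\infty))$, and on $\mathcal{I}(S^{-1})$ one has $|\langle\mathbf{p},S^{-1}(\mathbf{q}_\infty)\rangle|=|\langle\mathbf{p},\mathbf{q}_\infty\rangle|$. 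Hence $S^{-1}$ carries $\mathcal{I}(S)\cap\mathcal{I}(h)$ onto $\mathcal{I}(S^{-1})\cap\mathcal{I}(S^{-1}h)$, not into $\mathcal{I}(hS)$. Your $hS$ is the rule for the opposite convention mentioned in the remark after Definition~\ref{def:isosphere}.

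That convention slip, not any geometric phenomenon, is why $h=R$ gave you $RS=S^{-1}$. The same rule gives the identity for $h=S^{-1}$. For $h=A^{-1}QA$ it gives $A^{-1}(AS)^3=A^{-1}S^{-1}A^{-1}$, i.e.\ the sphere $\mathcal{I}^-_{-1}$, which by Proposition~\ref{prop:pair-disjoint44pp}(ii) does not even meet $\mathcal{I}^-_0$. So your expectation that this case lands on $\mathfrak{s}^+_1$ or $\mathfrak{s}^\diamond_0$ is unsupported. Also, $\mathfrak{s}^-_{-1}$ lies in $\mathcal{I}(A^{-1}S^{-1}A)$, not $\mathcal{I}(AS^{-1}A^{-1})$. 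The fallback to Proposition~\ref{prop:pair-disjoint44pp} does not close this gap. It only lists the spheres that can meet $\mathcal{I}^-_0$; it does not say which image ridge lies in which of them, nor that each candidate actually carries a two-sector ridge.

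The repair is short. Use $\mathcal{I}(gA^j)=\mathcal{I}(g)$ (as $A\mathbf{q}_\infty=\mathbf{q}_\infty$), $S^4=(AS)^4=\mathrm{id}$ and $Q=(AS)^2$. Then $S^{-1}\cdot S^{-1}=R$, $S^{-1}R=S$, $S^{-1}(A^{-1}S^{-1}A)=QA^2$ and $S^{-1}(A^{-1}QA)=ASA$. So the ridges of $\mathfrak{s}^+_0$ with $\mathfrak{s}^-_0,\mathfrak{s}^\star_0,\mathfrak{s}^-_{-1},\mathfrak{s}^\diamond_{-1}$ are carried onto the ridges of $\mathfrak{s}^-_0$ with $\mathfrak{s}^\star_0,\mathfrak{s}^+_0,\mathfrak{s}^\diamond_0,\mathfrak{s}^+_1$ respectively. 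This is an equality, by running the same argument with $S$, and the two-sector shape transfers. This produces four ridges in $\partial\mathfrak{s}^-_0$. The printed (iii) omits $\mathfrak{s}^-_0\cap\mathfrak{s}^\star_0$, evidently a typo (compare (i) and Table~\ref{s0-}).

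Your \emph{main obstacle} also disappears. By Proposition~\ref{prop:pair-disjoint44pp}, $t_1=\mathrm{Fix}(A^{-1}Q)=\mathrm{Fix}(SAS)$, $u_1=\mathrm{Fix}(A^{-1}S^2)$ and $u_2=\mathrm{Fix}(AS^2)$. Hence $S^{-1}(t_1)=u_2$ and $S^{-1}(u_1)=\mathrm{Fix}(S^{-1}A^{-1}S^{-1})=t_1$, so $S^{-1}$ maps the pinch pair $\{t_1,u_1\}$ onto the pinch pair $\{u_2,t_1\}$, as any homeomorphism of sides must. The corrected rule also shows that $S^{-1}$ permutes $\{v_1,w_1,x_1,y_1\}=\mathcal{I}^+_0\cap\mathcal{I}^-_0\cap\mathcal{I}^\star_0\cap\partial\hc$ and sends $\{p_1,q_1,r_1,s_1\}$ to $\{p_2,q_2,r_2,s_2\}$. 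The cyclic orders then force exactly the correspondence recorded in rows $f_5,f_6$ of Table~\ref{vertices_corresponding44ii}, which is (ii).

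Finally, leaning on Theorem~\ref{thm:44pFord} is fine as a deduction from results the paper cites, but it is not an independent proof. In \cite{JWX2023}, the pairing $S\colon\mathfrak{s}^-_0\to\mathfrak{s}^+_0$ is an input to the Poincar\'e polyhedron theorem, not a consequence of it.
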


\begin{prop}[Propositions 4.16 and 5.6 in \cite{JWX2023}]
\label{prop:44pp3face0star}
For the 3-side $\mathfrak{s}_{0}^{\star}$ of $\mathcal{F}$, the following hold:
\begin{enumerate}[(i)]

\item 
$\mathfrak{s}_{0}^{\star}$ is topologically a solid light cone in $\overline{\hc}$;

\item 
$\mathfrak{s}_{0}^{\star}$ has the ridges 
\[
\mathfrak{s}_{0}^{\star} \cap \mathfrak{s}_{0}^{-}
\quad\text{and}\quad
\mathfrak{s}_{0}^{\star} \cap \mathfrak{s}_{0}^{+},
\]
each of which is topologically the union of two sectors;

\item 
The ideal boundary $\partial_{\infty} \mathfrak{s}_{0}^{\star}$ is the union of two triangles, with vertices
\[
\{u_1, v_1, w_1\}
\quad\text{and}\quad
\{x_1, y_1, u_2\},
\]
respectively.

\end{enumerate}
\end{prop}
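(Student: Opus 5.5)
The plan is to work directly with the explicit matrices of Section~\ref{subsection:1-dim} at $\theta=\pi/3$, conjugated so that $A=I_1I_2$ fixes $\mathbf{q}_\infty$, and to describe $\mathfrak{s}_0^{\star}\subset\mathcal{I}_0^{\star}=\mathcal{I}(R)$ in spinal (geographical) coordinates adapted to the bisector $\mathcal{B}(\mathbf{q}_\infty,R(\mathbf{q}_\infty))$. Since $R=(I_2I_3)^2$ is an involution, $R$ maps $\mathcal{I}(R)$ to itself and swaps its two sides. Hence $\mathfrak{s}_0^\star$ is invariant under $R$, and this symmetry halves most of the verification.

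First I determine which other isometric spheres meet $\mathcal{I}_0^\star$. By Proposition~\ref{prop:pair-disjoint44pp}, $\mathcal{I}_0^\star$ is disjoint from every $\mathcal{I}^{\diamond}_k$. It is also disjoint from $\mathcal{I}^\star_k$ for $k\neq\pm1$, and only tangent (at the parabolic fixed points $u_1,u_2$) to $\mathcal{I}^\star_{\pm1}$. For the $\pm$ spheres I translate the statements of Proposition~\ref{prop:pair-disjoint44pp} by powers of $A$, using $A\mathcal{I}(g)=\mathcal{I}(AgA^{-1})$. From part (iv), $\mathcal{I}^+_j$ meets $\mathcal{I}^\star_0$ at most for $j=0,1$, and $\mathcal{I}^+_1$ only tangentially at $u_2$. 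From part (vi), $\mathcal{I}^-_j$ meets $\mathcal{I}^\star_0$ at most for $j=0,-1$, and $\mathcal{I}^-_{-1}$ only tangentially at $u_1$. So the only spheres cutting $\mathfrak{s}^\star_0$ in a 2-dimensional set are $\mathcal{I}^+_0$ and $\mathcal{I}^-_0$, which gives item (ii) once the shapes of the ridges are known. Proposition~\ref{prop:44ppGirauddisk}(i) says $\mathcal{I}^+_0\cap\mathcal{I}^\star_0$ is a Giraud disk. Since $R=S^2$, the pair $\{S,S^{-1},R\}$ is permuted by conjugation by $S$, and $S$ maps $\mathcal{I}(S^{-1})\cap\mathcal{I}(R)$ to $\mathcal{I}(S)\cap\mathcal{I}(R)$ up to the appropriate relabelling. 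So $\mathcal{I}^-_0\cap\mathcal{I}^\star_0$ is also a Giraud disk. By Proposition~\ref{prop:44ppGirauddisk}(ii), the triple intersection is a pair of geodesics crossing at the fixed point of $S$.

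Next I determine which part of each Giraud disk lies on $\mathfrak{s}^\star_0$. The pair of crossing geodesics divides each Giraud disk into four sectors. On two opposite sectors $\mathcal{I}^-_0$ (respectively $\mathcal{I}^+_0$) is the binding constraint. This is a sign check of $|\langle\mathbf p,\mathbf q_\infty\rangle|-|\langle\mathbf p,S^{\mp1}(\mathbf q_\infty)\rangle|$ on one sample point per sector. It yields ridges that are each a union of two sectors, as in Propositions~\ref{prop:44pp3face0plus} and \ref{prop:44pp3face0minus}. The side $\mathfrak{s}^\star_0$ is then the region of $\mathcal{I}(R)\cong$ a 3-ball bounded by these two "bowtie" surfaces, each meeting the other along the two geodesics through $\mathrm{Fix}(S)$.

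The geodesics' endpoints on $\partial\hc$ are the cone tips. I identify them with $u_1,w_1$ and $y_1,u_2$ in the picture (and $v_1,x_1$ as the remaining ideal vertices). Taking the union of two complementary sector pairs, the region is a double cone with vertex $\mathrm{Fix}(S)$ and two ideal "nappes". This is the solid light cone of item (i). Its ideal boundary is the spinal sphere $\partial_\infty\mathcal{I}(R)$ minus two open disks cut out by $\partial_\infty\mathcal{I}^\pm_0$, pinched at the tangency points, and consists of two triangles. Their vertices are read off as the endpoints of the geodesics together with the points where the circles $\partial_\infty(\mathcal{I}^\pm_0\cap\mathcal{I}^\star_0)$ meet, matching Propositions~\ref{prop:44pp3face0plus}(ii) and \ref{prop:44pp3face0minus}(ii).

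The main obstacle is the topological claim (i): proving that the region of the bisector cut out by two Giraud disks is exactly this cone, and not something with extra components. I would handle it by projecting to the spine of $\mathcal{I}(R)$ in geographical coordinates, where each Giraud disk is a graph over a slice domain. I would then verify monotonicity along slices using the explicit coordinates, backed by the corresponding computations in Propositions~4.16 and~5.6 of \cite{JWX2023}.
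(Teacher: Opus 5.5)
Your overall route is the natural one. The paper itself only cites \cite{JWX2023} here, and its analogue for $\mathcal{D}$ in Section~\ref{section:dirichlet} runs along similar lines. Only $\mathcal{I}^{+}_{0}$ and $\mathcal{I}^{-}_{0}$ meet $\mathcal{I}^{\star}_{0}$ in more than a point; the Giraud disks $G_{\pm}=\mathcal{I}^{\pm}_{0}\cap\mathcal{I}^{\star}_{0}$ are each cut into four sectors by the two geodesics through $\kappa=\mathrm{Fix}(S)$; and sample points select two opposite sectors. But the vertex bookkeeping, which is exactly the content of item (iii), is wrong. By the restated Proposition~5.2(iii) of \cite{JWX2023}, $\mathcal{I}^{+}_{0}\cap\mathcal{I}^{-}_{0}\cap\mathcal{I}^{\star}_{0}\cap\partial\hc=\{v_1,w_1,x_1,y_1\}$. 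These four points are simultaneously the ideal endpoints of the two geodesics (namely $v_1x_1$ and $w_1y_1$) and the four crossing points of the circles $C_{\pm}=\partial_{\infty}G_{\pm}$. So your recipe ``endpoints of the geodesics together with the points where the circles meet'' only ever produces these four points, never $u_1$ or $u_2$. Conversely, $u_1,u_2$ cannot be geodesic endpoints: $u_1\in\mathcal{I}^{\star}_{-1}$ and $\mathcal{I}^{-}_{0}\cap\mathcal{I}^{\star}_{-1}=\emptyset$, while $u_2\in\mathcal{I}^{\star}_{1}$ and $\mathcal{I}^{+}_{0}\cap\mathcal{I}^{\star}_{1}=\emptyset$ (Proposition~\ref{prop:pair-disjoint44pp}(iv),(vi),(viii)). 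The ideal endpoints are also not ``cone tips''; the only apex is $\kappa$, as you yourself say later.

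The correct picture can be read off from Tables~\ref{s0+} and~\ref{s0-}. In both $G_{+}$ and $G_{-}$ the admissible sectors are those spanned by the rays to $\{v_1,w_1\}$ and to $\{x_1,y_1\}$. So $\partial_{\infty}\mathfrak{s}^{\star}_{0}$ is two lunes, each bounded by one arc of $C_{+}$ and one arc of $C_{-}$, with corners $v_1,w_1$ and $x_1,y_1$. The points $u_1\in C_{+}$ and $u_2\in C_{-}$ enter only as extra vertices inside the $C_{+}$-arc from $v_1$ to $w_1$ and the $C_{-}$-arc from $x_1$ to $y_1$. They appear because $\mathcal{I}^{\star}_{-1},\mathcal{I}^{-}_{-1}$ (resp.\ $\mathcal{I}^{\star}_{1},\mathcal{I}^{+}_{1}$) touch $\mathcal{I}^{\star}_{0}$ there. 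Nothing is pinched; the pinching at $u_1$ is a feature of $\partial_{\infty}\mathfrak{s}^{+}_{0}$, not of this side. Each nappe is then bounded by the tame $2$-sphere formed by a sector of $G_{+}$, the matching sector of $G_{-}$, and a lune, and the two spheres meet only at $\kappa$. Item (i) therefore follows from Alexander's theorem in the closed ball $\overline{\mathcal{I}^{\star}_{0}}$ plus one sample point; the geographical-coordinate monotonicity argument is unnecessary.

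Two smaller repairs are needed. First, disjointness in Proposition~\ref{prop:pair-disjoint44pp} does not exclude $\mathcal{I}^{\star}_{0}$ lying inside a disjoint isometric sphere. By connectedness, you must check one point (e.g.\ $\kappa$) against each of the finitely many $A$-translates of $\mathcal{I}(S^{\pm1}),\mathcal{I}(R),\mathcal{I}(Q)$ that come near $\mathcal{I}^{\star}_{0}$. Second, $S$ maps $\mathcal{I}^{-}_{0}\cap\mathcal{I}^{\star}_{0}$ onto $\mathcal{I}^{+}_{0}\cap\mathcal{I}^{-}_{0}$, not onto $G_{+}$. The map you want is $R$: on $\mathcal{I}(R)$ it interchanges the constraints from $\mathcal{I}^{+}_{0}$ and $\mathcal{I}^{-}_{0}$, sends $G_{+}$ to $G_{-}$, and swaps $u_1$ and $u_2$.
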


\begin{prop}[Propositions 4.16 and 5.7 in \cite{JWX2023}]
\label{prop:44pp3face0diamond}
For the 3-side $\mathfrak{s}_{0}^{\diamond}$ of $\mathcal{F}$, the following hold:
\begin{enumerate}[(i)]

\item 
$\mathfrak{s}_{0}^{\diamond}$ is topologically a solid light cone in $\overline{\hc}$;

\item 
$\mathfrak{s}_{0}^{\diamond}$ has the ridges
\[
\mathfrak{s}_{0}^{\diamond} \cap \mathfrak{s}_{0}^{-}
\quad\text{and}\quad
\mathfrak{s}_{0}^{\diamond} \cap \mathfrak{s}_{1}^{+},
\]
each of which is topologically the union of two sectors;

\item 
The ideal boundary $\partial_{\infty} \mathfrak{s}_{0}^{\diamond}$ is the union of two triangles, with vertices
\[
\{t_1, p_2, q_2\}
\quad\text{and}\quad
\{r_2, s_2, t_2\},
\]
respectively.

\end{enumerate}
\end{prop}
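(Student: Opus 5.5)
The statement is Proposition~\ref{prop:44pp3face0diamond}, on the 3-side $\mathfrak{s}_{0}^{\diamond}$. I would not recompute it from scratch. Instead I would reduce it to the already established description of $\mathfrak{s}_{0}^{\star}$ in Proposition~\ref{prop:44pp3face0star}, using a symmetry of $\Sigma$ that normalizes $\langle A\rangle$ and carries $R$-type spheres to $Q$-type spheres.

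\textbf{Step 1: find the symmetry.} Since $Q=(AS)^2$ and $R=S^2$, the natural candidate is conjugation by an element of the triangle group $\Delta_{4,4,\infty;\infty}$ swapping $I_1$ and $I_2$ up to the $I_3$ symmetry. Such a map would send $A\mapsto A^{\pm1}$ (up to conjugacy by a power of $A$) and $S\mapsto (AS)^{\pm1}$. I would check directly on the matrices at $\theta=\pi/3$ that there is an element $\tau$, possibly antiholomorphic, fixing $\mathbf{q}_\infty$ and normalizing $\langle A\rangle$, with $\tau R\tau^{-1}=A^{j}QA^{-j}$ for a suitable $j$. Because $\tau$ fixes $\mathbf{q}_\infty$, it maps isometric spheres to isometric spheres: $\tau(\mathcal{I}(g))=\mathcal{I}(\tau g\tau^{-1})$. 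It therefore preserves $\mathcal{F}$, and it permutes the families $\{\mathcal{I}^{\pm}_k\}$ and $\{\mathcal{I}^{\star}_k\}\leftrightarrow\{\mathcal{I}^{\diamond}_k\}$.

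\textbf{Step 2: transport the description of $\mathfrak{s}_0^{\star}$.} Applying $\tau$ (composed with a power of $A$) to Proposition~\ref{prop:44pp3face0star} yields the following for $\mathfrak{s}_{0}^{\diamond}$:
\begin{itemize}
\item[(a)] it is a solid light cone;
\item[(b)] its two ridges are intersections with spheres in the $\pm$ families, each ridge being a union of two sectors;
\item[(c)] its ideal boundary is two triangles.
\end{itemize}
Each ridge is determined by a Giraud disk from Proposition~\ref{prop:44ppGirauddisk}, and each pair of sectors by the two geodesics through $\mathrm{Fix}(Q)$ from part (iii) of that proposition. The remaining work is bookkeeping:
\begin{itemize}
\item[(i)] Identify the images of the ridge partners. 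They should be $\mathcal{I}_0^{-}$ and $\mathcal{I}_1^{+}$, consistent with the triple intersection $\mathcal{I}_{1}^{+}\cap\mathcal{I}^{-}_{0}\cap\mathcal{I}^{\diamond}_{0}$.
\item[(ii)] Track the labelled vertices. Proposition~\ref{prop:pair-disjoint44pp} gives tangency of $\mathcal{I}_0^{\diamond}$ with $\mathcal{I}_{\pm1}^{\diamond}$ at $t_1$ and $t_2$. The vertex lists of $\partial_\infty\mathfrak{s}_0^{-}$ and $\partial_\infty\mathfrak{s}_1^{+}=A(\partial_\infty\mathfrak{s}_0^{+})$, from Propositions~\ref{prop:44pp3face0minus} and~\ref{prop:44pp3face0plus}, share the segments $t_1,p_2,q_2$ and $r_2,s_2,t_2$ respectively. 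This forces the triangles $\{t_1,p_2,q_2\}$ and $\{r_2,s_2,t_2\}$.
\end{itemize}

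\textbf{Step 3: fallback if no symmetry exists.} I would repeat the argument of \cite{JWX2023} directly.
\begin{itemize}
\item[(1)] Use Proposition~\ref{prop:pair-disjoint44pp} to show that the only spheres meeting $\mathcal{I}_0^{\diamond}$ in its interior are $\mathcal{I}_0^{-}$ and $\mathcal{I}_1^{+}$. Here $\mathcal{I}^{\diamond}_{\pm1}$ are merely tangent, $\mathcal{I}^{\star}_k$ are disjoint by item (ix), and the $\pm$ cases follow by translating items (v) and (vii) by powers of $A$.
\item[(2)] Parametrize $\mathcal{I}_0^{\diamond}$ by spinal coordinates. Describe the two Giraud disks $\mathcal{I}_0^{\diamond}\cap\mathcal{I}_0^{-}$ and $\mathcal{I}_0^{\diamond}\cap\mathcal{I}_1^{+}$, whose intersection is the pair of geodesics through $\mathrm{Fix}(Q)$.
\item[(3)] Show the region of $\mathcal{I}_0^{\diamond}$ exterior to both spheres is a cone over that geodesic cross. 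This gives the solid light cone and the two-sector ridges, and its boundary at infinity is two triangles meeting at the cone point's ideal directions.
\end{itemize}

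The main obstacle is Step 3(3), or equivalently finding $\tau$ in Step 1. Showing that the exterior region in $\mathcal{I}_0^{\diamond}$ is exactly a cone, with no extra components, requires sign analysis of the defining inequalities in spinal coordinates. I would try the symmetry first, since it makes the result a formal consequence of Proposition~\ref{prop:44pp3face0star}.
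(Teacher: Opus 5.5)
Your main route (Steps 1--2) works, and it differs from what the paper does. The paper gives no argument of its own for this statement. It imports Propositions 4.16 and 5.7 of \cite{JWX2023}, and transfers them, via the $\mathbb{Z}_2$-symmetry of \cite{Thompson:2010} exchanging the two classes of parabolics, from the Ford domain centred at the fixed point of $I_1I_3I_2I_3$ to the one centred at the fixed point of $A=I_1I_2$.

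Drop the idea that $\tau$ is conjugation by an element of $\Delta_{4,4,\infty;\infty}$. The elements $R$ and $Q$ lie in different free factors of $\Sigma\cong\langle S\rangle*\langle AS\rangle$, and $I_3$ commutes with both. So no element of the triangle group conjugates $R$ to any $A^{j}QA^{-j}$.

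The symmetry you want is the antiholomorphic involution $\iota$ with $\iota I_1\iota=I_2$, $\iota I_2\iota=I_1$, $\iota I_3\iota=I_3$. This is the analogue at $\theta=\pi/3$ of the map used in Proposition~\ref{prop:symmetry}. It exists because the triples $(n_2,n_1,n_3)$ and $(\bar n_1,\bar n_2,\bar n_3)$ of polar vectors have the same normalised Gram invariants.

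The map $\iota$ satisfies $\iota A\iota=A^{-1}$, $\iota S\iota=AS$ and $\iota R\iota=Q$. Hence it fixes $q_\infty$ and normalises $\Sigma$. Since $A^{\pm1}\mathbf{q}_\infty=\mathbf{q}_\infty$, it maps $\mathcal{I}^{+}_k,\mathcal{I}^{-}_k,\mathcal{I}^{\star}_k,\mathcal{I}^{\diamond}_k$ to $\mathcal{I}^{+}_{1-k},\mathcal{I}^{-}_{-k},\mathcal{I}^{\diamond}_{-k},\mathcal{I}^{\star}_{-k}$. It is this permutation of all four families, not merely normalising $\langle A\rangle$, that gives $\iota(\mathcal{F})=\mathcal{F}$. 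Consequently $\iota(\mathfrak{s}^{\star}_0)=\mathfrak{s}^{\diamond}_0$, and the ridges $\mathfrak{s}^{\star}_0\cap\mathfrak{s}^{-}_0$ and $\mathfrak{s}^{\star}_0\cap\mathfrak{s}^{+}_0$ go to $\mathfrak{s}^{\diamond}_0\cap\mathfrak{s}^{-}_0$ and $\mathfrak{s}^{\diamond}_0\cap\mathfrak{s}^{+}_1$. Items (i) and (ii) then follow from Proposition~\ref{prop:44pp3face0star}.

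For (iii), the bookkeeping is cleaner through $\iota$ than through your vertex-list comparison, which is loosely stated: the edges $[t_1,p_2]$ and $[q_2,t_1]$ lie on $\partial_\infty\mathfrak{s}^{-}_0$, while $[p_2,q_2]$ lies on $\partial_\infty\mathfrak{s}^{+}_1$.

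\begin{itemize}
\item By Proposition~\ref{prop:pair-disjoint44pp}, $u_2,u_1$ are the fixed points of $AR,A^{-1}R$, and $t_1,t_2$ are those of $A^{-1}Q,AQ$. So $\iota(u_2)=t_1$ and $\iota(u_1)=t_2$.
\item The map $\iota$ carries $\{v_1,w_1,x_1,y_1\}=\mathcal{I}^{+}_0\cap\mathcal{I}^{-}_0\cap\mathcal{I}^{\star}_0\cap\partial\hc$ onto $\mathcal{I}^{+}_1\cap\mathcal{I}^{-}_0\cap\mathcal{I}^{\diamond}_0\cap\partial\hc=A(\{p_1,q_1,r_1,s_1\})=\{p_2,q_2,r_2,s_2\}$.
\item Hence $\{x_1,y_1,u_2\}$ goes to a triangle with vertex $t_1$ whose two edges at $t_1$ lie on $\partial_\infty\mathfrak{s}^{-}_0$. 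Among $p_2,q_2,r_2,s_2$, only $p_2$ and $q_2$ are joined to $t_1$ there.
\item This forces $\{t_1,p_2,q_2\}$, and the image of $\{u_1,v_1,w_1\}$ is $\{r_2,s_2,t_2\}$.
\end{itemize}

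What your route buys is that the hard sign analysis in spinal coordinates (your Step 3) is needed only once, for $\mathfrak{s}^{\star}_0$. It is, however, a reduction to Proposition~\ref{prop:44pp3face0star}, which the paper also only cites, so it is not independent of \cite{JWX2023}. The fallback is then unnecessary. Its last sentence is also inaccurate: the two triangles are disjoint, being the ideal ends of the two nappes, and the cone point (the fixed point of $Q$) lies in $\hc$.
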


Let $\kappa$, $\varkappa$, and $\tilde{\varkappa} = A^{-1}(\varkappa)$ denote the fixed points of $S$, $S^{-1}A^{-1}$, and $A^{-1}S^{-1}$, respectively.  
Tables~\ref{s0+} and~\ref{s0-} describe the vertices and the combinatorial shapes that form part of the intersection of $\mathfrak{s}^{+}_0$ and $\mathfrak{s}^{-}_0$ with $\hc$, as illustrated in Figure~\ref{figure:s0plusminus}.

\begin{table}[ht]
\begin{minipage}[t]{0.495\textwidth}
\makeatletter\def\@captype{table}
\centering
\begin{tabular}{c|c|c}
    \toprule
   ridges   & quadrangle & triangle \\
    \hline 
  $\mathfrak{s}^{-}_{-1} \cap \mathfrak{s}^{+}_0$   & $\tilde{\varkappa}$, $p_1$, $u_1$, $s_1$ & $q_1$, $r_1$, $\tilde{\varkappa}$ 
    \\
    \hline
 $\mathfrak{s}^{\diamond}_{-1} \cap \mathfrak{s}^{+}_0$      & $\tilde{\varkappa}$, $r_1$, $t_1$, $s_1$ & $q_1$, $p_1$, $\tilde{\varkappa}$    \\
    \hline
  $\mathfrak{s}^{\star}_{0} \cap \mathfrak{s}^{+}_0$    & $\kappa$, $v_1$, $u_1$, $w_1$ & $x_1$,$y_1$, $\kappa$   \\
    \hline
  $\mathfrak{s}^{-}_{0}\cap \mathfrak{s}^{+}_0$    & $\kappa$, $y_1$, $t_1$, $v_1$ & $x_1$, $w_1$, $\kappa$ \\
    \bottomrule
\end{tabular}
\caption{The ridges of $\mathfrak{s}^{+}_0$. }%\cap\hc
\label{s0+}
\end{minipage}
\begin{minipage}[t]{0.495\textwidth}
\makeatletter\def\@captype{table}
\centering
\begin{tabular}{c|c|c}
    \toprule
    ridges    & quadrangle & triangle\\
    \hline 
    $\mathfrak{s}^{\star}_0 \cap \mathfrak{s}^{-}_0$   & $\kappa$, $y_1$, $u_2$, $x_1$ & $w_1$, $v_1$, $\kappa$ \\
    \hline
    $\mathfrak{s}^{+}_0 \cap \mathfrak{s}^{-}_0$  & $\kappa$, $y_1$, $t_1$, $v_1$  &  $w_1$, $x_1$, $\kappa$     \\
    \hline
    $\mathfrak{s}^{+}_1 \cap \mathfrak{s}^{-}_0$ & $\varkappa$, $p_2$, $u_2$, $s_2$  & $r_2$, $q_2$, $\varkappa$       \\
    \hline
    $\mathfrak{s}^{\diamond}_1 \cap \mathfrak{s}^{-}_0$  & $\varkappa$, $q_2$, $t_1$, $p_2$ & $r_2$, $s_2$, $\varkappa$   \\
    \bottomrule
\end{tabular}
\caption{The ridges of $\mathfrak{s}^{-}_0$. }
\label{s0-}
\end{minipage}
\end{table}
%\end{minipage}

%Let $\kappa$, $\varkappa$ and $\tilde{\varkappa}=A^{-1}(\varkappa)$ denote the fixed points of $S$, $S^{-1}A^{-1}$ and $A^{-1}S^{-1}$, respectively.
%$\varkappa$ denote the fixed point of $S^{-1}A^{-1}$ and $\tilde{\varkappa}=A^{-1}(\varkappa)$ denote the fixed point of $A^{-1}S^{-1}$.
%The tables \ref{s0+} and \ref{s0-} describe the vertices and shapes that are part of the intersection of $\mathfrak{s}^{+}_0$ and $\mathfrak{s}^{-}_0$ with $\hc$ drawn %in Figure \ref{figure:s0plusminus}.

%$\\$
%\begin{minipage}{\textwidth}

\begin{figure}[ht]
	\begin{center}
		\begin{tikzpicture}
		\node at (0,0) {\includegraphics[width=12cm,height=5.0cm]{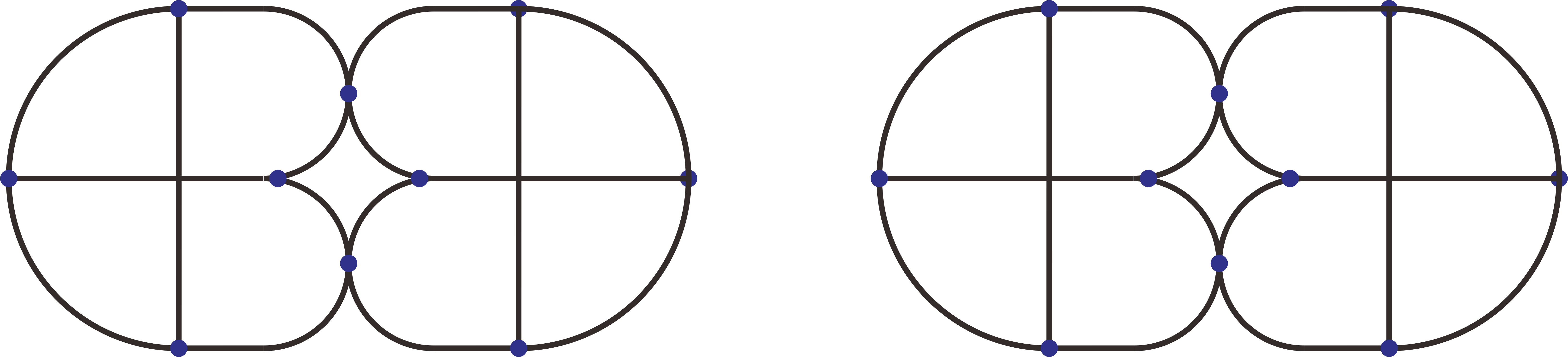}};
 \node at (-4.5,2.65){\large $r_1$};
 \node at (-3.3,2.3){\large $t_1$};
 \node at (-2,2.65){\large $y_1$};
 \node at (-0.4,0.3){\large $x_1$};
 \node at (-2,-2.7){\large $w_1$};
 \node at (-3.3,-2.4){\large $u_1$};
 \node at (-4.5,-2.7){\large $p_1$};
 \node at (-6.2,0.4){\large $q_1$};
 \node at (-4,0.3){\large $s_1$};
 \node at (-2.8,0.3){\large $v_1$};
%%%%%%%%%%%%%%%%%%%%%%%
\node at (-4.85,0.28){\large $\tilde{\varkappa}$};
 \node at (-1.8,0.26){\large $\kappa$};
 \node at (1.8,0.26){\large $\kappa$};
 \node at (4.85,0.26){\large $\varkappa$};
 %%%%%%%%%%%%%%%%%%%%%%%%%%%%%%%%%%%%
 \node at (4.5,2.65){\large $q_2$};
 \node at (3.3,2.3){\large $t_1$};
 \node at (2,2.65){\large $v_1$};
 \node at (0.4,0.3){\large $w_1$};
 \node at (2,-2.7){\large $x_1$};
 \node at (3.3,-2.4){\large $u_2$};
 \node at (4.5,-2.7){\large $s_2$};
 \node at (6.2,0.4){\large $r_2$};
 \node at (4,0.3){\large $p_2$};
 \node at (2.8,0.3){\large $y_1$};

 \node at (-1.5,1){\small $\mathfrak{s}^{\star}_0$ };
 \node at (-2.5,-1){\small $\mathfrak{s}^{\star}_0$};
 \node at (-2.5,1){\small $\mathfrak{s}^{-}_0$ };
 \node at (-1.5,-1){\small $\mathfrak{s}^{-}_0$};
 \node at (-4,1){\small $\mathfrak{s}^{\diamond}_{-1}$ };
 \node at (-5.2,-1){\small $\mathfrak{s}^{\diamond}_{-1}$};
 \node at (-5.2,1){\small $\mathfrak{s}^{-}_{-1}$ };
 \node at (-4,-1){\small $\mathfrak{s}^{-}_{-1}$};

 \node at (1.5,1){\small $\mathfrak{s}^{\star}_0$ };
 \node at (2.5,-1){\small $\mathfrak{s}^{\star}_0$};
 \node at (2.5,1){\small $\mathfrak{s}^{+}_0$ };
 \node at (1.5,-1){\small $\mathfrak{s}^{+}_0$};
 \node at (4,1){\small $\mathfrak{s}^{\diamond}_0$ };
 \node at (5.2,-1){\small $\mathfrak{s}^{\diamond}_0$};
 \node at (5.2,1){\small $\mathfrak{s}^{+}_1$ };
 \node at (4,-1){\small $\mathfrak{s}^{+}_1$};
 
\end{tikzpicture}
\end{center}
\caption{Schematic views of the 2-cell decomposition of $\partial\mathfrak{s}^{+}_0$ (left) and  $\partial\mathfrak{s}^{-}_0$ (right). 
On the left, we use $ \mathfrak{s}^{*}_0$ to represents $\mathfrak{s}^{+}_0 \cap \mathfrak{s}^{*}_0$. 
The other labels obey the same rule.}
\label{figure:s0plusminus}
\end{figure}

\begin{figure}[htbp]
	\begin{center}
		\begin{tikzpicture}
		\node at (0,0) {\includegraphics[width=10cm,height=5.0cm]{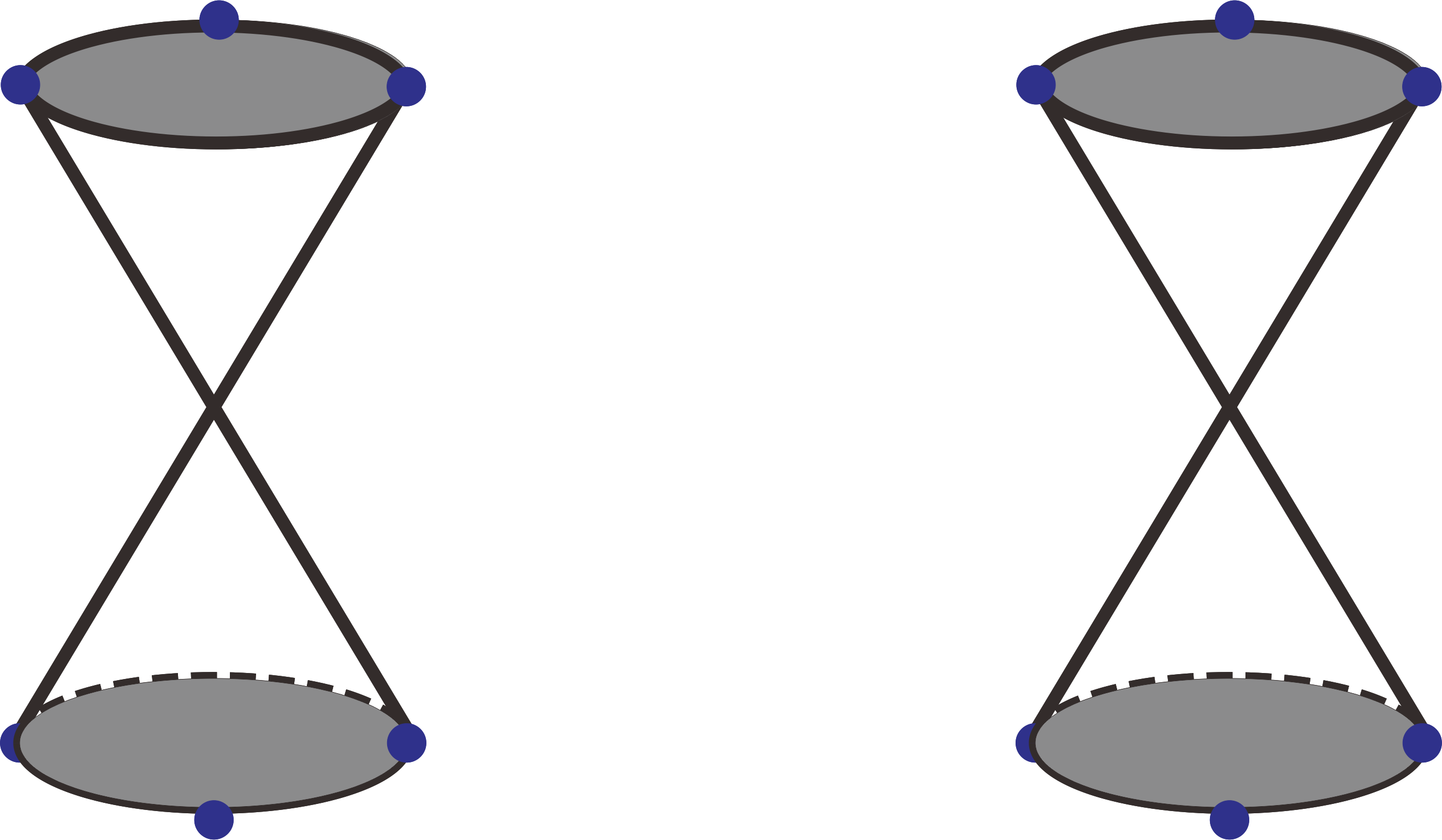}};
    \node at (-3.8,-2.1){\large $u_2$};
    \node at (-1.9,-2.1){\large $y_1$};
    \node at (-1.9,2.1){\large $v_1$};
    \node at (-5.25,-2.1){\large $x_1$};
    \node at (-5.25,2.1){\large $w_1$};
    \node at (-3.8,2.55){\large $u_1$};

    \node at (3.8,-2.1){\large $t_2$};
    \node at (1.85,-2.1){\large $s_2$};
    \node at (1.85,2.1){\large $p_2$};
    \node at (5.25,-2.1){\large $r_2$};
    \node at (5.2,2.1){\large $q_2$};
    \node at (3.83,2.55){\large $t_1$};
\end{tikzpicture}
\end{center}
\caption{A schematic view of the 3-sides $\mathfrak{s}^{\star}_0$ (left) and $\mathfrak{s}^{\diamond}_0$ (right). 
The shadowed regions are $\partial_{\infty}\mathfrak{s}^{\star}_0$ and $\partial_{\infty}\mathfrak{s}^{\diamond}_0$, respectively.}
\label{figure:s0stardiamond}
\end{figure}

Furthermore, the left subfigure of Figure~\ref{figure:s0plusminus} illustrates that
the union of the inner quadrangle with vertices $s_1, t_1, v_1, u_1$ and the outer octagon with vertices $r_1, t_1, y_1, x_1, w_1, u_1, p_1, q_1$ constitutes $\partial_{\infty}\mathfrak{s}^{+}_0$.
Similarly, the right subfigure shows that the union of the inner quadrangle with vertices $y_1, t_1, p_2, u_2$ and the outer octagon with vertices $v_1, t_1, q_2, r_2, s_2, u_2, x_1, w_1$ constitutes $\partial_{\infty}\mathfrak{s}^{-}_0$.

\begin{prop}[Proposition 5.2 in \cite{JWX2023}]
    The points $p_1$, $q_1$, $r_1$, $s_1$, $t_1$, $u_1$, $v_1$, $w_1$, $x_1$, $y_1$ satisfy:
    \begin{enumerate}[(i)]
    \item $t_1$  is the intersection  $\mathcal{I}_{0}^{+}\cap\mathcal{I}_{0}^{-}\cap\mathcal{I}_{0}^{\diamond}\cap\mathcal{I}_{-1}^{\diamond}$;
    \item $u_1$  is the intersection  $\mathcal{I}_{0}^{+}\cap\mathcal{I}_{-1}^{-}\cap\mathcal{I}_{0}^{\star}\cap\mathcal{I}_{-1}^{\star}$;
    \item $\{v_1, w_1, x_1, y_1\}$ is exactly the set $\mathcal{I}_{0}^{+}\cap\mathcal{I}_{0}^{-}\cap\mathcal{I}_{0}^{\star} \cap \partial \hc$; 
    \item $\{p_1,q_1,r_1,s_1\}$ is exactly the set $\mathcal{I}_{0}^{+}\cap\mathcal{I}_{-1}^{-}\cap\mathcal{I}_{-1}^{\diamond} \cap  \partial \hc$.
    \end{enumerate}
\end{prop}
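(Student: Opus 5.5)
This proposition is a statement about explicit points and explicit isometric spheres at the parameter $\theta=\pi/3$, so I would prove it by direct computation in the Siegel model. First I write down $A=I_1I_2$, $S=I_2I_3$, $R=S^2$ and $Q=(AS)^2$ as matrices with $e^{i\theta}=e^{i\pi/3}$. From these I compute $g(\mathbf{q}_\infty)$ for $g\in\{S,S^{-1},R,Q\}$ and for their conjugates by $A^{\pm1}$. Each isometric sphere $\mathcal{I}(g)=\mathcal{B}(\mathbf{q}_\infty,g(\mathbf{q}_\infty))$ then has an explicit equation $|\langle\mathbf{p},\mathbf{q}_\infty\rangle|=|\langle\mathbf{p},g(\mathbf{q}_\infty)\rangle|$. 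On $\partial\hc$, in Heisenberg coordinates $(z,t)$, this becomes a Cygan sphere: $|\,|z-z_g|^2+i(t-t_g+2\Im(z\bar z_g))|=r_g^2$, with center $(z_g,t_g)$ and radius $r_g$ read off from the matrix entries.

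For (i) and (ii), I use the tangencies already stated in Proposition \ref{prop:pair-disjoint44pp}. By item (x), $t_1$ is the fixed point of $A^{-1}Q$, and $\mathcal{I}^\diamond_0$ and $\mathcal{I}^\diamond_{-1}$ are tangent there. By item (viii), $u_1$ is the fixed point of $A^{-1}R$, where $\mathcal{I}^\star_0$ and $\mathcal{I}^\star_{-1}$ are tangent. For each point I compute the parabolic fixed point explicitly as a null eigenvector of $A^{-1}Q$, respectively $A^{-1}R$. I then substitute it into the four sphere equations and check equality of the moduli. Since two tangent Cygan spheres meet in a single point, the quadruple intersection is at most that point, which gives equality of sets.

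For (iii) and (iv), I would exploit the symmetry of the configuration. By Proposition \ref{prop:44ppGirauddisk}(ii), $\mathcal{I}^+_0\cap\mathcal{I}^-_0\cap\mathcal{I}^\star_0$ is a union of two complete geodesics through the fixed point of $S$. Each geodesic has exactly two endpoints on $\partial\hc$, so the ideal triple intersection consists of exactly four points, provided the two geodesics share no endpoint (they cross only at the interior point $\kappa$). To locate them concretely, I parametrize the relevant geodesics as intersections of the triple intersection with the $\mathbb{R}$-plane or complex line preserved by $S$, and solve for the null vectors. These four points are named $v_1,w_1,x_1,y_1$, consistently with the combinatorics of Table~\ref{s0+}. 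Part (iv) is handled the same way using Proposition \ref{prop:44ppGirauddisk}(iii) for $\mathcal{I}^+_1\cap\mathcal{I}^-_0\cap\mathcal{I}^\diamond_0$, applying $A^{-1}$, which maps this triple to $\mathcal{I}^+_0\cap\mathcal{I}^-_{-1}\cap\mathcal{I}^\diamond_{-1}$.

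The main obstacle is (iii)/(iv): writing down the endpoints of the two geodesics explicitly. Solving the system of two Cygan-sphere equations on the boundary generally yields quartic equations. I would first look for an antiholomorphic involution or an $\mathbb{R}$-plane symmetry fixing $\kappa$ that reduces the computation to a one-variable problem. If that fails, I would fall back on counting via the Giraud disk structure: the boundary of the Giraud disk $\mathcal{I}^+_0\cap\mathcal{I}^-_0$ is a circle, and $\mathcal{I}^\star_0$ meets it in exactly the four endpoints.
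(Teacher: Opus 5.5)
The paper does not prove this statement; it quotes Proposition~5.2 of \cite{JWX2023}. Your proposal is a correct derivation of it from the other facts imported in Section~\ref{section:forddomain}. It shows the statement is a formal consequence of Propositions~\ref{prop:pair-disjoint44pp} and~\ref{prop:44ppGirauddisk}, and the coordinate computations you plan are unnecessary:

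\begin{itemize}
\item For (i), items (v), (vii) and (x) of Proposition~\ref{prop:pair-disjoint44pp} already place $t_1$ on $\mathcal{I}^{+}_0$, $\mathcal{I}^{-}_0$, $\mathcal{I}^{\diamond}_0$ and $\mathcal{I}^{\diamond}_{-1}$.
\item For (ii), items (iv), (viii) and the $A^{-1}$-translate of (vi) do the same for $u_1$.
\item For (iii) and (iv), Proposition~\ref{prop:44ppGirauddisk}(ii),(iii) gives exactly four ideal points each, reading it as a statement in $\overline{\hc}$, where the isometric spheres are defined. Your proviso about shared endpoints is automatic: two distinct geodesics through the interior point $\kappa$ cannot share an endpoint.
\end{itemize}

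The labels $v_1,\dots,y_1$ and $p_1,\dots,s_1$ are only introduced through Figure~\ref{figure:44iiford}. So the real content of (iii) and (iv) is this count of four, and the quartic equations you flag as the main obstacle never need to be solved.

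Two steps need tightening.

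First, ``two tangent Cygan spheres meet in a single point'' is not a valid general principle. The right reason is that $A^{-1}Q=I_2I_3I_1I_3$ and $S^{-1}(A^{-1}R)S=I_3I_1I_3I_2$ are both conjugate to $I_1I_3I_2I_3$, hence unipotent. Moreover $\mathcal{I}^{\diamond}_{-1}=\mathcal{I}(A^{-1}Q)$ and $\mathcal{I}^{\diamond}_{0}=\mathcal{I}((A^{-1}Q)^{-1})$, and likewise $\mathcal{I}^{\star}_{-1}=\mathcal{I}(A^{-1}R)$ and $\mathcal{I}^{\star}_{0}=\mathcal{I}((A^{-1}R)^{-1})$. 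For unipotent $G$, move its fixed point to $\infty$ and $\mathbf{q}_\infty$ to $0$. Then $\mathcal{I}(G)\cap\mathcal{I}(G^{-1})$, minus the fixed point, becomes the set of points at equal extended Cygan distance from $x^{-1}$, $0$ and $x$, where $x=T(0)$ for the Heisenberg translation $T$ conjugate to $G$. Adding the two defining equations gives a strictly positive quantity equal to zero, so this set is empty. This Ford analogue of Proposition~\ref{prop:Phillips} is what ``tangent'' means in Proposition~\ref{prop:pair-disjoint44pp}.

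Second, if you do compute, note that with the matrices of Section~\ref{subsection:1-dim}, $\mathbf{q}_\infty$ is fixed by $I_1I_3I_2I_3$, not by $A=I_1I_2$. You must first conjugate so that $A$ fixes $\mathbf{q}_\infty$, as at the start of Section~\ref{section:forddomain}. Otherwise $\mathcal{I}(A^kgA^{-k})$ in general differs from $A^k\mathcal{I}(g)$, and the spheres you write down are not the ones in the statement.
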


To use the Poincar\'e polyhedron theorem for the Ford domain $\mathcal{F}$, we consider the following:

\textbf{The side-pairing maps:} 
\[
A^k S A^{-k} : \mathfrak{s}_k^{-} \mapsto \mathfrak{s}_k^{+}, \quad
A^k S^2 A^{-k} : \mathfrak{s}_k^{\star} \mapsto \mathfrak{s}_k^{\star}, \quad
A^k Q A^{-k} : \mathfrak{s}_k^{\diamond} \mapsto \mathfrak{s}_k^{\diamond}
\]

\textbf{The ridge circles:}
\begin{equation*}
\begin{aligned}
(\mathfrak{s}_k^{-} \cap \mathfrak{s}_k^{+}, \;& \mathfrak{s}_k^{-}, \;\mathfrak{s}_k^{+})
  \xrightarrow{A^k S A^{-k}}
  (\mathfrak{s}_k^{\star} \cap \mathfrak{s}_k^{+}, \;\mathfrak{s}_k^{\star}, \;\mathfrak{s}_k^{+})
  \xrightarrow{A^k S^2 A^{-k}} \\
(\mathfrak{s}_k^{\star} \cap \mathfrak{s}_k^{-}, \;& \mathfrak{s}_k^{\star}, \;\mathfrak{s}_k^{-})
  \xrightarrow{A^k S A^{-k}}
  (\mathfrak{s}_k^{-} \cap \mathfrak{s}_k^{+}, \;\mathfrak{s}_k^{-}, \;\mathfrak{s}_k^{+})
\end{aligned}
\end{equation*}

and

\begin{equation*}
\begin{aligned}
(\mathfrak{s}_k^{-} \cap \mathfrak{s}_{k+1}^{+}, \;& \mathfrak{s}_k^{-}, \;\mathfrak{s}_{k+1}^{+})
  \xrightarrow{A^k (AS)^{-1} A^{-k}}
  (\mathfrak{s}_k^{\diamond} \cap \mathfrak{s}_k^{-}, \;\mathfrak{s}_k^{\diamond}, \;\mathfrak{s}_k^{-})
  \xrightarrow{A^k (AS)^{-2} A^{-k}} \\
(\mathfrak{s}_k^{\diamond} \cap \mathfrak{s}_{k+1}^{+}, \;& \mathfrak{s}_k^{\diamond}, \;\mathfrak{s}_{k+1}^{+})
  \xrightarrow{A^k (AS)^{-1} A^{-k}}
  (\mathfrak{s}_k^{-} \cap \mathfrak{s}_{k+1}^{+}, \;\mathfrak{s}_k^{-}, \;\mathfrak{s}_{k+1}^{+})
\end{aligned}
\end{equation*}

From the side-pairing maps,  Jiang, Wang and Xie proved

\begin{thm}[Theorem 4.17 in \cite{JWX2023}] \label{thm:44pFord}  
Let $\Sigma=\langle I_1I_2, I_2I_3 \rangle$ be the even subgroup of  complex hyperbolic  triangle group $\Delta_{4,4,\infty;\infty}$. Then $\mathcal{F}$ is the  Ford domain of $\Sigma$, and  $\mathcal{F}_{\infty}$ is homeomorphic to $\mathbb{R}^3-int(\mathbb{D}^2\times \mathbb{R} )$. Moreover,  $\Sigma$ is discrete and has the presentation
$$\langle S, A | S^4=id,~ (AS)^4=id\rangle$$
\end{thm}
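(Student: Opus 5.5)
The statement to prove is Theorem~\ref{thm:44pFord}, cited from \cite{JWX2023}. I will apply the Poincar\'e polyhedron theorem for Ford domains to the infinite polyhedron $\mathcal{F}$ of Definition~\ref{def:Folddomain44ii}, together with the stabilizer $\langle A\rangle$ of $q_\infty$.

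\textbf{Step 1: the sides are embedded 3-balls.} Proposition~\ref{prop:pair-disjoint44pp} shows that each isometric sphere meets only finitely many others up to the $A$-action. Hence $\mathcal{F}$ is locally finite and $A$-invariant, so it suffices to study the sides $\mathfrak{s}_0^{\pm},\mathfrak{s}_0^{\star},\mathfrak{s}_0^{\diamond}$. Propositions~\ref{prop:44pp3face0plus}--\ref{prop:44pp3face0diamond} give their topology and list their ridges, and Proposition~\ref{prop:44ppGirauddisk} identifies these ridges as pieces of Giraud disks.

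\textbf{Step 2: the side pairings.} I will check that $A^kSA^{-k}$ maps $\mathcal{I}_k^-$ to $\mathcal{I}_k^+$ and takes the exterior of one to the interior of the other. With the convention of Definition~\ref{def:isosphere}, $S$ maps $\mathcal{I}(S^{-1})$ onto $\mathcal{I}(S)$, and it is an isometry between the two metrics. The involutions $A^kRA^{-k}$ and $A^kQA^{-k}$ each preserve their own isometric sphere. Next I will check that the pairings carry sides to sides and ridges to ridges, matching the quadrangle/triangle decompositions in Tables~\ref{s0+} and~\ref{s0-}.

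\textbf{Step 3: cycle conditions.} The displayed ridge cycles have cycle transformations $S^4$ and $(AS)^{-4}$, up to conjugation by $A^k$. We have $S^4=\mathrm{id}$ because $(I_2I_3)^4=\mathrm{id}$, and $(AS)^4=(I_1I_3)^4=\mathrm{id}$. In each case three ridges sit in the cycle, and the dihedral angles must sum to $2\pi/4$ times the appropriate multiplicity. I will check the angle sum through the local tessellation: near a point in the interior of a ridge, the images of $\mathcal{F}$ under the cycle elements should cover a neighbourhood. For ridges containing the fixed points $\kappa$ and $\varkappa$ of order-4 elliptics, one uses the two-sector structure from Proposition~\ref{prop:44ppGirauddisk}.

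\textbf{Step 4: cusps.} At $q_\infty$ the stabilizer is $\langle A\rangle$, which acts cocompactly on $\partial\mathcal{F}_\infty$ (the $\mathbb{Z}$-invariant annulus of Figure~\ref{figure:44iiford}). The remaining ideal vertices are the tangency points $t_i$ and $u_i$. Their cycles have cycle transformations $A^{-1}Q$, $AQ$ (resp.\ $AR$, $A^{-1}R$), and these must be checked to be parabolic, which makes the horoball condition consistent. Poincar\'e's theorem then gives discreteness, shows $\mathcal{F}$ is a fundamental domain (hence the Ford domain), and yields the presentation from the side pairings and cycle relations. The generators are $S$, $R=S^2$, $Q=(AS)^2$ and $A$, and after eliminating $R$ and $Q$ the relations reduce to $S^4=(AS)^4=\mathrm{id}$.

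\textbf{Step 5: topology of $\mathcal{F}_\infty$.} The boundary $\partial\mathcal{F}_\infty$ is the properly embedded annulus $\mathbb{R}\times\mathbb{S}^1$ of Figure~\ref{figure:44iiford}, isotopic to the boundary of a neighbourhood of a line. Its complementary region on the $\mathcal{F}_\infty$ side is therefore $\mathbb{R}^3$ minus an open solid cylinder.

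The main obstacle is Step~1, namely proving that the sides are exactly as described. This requires controlling all intersections of the spinal spheres and Giraud disks, especially the tangencies, where local finiteness and the cusp conditions are delicate. I would handle this by explicit computation in Heisenberg coordinates, using Giraud's parametrization of intersections of bisectors.
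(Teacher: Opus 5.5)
Your outline is correct and follows the same route the paper indicates, following \cite{JWX2023}: apply the Poincar\'e polyhedron theorem to $\mathcal{F}$ with side pairings $A^kSA^{-k}$, $A^kRA^{-k}$, $A^kQA^{-k}$, the two ridge cycles with cycle transformations $S^4$ and $(AS)^{-4}$, and parabolic stabilizers at the tangency points, then read off $\langle S,A\mid S^4,(AS)^4\rangle$. Two clarifications: since both cycle transformations are the identity, the local condition is that the three images of $\mathcal{F}$ around each ridge fill a full neighbourhood (total angle $2\pi$), and the tangency points $t_i,u_i$ form a single $\Sigma$-orbit (e.g.\ $S^{-1}(u_1)=t_1$), with all the relevant cycle transformations conjugate to $(AR)^{\pm1}=(I_1I_3I_2I_3)^{\pm1}$, which is parabolic by construction.
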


\subsection{The isomorphism between \texorpdfstring{$\pi_1(\mathscr{M}_{4,4,\infty; \infty}) ~\text{and}~ \pi_1(s782)$}{}}

We denote by $\mathscr{M}_{4,4,\infty; \infty}$ the 3-manifold at infinity of the even subgroup of  complex hyperbolic  triangle group $\Delta_{4,4,\infty;\infty}$.  In this subsection, we outline the proof that there exists an isomorphism between the fundamental group of the 3-manifold at infinity of $\Delta_{4,4,\infty;\infty}$ and $\pi_1(\text{s782})$. 
Since we require a concordance between the Dirichlet domain of $\Delta_{4,4,n;\infty}$ and the Ford domain of the group $\Delta_{4,4,\infty;\infty}$, the isometric spheres considered in this section are not the same as those in \cite{JWX2023}. Consequently, the labels of the side-pairings are different from those in \cite{JWX2023}. For this reason, we cannot directly use the isomorphism from \cite{JWX2023} for the purposes in Section \ref{section:proof}.

Take a proper disk $E$ in $\mathcal{F}_{\infty}$, punctured at $q_{\infty}$, whose boundary is the union of the arcs
\[
[p_1,q_1],\ [q_1,r_1],\ [r_1,s_1],\ [s_1,u_1],\ [u_1,p_1].
\]
Then $A(E)$ is also a proper disk in $\mathcal{F}_{\infty}$, punctured at $q_{\infty}$, with boundary the union of the arcs
\[
[p_2,q_2],\ [q_2,r_2],\ [r_2,s_2],\ [s_2,u_2],\ [u_2,p_2].
\]
Since $\partial E \cap \partial A(E)=\emptyset$, by standard arguments in 3-dimensional topology we may assume that
\[
E \cap A(E)=\emptyset.
\]

The region in $\mathcal{F}_{\infty}$ bounded by $E$ and $A(E)$ is a fundamental domain $L$ for the $\mathbb{Z}=\langle A\rangle$-action on $\mathcal{F}_{\infty}$.  
We then take a proper disk $D$ in $L$ whose boundary is the union of 
\[
[q_{\infty},u_1],\ [u_1,v_1],\ [v_1,w_1],\ [w_1,x_1],\ [x_1,y_1],\ [y_1,u_2],\ [u_2,q_{\infty}].
\]
Again by standard arguments in 3-dimensional topology, we may assume that
\[
D \cap E = [u_1,q_{\infty}] \qquad \text{and} \qquad 
D \cap A(E) = [u_2,q_{\infty}].
\]
See Figure~\ref{figure:unit44iiford}.  
Here the left (orange) and right (green) circles represent $\partial E$ and $\partial A(E)$, respectively.  
The reader may compare Figure~\ref{figure:unit44iiford} with Figure~\ref{figure:44iiford}.

Cutting $L$ along $D$, we obtain a 3-ball $N$.  
See Figure~\ref{figure:edgecircle4iii} for the 2–cell decomposition of the 2-sphere boundary of $N$.

	The boundary $\partial N$ consists of:
\begin{itemize}
    \item two copies of $D$, denoted by $D_+$ and $D_-$;
    \item four disks labelled $S_8$, $S_8^{-1}$, $S_4$ and $S_4^{-1}$, which are associated to the group element $S$;
    \item two disks labelled $R_+$ and $R_-$, which are associated to the group element $R$;
    \item two disks labelled $Q_+$ and $A^{-1}Q_{-}A$,  which are associated to the group element $Q$.
\end{itemize}

\begin{figure}[htbp]
    \begin{center}
    \begin{tikzpicture}
    \node at (0,0) 
    {\includegraphics[width=10cm,height=7.0cm]{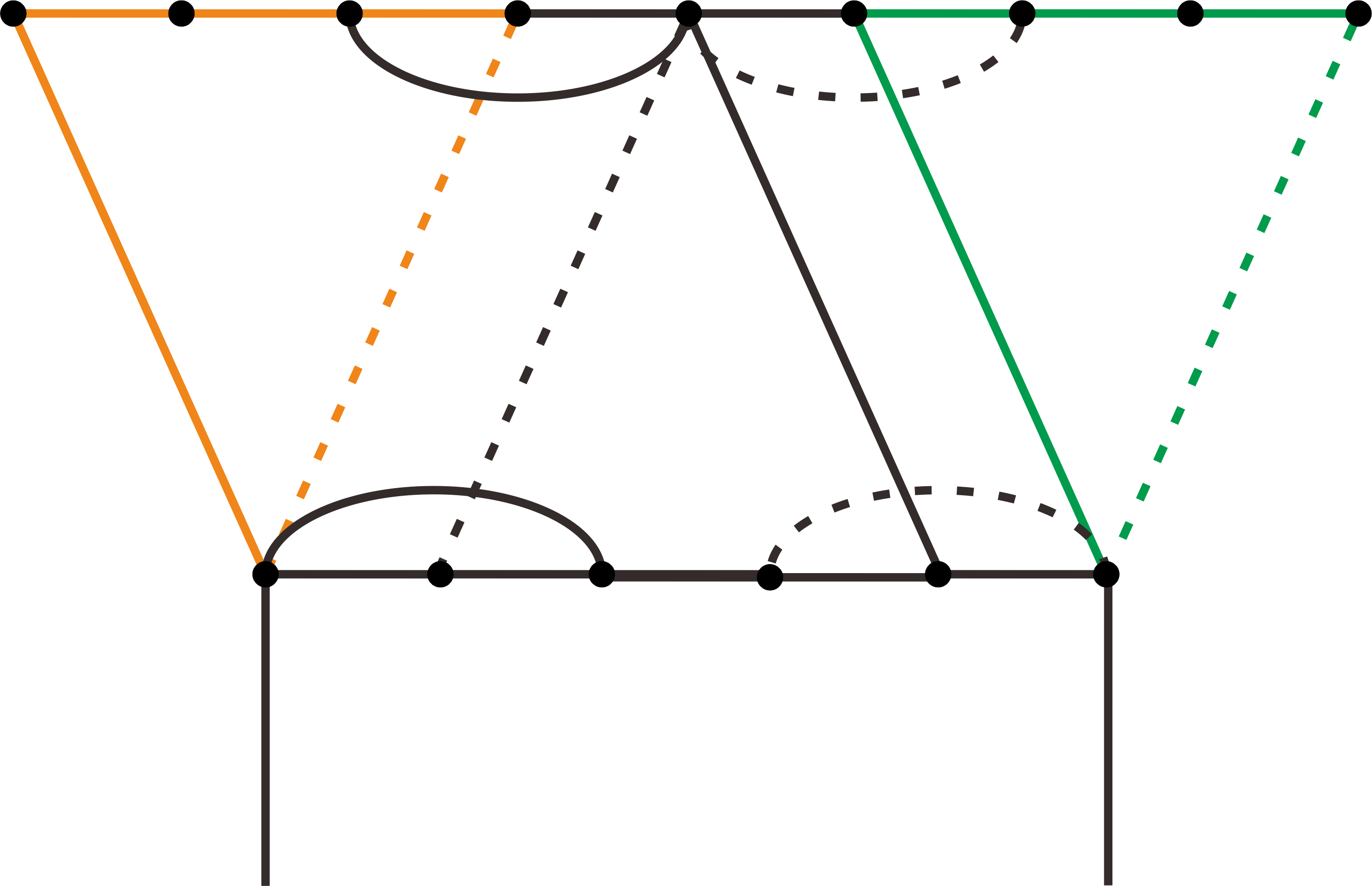}};
    \node at (-4.8,3.7){\large $p_1$};
    \node at (-3.65,3.7){\large $q_1$};
    \node at (-2.5,3.7){\large $r_1$};
    \node at (-1.25,3.7){\large $s_1$};
    \node at (0,3.7){\large $t_1$};
    \node at (1.25,3.7){\large $p_2$};
    \node at (2.5,3.7){\large $q_2$};
    \node at (3.75,3.7){\large $r_2$};
    \node at (4.8,3.7){\large $s_2$};
    
    \node at (-3.4,-1.2){\large $u_1$};
    \node at (-1.8,-1.4){\large $v_1$};
    \node at (-0.6,-1.4){\large $w_1$};
    \node at (0.6,-1.4){\large $x_1$};
    \node at (1.8,-1.4){\large $y_1$};
    \node at (3.4,-1.2){\large $u_2$};

    \node at (0,-2.7){\large $D$};
    \end{tikzpicture}
    \end{center}
    \caption{A fundamental domain of the $\mathbb{Z}=\langle A \rangle $-action on $\mathcal{F}_{\infty}$ with a cutting disk $D$.}
    \label{figure:unit44iiford}
\end{figure}

\begin{figure}[htbp]
\begin{center}
\begin{tikzpicture}
\node at (0,0) {\includegraphics[width=10cm,height=10.0cm]{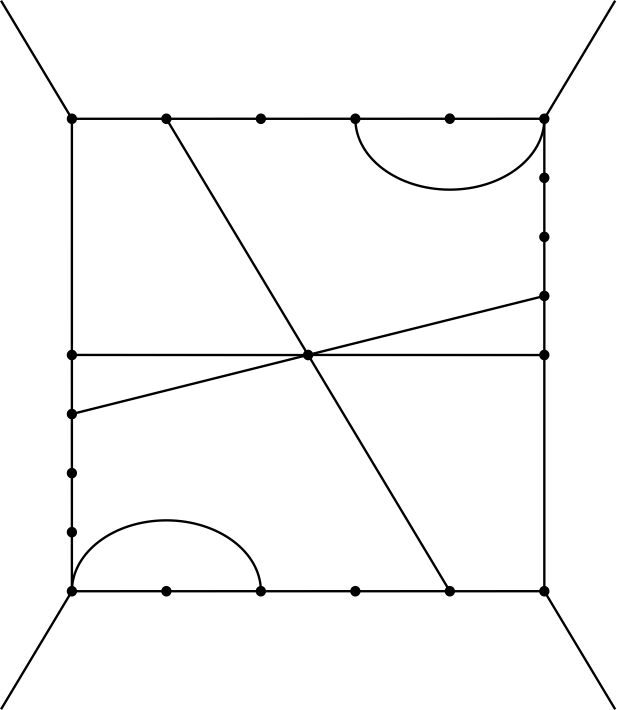}};

\node at (-4.1,3.3){\large $u_1$};
\node at (-2.3,3.55){\large $v_1$};
\node at (-0.85,3.55){\large $w_1$};
\node at (0.85,3.55){\large $x_1$};
\node at (2.3,3.55){\large $y_1$};
\node at (4.1,3.3){\large $u_2$};

\node at (-4.15,-3.3){\large $u_1$};
\node at (-2.3,-3.6){\large $v_1$};
\node at (-0.85,-3.6){\large $w_1$};
\node at (0.85,-3.6){\large $x_1$};
\node at (2.3,-3.6){\large $y_1$};
\node at (4.15,-3.3){\large $u_2$};

\node at (-4.15,0){\large $s_1$};    
\node at (-4.15,-0.825){\large $r_1$}; 
\node at (-4.15,-1.7){\large $q_1$}; 
\node at (-4.15,-2.525){\large $p_1$}; 

\node at (4.15,0){\large $p_2$};    
\node at (4.15,0.825){\large $q_2$}; 
\node at (4.15,1.7){\large $r_2$}; 
\node at (4.15,2.525){\large $s_2$}; 

\node at (-0.06,-0.25){\large $t_1$};

\node at (-2.2,1.25){\large $S_4$};
\node at (-1.46,-1.35){\large $S_8$};

\node at (1.06,1.25){\large $S^{-1}_8$};
\node at (2.2,-1.35){\large $S^{-1}_4$};

\node at (2.56,2.75){\large $R_{-}$};
\node at (-2.56,-2.85){\large $R_{+}$};

\node at (0.56,-4.65){\large $D_{+}$};

\node at (0.56,4.3){\large $D_{-}$};

\node at (-5,1.25){\large $E$};
\node at (5,-1.25){\large $A(E)$};

\node at (3.1,0.25){\small $Q_+$};
\node at (-3,-0.3){\small $A^{-1}Q_{-}A$};
\end{tikzpicture}
\end{center}
\caption{The 2-cell decomposition of the 2-sphere boundary of $N$ to get the 3-manifold $\mathscr{M}_{4,4,\infty; \infty}$. }
\label{figure:edgecircle4iii}
\end{figure}

%According to the above analysis, the side-pairings on $N$ are
The 3-manifold $\mathscr{M}_{4,4,\infty; \infty}$ is the quotient space of $N$.  The side-pairings on $N$ are
\begin{equation*}
    \begin{aligned}
    %\begin{align*}
        f_1&: D_{-}\longrightarrow D_{+},\\
        f_2=A&: E\longrightarrow A(E),\\
        f_3=R&: R_+\longrightarrow R_-,\\
        f_4=A^{-1}Q&: Q_{+}\longrightarrow A^{-1}Q_{-}A,\\
	f_5=S^{-1}&: S_8\longrightarrow S_8^{-1},\\
        f_6=S^{-1}&: S_4\longrightarrow S_4^{-1}.\\
    \end{aligned}
\end{equation*}

The correspondences of vertices of the polygons are shown in Table \ref{vertices_corresponding44ii}.
\begin{table}[htbp]
\centering
    \caption{Vertices of the 2-cells involved in the side-pairing maps on $\partial N$.}
    \begin{tabular}{|c|c|c|}
    \hline 
    maps&vertices&vertices
    \\
    \hline  
    $f_1$
    &$u_1, v_1, w_1, x_1, y_1, u_2$
    &$u_1, v_1, w_1, x_1, y_1, u_2$\\
    
    \hline
    $f_2$
    &$u_1, s_1, r_1, q_1, p_1$
    &$u_2, s_2, r_2, q_2, p_2$\\

    \hline
    $f_3$&$u_1, v_1, w_1$
    &$u_2, x_1, y_1$\\

    \hline
    $f_4$&$t_1, p_2, q_2$
    &$t_1, r_1, s_1$\\

    \hline
    $f_5$&$w_1, x_1, y_1, t_1, r_1, q_1, p_1, u_1$
    &$v_1, w_1, x_1, u_2, s_2, r_2, q_2, t_1$\\

    \hline
    $f_6$&$t_1, v_1, u_1, s_1$
    &$ u_2, y_1, t_1, p_2$\\
    
    \hline
    \end{tabular}
    \label{vertices_corresponding44ii}
\end{table}

Then we have Table \ref{table:edgecircle4pp}, it gives the ridge circles of the 3-manifold at infinity of $\Delta_{4,4,\infty;\infty}$, and the fundamental group of it. So we have a presentation of $\pi_1(\mathscr{M}_{4,4,\infty; \infty})$ with six generators and eight relations. 

\begin{table}[!htbp]
	\caption{Edge cycles and relations for the 3-manifold $\mathscr{M}_{4,4,\infty; \infty}$.}
	\centering
	\begin{tabular}{c|c|c}
		\toprule
		\textbf{edge} & \textbf{edge cycle}  & \textbf{cycle relation}\\
		\midrule
		$e_{1}$ & $e_1\xrightarrow{f_1} e_{24}  
            \xrightarrow{f_2} e_{18} 
            \xrightarrow{f_1^{-1}}e_7
            \xrightarrow{f_2^{-1}}e_1$
            
            & $f_2^{-1}f_1^{-1}f_2f_1$ \\  [1 ex]
		
    $e_{2}$ & $e_2\xrightarrow{f_1} e_{23} 
    \xrightarrow{f_3} e_{8} 
    \xrightarrow{f_5^{-1}} e_{16} 
    \xrightarrow{f_{6}^{-1}}e_{2}$ 
    & $f_{6}^{-1}f_5^{-1}f_3f_1$ \\  [1 ex]

    $e_{3}$ & $e_3\xrightarrow{f_1} e_{22} 
    \xrightarrow{f_3} e_{5} 
    \xrightarrow{f_1} e_{19} 
    \xrightarrow{f_5}e_{4}
    \xrightarrow{f_1} e_{20} 
    \xrightarrow{f_5}e_{3}$ 
    & $f_5f_1f_5f_1f_3f_1$ \\  [1 ex]

    $e_{6}$ & $e_6\xrightarrow{f_1} e_{17} \xrightarrow{f_6^{-1}} e_{32} \xrightarrow{f_5^{-1}}e_{21}
    \xrightarrow{f_3}e_{6}$  
    & $ f_3f_5^{-1}f_6^{-1}f_1$ \\  [1 ex]

    $e_{9}$ & $e_9\xrightarrow{f_2^{-1}} e_{29} \xrightarrow{f_6} e_{15} \xrightarrow{f_4} e_{30} \xrightarrow{f_5}e_{9}$  
    & $f_5f_4f_6f_2^{-1}$ \\  [1 ex]

    $e_{10}$ & $e_{10}
    \xrightarrow{f_2^{-1}} e_{28}
    \xrightarrow{f_{4}^{-1}} e_{12}  
    \xrightarrow{f_2^{-1}}e_{26}
    \xrightarrow{f_5}e_{11}
    \xrightarrow{f_2^{-1}}e_{27}
    \xrightarrow{f_5}e_{10}$
    & $f_5f_2^{-1}f_5f_2^{-1}f_4^{-1}f_{2}^{-1}$ \\  [1 ex]

    $e_{13}$ & $e_{13}\xrightarrow{f_2^{-1}} e_{25} 
    \xrightarrow{f_5} e_{14}
    \xrightarrow{f_4}e_{31}
    \xrightarrow{f_6}e_{13}$
    & $ f_6f_4f_5f_2^{-1}$ \\  
    \bottomrule
	\end{tabular}
	\label{table:edgecircle4pp}
\end{table}

By using Magma, we obtain the following simplified presentation:
\[
\pi_1(\mathscr{M}_{4,4,\infty;\,\infty})
=\left\langle x_1, x_2, x_3 \;\middle|\;
x_2^{-1}x_1^{-1}x_2 x_1,\;
(x_3^{-1}x_1^{-1})^{2} x_3^{-1} x_2^{-1} (x_3 x_2^{-1})^{3}
\right\rangle,
\]
where $x_1=f_1$, $x_2=f_2$, and $x_3=f_5$.

There is an isomorphism
\[
\phi:\pi_1(\mathscr{M}_{4,4,\infty;\,\infty}) \longrightarrow \pi_1(s782)
\]
defined by
\[
\phi(x_1)=c^{-1}a, \qquad
\phi(x_2)=bc,  \qquad
\phi(x_3)=c.
\]

The inverse isomorphism
\[
\phi^{-1}: \pi_1(s782)\longrightarrow \pi_1(\mathscr{M}_{4,4,\infty;\,\infty})
\]
is given by
\[
\phi^{-1}(a)=x_3 x_1, \qquad
\phi^{-1}(b)=x_2 x_3^{-1}, \qquad
\phi^{-1}(c)=x_3.
\]

With the isomorphism $\phi$ at hand, Jiang, Wang and Xie proved the following.

\begin{thm}[Theorem 5.22 in \cite{JWX2023}] \label{thm:44p}
Let $\Sigma=\langle I_1 I_2,\, I_2 I_3 \rangle$ be the even subgroup of the complex hyperbolic
 triangle group $\Delta_{4,4,\infty;\infty}$.  
Then the 3-manifold $\mathscr{M}_{4,4,\infty;\,\infty}$ at infinity of the even subgroup
 $\Sigma$ is the 3-manifold $s782$ in the SnapPy census.
\end{thm}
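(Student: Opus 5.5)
The plan is to read the topology of $\mathscr{M}_{4,4,\infty;\infty}$ off the combinatorial model $N/\!\sim$ built above, and then compare it with $s782$ at the level of fundamental groups \emph{together with peripheral structure}. This is enough because $s782$ is a finite-volume hyperbolic manifold.

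\textbf{Step 1: the quotient is a genuine 3-manifold.} By Theorem~\ref{thm:44pFord}, $\Sigma$ is discrete and $\mathcal{F}$ is its Ford domain. Hence $\mathscr{M}_{4,4,\infty;\infty}=\Sigma\backslash\Omega_\Sigma$ is obtained from $\mathcal{F}_\infty$ by the side pairings. Passing to the $\langle A\rangle$-quotient and cutting along $D$, it is the quotient of the 3-ball $N$ by $f_1,\dots,f_6$.

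I will check the link condition for this gluing. The edge cycles in Table~\ref{table:edgecircle4pp} have total dihedral angle $2\pi$ at the edges, which follows from the ridge circles in Section~\ref{section:forddomain}. At the finitely many vertex classes $t_1,u_1,\dots$, the Euler characteristic of each vertex link should be $2$.

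There are two non-compact ends. One is the $\langle A\rangle$-end coming from $q_\infty$, since $\mathcal{F}_\infty\cong\mathbb{R}^3-\mathrm{int}(\mathbb{D}^2\times\mathbb{R})$. The other comes from the parabolic fixed points $t_i,u_i$ of $A^{-1}Q$ and $A^{-1}R$ (conjugates of $I_1I_3I_2I_3$), where isometric spheres are tangent. Removing small horoball neighbourhoods of these ends gives a compact manifold $\overline{M}$ whose boundary consists of two tori $T_0$ and $T_1$.

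\textbf{Step 2: group and peripheral subgroups.} Poincar\'e's theorem applied to the side pairings gives the presentation with generators $f_1,\dots,f_6$ and the cycle relations of Table~\ref{table:edgecircle4pp}. It simplifies to
\[
\langle x_1,x_2,x_3\mid x_2^{-1}x_1^{-1}x_2x_1,\ (x_3^{-1}x_1^{-1})^2x_3^{-1}x_2^{-1}(x_3x_2^{-1})^3\rangle .
\]
I will verify directly that $\phi$ and $\phi^{-1}$ are well defined by substituting into the relators, and that they are mutually inverse on generators; for example, $\phi(\phi^{-1}(a))=c\cdot c^{-1}a=a$.

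Next I identify the peripheral subgroups. The relator from edge $e_1$ shows that $\langle f_1,f_2\rangle=\langle x_1,x_2\rangle\cong\mathbb{Z}^2$. Geometrically this is the torus through the $A$-cusp: $f_2=A$, and $f_1$ goes around the $D$-direction. Its image $\langle c^{-1}a,\,bc\rangle$ contains $\mathcal{m}_1=bc$ and should coincide, up to conjugacy, with $\langle \mathcal{m}_1,\mathcal{l}_1\rangle=\langle bc,ba\rangle$. For the other cusp, I read off from the vertex links at $t_1,u_1$ a pair of commuting words in the $f_i$, and check that their images are conjugate to $\langle\mathcal{m}_0,\mathcal{l}_0\rangle$.

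\textbf{Step 3: conclude.} $\overline{M}$ is irreducible, because its universal cover embeds in the simply connected region $\Omega_\Sigma\subset\mathbb{S}^3$ with the parabolic points removed, and one can argue by Alexander's theorem. Its boundary tori are incompressible, since they carry $\mathbb{Z}^2$ subgroups of a torsion-free group. So $\overline{M}$ is Haken.

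We then have an isomorphism of fundamental groups that respects the peripheral systems, to the compact core of the hyperbolic manifold $s782$. Waldhausen's theorem therefore makes it homotopic to a homeomorphism.

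A fully independent alternative, which I would run in parallel as a check, is to triangulate $N/\!\sim$ and feed the triangulation to SnapPy's \texttt{is\_isometric\_to} against $s782$.

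\textbf{Expected main obstacle.} I expect the hard part to be the bookkeeping of the peripheral structure. This means extracting the second cusp group from the vertex cycles at $t_1,u_1$ and confirming that $\phi$ carries both cusp subgroups to conjugates of the SnapPy cusp groups. Irreducibility is the secondary point; if the direct argument becomes delicate, I would fall back on the SnapPy verification.
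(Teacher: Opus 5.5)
Your argument is sound in outline, and it goes further than the paper does. It shares the paper's route up to $\phi$. The paper cuts $\mathcal{F}_\infty/\langle A\rangle$ along $D$ into the ball $N$, reads a presentation off the edge cycles of Table~\ref{table:edgecircle4pp}, simplifies it, and writes down $\phi$. For this statement it then simply defers the passage from a $\pi_1$-isomorphism to a homeomorphism to \cite{JWX2023}. In Section~\ref{section:group} it performs the analogous upgrade using only prime decomposition and the Poincar\'e conjecture.

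Your peripheral-structure check plus Waldhausen is the argument actually needed here. A $\pi_1$-isomorphism alone does not determine a manifold with torus boundary; the square and granny knot complements are the standard counterexample. The cost is extracting the second cusp group from the vertex cycle of $[t_1]=[u_1]$. Half of that check is immediate. You have $\phi(\langle f_1,f_2\rangle)=\langle c^{-1}a,\,bc\rangle=\langle bc,\,ba\rangle$ exactly, not just up to conjugacy, because $ba=(bc)(c^{-1}a)$. So the $q_\infty$-torus is forced to match $C_1$, and the $t/u$-torus must match $C_0$.

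The irreducibility step, however, fails as written. $\Omega_\Sigma$ is not simply connected, and the universal cover of $\overline{M}$ covers $\Omega_\Sigma$; it does not embed in it. The covering $\Omega_\Sigma\to\mathscr{M}_{4,4,\infty;\infty}$ has deck group $\Sigma\cong\mathbb{Z}_4*\mathbb{Z}_4$. If $\Omega_\Sigma$ were simply connected, $\pi_1(\mathscr{M}_{4,4,\infty;\infty})\cong\Sigma$ would have torsion, which contradicts $\phi$. Concretely, the natural surjection onto $\Sigma$ kills $f_1$, while $\phi(f_1)=c^{-1}a\neq 1$. Also, $\Omega_\Sigma$ already excludes the parabolic points.

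You can replace this step with tools you already have. $\pi_1(\overline{M})\cong\pi_1(s782)$ is torsion-free, not $\mathbb{Z}$, and freely indecomposable. By Kneser--Milnor, every prime summand of $\overline{M}$ except one is closed and simply connected, since a summand carrying a boundary torus has infinite $H_1$. Those summands are $S^3$ by Perelman, and the remaining summand is not $S^1\times S^2$. Hence $\overline{M}$ is irreducible. Your Alexander-theorem idea can be salvaged differently: show $\pi_2(\Omega_\Sigma)=0$, which needs the limit set connected, and then $\pi_2(\overline{M})=0$.

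Smaller slips:
\begin{itemize}
\item Incompressibility does not follow from ``$\mathbb{Z}^2$ in a torsion-free group''. The $e_1$ relator only says $f_1$ and $f_2$ commute. Incompressibility follows from irreducibility instead: a compressible boundary torus in an irreducible manifold forces a solid torus, which has one boundary component, not two. Alternatively, $\phi$ maps $\langle f_1,f_2\rangle$ onto $\mathbb{Z}^2$, which is Hopfian.
\item In Step 1, the links of $[q_\infty]$ and $[t_1]=[u_1]$ are tori, not spheres.
\item The manifold check in Step 1 is automatic. Every torsion element of $\Sigma$ is conjugate to a power of the regular elliptic $S$ or $AS=I_1I_3$, and none of these fixes a point of $\partial\hc$. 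So $\Sigma$ acts freely on $\Omega_\Sigma$.
\item The presentation with generators $f_i$ comes from van Kampen on the dual $2$-complex of the face-pairing of $N$. Poincar\'e's polyhedron theorem presents $\Sigma$, which is a different group.
\end{itemize}
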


Note that the Ford domain considered in \cite{JWX2023} is centered at the fixed point of $I_1 I_3 I_2 I_3$, not at the fixed point of $I_1 I_2$ as in our present setting.  
By \cite{Thompson:2010}, the group $\Delta_{4,4,\infty;\infty}$ admits a $\mathbb{Z}_2$--symmetry that exchanges the two conjugacy classes of parabolic elements.  
Therefore, the Ford domains of $\Sigma$ centered at the fixed point of $I_1 I_3 I_2 I_3$ and at the fixed point of $I_1 I_2$ have the same combinatorial and topological structure.

In Section~\ref{section:dirichlet}, we will prove that the even subgroup of the complex
 hyperbolic triangle group $\Delta_{4,4,n;\infty}$ has a Dirichlet domain whose local
  topology and combinatorial structure coincide with those of the Ford domain of the even
   subgroup of $\Delta_{4,4,\infty;\infty}$.  Consequently, the corresponding 3-manifold 
   at infinity is obtained from $s782$ by an appropriate Dehn filling.

%\textcolor{red}{this section is complete. }

\section{Dirichlet domain of the even subgroup of \texorpdfstring{$\Delta_{4,4,n;\infty}$}{}} 
\label{section:dirichlet}

In this section, we analyze the Dirichlet domain of the even subgroup of 
$\Delta_{4,4,n;\infty}$.  
The construction involves a substantial amount of explicit and rather intricate analytic computations, as well as geometric estimates concerning the intersections of various bisectors.  
Readers whose primary interest lies in the spherical CR Dehn filling results may take the conclusions of this section for granted and proceed directly to Sections~6 and~7.

We will prove Theorem \ref{thm:44nDirichlet} for  $n\geq 7$, whence  $\Delta_{4,4,n;\infty}$ is a discrete group  for $n\geq 7$.  In the proof presented here, we require the condition $n \geq 7$ in Propostions \ref{prop:kminus} and   \ref{prop:kstar}. However, the conclusions of   Theorem \ref{thm:44nDirichlet} also holds for the cases $n=5$ and $n=6$. Their  proofs   are slightly simpler, so we omit their proof herein. 
More importantly, we demonstrate that the Ford domain of $\Delta_{4,4,\infty;\infty}$ and the Dirichlet domain of $\Delta_{4,4,n;\infty}$ share the same local combinatoric and topology.

In this section, let $n > 6$ be an integer. Define
\begin{equation}\label{definition:csd}
	c = \cos\left(\frac{\pi}{n}\right), \quad 
	s = \sin\left(\frac{\pi}{n}\right), \quad 
	d = \frac{1}{1 - c^2},
\end{equation}
and let $a$ and $b$ be nonnegative real numbers such that
\begin{equation}\label{definition:ab}
	a^2 = \frac{1}{2}(d - 1)\left(1 + d + \frac{d - 1}{c}\right) \quad \text{and} \quad 
	b^2 = \frac{1}{2}(d - 1)\left(1 + d - \frac{d - 1}{c}\right).
\end{equation}

\subsection{A  matrix representation of \texorpdfstring{$\Delta_{4,4,n;\infty}$}{}}
In this section, we construct a Dirichlet domain for the complex hyperbolic triangle
 group $\Delta_{4,4,n;\infty}$ in the ball model. Although a matrix representation of
  this group was already introduced, we present here an alternative realization that
   facilitates our construction. Let $I_1$, $I_2$, and $I_3$ be the complex reflection
    generators. Following a parametrization analogous to that in \cite{ParkerWX:2016},
	 we establish the following result.

\begin{prop}
	Let $n \geq 5$ be an integer, and let $\Gamma = \langle I_1, I_2, I_3 \rangle$ be the complex hyperbolic triangle group $\Delta_{4,4,n;\infty}$, with parameters $c, s, d, a, b$ defined as in (\ref{definition:csd}) and (\ref{definition:ab}). Then, up to conjugacy, the generators of $\Gamma$ are given by
	\begin{equation*}\label{I1-I2_I3-eq}
		I_1 = \begin{bmatrix}
			-c & s & 0 \\
			s & c & 0 \\
			0 & 0 & -1
		\end{bmatrix}, \quad
		I_2 = \begin{bmatrix}
			-c & -s & 0 \\
			-s & c & 0 \\
			0 & 0 & -1
		\end{bmatrix}, \quad
		I_3 = \begin{bmatrix}
			\dfrac{a^2}{d-1} - 1 & -\dfrac{abi}{d-1} & -a \\[8pt]
			\dfrac{abi}{d-1} & \dfrac{b^2}{d-1} - 1 & -bi \\[8pt]
			a & -bi & -d
		\end{bmatrix}.
	\end{equation*}
\end{prop}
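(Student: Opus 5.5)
The proposition is a verification claim: the three explicit matrices give complex reflections with the required relations, and so realize $\Delta_{4,4,n;\infty}$. I would verify it directly in the ball model with Hermitian form $J_1=\mathrm{diag}(1,1,-1)$, in four steps.

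\textbf{Step 1 (each $I_i$ is a complex reflection in $\mathbf{U}(2,1)$).} For $I_1$ and $I_2$ this is immediate. Their upper-left $2\times 2$ blocks are real symmetric orthogonal matrices with eigenvalues $\pm 1$, and the third coordinate is multiplied by $-1$. So each has eigenvalues $(1,-1,-1)$, and the $+1$ eigenvector, $(s,1+c,0)^{\top}$ or $(-s,1+c,0)^{\top}$ respectively, is positive. Hence each is a reflection in the complex line orthogonal to a positive vector.

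For $I_3$ I would not expand $I_3^*J_1I_3=J_1$ blindly. Instead I would show that $I_3$ has the reflection form $\mathbf z\mapsto -\mathbf z+2\frac{\langle \mathbf z,\mathbf n_3\rangle}{\langle \mathbf n_3,\mathbf n_3\rangle}\mathbf n_3$. Reading off $I_3+\mathbf I$, which should be a rank-one matrix $2\mathbf n_3\mathbf n_3^*J_1/\langle\mathbf n_3,\mathbf n_3\rangle$, suggests
\[
\mathbf n_3=(a,\,bi,\,d-1)^{\top},\qquad \langle\mathbf n_3,\mathbf n_3\rangle=a^2+b^2-(d-1)^2.
\]
From (\ref{definition:ab}) we get $a^2+b^2=(d-1)(d+1)$, so $\langle\mathbf n_3,\mathbf n_3\rangle=2(d-1)>0$. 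Then entrywise comparison with the given matrix is routine, e.g.\ the $(1,1)$ entry is $-1+2a^2/(2(d-1))$. This gives unitarity and the reflection property together.

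\textbf{Step 2 (the relation $(I_1I_2)^n=\mathrm{id}$).} $I_1I_2$ is block diagonal, with upper block a rotation by $2\pi/n$ (its trace is $2c^2-2s^2=2\cos(2\pi/n)$) and lower entry $1$. So it is regular elliptic of order $n$, fixing the origin. This is exactly why the ball model centered there suits the Dirichlet domain.

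\textbf{Step 3 (the relations $(I_2I_3)^4=(I_3I_1)^4=\mathrm{id}$).} For two complex reflections with polar vectors $\mathbf n_i,\mathbf n_j$, the product is determined by
\[
\frac{|\langle \mathbf n_i,\mathbf n_j\rangle|^2}{\langle\mathbf n_i,\mathbf n_i\rangle\langle\mathbf n_j,\mathbf n_j\rangle},
\]
which equals $\cos^2(\pi/4)=1/2$ exactly when the lines meet at angle $\pi/4$, i.e.\ when the product has order $4$. With $\mathbf n_1=(s,1+c,0)^{\top}$ and $\langle\mathbf n_3,\mathbf n_3\rangle=2(d-1)$, this reduces to the identity
\[
|as+bi(1+c)|^2=a^2s^2+b^2(1+c)^2=2(1+c)(d-1),
\]
plus the analogous identity for $\mathbf n_2$. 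These should follow from (\ref{definition:ab}) using $d-1=c^2/s^2$. I expect the sign choice $\pm(d-1)/c$ in $a^2,b^2$ to be exactly what makes the two identities (for $I_1$ and $I_2$) compatible.

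\textbf{Step 4 (parabolicity and identification).} This is the main obstacle. I would compute $\operatorname{tr}(I_1I_3I_2I_3)$ and show it equals $3$, using the standard formula for traces of products of reflections in terms of the Hermitian products $\langle\mathbf n_i,\mathbf n_j\rangle$ together with the Cartan-type triple product $\langle\mathbf n_1,\mathbf n_3\rangle\langle\mathbf n_3,\mathbf n_2\rangle\langle\mathbf n_2,\mathbf n_1\rangle$. Trace $3$ alone allows unipotent or the identity, so I would also check that the element is not the identity, for instance that some explicit entry is nonzero.

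Finally, I would invoke rigidity. Since $n\ge5$ gives $1/4+1/4+1/n<1$, the deformation space of $(4,4,n)$ triangle groups is one-dimensional, parametrized by the angular invariant. The triple product above fixes this invariant, and the condition that $I_1I_3I_2I_3$ be parabolic selects it, matching the parameter $\theta=\arccos(\cos(\pi/n)/2)$ of Section~\ref{subsection:1-dim}. Hence the group is conjugate to $\Delta_{4,4,n;\infty}$. To be safe I would confirm this conjugacy by matching the invariant $\operatorname{tr}(I_1I_2I_3)$ against the Siegel-model matrices at that $\theta$.
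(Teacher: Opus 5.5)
Your proposal is correct and follows the paper's route. The paper's proof only asserts that direct verification shows the $I_i$ are complex reflections with $(I_3I_1)^4=(I_2I_3)^4=(I_1I_2)^n=\mathrm{id}$ and $I_1I_3I_2I_3$ parabolic, deferring the choice of $a,b,d$ to \cite{ParkerWX:2016}; your reflection-form, angle and trace computations carry this out correctly, giving $\operatorname{tr}(I_1I_3I_2I_3)=-1+4s^2d=3$. One small correction to your expectation in Step~3: the identities for $\mathbf n_1$ and $\mathbf n_2$ coincide automatically because $a,b$ are real, and the $\pm(d-1)/c$ split is exactly what that single identity forces once $a^2+b^2=d^2-1$, while $d=1/s^2$ is what parabolicity forces.
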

%\textcolor{red}{assign value of d in the equations of a and b, so, simlify the equations of a and b. }

\begin{proof}
	By a straightforward verification, $I_1$, $I_2$ and $I_3$ are complex reflections in
	 $\mathbf{PU}(2,1)$ satisfying the required group relations: $(I_3I_1)^4 = (I_2I_3)^4 = \mathrm{id}$,
	  $(I_1I_2)^n = \mathrm{id}$, and $I_1I_3I_2I_3$ is parabolic. The parameter conditions for
	   $a$, $b$, and $d$ are derived identically to \cite{ParkerWX:2016}, and are thus omitted.
\end{proof}

\subsection{The combinatorics of the Dirichlet domain}
Let
\begin{equation}\label{A-eq}
	A = I_1I_2 = \begin{bmatrix}
		c^2 - s^2 & 2cs & 0 \\
		-2cs & c^2 - s^2 & 0 \\
		0 & 0 & 1
	\end{bmatrix},
\end{equation}
which is an element of order $n$ and conjugate to
\begin{equation}\label{I1I2-conjugate}
	\begin{bmatrix}
		e^{2\pi i/n} & 0 & 0 \\
		0 & e^{-2\pi i/n} & 0 \\
		0 & 0 & 1
	\end{bmatrix}.
\end{equation}
Similarly to the group $\Delta_{4,4,\infty;\infty}$, we define
\[
S = I_2I_3, \quad R = S^2, \quad Q = (I_1I_3)^2,
\]
in  the group  $\Delta_{4,4,n;\infty}$, which have orders 4, 2, and 2, respectively. Note that the point $o = (0,0,1)^T \in \hc$ is the fixed point of $A$.

For the even subgroup $\Sigma = \langle I_1I_2, I_2I_3 \rangle$, we define a polyhedron $\mathcal{D}$ centered at $o$, bounded by sides lying on $4n$ bisectors. The only difference between the group $\Sigma$ here and the group $\Sigma$ in Section~\ref{section:forddomain} is that here $A$ has finite order.

The fixed point of $I_1I_3I_2I_3 = AS^2$ is given by
\begin{equation*}
	p_{AS^2} = \begin{bmatrix}
		-as(1+c) + ib(1+c)^2 \\
		as^2 - bs(1+c)i \\
		\dfrac{ - (d-1)^{2}(1 + c) s - i a b s^{2} + iab(1 + c)^{2} }{ d-1 }
	\end{bmatrix}.
\end{equation*}

\begin{defn}\label{def:involutions}
	For $k \in \mathbb{Z}$, define the following $4n$ elements of $\Sigma$:
	\[
	A_k^{+} = A^k S A^{-k}, \quad 
	A_k^{-} = A^k S^{-1} A^{-k}, \quad 
	A_k^{\star} = A^k R A^{-k}, \quad 
	A_k^{\diamond} = A^k Q A^{-k}.
	\]
\end{defn}

\begin{defn}\label{def:bisectors}
	Define the bisectors
	\begin{align*}
		\mathcal{B}^{+}_{k} &= \mathcal{B}(o, A^{+}_{k}(o)), &
		\mathcal{B}^{-}_{k} &= \mathcal{B}(o, A^{-}_{k}(o)), \\
		\mathcal{B}^{\star}_{k} &= \mathcal{B}(o, A^{\star}_{k}(o)), &
		\mathcal{B}^{\diamond}_{k} &= \mathcal{B}(o, A^{\diamond}_{k}(o)),
	\end{align*}
	and denote their boundaries by $\partial_{\infty}\mathcal{B}^{+}_{k}$, $\partial_{\infty}\mathcal{B}^{-}_{k}$, $\partial_{\infty}\mathcal{B}^{\star}_{k}$, $\partial_{\infty}\mathcal{B}^{\diamond}_{k}$, respectively.
\end{defn}

Since $A=I_1I_2$ has order $n$, we have the periodicity relations
\[
\mathcal{B}_{k+n}^{+} = \mathcal{B}_k^{+}, \quad 
\mathcal{B}_{k+n}^{-} = \mathcal{B}_k^{-}, \quad 
\mathcal{B}_{k+n}^{\star} = \mathcal{B}_k^{\star}, \quad 
\mathcal{B}_{k+n}^{\diamond} = \mathcal{B}_k^{\diamond}.
\]
Thus, there are exactly \(4n\) distinct bisectors, and the index \(k\) may be taken modulo \(n\).
See Figure~\ref{figure:dirichlet44n} for an illustration of these bisectors.
Throughout, we fix \(k = 0, 1, \dots, n-1\) and \(m = 1, 2, \dots, n-1\).

\begin{defn}\label{def:Dirichletdomain44ni}
	Define the polyhedron $\mathcal{D}$ in $\hc$ by
	\[
	\mathcal{D} = \left\{ p \in \overline{\hc} \,\middle|\, 
	\begin{aligned}
		& |\langle \mathbf{p}, o \rangle| \leq |\langle \mathbf{p}, g(o) \rangle|, \\
		& \forall g \in \{ A_k^{+}, A_k^{-}, A_k^{\star}, A_k^{\diamond} \mid k = 0,1,\dots, n-1 \}
	\end{aligned}
	\right\}.
	\]
	Denote $\mathcal{D}_{\infty} = \mathcal{D} \cap \partial \hc$ and $\partial\mathcal{D}_{\infty}$ its boundary.
\end{defn}

\begin{figure}
	\begin{center}
		\begin{tikzpicture}
			
			\draw[thick] (0,0) circle (4 cm);
			
			\draw (0,0) -- (18:2.5);
			\draw (0,0) -- (54:2.5);
			\draw (0,0) -- (90:2.5);
			\draw (0,0) -- (126:2.5);
			\draw (0,0) -- (162:2.5);
			\draw (0,0) -- (198:2.5);
			\draw (0,0) -- (234:2.5);
			\draw (0,0) -- (270:2.5);
			\draw (0,0) -- (306:2.5);
			\draw (0,0) -- (342:2.5);

			\draw [thick] (18:2.5) arc (162:198:2.5);
			\draw [thick] (54:2.5) arc (198:234:2.5);
			\draw [thick] (90:2.5) arc (234:270:2.5);
			\draw [thick] (126:2.5) arc (270:306:2.5);
			\draw [thick] (162:2.5) arc (306:342:2.5);
			\draw [thick] (162:2.5) arc (18:-18:2.5);
			\draw [thick] (198:2.5) arc (54:18:2.5);
			\draw [thick] (234:2.5) arc (90:54:2.5);
			\draw [thick] (270:2.5) arc (126:90:2.5);
			\draw [thick] (306:2.5) arc (162:126:2.5);

			\draw [thick] (18:2.5) arc (198:223:2.5);		
			\draw [thick] (18:2.5) arc (198:173:2.5);
			\draw [thick] (54:2.5) arc (234:259:2.5);
			\draw [thick] (54:2.5) arc (234:209:2.5);
			\draw [thick] (90:2.5) arc (270:295:2.5);
			\draw [thick] (90:2.5) arc (270:245:2.5);
			\draw [thick] (126:2.5) arc (306:331:2.5);	
			\draw [thick] (126:2.5) arc (306:281:2.5);
			\draw [thick] (162:2.5) arc (-18:7:2.5);
			\draw [thick] (162:2.5) arc (-18:-43:2.5);
			\draw [thick] (198:2.5) arc (18:43:2.5);	
			\draw [thick] (198:2.5) arc (18:-7:2.5);
			\draw [thick] (234:2.5) arc (54:79:2.5);	
			\draw [thick] (234:2.5) arc (54:29:2.5);
			\draw [thick] (270:2.5) arc (90:65:2.5);		
			\draw [thick] (270:2.5) arc (90:115:2.5);
			\draw [thick] (306:2.5) arc (126:151:2.5);
			\draw [thick] (306:2.5) arc (126:101:2.5);
			\draw [thick] (-18:2.5) arc (162:187:2.5);
			\draw [thick] (-18:2.5) arc (162:137:2.5);
			
			\coordinate [label=right:$\frac{\pi}{n}$] (n) at (0.5,0);
			\coordinate [label=right:$1$] (1) at (1.2,-0.3);
			\coordinate [label=right:$2$] (2) at (1.2,0.3);
			\coordinate [label=right:$3$] (3) at (1.8,0);
			\coordinate [label=right:${\mathcal B}_0^{-}$] (B_0n) at (2.1,0);
			\coordinate [label=right:${\mathcal B}_0^{+}$] (B_0p) at (1.6,1.4);
			\coordinate [label=right:${\mathcal B}_0^{\star}$] (B_0p) at (2.3,0.8);
			\coordinate [label=right:${\mathcal B}_0^{\diamond}$] (B_0p) at (2.3,-0.8);
			\coordinate [label=right:${\mathcal B}_1^{+}$] (B_0p) at (1.6,-1.4);
			\coordinate [label=right:${\mathcal B}_1^{\star}$] (B_0p) at (1.3,-2.3);
			\coordinate [label=right:${\mathcal B}_1^{-}$] (B_0p) at (0.2,-1.9);
			\coordinate [label=right:${\mathcal B}_1^{\diamond}$] (B_0p) at (-0.3,-2.8);
		\end{tikzpicture}
		
	\end{center}
	\caption{A schematic view of the Dirichlet domain of complex  triangle group $\Delta(4,4,n)$.}
	\label{figure:dirichlet44n}
\end{figure}

The main result of this section is the following.

\begin{thm} \label{thm:44nDirichlet}
	For $n \geq 5$, let $\Sigma = \langle I_1I_2, I_2I_3 \rangle$ be the even subgroup of the complex hyperbolic triangle group $\Delta_{4,4,n;\infty}$. 
	Then $\mathcal{D}$ is the Dirichlet domain of $\Sigma$ centered at $o \in \hc$. 
	Moreover, $\Sigma$ is discrete and has the presentation
	\[
	\langle S, A \mid A^{n} = S^4 = (AS)^4 = \mathrm{id} \rangle.
	\]
\end{thm}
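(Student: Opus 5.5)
The plan is to apply the Poincaré polyhedron theorem for Dirichlet domains with finite stabilizer of the center (here $\langle A\rangle\cong\mathbb{Z}/n$ fixes $o$). The $4n$ sides $\mathfrak{d}^{j}_k=\mathcal{D}\cap\mathcal{B}^{j}_k$, $j\in\{+,-,\star,\diamond\}$, would be paired exactly as in Section~\ref{section:forddomain}:
\[
A_k^{+}:\mathfrak{d}_k^{-}\to\mathfrak{d}_k^{+},\qquad A_k^{\star}:\mathfrak{d}_k^{\star}\to\mathfrak{d}_k^{\star},\qquad A_k^{\diamond}:\mathfrak{d}_k^{\diamond}\to\mathfrak{d}_k^{\diamond}.
\]
This works because $g$ maps $\mathcal{B}(o,g^{-1}(o))$ to $\mathcal{B}(g(o),o)$, and $(A_k^{+})^{-1}=A_k^{-}$, while $A_k^\star$ and $A_k^\diamond$ are involutions. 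Equivariance under $A$ is automatic, since $A$ fixes $o$ and permutes the bisectors by $k\mapsto k+1 \pmod n$. It therefore suffices to analyze the sides with index $k=0$ and the ridges meeting them.

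\textbf{Step 1: combinatorics.} I would establish the Dirichlet analogues of Propositions~\ref{prop:pair-disjoint44pp}, \ref{prop:44ppGirauddisk}, \ref{prop:44pp3face0plus}--\ref{prop:44pp3face0diamond}. For the explicit matrices $I_1,I_2,I_3$ in the ball model, I would compute $A^{j}_k(o)$ and write each bisector equation $|\langle \mathbf p,o\rangle|=|\langle\mathbf p,A^j_k(o)\rangle|$ in terms of $c,s,d,a,b$. Then I would prove:
\begin{itemize}
\item[(a)] Disjointness. $\mathcal{B}^{j}_0\cap\mathcal{B}^{j'}_m=\emptyset$ for all pairs not adjacent in Figure~\ref{figure:dirichlet44n} (indices mod $n$).
\item[(b)] Tangency. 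The tangencies of the Ford picture persist: $\mathcal{B}^\star_0$ meets $\mathcal{B}^\star_{\pm1}$ only at the fixed points of $AR$ and $A^{-1}R$, and similarly for $\diamond$ with $AQ$, $A^{-1}Q$. These fixed points are parabolic fixed points in $\partial\hc$, because $AR$ and $AQ$ are conjugate to $I_1I_3I_2I_3$ or its inverse up to the $\mathbb{Z}_2$ symmetry.
\item[(c)] Giraud disks. $\mathcal{B}^+_0\cap\mathcal{B}^-_0$, $\mathcal{B}^+_0\cap\mathcal{B}^\star_0$ and $\mathcal{B}^-_0\cap\mathcal{B}^\diamond_0$ are Giraud disks, and the triple intersections through the fixed points of $S$ and of $S^{-1}A^{-1}$ are pairs of geodesics.
\end{itemize}
For (a), the standard device is to parametrize one bisector by its slices and spine coordinates and show that the defining function of the other bisector has a definite sign on it. Equivalently, one can use a balanced-pair argument and estimate $|\langle\cdot,\cdot\rangle|$ via Cauchy--Schwarz-type bounds in $c=\cos(\pi/n)$. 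I expect these bounds to need $n\ge7$, because the relevant angles $2\pi m/n$ must be small enough that neighbours in the $\langle A\rangle$-orbit are the only candidates for intersection. The cases $m$ close to $n/2$ should be the easiest, and $m=\pm1,\pm2$ the delicate ones.

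\textbf{Step 2: topology of sides and cycle conditions.} Using Step 1, I would show that each $\mathfrak{d}^\pm_0$ is a 3-ball whose ideal boundary is a quadrangle union an octagon, and that each of $\mathfrak{d}^\star_0$ and $\mathfrak{d}^\diamond_0$ is a solid light cone, all with the same vertex labels as in Tables~\ref{s0+}, \ref{s0-}. The ridge cycles are then literally those displayed before Theorem~\ref{thm:44pFord}. The cycle through $\mathfrak{d}^-_0\cap\mathfrak{d}^+_0$ has cycle transformation a conjugate of $S^{4}$ or $S^{-1}R$, of order $4$ with the angle sum $2\pi$ checked on the fixed point $\kappa$ of $S$. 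The cycle through $\mathfrak{d}^-_0\cap\mathfrak{d}^+_1$ yields $(AS)^4=\mathrm{id}$ around the fixed point $\varkappa$. The tangency points lie at infinity and are parabolic fixed points; they require only the consistent horoball (cusp) condition, which holds since $AR$ and $AQ$ are parabolic.

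\textbf{Step 3: conclusion.} I would verify that the $\langle A\rangle$-stabilizer condition is satisfied, since the $A$-action permutes the sides compatibly with the pairings. The Poincaré polyhedron theorem then gives three things: $\Sigma$ is discrete, $\mathcal{D}$ is a fundamental domain for the cosets of $\langle A\rangle$ (hence it is the Dirichlet domain centered at $o$), and $\Sigma$ has the presentation generated by $A$ and the side pairings. The relations are $A^n$ (from the stabilizer), $S^4$ and $(AS)^4$ (from the two ridge cycles); the relations $R=S^2$ and $Q=(AS)^2$ are definitional. I expect the main obstacle to be Step~1(a): the explicit bisector-disjointness estimates for all $m$, uniformly in $n$. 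This is where the restriction $n\ge7$ enters. For $n=5,6$ there are finitely many configurations, which I would check directly by the same method.
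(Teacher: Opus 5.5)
\textbf{Verdict: genuine gap in Step 1.} Your overall architecture is the paper's: the coset version of the Poincar\'e polyhedron theorem with stabilizer $\langle A\rangle$, the pairings $A^kSA^{-k}$, $A^kRA^{-k}$, $A^kQA^{-k}$, the two ridge cycles giving $S^4$ and $(AS)^4$, and non-loxodromic cusp conditions at the tangency points. The gap is in Step 1. There you assume the bisectors reproduce the intersection pattern of the isometric spheres in Proposition~\ref{prop:pair-disjoint44pp}, i.e.\ non-adjacent pairs are disjoint and the Ford tangencies persist. For the Dirichlet domain this is false.

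In the Ford domain, $\mathcal{I}_0^{+}$ is tangent to $\mathcal{I}_{-1}^{\star}$ at $u_1$ and to $\mathcal{I}_0^{\diamond}$ at $t_1$, and $\mathcal{I}_0^{\star}\cap\mathcal{I}_0^{\diamond}=\emptyset$. For $\Delta_{4,4,n;\infty}$, by contrast, each of $\mathcal{B}_0^{+}\cap\mathcal{B}_{n-1}^{\star}$, $\mathcal{B}_0^{+}\cap\mathcal{B}_0^{\diamond}$, $\mathcal{B}_0^{\star}\cap\mathcal{B}_0^{\diamond}$ and $\mathcal{B}_0^{\star}\cap\mathcal{B}_{n-1}^{\diamond}$ is a non-empty Giraud disk. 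For example, $(o-R(o))\boxtimes(o-Q(o))$ has norm $-16(d-1)^2s^2<0$, and the paper exhibits a negative point $p'$ in $\mathcal{B}_0^{+}\cap\mathcal{B}_{n-1}^{\star}$. So the sign-definite estimates you plan for (a) must fail on exactly these pairs. Step~2 then has no foundation: it needs exactly four ridges on $\mathfrak{s}_0^{+}$, two on $\mathfrak{s}_0^{\star}$, and isolated ideal contact points.

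\textbf{The missing idea.} You need to show that these disks are hidden behind a third side, so that the \emph{sides} still meet only at a parabolic fixed point even though the \emph{bisectors} cross. The paper does this in three steps.
\begin{enumerate}
\item In Lemma~\ref{lem:bisector-balanced} it uses Acosta's description of the intersection of a balanced pair (an $\mathbb{R}$-plane union a $\mathbb{C}$-line, each checked separately). This shows that $\mathcal{B}_1^{+}\cap\mathcal{B}_0^{\star}\cap\mathcal{B}_0^{-}$ is exactly the fixed point of $AS^2=I_1I_3I_2I_3$.
\item Applying $A^{-1}$, the connected disk $\mathcal{B}_0^{+}\cap\mathcal{B}_{n-1}^{\star}$ meets $\mathcal{B}_{n-1}^{-}$ only at the ideal fixed point of $S^2A$, so it lies in one closed half-space. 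One interior test point then shows it is the half-space not containing $o$ (Proposition~\ref{prop:bisector-S0plus-empty1}). The case $\mathcal{B}_0^{+}\cap\mathcal{B}_0^{\diamond}$ follows by $\iota$ and $A$.
\item Proposition~\ref{prop:empty2} places $\mathcal{B}_0^{\star}\cap\mathcal{B}_0^{\diamond}$ beyond $\mathcal{B}_0^{-}$ by a direct estimate. By $I_2$, it places $\mathcal{B}_0^{\star}\cap\mathcal{B}_{n-1}^{\diamond}$ beyond $\mathcal{B}_0^{+}$.
\end{enumerate}
This hiding mechanism is what makes $\mathcal{D}$ and $\mathcal{F}$ combinatorially identical despite different bisector arrangements. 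Only your $\star$--$\star$ and $\diamond$--$\diamond$ tangencies in (b) persist literally, via Phillips' theorem with $\operatorname{tr}(A^{\pm1}S^2)=3$.

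\textbf{A smaller slip.} The cycle transformation for the ridge $\mathfrak{s}_0^{-}\cap\mathfrak{s}_0^{+}$ is $SRS=\mathrm{id}$, not an element of order $4$. Angle sum $2\pi$ is the right condition. The relation $S^4=\mathrm{id}$ comes from combining $SRS=\mathrm{id}$ with the reflection relation $R^2=\mathrm{id}$.
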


Both Definition~\ref{def:Dirichletdomain44ni} and Theorem~\ref{thm:44nDirichlet} for $\Delta_{4,4,n;\infty}$ should be compared with their counterparts, Definition~\ref{def:Folddomain44ii} and Theorem~\ref{thm:44pFord}, for the group $\Delta_{4,4,\infty;\infty}$.

\begin{defn}
	For $k \in \{0, 1, \dots, n-1\}$, we define the following 3-sides of $\mathcal{D}$:
	\begin{itemize}
		\item $\mathfrak{s}_{k}^{+}=\mathcal{B}_{k}^{+} \cap \mathcal{D}$;
		\item $\mathfrak{s}_{k}^{-}=\mathcal{B}_{k}^{-}  \cap \mathcal{D}$;
		\item $\mathfrak{s}^{\star}_{k}= \mathcal{B}^{\star}_{k} \cap \mathcal{D}$;
		\item $\mathfrak{s}^{\diamond}_{k}=\mathcal{B}^{\diamond}_{k}  \cap \mathcal{D}$.
	\end{itemize}

%\begin{itemize}
%	\item $\mathfrak{s}_{k}^{+}$: contained in the isometric sphere $\mathcal{B}_{k}^{+}$,
%	\item $\mathfrak{s}_{k}^{-}$: contained in $\mathcal{B}_{k}^{-}$,
%	\item $\mathfrak{s}^{\star}_{k}$: contained in $\mathcal{B}^{\star}_{k}$,
%	\item $\mathfrak{s}^{\diamond}_{k}$: contained in $\mathcal{B}^{\diamond}_{k}$.
%\end{itemize}
	Their ideal boundaries are denoted by
	\[
	\partial_{\infty}\mathfrak{s}_{k}^{+} = \mathfrak{s}_{k}^{+} \cap \partial \hc, \quad
	\partial_{\infty}\mathfrak{s}_{k}^{-} = \mathfrak{s}_{k}^{-} \cap \partial \hc, \quad
	\partial_{\infty}\mathfrak{s}^{\star}_{k} = \mathfrak{s}^{\star}_{k} \cap \partial \hc, \quad
	\partial_{\infty}\mathfrak{s}^{\diamond}_{k} = \mathfrak{s}^{\diamond}_{k} \cap \partial \hc.
	\]
\end{defn}

\begin{figure}[htbp]
	\begin{center}
		\begin{tikzpicture}
		\node at (0,0) {\includegraphics[width=14cm,height=14.0cm]{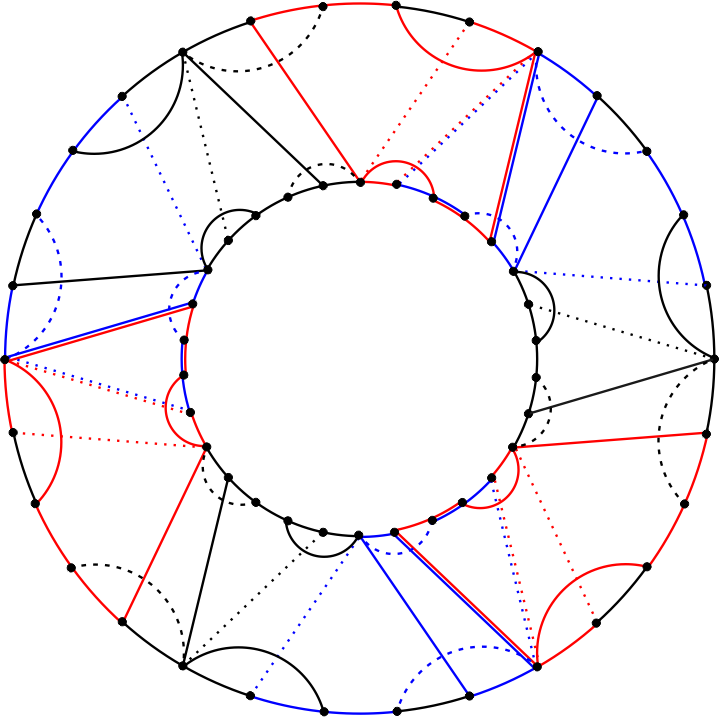}};

\node at (-6.3,4.3){\large $r_0$};
\node at (-5.1,5.3){\large $s_0$};
\node at (-3.5,6.3){\large $t_0$};
\node at (-2.1,6.9){\large $p_1$};
\node at (-0.9,7.1){\large $q_1$};
\node at (0.7,7.1){\large $r_1$};
\node at (2.0,6.9){\large $s_1$};
\node at (3.5,6.3){\large $t_1$};
  		
\node at (4.8,5.3){\large $p_2$};
\node at (5.6,4.5){\large $q_2$};
\node at (6.5,3.0){\large $r_2$};
\node at (7.2,1.3){\large $s_2$};
\node at (7.3,0.3){\large $t_2$};

\node at (0.1,3.1){\tiny $u_1$};
\node at (0.7,3.1){\tiny $v_1$};
\node at (1.2,2.8){\tiny $w_1$};
\node at (1.8,2.5){\tiny $x_1$};
\node at (2.5,2.0){\tiny $y_1$};
\node at (2.7,1.6){\tiny $u_2$};
\node at (3.0,1.1){\tiny $v_2$};
\node at (2.8,0.4){\tiny $w_2$};

\draw[->] (0.4,3.55)--(0.7,1.9);
\draw[->] (2.5,2.6)--(0.9,1.9);
\node at (0.8,1.7){\tiny $\partial_{\infty}\mathfrak{s}_{0}^{\star}$};

\draw[->] (-2.5,2.6)--(-0.9,1.4);
\draw[->] (-1.0,3.4)--(-0.7,1.4);
\node at (-0.8,1.2){\tiny $\partial_{\infty}\mathfrak{s}_{-1}^{\star}$};

\node at (0.02,7.8){\tiny $\partial_{\infty}\mathfrak{s}_{-1}^{\diamond}$};
\draw[->] (-1.8,6.3)--(-0.2,7.4);
\draw[->] (1.2,6.3)--(0.2,7.4);
  
\node at (6.0,6.12){\tiny $\partial_{\infty}\mathfrak{s}_{0}^{\diamond}$};
\draw[->] (4.8,4.7)--(5.9,5.9);
\draw[->] (6.2,2.4)--(5.9,5.9);

\node at (-2.57,1.6){\tiny $u_0$};
\node at (-2.3,2.0){\tiny $v_0$};
\node at (-0.65,3.05){\tiny $y_0$};
\end{tikzpicture}
\end{center}
\caption{An abstract picture of the Dirichlet domain of the even subgroup of $\Delta_{4,4,6;\infty}$. Although the proof is presented for $n \geq 7$, the combinatorial picture for $n=6$ is qualitatively similar and is shown here for illustration. The red regions are $\partial_{\infty}\mathfrak{s}_{0}^{+}$, $\partial_{\infty}\mathfrak{s}_{2}^{+}$ and $\partial_{\infty}\mathfrak{s}_{4}^{+}$, respectively. 
The blue regions are $\partial_{\infty}\mathfrak{s}_{0}^{-}$, $\partial_{\infty}\mathfrak{s}_{2}^{-}$  and $\partial_{\infty}\mathfrak{s}_{4}^{-}$, respectively.}
\label{figure:446dirichlet}
\end{figure}

\begin{figure}[htbp]
	\begin{center}
		\begin{tikzpicture}
		\node at (0,0) {\includegraphics[width=8cm,height=8.0cm]{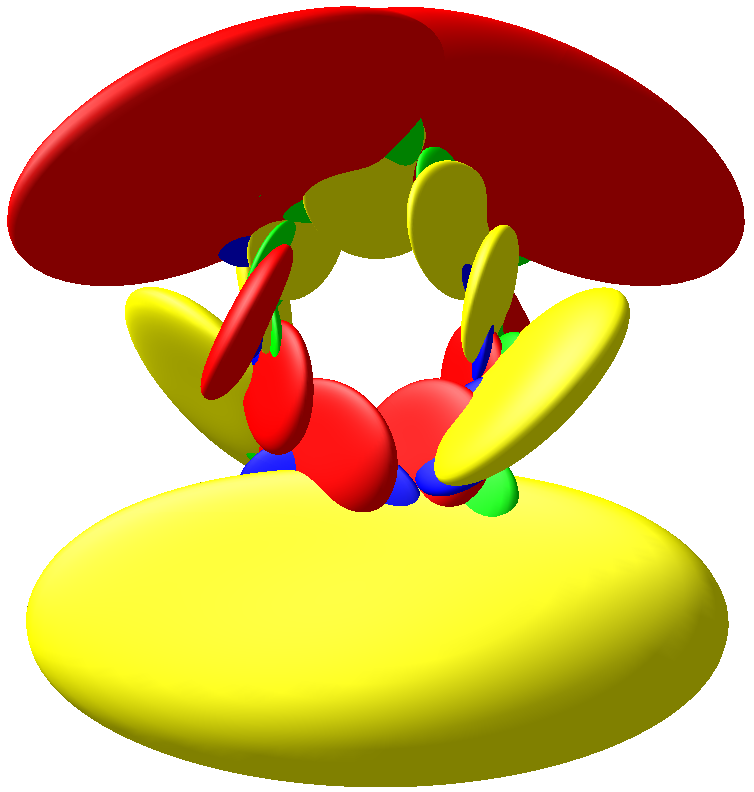}};
\end{tikzpicture}
\end{center}
\caption{A  realistic view of the ideal boundary $\mathcal{D}_{\infty}$ of the Dirichlet domain $\mathcal{D}$ of
$\Delta_{4,4,8;\infty}$. It is the solid torus outside all of the spheres. }
\label{figure:448realistic}
\end{figure}

Before proceeding, the reader may refer to Figures~\ref{figure:448realistic} and~\ref{figure:446dirichlet}. 
Figure~\ref{figure:448realistic} shows a realistic view of the ideal boundary of the Dirichlet domain $\mathcal{D}$ for $\Delta_{4,4,8;\infty}$, where the sides $\partial_{\infty}\mathfrak{s}_{k}^{+}$, $\partial_{\infty}\mathfrak{s}_{k}^{-}$, $\partial_{\infty}\mathfrak{s}^{\star}_{k}$, and $\partial_{\infty}\mathfrak{s}^{\diamond}_{k}$ are colored in red, yellow, green, and blue, respectively. 
Figure~\ref{figure:446dirichlet} provides an abstract schematic of the Dirichlet domain for the even subgroup of $\Delta_{4,4,6;\infty}$, which should be compared with Figure~\ref{figure:44iiford} in Section~\ref{section:forddomain}.

\subsection{Symmetry for  \texorpdfstring{$\mathcal{D}$}{}}
The combinatorial structure of the polyhedron $\mathcal{D}$ is preserved by  certain symmetries. Consider the anti-holomorphic isometry defined by
\[
\iota:\left[\begin{matrix} z_1 \\ z_2 \\ z_3 \end{matrix}\right]
\longmapsto \left[\begin{matrix} \bar{z}_1 \\
	-\bar{z}_2 \\ \bar{z}_3 \end{matrix}\right].
\]
Clearly, $\iota$ is an involution, i.e., $\iota^2 = \mathrm{id}$.

\begin{lem} \label{lem:iota-action}
	Let $\mathbf{n}_1, \mathbf{n}_2, \mathbf{n}_3$ be the polar vectors of $I_1, I_2, I_3$, respectively:
	\[
	\mathbf{n}_1 = \begin{bmatrix} 1 - c \\ s \\ 0 \end{bmatrix}, \quad 
	\mathbf{n}_2 = \begin{bmatrix} c - 1 \\ s \\ 0 \end{bmatrix}, \quad 
	\mathbf{n}_3 = \begin{bmatrix} a \\ bi \\ d - 1 \end{bmatrix}.
	\]
	Then $\iota(\mathbf{n}_1) = \mathbf{n}_2$ and $\iota(\mathbf{n}_3) = \mathbf{n}_3$.
\end{lem}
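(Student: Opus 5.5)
This is a direct computation: apply $\iota$ to each polar vector using its definition, conjugating every entry and negating the middle one. The only inputs needed are that $c,s,a,b,d$ are real, which holds by (\ref{definition:csd}) and (\ref{definition:ab}) since $a,b\geq 0$ are real square roots.

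For $\mathbf{n}_1$, all entries are real, so conjugation leaves them unchanged. Hence
\[
\iota(\mathbf{n}_1)=\begin{bmatrix} 1-c \\ -s \\ 0\end{bmatrix}.
\]
This is $-\mathbf{n}_2$. Since polar vectors are only defined up to a nonzero scalar, it represents the same projective point, and in particular the same complex line and the same reflection $I_2$. So $\iota(\mathbf{n}_1)=\mathbf{n}_2$ in $\mathbb{CP}^2$, which is how the statement should be read. Because $\iota$ is an involution, it also follows that $\iota(\mathbf{n}_2)=\mathbf{n}_1$.

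For $\mathbf{n}_3$, the entries $a$ and $d-1$ are real and fixed by conjugation. The middle entry satisfies $\overline{bi}=-bi$, and the negation in $\iota$ turns this back into $bi$. Thus $\iota(\mathbf{n}_3)=\mathbf{n}_3$ exactly.

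The only point requiring care is the sign in the first case: the statement has to be understood projectively, up to scalar. This is harmless, because a complex reflection depends only on the line spanned by its polar vector. As a consequence, $\iota I_1\iota=I_2$ and $\iota I_3\iota=I_3$, which is presumably how the lemma will be used later.
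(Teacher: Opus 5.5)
Your proof is correct and is the same direct computation the paper uses; the paper simply states that the identities follow from the definition of $\iota$. Your observation that literally $\iota(\mathbf{n}_1)=-\mathbf{n}_2$ is accurate, so the first identity holds only up to a nonzero scalar. That projective equality is all that is needed for the conjugation relations $\iota I_1\iota=I_2$ and $\iota I_3\iota=I_3$ drawn right after the lemma.
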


\begin{proof}
	This is verified by direct computation using the definition of $\iota$.
\end{proof}

As an immediate corollary, we obtain the conjugation relations:
\[
\iota I_1 \iota = I_2, \quad \iota I_2 \iota = I_1, \quad \iota I_3 \iota = I_3.
\]

\begin{prop} \label{prop:symmetry}
	Let $\mathcal{B}_k^{+}$, $\mathcal{B}_k^{-}$, $\mathcal{B}_k^{\star}$, and $\mathcal{B}_k^{\diamond}$ be defined as in Definition~\ref{def:bisectors}. Then for each $k$ modulo $n$, the following symmetries hold:
	\begin{itemize}
		\item $I_2(\mathcal{B}_k^{+})=\mathcal{B}_{-k}^{-}$;
		\item $I_2(\mathcal{B}_k^{\star})=\mathcal{B}_{-k}^{\star}$;
		\item $I_2(\mathcal{B}_k^{\diamond})=\mathcal{B}_{-k-1}^{\diamond}$;
		\item $\iota(\mathcal{B}_k^{+})=\mathcal{B}_{1-k}^{+}$;
		\item $\iota(\mathcal{B}_k^{-})=\mathcal{B}_{-k}^{-}$;
		\item $\iota(\mathcal{B}_k^{\star})=\mathcal{B}_{-k}^{\diamond}$.
	\end{itemize}
\end{prop}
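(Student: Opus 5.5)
The plan is to reduce every item to a conjugation identity in $\Sigma$. Let $h$ be either $I_2$ or $\iota$. Both fix $o=(0,0,1)^{T}$: the matrix of $I_2$ has third row and column $(0,0,-1)$, and $\iota(o)=o$ directly. Both also preserve $|\langle \cdot,\cdot\rangle|$. For $I_2$ this holds because $I_2\in\mathbf{U}(2,1)$. For $\iota$ one checks $\langle \iota \mathbf{z},\iota \mathbf{w}\rangle=\overline{\langle \mathbf{z},\mathbf{w}\rangle}$ for the form $J_1$.

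\textbf{General principle.} For any $g\in\Sigma$ we get
\[
h(\mathcal{B}(o,g(o)))=\mathcal{B}(h(o),hg(o))=\mathcal{B}(o,(hgh^{-1})(o)).
\]
Moreover, if $hgh^{-1}=A^{j}g'A^{l}$, then $A^{l}$ fixes $o$, so $(hgh^{-1})(o)=(A^{j}g'A^{-j})(o)$. Hence it suffices to compute $hA_k^{*}h^{-1}$ modulo right multiplication by powers of $A$.

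\textbf{The involution $I_2$.} Since $I_1,I_2,I_3$ are involutions:
\begin{itemize}
\item $I_2AI_2=I_2I_1=A^{-1}$.
\item $I_2SI_2=I_3I_2=S^{-1}$, and therefore $I_2RI_2=S^{-2}=R$.
\item $I_2QI_2=I_2I_1I_3I_1I_3I_2$. Inserting $I_1I_1$ before the last $I_2$ gives $A^{-1}(I_3I_1I_3I_1)A=A^{-1}Q^{-1}A$. Since $Q=(I_1I_3)^2$ has order $2$, this equals $A^{-1}QA$.
\end{itemize}
Conjugating $A^{k}gA^{-k}$ then yields $A^{-k}S^{-1}A^{k}$, $A^{-k}RA^{k}$ and $A^{-k-1}QA^{k+1}$. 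These give $\mathcal{B}^{-}_{-k}$, $\mathcal{B}^{\star}_{-k}$ and $\mathcal{B}^{\diamond}_{-k-1}$ respectively.

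\textbf{The anti-holomorphic involution $\iota$.} By Lemma~\ref{lem:iota-action} we have $\iota I_1\iota=I_2$, $\iota I_2\iota=I_1$ and $\iota I_3\iota=I_3$.
\begin{itemize}
\item $\iota A\iota=I_2I_1=A^{-1}$.
\item $\iota S\iota=I_1I_3=(I_1I_2)(I_2I_3)=AS$.
\item $\iota S^{-1}\iota=S^{-1}A^{-1}$.
\item $\iota R\iota=(AS)^2=Q$.
\end{itemize}
Hence $\iota A^{k}SA^{-k}\iota=A^{-k}ASA^{k}$, and evaluating at $o$ gives $A^{1-k}S(o)$, i.e.\ $\mathcal{B}^{+}_{1-k}$. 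Similarly $A^{-k}S^{-1}A^{-1}A^{k}(o)=A^{-k}S^{-1}(o)$ gives $\mathcal{B}^{-}_{-k}$. Finally $A^{-k}QA^{k}(o)$ gives $\mathcal{B}^{\diamond}_{-k}$. All indices are read modulo $n$ because $A^{n}=\mathrm{id}$.

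\textbf{Expected obstacle.} The only delicate points are bookkeeping. One must keep track of the order of factors when conjugating, for instance that $Q$ may be written as $(I_3I_1)^2$ because $Q^{2}=\mathrm{id}$. One must also make sure that anti-holomorphic $\iota$ still maps bisectors to bisectors. This holds because the defining condition uses only absolute values of the Hermitian product, which $\iota$ preserves.
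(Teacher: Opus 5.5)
Your proof is correct and takes essentially the same route as the paper. Both transport each bisector by an isometry fixing $o$ and identify the image using $\iota I_1\iota=I_2$, $\iota I_3\iota=I_3$ and the fact that each $I_j$ is an involution; the paper writes out only $I_2(\mathcal{B}_k^{+})$ and $\iota(\mathcal{B}_k^{+})$ by direct substitution, whereas your conjugation table for $A,S,R,Q$ handles all six items uniformly and makes explicit that $\iota$ fixes $o$ and preserves $|\langle\cdot,\cdot\rangle|$.
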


\begin{proof}
	We prove two representative cases; the remaining identities follow by similar arguments.
	
	For the relation $I_2(\mathcal{B}_k^{+}) = \mathcal{B}_{-k}^{-}$:
	\begin{align*}
		I_2(\mathcal{B}_k^{+}) 
		&= \mathcal{B}\left(I_2(o), I_2 A_k^{+}(o)\right) 
		= \mathcal{B}\left(I_2(o), I_2 A^{k} S(o)\right) \\
		&= \mathcal{B}\left(o, I_2 (I_1 I_2)^{k} I_2 I_3(o)\right) 
		= \mathcal{B}\left(o, (I_2 I_1)^{k} I_3(o)\right) \\
		&= \mathcal{B}\left(o, (I_2 I_1)^{k} I_3 I_2(o)\right) 
		= \mathcal{B}_{-k}^{-}.
	\end{align*}
	
	For $\iota(\mathcal{B}_k^{+}) = \mathcal{B}_{1-k}^{+}$:
	\begin{align*}
		\iota(\mathcal{B}_k^{+}) 
		&= \mathcal{B}\left(\iota(o), \iota A^{k} S(o)\right) 
		= \mathcal{B}\left(o, \iota (I_1 I_2)^{k} I_2 I_3(o)\right) \\
		&= \mathcal{B}\left(o, (\iota I_1 \iota \cdot \iota I_2 \iota)^{k-1} \iota I_1 \iota \cdot \iota I_3 \iota(o)\right) \\
		&= \mathcal{B}\left(o, (I_2 I_1)^{k-1} I_2 I_3(o)\right) 
		= \mathcal{B}\left(o, A^{-(k-1)} S(o)\right) 
		= \mathcal{B}_{1-k}^{+}.
	\end{align*}
	
	Here the last equality uses the relation $I_2 I_3 = S$ and the identification $A^{-k} S A^{k} = A_{-(1-k)}^{+} = A_{1-k}^{+}$ modulo $n$.
	
	The verification of the other identities proceeds similarly, employing the fundamental relations $\iota I_1 \iota = I_2$, $\iota I_2 \iota = I_1$, $\iota I_3 \iota = I_3$, and the group structure of $\Sigma$.
	
\end{proof}

\subsection{The intersections of the bisectors}
Owing to the symmetry of $\mathcal{D}$ and its invariance under the action of $A$, it suffices to verify most statements only for the bisectors $\mathcal{B}_0^{+}$ and $\mathcal{B}_0^{\star}$ alone; the results then extend to the entire family by symmetry. The following propositions will be instrumental in analyzing intersections among these bisectors.

\begin{prop}[Phillips \cite{Phillips}] \label{prop:Phillips}
	Let $G \in \mathbf{SU}(2,1)$ satisfy $\operatorname{Re}(\operatorname{tr}(G)) \geq 3$. Then for any $u \in \hc$, the bisectors $\mathcal{B}(u, G(u))$ and $\mathcal{B}(u, G^{-1}(u))$ are disjoint. Moreover, if $G$ is parabolic, these bisectors are asymptotic to the parabolic fixed point of $G$. Consequently, the Dirichlet domain for $\langle G \rangle$ centered at $u$ has exactly two sides.
\end{prop}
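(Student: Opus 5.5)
The statement is Phillips' result, so I would prove it directly from the geometry of a single element $G$ with $\operatorname{Re}(\operatorname{tr} G)\geq 3$. First I would use the trace condition to locate $G$ in the classification. For $G\in\mathbf{SU}(2,1)$ the eigenvalues of an elliptic element have modulus one, so $\operatorname{Re}\operatorname{tr} G\leq 3$, with equality only for the identity. Hence $G$ is loxodromic or parabolic. In the parabolic case, $\operatorname{Re}\operatorname{tr}G=3$ forces $G$ to be unipotent, i.e.\ a Heisenberg translation, either vertical or horizontal. In the loxodromic case the eigenvalues are $re^{i\phi},e^{-2i\phi},r^{-1}e^{i\phi}$, and the condition $\operatorname{Re}\operatorname{tr}\ge 3$ bounds the rotational part $\phi$ in terms of the translation length $\log r$.

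Next I would reduce disjointness to an inequality. A point $p$ lies in $\mathcal{B}(u,G(u))\cap\mathcal{B}(u,G^{-1}(u))$ iff
\[
|\langle \mathbf{p},\mathbf{u}\rangle|=|\langle \mathbf{p},G\mathbf{u}\rangle|=|\langle \mathbf{p},G^{-1}\mathbf{u}\rangle|.
\]
Set $\mathbf{q}=G^{-1}\mathbf{p}$. The condition says $\mathbf{p}$ is equidistant from $u$ and $Gu$, and $G\mathbf{p}$ is equidistant from $Gu$ and $G^2u$. I would normalize: in the parabolic case, conjugate to the Siegel model with $G$ a Heisenberg translation $T$ and $u=(0,0)$ at some height. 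In the loxodromic case, use the model where $G=\operatorname{diag}(re^{i\phi},e^{-2i\phi},r^{-1}e^{i\phi})$ fixes $0$ and $\infty$. In horospherical coordinates $(z,t,h)$ the quantities $|\langle\mathbf{p},G^{\pm1}\mathbf{u}\rangle|^2$ become explicit real functions. Subtracting the two equations, for a Heisenberg translation by $(\tau,\sigma)$, gives a linear relation in $(\Re(z\bar\tau),t)$, while adding them gives a strictly convex expression that cannot equal the central value. Concretely, the function $f(g)=|\langle\mathbf{p},g\mathbf{u}\rangle|^2$ along $g=G^k$ should be a strictly convex function of $k$ (a quadratic-type polynomial in $k$ in the unipotent case, a combination of $r^{2k}$, $r^{-2k}$ and bounded oscillating terms in the loxodromic case). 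Then $f(G^{-1})=f(\mathrm{id})=f(G)$ is impossible, because strict convexity gives $f(\mathrm{id})<\tfrac12(f(G)+f(G^{-1}))$.

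The hard step is proving this convexity in the loxodromic case. The rotation $e^{3i\phi}$ introduces cross terms like $\Re(e^{3ik\phi}\cdots)$ that could spoil convexity. This is exactly where $\operatorname{Re}\operatorname{tr} G\geq3$ enters: I would expand $f(G^{k})$ as $\alpha r^{2k}+\beta r^{-2k}+\gamma+2\Re(\delta\, r^{k}e^{3ik\phi})+2\Re(\epsilon\, r^{-k}e^{-3ik\phi})+\dots$. Then I would check that $f(G)+f(G^{-1})-2f(\mathrm{id})$ is a Hermitian-type quadratic form in the coordinates of $p$ whose positivity is equivalent to $\Re(re^{i\phi}+r^{-1}e^{i\phi}+e^{-2i\phi})\geq 3$. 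I would try to organize it as $|\langle \mathbf{p},(G-I)\mathbf{u}\rangle|^2$-type terms plus $(\operatorname{Re}\operatorname{tr}G-3)$ times a positive quantity. The same computation with $\operatorname{Re}\operatorname{tr}G=3$ covers the parabolic case, where the excess becomes nonnegative and vanishes only at the fixed point. That gives tangency at the parabolic fixed point in $\partial\hc$, i.e.\ the bisectors are asymptotic there.

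Finally, the Dirichlet domain claim follows from this. Since the bisectors for $G$ and $G^{-1}$ are disjoint, the region between them is preserved by the side pairing $G$. By Poincaré's polyhedron theorem, with no ridges and hence no cycle conditions, it is a fundamental domain for $\langle G\rangle$. Every other bisector $\mathcal{B}(u,G^{k}(u))$ with $|k|\geq2$ lies outside, again by the convexity of $k\mapsto f(G^k)$. Hence the Dirichlet domain has exactly two sides.
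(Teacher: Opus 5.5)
The paper gives no proof of this statement; it only cites Phillips. Your argument is fine for unipotent $G$. The loxodromic step you flag as the hard one, however, cannot be carried out. Under the hypothesis $\Re\operatorname{tr}G\ge 3$ alone, both your key inequality and the conclusion itself fail once $G$ has a rotational part.

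Here is a counterexample. Take the paper's Siegel form $J_2$ and set $G=\operatorname{diag}(re^{i\phi},e^{-2i\phi},r^{-1}e^{i\phi})\in\mathbf{SU}(2,1)$, $\mathbf u=(-1,1,1)^{\top}$ and $\mathbf p=(1,\beta,-1)^{\top}$. Then $\langle\mathbf u,\mathbf u\rangle=-1$ and $\langle\mathbf p,\mathbf p\rangle=\beta^2-2$. With $C=r+r^{-1}$ one computes
\[
|\langle\mathbf p,\mathbf u\rangle|^2=(2+\beta)^2,\qquad |\langle\mathbf p,G^{\pm1}\mathbf u\rangle|^2=C^2+\beta^2+2C\beta\cos 3\phi .
\]
Now take $r=e^{0.3}$ and $\phi=0.17$.
\begin{itemize}
\item Then $\Re\operatorname{tr}G=C\cos\phi+\cos2\phi\approx 3.003$, so the hypothesis holds.
\item The choice $\beta=(C^2-4)/(4-2C\cos3\phi)\approx 1.058<\sqrt2$ puts a point $p\in\mathbf{H}^2_{\mathbb{C}}$ on both $\mathcal{B}(u,G(u))$ and $\mathcal{B}(u,G^{-1}(u))$.
\item Any larger $\beta\le\sqrt2$ gives $f(G)+f(G^{-1})<2f(\mathrm{id})$, so the second difference is not positive.
\end{itemize}
You can see the mechanism in your own expansion. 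The cross terms carry the factor $2\cosh(\ell\pm 3i\phi)-2\approx(\ell\pm3i\phi)^2$, with $\ell=\log r$. Its real part $\ell^2-9\phi^2$ can be negative, while $\Re\operatorname{tr}G\ge 3$ only forces $\ell^2\gtrsim 3\phi^2$. So no decomposition into $(\Re\operatorname{tr}G-3)\cdot(\text{positive})$ plus squares exists.

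The hypothesis that works is that $\operatorname{tr}G$ is real and $\ge 3$. Since $\Im\operatorname{tr}G=\sin\phi\,(C-2\cos\phi)$ and $C>2$, this means $G$ is unipotent or loxodromic with $\phi=0$. That is all the paper actually uses: the traces of $A^mS^{-1}$ and $A^mS^2$ computed in Propositions~\ref{prop:kminus} and~\ref{prop:kstar} are real.

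Under that hypothesis your convexity strategy goes through, but it needs one ingredient your sketch lacks. Put $\alpha=p_1\bar u_3$, $\beta=p_2\bar u_2$, $\gamma=p_3\bar u_1$. With $\phi=0$ you get $f(G^k)=|r^{-k}\alpha+\beta+r^k\gamma|^2$ and
\[
f(G)+f(G^{-1})-2f(\mathrm{id})=(C-2)\bigl[(C+2)(|\alpha|^2+|\gamma|^2)+2\Re\bigl((\alpha+\gamma)\bar\beta\bigr)\bigr].
\]
The missing ingredient is a bound on $\beta$:
\begin{itemize}
\item $\langle\mathbf u,\mathbf u\rangle<0$ and $\langle\mathbf p,\mathbf p\rangle\le 0$ give $|u_2|^2<2|u_1||u_3|$ and $|p_2|^2\le 2|p_1||p_3|$, hence $|\beta|^2\le 4|\alpha||\gamma|$.
\item So the cross term is bounded in absolute value by $2(|\alpha|+|\gamma|)^2\le 4(|\alpha|^2+|\gamma|^2)$.
\item The bracket is therefore positive, because $C+2>4$ and $\alpha=\gamma=0$ is impossible for $p\in\overline{\mathbf{H}^2_{\mathbb{C}}}$.
\end{itemize}
Two smaller corrections to the rest of your sketch:
\begin{itemize}
\item For a non-vertical Heisenberg translation, $f$ is quartic in $k$, not quadratic. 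It has the form $(\text{positive convex quadratic})^2+(\text{affine})^2$, so it is still strictly convex away from the fixed point.
\item Applying the inequality at $G^{-k}p$ gives $f(G^{k+1})+f(G^{k-1})>2f(G^k)$ for every $k$. This is the discrete convexity your final step, excluding the bisectors with $|k|\ge 2$, actually requires.
\end{itemize}
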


\begin{prop}[Lemma~2.3 in \cite{DPP:2016}] \label{prop:complex-bisector}
	Let $\mathcal{B}$ be a bisector and $C$ a complex line orthogonal to a complex slice of $\mathcal{B}$. Then $C \cap \mathcal{B}$ is a real geodesic contained in a real slice of $\mathcal{B}$.
\end{prop}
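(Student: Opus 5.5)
We would prove this by normalizing the bisector and then computing $C\cap\mathcal{B}$ explicitly in coordinates adapted to it. The argument uses three standard facts. A bisector $\mathcal{B}$ with complex spine $\Sigma$ and spine $\sigma$ is the preimage of $\sigma$ under the orthogonal projection $\Pi_\Sigma$ onto $\Sigma$ (Mostow's slice decomposition). Its complex slices are the fibres $\Pi_\Sigma^{-1}(x)$ for $x\in\sigma$. Its real slices (meridians) are the $\mathbb{R}$-planes containing $\sigma$. Both decompositions are preserved by the stabilizer of $\mathcal{B}$ in $\mathbf{PU}(2,1)$, so we may conjugate freely by elements of that stabilizer.

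First normalize in the ball model. Take $\Sigma=\{z_2=0\}$, with polar vector $(0,1,0)^{\top}$, and $\sigma=\{(t,0):t\in(-1,1)\}$. The orthogonal projection onto $\Sigma$ is $\mathbf{z}\mapsto \mathbf{z}-\langle \mathbf{z},\mathbf{e}_2\rangle \mathbf{e}_2$, which in affine coordinates is $(z_1,z_2)\mapsto (z_1,0)$. Hence
\[
\mathcal{B}=\{(z_1,z_2)\in\hc : \Im z_1=0\},
\]
and the complex slices are the lines $\{z_1=x\}$ with $x\in(-1,1)$. The real hyperbolic translations along $\sigma$ (acting on the $(z_1,z_3)$-coordinates) preserve $\mathcal{B}$ and permute its complex slices transitively. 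So we may assume $C$ is orthogonal to the slice $L=\{z_1=0\}$, whose polar vector is $\mathbf{e}_1$.

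Next identify $C$. Two complex lines are orthogonal exactly when the polar vector of each lies in the projective subspace defining the other. So the polar vector of $C$ has the form $(0,w_2,w_3)^{\top}$ with $|w_2|>|w_3|$. This gives $C=\{z_2=\lambda\}$, where $\lambda=\bar w_3/\bar w_2$ satisfies $|\lambda|<1$. The rotation $(z_1,z_2)\mapsto (z_1,e^{i\varphi}z_2)$ preserves $\Sigma$, $\sigma$, $\mathcal{B}$ and every complex slice, so we may further assume $\lambda\ge 0$. Then
\[
C\cap\mathcal{B}=\{(t,\lambda): t\in\mathbb{R},\ t^2+\lambda^2<1\}.
\]
This set lies in the standard $\mathbb{R}$-plane $\mathbf{H}^2_{\mathbb{R}}=\{z_1,z_2\in\mathbb{R}\}$, which contains $\sigma$ and is therefore a real slice of $\mathcal{B}$. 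In that $\mathbb{R}$-plane, viewed in the Klein (projective) model, the set is a straight chord, hence a real geodesic.

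Finally, undo the normalizations. The rotation used in the second step maps real slices to real slices, and the other normalizing maps preserve $\mathcal{B}$ and its slice structure. Therefore the conclusion holds for general $C$: the image of the standard real slice under $(z_1,z_2)\mapsto(z_1,e^{-i\varphi}z_2)$ is the $\mathbb{R}$-plane $\{z_1\in\mathbb{R},\ z_2\in e^{-i\varphi}\mathbb{R}\}$, which is again a meridian. The only step requiring care is verifying that the orthogonality condition forces the polar vector of $C$ to lie in $\mathbf{e}_1^{\perp}$, so that $C$ has the form $\{z_2=\lambda\}$. This is a direct check with the Hermitian form $J_1$.
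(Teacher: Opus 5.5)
Your proof is correct. The paper gives no proof of its own to compare against: it imports the statement as Lemma~2.3 of Deraux--Parker--Paupert and uses it in the proof of Proposition~\ref{prop:cross1}.

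Your normalization is a natural self-contained verification. You put the bisector at $\mathcal{B}=\{\Im z_1=0\}$, the slice at $\{z_1=0\}$, and $C$ at $\{z_2=\lambda\}$ with $0\le\lambda<1$. Each step checks out:
\begin{itemize}
\item the orthogonality condition $\langle \mathbf{n}_C,\mathbf{e}_1\rangle=0$ with $J_1$ gives $C=\{z_2=\bar w_3/\bar w_2\}$;
\item the hyperbolic translations along $\sigma$ and the rotations in $z_2$ preserve $\mathcal{B}$ together with both of its slice decompositions;
\item the chord $\{(t,\lambda)\}$ is the projectivization of the real span of $(1,0,0)^{\top}$ and $(0,\lambda,1)^{\top}$, a totally real subspace of signature $(1,1)$, so it is a geodesic.
\end{itemize}

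The argument also gives slightly more than the statement. The geodesic is complete, with both endpoints on $\partial\hc$. Before you rotate $\lambda$ to be real, the meridian containing it is explicitly $\{z_1\in\mathbb{R},\ z_2\in\lambda\mathbb{R}\}$ when $\lambda\neq 0$; when $\lambda=0$ one has $C=\Sigma$ and $C\cap\mathcal{B}=\sigma$.

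Your only external inputs are Mostow's description $\mathcal{B}=\Pi_\Sigma^{-1}(\sigma)$ and the identification of real slices with $\mathbb{R}$-planes through $\sigma$. Both are standard. If you want to tie the argument directly to the paper's equidistant definition of a bisector, two short checks suffice:
\begin{itemize}
\item with $J_1$ one has $|\langle \mathbf{z},(ir,0,1)^{\top}\rangle|=|\langle \mathbf{z},(-ir,0,1)^{\top}\rangle|$ if and only if $\Im z_1=0$;
\item an $\mathbb{R}$-plane through $\sigma$ has Lagrangian tangent plane at the origin spanned by $(1,0)$ and some $(0,e^{i\theta})$, so it equals $\{z_1\in\mathbb{R},\ z_2\in e^{i\theta}\mathbb{R}\}$.
\end{itemize}
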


Let $\mathcal{B}_k^{+}$, $\mathcal{B}_k^{-}$, $\mathcal{B}_k^{\star}$, and $\mathcal{B}_k^{\diamond}$ be the bisectors defined in Definition~\ref{def:bisectors}. We now describe their intersection patterns in the following propositions.

First, we establish global combinatorial conditions for the bisectors. Define
\[
c_m = \cos\left(\frac{2\pi m}{n}\right), \quad s_m = \sin\left(\frac{2\pi m}{n}\right).
\]
Then
\begin{equation}\label{Am-eq}
	A^m = \begin{bmatrix}
		c_m & s_m & 0 \\
		-s_m & c_m & 0 \\
		0 & 0 & 1
	\end{bmatrix}.
\end{equation}

We focus primarily on the case $n \geq 7$, as the cases $4 \leq n \leq 6$ have
 been treated separately.

\begin{prop} \label{prop:kminus}
For each non-zero residue class \( m \) modulo \( n \), the following statements hold:
\begin{enumerate}[(i)]
    \item \label{item:kminus:plus-plus} 
        \(\mathcal{B}_0^{+} \cap \mathcal{B}_{m}^{+} = \emptyset\);
    \item \label{item:kminus:plus-minus} 
        \(\mathcal{B}_0^{+} \cap \mathcal{B}_{m}^{-} = \emptyset\) except when \( m = 0 \) or \( m = n-1 \);
    \item \label{item:kminus:plus-star} 
        \(\mathcal{B}_0^{+} \cap \mathcal{B}_{m}^{\star} = \emptyset\) except when \( m = 0 \) or \( m = n-1 \);
    \item \label{item:kminus:plus-diamond} 
        \(\mathcal{B}_0^{+} \cap \mathcal{B}_{m}^{\diamond} = \emptyset\) except when \( m = 0 \) or \( m = n-1 \).
\end{enumerate}
\end{prop}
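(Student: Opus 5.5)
To prove Proposition~\ref{prop:kminus}, I would reduce each disjointness statement to an explicit inequality on the ball model. Everything rests on the standard criterion for two bisectors with a common centre.

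\textbf{The criterion.} With $\mathbf{o}=(0,0,1)^{\top}$, the bisector $\mathcal{B}(o,g(o))$ is the set of points satisfying
\[
|\langle \mathbf{p},\mathbf{o}\rangle| = |\langle \mathbf{p}, g(\mathbf{o})\rangle|.
\]
Write $\mathbf{p}=(z_1,z_2,1)^{\top}$ with $|z_1|^2+|z_2|^2\le 1$, and write $g(\mathbf{o})=(g_{13},g_{23},g_{33})^{\top}$. The defining equation then becomes
\[
|g_{13}\bar z_1 \ldots + g_{23}\bar z_2 - g_{33}| = 1 .
\]
Since $\langle \mathbf{p},g(\mathbf{o})\rangle$ is affine in $(\bar z_1,\bar z_2)$, each bisector is a real quadric. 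This quadric is the boundary of a round region in the affine coordinates, intersected with the ball.

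\textbf{Computing the data.} I would compute the third columns of $S$, $S^{-1}$, $R=S^2$ and $Q=(AS)^2$ from the matrices of $I_1$, $I_2$, $I_3$. The same columns for $A^m g A^{-m}$ then follow by applying the rotation $A^m$ of (\ref{Am-eq}), because $A^{-m}$ fixes $\mathbf{o}$.

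The key quantity for $\mathcal{B}(o,g(o))$ is the vector $\mathbf{v}_g=(g_{13},g_{23})$, with $|g_{33}|=\cosh(\rho(o,g(o))/2)$. A point $\mathbf{p}$ lies on this bisector exactly when
\[
|\langle (z_1,z_2),\mathbf{v}_g\rangle_{\mathbb C^2} - g_{33}| = 1 .
\]
So the projection of the bisector onto the complex direction $\mathbf{v}_g/|\mathbf{v}_g|$ is a disc-shaped region, the ``shadow'' of the bisector. Two bisectors $\mathcal{B}(o,g(o))$ and $\mathcal{B}(o,h(o))$ are disjoint as soon as their half-spaces containing $g(o)$ and $h(o)$ (the non-$o$ sides) are disjoint.

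My plan for each pair is to parametrize the closed ball by the angle $\alpha_m$ between the complex directions $\mathbf{v}_g$ and $A^m\mathbf{v}_h$. This angle is essentially $2\pi m/n$ plus a fixed offset depending on the type. I would then show that no point $(z_1,z_2)$ with $|z|\le1$ satisfies both
\[
|\langle z,\mathbf{v}_g\rangle - g_{33}|\le 1 \quad\text{and}\quad |\langle z,A^m\mathbf{v}_h\rangle - h_{33}|\le 1 .
\]

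\textbf{Reducing to a one-variable inequality.} To get a clean sufficient condition, I would apply Cauchy--Schwarz and minimize over the unit ball. Maximizing $|\langle z,\mathbf{v}_g\rangle|$ and $|\langle z,A^m\mathbf{v}_h\rangle|$ simultaneously over $|z|\le1$ is a $2\times2$ Hermitian eigenvalue problem. This reduces each case to an explicit inequality in $c$, $s$ and $c_m$ (equivalently $\cos(2\pi m/n\pm\text{offset})$).

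\textbf{Symmetry reductions.} These cut down the number of cases before any computation.
\begin{itemize}
\item Proposition~\ref{prop:symmetry} lets $I_2$ and $\iota$ permute the families. For example, $\iota$ sends the pair $(\mathcal{B}_0^+,\mathcal{B}_m^+)$ to $(\mathcal{B}_1^+,\mathcal{B}_{1-m}^+)$, and then $A^{-1}$ gives $(\mathcal{B}_0^+,\mathcal{B}_{-m}^+)$. So $m$ and $n-m$ are equivalent, and only $1\le m\le n/2$ is needed in (i).
\item For (iii), $\iota(\mathcal{B}^\star_k)=\mathcal{B}^\diamond_{-k}$ relates (iii) to (iv). Combined with the $\iota$-image of $\mathcal{B}_0^+$, this should reduce (iv) to a reindexed version of (iii).
\item For the smallest excluded neighbours, such as $m=1$ in (i) or $m=1$ and $m=n-2$ in (ii)--(iv), Proposition~\ref{prop:Phillips} may apply directly. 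In (i), $A^m S A^{-m}$ and $S^{-1}$ are paired through $(A^mSA^{-m}S^{-1})$, and where such a product has $\operatorname{Re}\operatorname{tr}\ge 3$ Phillips gives disjointness immediately. I would check traces such as $\operatorname{tr}(SAS^{-1}A^{-1})$ first.
\end{itemize}

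\textbf{Main obstacle.} The hard cases are the ``near'' indices, $m=1,2$ and $m=n-2,n-3$. There the shadows come close and the crude Cauchy--Schwarz bound fails. This is where $n\ge7$ should enter: the inequalities hold once $\cos(2\pi/n)$ is large enough, and fail in the crude form for $n=5,6$.

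For these cases I would write the two boundary conditions in real coordinates, eliminate along the complex line spanned by $\mathbf{v}_g$ and $A^m\mathbf{v}_h$ using Proposition~\ref{prop:complex-bisector}, and reduce to a one-variable polynomial. I would then show this polynomial has no admissible root by sign estimates in $c\in[\cos(\pi/7),1)$.

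The far indices, with $m$ near $n/2$, should follow from the easy bound, since the angle between the shadows there is large.
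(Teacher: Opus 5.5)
\textbf{There is a genuine gap: no working argument for (i) with $m=\pm1$, and the ``easy bound'' fails as stated.}

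\textbf{The relaxation loses the phase.} You replace the disc conditions $|\langle z,\mathbf v_g\rangle-\bar g_{33}|\le 1$ by lower bounds on $|\langle z,\mathbf v_g\rangle|$ and solve a Hermitian $2\times2$ eigenvalue problem. This discards the phase of the centres, and that phase is exactly what separates the half-spaces at the \emph{far} indices, where you claim the easy bound suffices.

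Take (i) with $n$ even and $m=n/2$. Then $A^{m}\mathbf v=-\mathbf v$ spans the \emph{same} complex line. The Hermitian angle between $\mathbf v$ and $A^m\mathbf v$ is not $2\pi m/n$; only the real angle in $\mathbb R^4$ is. The point $z=(d-1)\mathbf v/|\mathbf v|^2$ satisfies both modulus bounds with $|z|^2=(d-1)/(d+1)<1$. Yet the half-spaces are disjoint, since their discs are centred at $d$ and $-d$.

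\textbf{How to repair it.} Keep real parts instead. All centres here are positive reals ($d$ for $\mathcal B^{\pm}$, $2d-1$ for $\mathcal B^{\star}$), so each non-$o$ half-space lies in $\{\operatorname{Re}\langle z,\mathbf v\rangle\ge |g_{33}|-1\}$. The least norm over two real half-spaces of $\mathbb R^4$ is then an explicit real Gram computation.
\begin{itemize}
\item In (i) the least norm is $2(d-1)/\bigl((d+1)(1+c_m)\bigr)$, which exceeds $1$ exactly when $m\ne0,\pm1$.
\item In (ii) the condition becomes $1-3s^2>c\cos\bigl((2m+1)\pi/n\bigr)$. This is precisely $\operatorname{tr}(A^mS^{-1})>3$, which is how the paper proves (ii): Proposition~\ref{prop:Phillips} with $G=A^mS^{-1}$, for \emph{all} $m\ne0,n-1$, not just near neighbours.
\item A short computation with $R(o)$ in place of the second vector indicates that the same bound also settles (iii), hence (iv), for all $m\ne0,n-1$. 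The paper does this by parametrizing the intersection torus and bounding a trigonometric polynomial, which is much heavier.
\end{itemize}

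\textbf{Where Phillips cannot help.} It cannot be used for (iii) at all, because $d(o,S(o))\ne d(o,R(o))$. In (i), the commutator $SAS^{-1}A^{-1}$ you propose to test does not produce the pair $\mathcal B_0^+,\mathcal B_1^+$. Both of those come from $S$, not from some $G$ and $G^{-1}$.

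\textbf{The unresolved case: (i) with $m=\pm1$.} Here every half-space relaxation fails; the real-part bound gives only $1/(1+s^2)<1$. Your fallback is not a well-defined reduction. The vectors $\mathbf v$ and $A\mathbf v$ are independent, so there is no ``complex line spanned by'' them. Proposition~\ref{prop:complex-bisector} only says that a complex line orthogonal to a complex slice meets a bisector in a geodesic, and that does not reduce a two-dimensional intersection problem to one variable.

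\textbf{The missing idea.} You already wrote it down but did not use it: by Proposition~\ref{prop:symmetry}, $\iota$ \emph{exchanges} $\mathcal B_0^+$ and $\mathcal B_1^+$.
\begin{itemize}
\item Their intersection, if non-empty, is a Giraud disk whose closure is a closed disk. It is $\iota$-invariant, so by Brouwer it contains a point of $\mathrm{Fix}(\iota)=\{[u,vi,1]^{\top}:u,v\in\mathbb R\}$.
\item Equivalently, the convex set cut out of the closed ball by the two non-$o$ half-spaces is $\iota$-invariant, so its point of least norm is $\iota$-fixed.
\item On this $\mathbb R$-plane the single condition is $f(u,v)=|\langle q_{u,v},S(o)\rangle|^2-1=0$, where $f$ is a quadratic with positive definite Hessian.
\item The minimiser of $f$ lies outside the unit disc, so it suffices to check $f>0$ on the unit circle.
\end{itemize}
This is the paper's argument for $m=1$. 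Without it, or an equally sharp treatment of this closest pair, your proposal does not prove (i).
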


\begin{proof}
We begin by observing that Item~\ref{item:kminus:plus-diamond} follows immediately from Item~\ref{item:kminus:plus-star} via the anti-holomorphic isometry \(\iota\). 

\textbf{Proof of Item~\ref{item:kminus:plus-plus}.} 
Using the relation 
\[
\mathcal{B}_{0}^{+} \cap \mathcal{B}_{-m}^{+} = A^{-m}(\mathcal{B}_{0}^{+} \cap \mathcal{B}_{m}^{+}),
\]
we may restrict to the case \( 1 \leq m \leq \lfloor n/2 \rfloor \), which ensures \( 0 \leq s_m < 1 \). We divide the argument into two subcases.

\textbf{Case 1: \( m = 1 \).} 
The involution \(\iota\) interchanges the bisectors \(\mathcal{B}_{0}^{+}\) and \(\mathcal{B}_{1}^{+}\). If their intersection is non-empty, it must lie in the real plane fixed by \(\iota\), whose points have the form \( q_{u,v} = (u, v i, 1)^t \) with \( u, v \in \mathbb{R} \). Assume for contradiction that \( q_{u,v} \in \mathcal{B}_{0}^{+} \cap \mathcal{B}_{1}^{+} \). Then the bisector conditions yield:
\[
|\langle q_{u,v}, S(o)\rangle|^2 = |\langle q_{u,v}, o\rangle|^2, \quad
|\langle q_{u,v}, AS(o)\rangle|^2 = |\langle q_{u,v}, o\rangle|^2.
\]
A direct computation shows these equations are equivalent. Define the function:
\[
f(u,v) = |\langle q_{u,v}, S(o)\rangle|^2 - |\langle q_{u,v}, o\rangle|^2.
\]
Explicitly:
\[
f(u,v) = a^2 (c^2 u^2 + s^2 v^2) + d (2 b c v - 2 a c u) - 2 a b u v + b^2 (c^2 v^2 + s^2 u^2) + d^2 - 1.
\]
We analyze \( f(u,v) \) on the closed unit disk \( \{ (u,v) \mid u^2 + v^2 \leq 1 \} \). The Hessian matrix of \( f \) is positive definite, implying strict convexity and the existence of a unique global minimum. This minimum equals \( -1 \) and is attained at:
\[
(u_0, v_0) = \left( \frac{a d}{(d-1)^2}, \frac{b d}{(d-1)^2} \right).
\]
However, we compute:
\[
u_0^2 + v_0^2 = \frac{2 - c^2}{c^6} > 1,
\]
so the minimum over the unit disk occurs on the boundary \( u^2 + v^2 = 1 \). Parametrize the unit circle by \( e^{it} = (\cos t, \sin t) \), obtaining:
\[
f(e^{it}) = \sin^2 t (a^2 s^2 + b^2 c^2) - 2 a \cos t (b \sin t + c d) + \cos^2 t (b^2 + c (d-1)^2) + 2 b c d \sin t + d^2 - 1.
\]
An elementary calculus argument confirms that \( f(e^{it}) > 0 \) for all \( t \in \mathbb{R} \), which yields the desired contradiction.

\textbf{Case 2: \( 2 \leq m \leq \lfloor n/2 \rfloor \).} 
Suppose \( p = [z, w, 1]^t \in \mathcal{B}_{0}^{+} \cap \mathcal{B}_{m}^{+} \). Then the bisector equations are:
\begin{align*}
1 &= | z(ac - s b i) + w(a s + c b i) - d |, \\
1 &= | z(a c_m + b s_m i) + w(-a s_m + b c_m i) - d |.
\end{align*}
Solving this system yields:
\begin{align*}
z &= \frac{1}{s_m(a^2 - b^2)} \big[ a c s_m(d + e^{iu}) - a c_m s(d + e^{iu}) + a s(d + e^{iv}) \\
&\quad - i b c c_m(d + e^{iu}) + i b c(d + e^{iv}) - i b s s_m(d + e^{iu}) \big], \\
w &= \frac{1}{s_m(a^2 - b^2)} \big[ a c c_m(d + e^{iu}) - a c(d + e^{iv}) + a s s_m(d + e^{iu}) \\
&\quad + i b c s_m(d + e^{iu}) - i b c_m s(d + e^{iu}) + i b s(d + e^{iv}) \big].
\end{align*}
Define the quantity:
\[
E = |s_m(a^2 - b^2)|^2 (|z|^2 + |w|^2 - 1).
\]
Expanding this expression gives:
\[
\begin{aligned}
E = & -2 + 2 c_m d^2 + 2 d^4 - 2 c_m d^4 - s_m^2 - 4 b^2 s_m^2 - 4 b^4 s_m^2 + 2 d^2 s_m^2 \\
& - 2 (-1 + c_m) d (-1 + d^2) \cos u - 2 c_m (-1 + d^2) \cos(u - v) \\
& - 2 d \cos v + 2 c_m d \cos v + 2 d^3 \cos v - 2 c_m d^3 \cos v + 4 b^2 d^2 s_m^2 \\
& + 4 a b s_m \sin(u - v) + 4 a b d s_m (\sin u - \sin v) - d^4 s_m^2.
\end{aligned}
\]
Let \( E_0 \) denote the constant terms. Minimizing the trigonometric part over \( u, v \) yields the estimate:
\[
E \ge E_0 - 2\rho_{1,1} - \rho_{1,2},
\]
where we define:
\begin{align*}
\rho_{1,1} &= 2\sqrt{(1 - c_m)^2 d^2 (d^2 - 1)^2 + 4 a^2 b^2 d^2 s_m^2}, \\
\rho_{1,2} &= 2\sqrt{c_m^2 (d^2 - 1)^2 + 4 a^2 b^2 s_m^2}.
\end{align*}
After algebraic simplification, we obtain the function:
\[
F(c, c_m) = \delta_{1,1} + c^6 c_m^2 + (2 c^4 - 4 c^2) c_m - 2 c^2 (1 - c^2)^2 \delta_{1,2} + 4 c^2 (c^2 - 1) \delta_{1,3},
\]
with the auxiliary expressions:
\begin{align*}
\delta_{1,1} &= -2 c^8 + 7 c^6 - 12 c^4 + 8 c^2, \\
\delta_{1,2} &= \sqrt{4 + c^4 + c^2(c_m^2 - 5)}, \\
\delta_{1,3} &= -8 - 2 c^4 + c^2(9 + c_m), \\
\delta_{1,4} &= \sqrt{(c_m - 1)\delta_{1,3}}.
\end{align*}
We aim to prove that \( F(c, c_m) > 0 \) for \( c \in (\frac12, 1) \) and \( -1 < c_m < 8 c^4 - 8 c + 1 \). Observe the bounds:
\[
\delta_{1,2} \leq \sqrt{4 + c^4 - 4 c^2} = 2 - c^2, \quad \text{and} \quad \delta_{1,4} < \sqrt{\frac{3}{2}}(1 - c_m).
\]
Using these estimates, we derive:
\[
\begin{aligned}
F(c, c_m) &\geq c^6 c_m^2 - c^6 + 2 c^4 c_m - 2 c^4 - 4 c^2 c_m + 4 c^2 + 4 c^2 (c^2 - 1) \sqrt{\frac{3}{2}}(1 - c_m) \\
&= c^2 (1 - c_m) \left[ -c^4 (c_m + 1) + (2\sqrt{6} - 2) c^2 - (2\sqrt{6} - 4) \right].
\end{aligned}
\]
Furthermore, we verify:
\[
\begin{aligned}
& -c^4 (c_m + 1) + (2\sqrt{6} - 2) c^2 - (2\sqrt{6} - 4) \\
&\quad \geq 8 c^5 - 8 c^8 + (2\sqrt{6} - 2) c^2 + 4 - 2\sqrt{6} > 0,
\end{aligned}
\]
which completes the proof of Item~\ref{item:kminus:plus-plus}.

\textbf{Proof of Item~\ref{item:kminus:plus-minus}.} 
Note the identities:
\[
\mathcal{B}_0^+ = \mathcal{B}(o, S(o)) = \mathcal{B}(o, (A^{m} S^{-1})^{-1}(o)), \quad
\mathcal{B}_m^- = \mathcal{B}(o, A^m S^{-1}(o)).
\]
We demonstrate that \( A^m S^{-1} \) is hyperbolic with trace at least \( 3 \) for \( m \neq 0, n-1 \). Computing the trace:
\[
\begin{aligned}
\operatorname{tr}(A^m S^{-1}) &= -c c_{2m} \left(-1 + \frac{a^2}{d-1}\right) - c s_{2m} \frac{a b i}{d-1} + s c_{2m} \frac{a b i}{d-1} \\
&\quad - s s_{2m} \left(-1 + \frac{b^2}{d-1}\right) + s s_{2m} \left(-1 + \frac{a^2}{d-1}\right) - s c_{2m} \frac{a b i}{d-1} \\
&\quad + c s_{2m} \frac{a b i}{d-1} - c c_{2m} \left(-1 + \frac{b^2}{d-1}\right) + d \\
&= d + \frac{b^2 - a^2}{d-1} c_{2m+1} = \frac{1 - c c_{2m+1}}{s^2}.
\end{aligned}
\]
For \( m = 0 \) or \( n-1 \), we have \( \operatorname{tr}(A^m S^{-1}) = 1 \), corresponding to order 4 elements. For other values of \( m \):
\[
\operatorname{tr}(A^m S^{-1}) \ge \frac{1 - c c_3}{s^2} = \frac{1 - c^2(1 - 2 s^2) + 2 s^2 c^2}{s^2} = 1 + 4 c^2 > 3.
\]
By Proposition~\ref{prop:Phillips}, the Dirichlet domain for the cyclic group \( \langle A^m S^{-1} \rangle \) has faces \( \mathcal{B}_0^{+} \) and \( \mathcal{B}_{m}^{-} \) that do not intersect.

\textbf{Proof of Item~\ref{item:kminus:plus-star}.} 
One can check that \( \mathcal{B}_0^{+} \) and \( \mathcal{B}_{0}^{\star} \), as well as \( \mathcal{B}_0^{+} \) and \( \mathcal{B}_{-1}^{\star} \), form coequidistant pairs. Define the vectors:
\begin{align*}
v_1'&= (o - S(o)) \boxtimes (o - R(o)), \\
v_2'&= (o - S(o)) \boxtimes (o - A^{-1} R(o)).
\end{align*}
Direct computation shows:
\[
\langle v_1', v_1' \rangle = -4 (d-1)^2 s^2 < 0,
\]
and
\[
\langle v_2', v_2' \rangle = -\frac{4 (d-1)\left( \sqrt{2} c^4 + (3 - 2\sqrt{2}) c^2 - \sqrt{8 - 2 c^2} c^3 - \sqrt{2} + 2 \right)}{c^2 - 1}.
\]
Since
\[
\sqrt{2} c^4 + (3 - 2\sqrt{2}) c^2 - \sqrt{8 - 2 c^2} c^3 - \sqrt{2} + 2 < 0 \quad \text{for } c \in \left( \cos\frac{\pi}{7}, 1 \right),
\]
the intersections \( \mathcal{B}_0^{+} \cap \mathcal{B}_{0}^{\star} \) and \( \mathcal{B}_0^{\star} \cap \mathcal{B}_{-1}^{\star} \) are non-empty.

We now restrict our attention to the intersections \( \mathcal{B}_{0}^{\star} \cap \mathcal{B}_{m}^{\diamond} \) for \( 1 \leq m \leq \lfloor n/2 \rfloor \) (so that \( s_m \geq 0 \)). The case \( \lfloor n/2 \rfloor < m \leq n-2 \) follows by similar arguments.

Suppose \( p = [z, w, 1]^t \in \mathcal{B}_{0}^{+} \cap \mathcal{B}_{m}^{\star} \). Then the defining equations are:
\begin{align*}
1 &= | z(a c - s b i) + w(a s + c b i) - d |, \\
1 &= | z(a c c_m + a c_m + a s s_m - i (-b c s_m + b c_m s + b s_m)) \\
&\quad + w(-a c s_m + a c_m s - a s_m - i (-b c c_m + b c_m - b s s_m)) - (2d - 1) |.
\end{align*}
Solving this system yields:
\begin{align*}
z &= \frac{1}{D_1} \big[ a c s_m (d + e^{iu}) - a c_m s (d + e^{iu}) + a s (2d - 1 + e^{iv}) \\
&\quad + a s_m (d + e^{iu}) - i b c c_m (d + e^{iu}) + i b c (2d - 1 + e^{iv}) \\
&\quad + i b c_m (d + e^{iu}) - i b s s_m (d + e^{iu}) \big], \\
w &= \frac{1}{D_1} \big[ a c c_m (d + e^{iu}) - a c (2d - 1 + e^{iv}) + a c_m (d + e^{iu}) \\
&\quad + a s s_m (d + e^{iu}) + i b c s_m (d + e^{iu}) - i b c_m s (d + e^{iu}) \\
&\quad + i b s (2d - 1 + e^{iv}) - i b s_m (d + e^{iu}) \big],
\end{align*}
where the denominator is:
\[
D_1 = a^2 (c s_m + c_m s + s_m) + 2 i a b (c c_m - s s_m) + b^2 ((c - 1) s_m + c_m s).
\]
Define the quantity:
\[
E = |D_1|^2 (|z|^2 + |w|^2 - 1).
\]
Expanding yields:
\[
\begin{aligned}
E = & -2 + 8 d^4 - 8 c_m d^4 - 4 c_m d s s_m - 8 b^2 c_m d s s_m - 4 d^2 s s_m - 8 b^2 d^2 s s_m \\
& + 4 c_m d^2 s s_m + 8 b^2 c_m d^2 s s_m + 4 c_m d^3 s s_m + 4 d^4 s s_m - 4 c_m d^4 s s_m + 4 d s_m^2 \\
& - 12 d^2 s_m^2 + 12 d^3 s_m^2 - 4 d^4 s_m^2 - 2\alpha_{1,1} \cos u - 2\alpha_{1,2} \cos(u - v) + \alpha_{1,3} \cos v \\
& + \beta_{1,1} \sin u + \beta_{1,2} \sin(u - v) + \beta_{1,3} \sin v,
\end{aligned}
\]
with coefficients:
\begin{align*}
\alpha_{1,1} &= 4 d^2 - 4 d^3 + 2 c_m d (1 - 3 d + 2 d^2) - s s_m - 2 b^2 s s_m + 2 d s s_m \\
&\quad + 4 b^2 d s s_m + d^2 s s_m - 2 d^3 s s_m, \\
\alpha_{1,2} &= 2 c_m (-1 + d) d + (1 + 2 b^2 - d^2) s s_m, \\
\alpha_{1,3} &= -4 b^2 d s s_m - 2 d s s_m + 2 d^2 - 4 d^3 + 4 c_m d^3 + 4 c_m d^2 \\
&\quad - 2 d^3 s s_m + 2 - 4 d, \\
\beta_{1,1} &= 4 a b s_m (2d - 1), \quad 
\beta_{1,2} = 4 a b s_m, \quad 
\beta_{1,3} = -4 a b d s_m.
\end{align*}
Let \( E_0 \) denote the constant part. The trigonometric component \( T \) satisfies the inequality:
\[
T \geq -\sqrt{(2\alpha_{1,1})^2 + \beta_{1,1}^2} - \sqrt{\alpha_{1,3}^2 + \beta_{1,3}^2} - \sqrt{(2\alpha_{1,2})^2 + \beta_{1,2}^2}.
\]
Consequently:
\[
E \geq E_0 - \sqrt{(2\alpha_{1,1})^2 + \beta_{1,1}^2} - \sqrt{\alpha_{1,3}^2 + \beta_{1,3}^2} - \sqrt{(2\alpha_{1,2})^2 + \beta_{1,2}^2}.
\]
Define the function:
\[
E(c, c_m) = E_0 - \sqrt{(2\alpha_{1,1})^2 + \beta_{1,1}^2} - \sqrt{\alpha_{1,3}^2 + \beta_{1,3}^2} - \sqrt{(2\alpha_{1,2})^2 + \beta_{1,2}^2}.
\]
We express this as \( E(c, c_m) = -\frac{2}{c s^8} F(c, c_m) \), where:
\[
\begin{aligned}
F(c, c_m) = & \; c - 10 c^3 + 7 c^5 + 2 c^3 (3 - c^2) c_m - 2 c^7 c_m^2 + 2 c^3 s^2 \sqrt{\Gamma_{1,1}} + c s^2 \sqrt{\Gamma_{1,2}} \\
& + s \sqrt{1 - c_m^2} ( -c^4 - c^6 + 2 c^6 c_m ) - 2 s \sqrt{1 - c_m^2} + 2 c_m s \sqrt{1 - c_m^2} \\
& + 2 c^3 s^4 \sqrt{1 + c^2 (c_m^2 - 1) - c c_m s \sqrt{1 - c_m^2}}.
\end{aligned}
\]
Here the auxiliary expressions are:
\begin{align*}
\Gamma_{1,1} &= 5 - 4 c_m + c^6 (c_m^2 - 1) + c^4 (2 c_m^2 - 1) - c (-2 + c_m) s \sqrt{1 - c_m^2} \\
&\quad - 2 c^3 (-1 + c_m) s \sqrt{1 - c_m^2} - c^5 c_m s \sqrt{1 - c_m^2} + c^2 (1 - 4 c_m + c_m^2), \\
\Gamma_{1,2} &= 4 c^5 - 2 c^{10} + c^{12} + c^8 (1 + 4 c_m) + 2 c^7 s \sqrt{1 - c_m^2} - 2 c^9 s \sqrt{1 - c_m^2} \\
&\quad - 4 c^4 (-2 c_m + s \sqrt{1 - c_m^2}) + 4 c^6 (-3 c_m + c_m^2 + s \sqrt{1 - c_m^2}) \\
&\quad - 4 c^5 (c_m^2 + c_m s \sqrt{1 - c_m^2}) + 4 c^3 (-1 + c_m^2 - s \sqrt{1 - c_m^2} + 2 c_m s \sqrt{1 - c_m^2}) \\
&\quad + 4 \left( 3 c_m^2 + 2 (1 + s \sqrt{1 - c_m^2}) - 4 c_m (1 + s \sqrt{1 - c_m^2}) \right) \\
&\quad + c^2 \left( -4 - 12 c_m^2 + 8 c_m (1 + s \sqrt{1 - c_m^2}) \right).
\end{align*}
Our goal is to prove that \( F(c, c_m) < 0 \) for \( c \in (\cos(\frac{\pi}{7}), 1) \) and \( c_m \in [-1, 2 c^2 - 1] \). To establish this, we write:
\[
-F(c, c_m) = P_{1,1}(c, c_m) + T_{1,1}(c, c_m) + R_{1,1}(c, c_m) + R_{1,2}(c, c_m),
\]
where the components are:
\begin{align*}
P_{1,1}(c, c_m) &= -c + 10 c^3 - 7 c^5 - 2 c^3 (3 - c^2) c_m + 2 c^7 c_m^2, \\
T_{1,1}(c, c_m) &= s \sqrt{1 - c_m^2} \left( c^6 (1 - 2 c_m) + c^4 - 2 c_m + 2 \right), \\
R_{1,1}(c, c_m) &= -2 c^3 s^4 \sqrt{1 + c^2 (c_m^2 - 1) - c c_m s \sqrt{1 - c_m^2}}, \\
R_{1,2}(c, c_m) &= -2 c^3 s^2 \sqrt{\Gamma_{1,1}} - c s^2 \sqrt{\Gamma_{1,2}}.
\end{align*}
We now provide estimates for these terms. First, observe that:
\[
\sqrt{(1 - c^2)(1 - c_m^2)} \leq (1 - c^2)(1 - c_m^2),
\]
which implies:
\[
T_{1,1}(c, c_m) \ge (1 - c^2)(1 - c_m^2) \left( c^6 (1 - 2 c_m) + c^4 - 2 c_m + 2 \right).
\]
Define the function:
\[
R_{1,1}'(c, c_m) = \sqrt{1 + c^2 (c_m^2 - 1) - c c_m \sqrt{1 - c^2} \sqrt{1 - c_m^2}}.
\]
Applying Young's inequality with parameter \( \lambda = \frac{1}{2} \) gives:
\[
- c c_m \sqrt{1 - c^2} \sqrt{1 - c_m^2} \le \frac14 c^2 (1 - c^2) + c_m^2 (1 - c_m^2),
\]
whence:
\[
\begin{aligned}
R_{1,1}'^2 &\le 1 + c^2 (c_m^2 - 1) + \frac14 c^2 (1 - c^2) + c_m^2 (1 - c_m^2) \\
&= 1 - \frac34 c^2 - \frac14 c^4 + c^2 c_m^2 + c_m^2 - c_m^4.
\end{aligned}
\]
Using the inequality \( \sqrt{Y} \le \frac{1 + Y }{2} \), we obtain:
\[
R_{1,1}' \le \frac12 \left[ 1 + 1 - \frac34 c^2 - \frac14 c^4 + c^2 c_m^2 + c_m^2 - c_m^4 \right] = 1 - \frac38 c^2 - \frac18 c^4 + \frac12 c^2 c_m^2 + \frac12 c_m^2 - \frac12 c_m^4.
\]
Therefore:
\[
R_{1,1}(c, c_m) \geq -2 c^3 s^4 R_{1,1}'(c, c_m).
\]
Additionally, for \( c \in (\cos(\frac{\pi}{7}), 1) \) and \( c_m \in [-1, 2 c^2 - 1] \), we have the bounds:
\[
\sqrt{\Gamma_{1,1}} \leq 6 - 3 c - 3 c_m, \quad \sqrt{\Gamma_{1,2}} \leq \frac{9}{2} - 2 c - \frac{5}{2} c_m,
\]
which yield:
\[
R_{1,2}(c, c_m) \ge -2 c^3 s^2 (6 - 3 c - 3 c_m) - c s^2 \left( \frac{9}{2} - 2 c - \frac{5}{2} c_m \right).
\]
These estimates demonstrate that \( -F(c, c_m) \) is bounded below by a polynomial in \( c \) and \( c_m \). The nonnegativity of this polynomial over the domain \( \cos(\frac{\pi}{7}) \le c \le 1 \), \( -1 \le c_m \le 2 c^2 - 1 \) can be verified using a combination of analytical and numerical methods, such as locating critical points via Gr\"obner basis techniques and examining the behavior on the boundary. Graphical verification further supports this conclusion.
\end{proof}

\begin{prop} \label{prop:kstar}
For each residue class \( m \) modulo \( n \), the following statements hold:
\begin{enumerate}[(i)]
    \item \label{item:kstar:star-star} 
        \(\mathcal{B}_0^{\star} \cap \mathcal{B}_{m}^{\star} = \emptyset\);
    \item \label{item:kstar:star-plus} 
        \(\mathcal{B}_0^{\star} \cap \mathcal{B}_{m}^{+} = \emptyset\) except when \( m = 0 \) or \( m = 1 \);
    \item \label{item:kstar:star-minus} 
        \(\mathcal{B}_0^{\star} \cap \mathcal{B}_{m}^{-} = \emptyset\) except when \( m = 0 \) or \( m = n-1 \);
    \item \label{item:kstar:star-diamond} 
        \(\mathcal{B}_0^{\star} \cap \mathcal{B}_{m}^{\diamond} = \emptyset\) except when \( m = 0 \) or \( m = n-1 \).
\end{enumerate}
\end{prop}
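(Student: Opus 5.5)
Most items reduce to earlier results via the symmetries of Proposition~\ref{prop:symmetry} and $A$-invariance; the direct work concentrates in item~\ref{item:kstar:star-star}, handled with the same method as Proposition~\ref{prop:kminus}. Throughout, $m$ is a nonzero residue modulo $n$ (for $m=0$ the intersections are the bisectors themselves or are among the allowed exceptions).

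\textbf{Items \ref{item:kstar:star-plus}--\ref{item:kstar:star-diamond}.} Applying $A^{-m}$ gives $\mathcal{B}_0^{\star}\cap\mathcal{B}_m^{+}=A^{m}(\mathcal{B}_{-m}^{\star}\cap\mathcal{B}_0^{+})$. By Proposition~\ref{prop:kminus}\ref{item:kminus:plus-star}, this is empty unless $-m\equiv 0$ or $-m\equiv n-1$, that is, unless $m=0$ or $m=1$. This is item~\ref{item:kstar:star-plus}.

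For item~\ref{item:kstar:star-minus}, apply $I_2$. Since $I_2(\mathcal{B}_0^{\star})=\mathcal{B}_0^{\star}$ and $I_2(\mathcal{B}_{-m}^{+})=\mathcal{B}_m^{-}$, we get $\mathcal{B}_0^{\star}\cap\mathcal{B}_m^{-}=I_2(\mathcal{B}_0^{\star}\cap\mathcal{B}_{-m}^{+})$. Item~\ref{item:kstar:star-plus} then shows this is empty unless $-m\in\{0,1\}$, i.e.\ $m\in\{0,n-1\}$.

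For item~\ref{item:kstar:star-diamond}, use $\iota(\mathcal{B}_0^{\star})=\mathcal{B}_0^{\diamond}$ together with $A$-invariance. These convert $\mathcal{B}_0^{\star}\cap\mathcal{B}_m^{\diamond}$ into an intersection $\mathcal{B}^{\diamond}_0\cap\mathcal{B}^{\star}_{-m}$, which is then translated by $A^{m}$ to $\mathcal{B}_0^{\star}\cap\mathcal{B}^{\diamond}_{m}$ with indices reflected. I would check carefully whether this gives a genuine reduction or is circular. If it is circular, I would treat $\mathcal{B}_0^{\star}\cap\mathcal{B}_m^{\diamond}$ directly. The paper already set this computation up at the end of the proof of Proposition~\ref{prop:kminus}, parametrizing by $(u,v)$ and solving two linear equations, and I would finish it with the same lower-bound estimate.

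\textbf{Item \ref{item:kstar:star-star}.} This needs a real computation. Note that $\mathcal{B}_0^{\star}=\mathcal{B}(o,R(o))$ and $\mathcal{B}_m^{\star}=\mathcal{B}(o,A^mRA^{-m}(o))$. Since $R$ is an involution, $\mathcal{B}(o,R(o))=\mathcal{B}(o,R^{-1}(o))$, so Phillips' lemma (Proposition~\ref{prop:Phillips}) applies directly to the element $G=A^mRA^{-m}R$ with base point $R(o)$. Concretely, $R$ maps the pair $\{o,A^mRA^{-m}(o)\}$ to $\{R(o),\,RA^mRA^{-m}(o)\}$. Write $G=RA^mRA^{-m}$. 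Then $\mathcal{B}_0^{\star}\cap\mathcal{B}_m^{\star}$ corresponds, under $R$, to $\mathcal{B}(R o, o)\cap\mathcal{B}(Ro,GRo)$ with $G^{-1}(Ro)=o$. So it suffices to show that $\operatorname{Re}\operatorname{tr}(RA^mRA^{-m})\ge 3$ for all $m\neq0$.

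I would compute this trace explicitly from the matrices of $I_2,I_3$ and~(\ref{Am-eq}). I expect it to have the form $\alpha+\beta c_m+\gamma c_m^2$ with coefficients in $c$ and $d$. Then I would check that it is at least $3$ for $c\in(\cos\frac{\pi}{7},1)$ and $c_m\le\cos\frac{2\pi}{n}$. The extremal case should be $m=\pm1$, where $R$ and $A^{\pm1}RA^{\mp1}$ are ``tangent'' in the limit $n\to\infty$ (Proposition~\ref{prop:pair-disjoint44pp}). There the trace should tend to $3$ as $n\to\infty$, so it stays above $3$ only by a margin. This endpoint estimate is the main obstacle. If the trace bound fails near $m=\pm1$, I would fall back on the $(u,v)$-parametrization and trigonometric lower bound of Proposition~\ref{prop:kminus}, Case~2: write $E=|D|^2(|z|^2+|w|^2-1)$ and bound each sinusoid by its amplitude. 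For $m=1$ I would instead use the real-plane symmetry argument of Case~1, since an involution conjugating $A$ swaps $\mathcal{B}^\star_0$ and $\mathcal{B}^\star_1$.
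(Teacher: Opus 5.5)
Your items (ii) and (iii) are correct and match the paper's argument: translate by $A^{m}$ and apply item (iii) of Proposition~\ref{prop:kminus}, then apply $I_2$.

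\textbf{The gap: item (i).} Your reduction to Phillips' lemma uses the wrong element, and the identity it rests on is false. With $G=RA^mRA^{-m}$ you get
\[
G^{-1}(R(o))=A^mRA^{-m}R\,R(o)=A^mR(o),
\]
not $o$, and $G(R(o))=RA^mRA^{-m}R(o)$. So the pair $\mathcal{B}(R(o),G^{\pm1}R(o))$ is not the $R$-image of $\{\mathcal{B}_0^{\star},\mathcal{B}_m^{\star}\}$. That $R$-image is $\mathcal{B}(R(o),o)$ and $\mathcal{B}(R(o),RA^mR(o))$, so $\operatorname{tr}(G)$ tells you nothing about $\mathcal{B}_0^{\star}\cap\mathcal{B}_m^{\star}$.

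In fact $G$ is the product of the central involutions $R$ and $A^mRA^{-m}$ about $\kappa$ and $A^m(\kappa)$, a pure translation along a geodesic. Its trace therefore exceeds $3$ for every $m\neq 0$. If your reduction were valid, it would show that $\mathcal{B}_0^{\star}$ and $\mathcal{B}_{\pm1}^{\star}$ do not meet even in $\overline{\HdC}$. But both contain the fixed point of the parabolic element $A^{\pm1}S^2$, so that conclusion is false.

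\textbf{The fix.} Use the fact that $A$ fixes $o$: take $G=A^mR=A^mS^2$ with base point $o$. Then $G(o)=A^mR(o)$ and $G^{-1}(o)=RA^{-m}(o)=R(o)$, so $\mathcal{B}(o,G(o))=\mathcal{B}_m^{\star}$ and $\mathcal{B}(o,G^{-1}(o))=\mathcal{B}_0^{\star}$. The trace is real:
\[
\operatorname{tr}(A^mS^2)=\bigl(2-s^2-2\cos(2\pi m/n)\bigr)/s^2 .
\]
This is the paper's proof.

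It also corrects your picture of $m=\pm1$. The trace there equals $3$ for every $n$, not only in the limit $n\to\infty$. This is because $AS^2=I_1I_3I_2I_3$ is parabolic by the very definition of $\Delta_{4,4,n;\infty}$, and $A^{-1}S^2=I_2(AS^2)I_2$. Consequently, for $m=\pm1$, item (i) can only hold in $\HdC$: the two bisectors are tangent at the parabolic fixed point on $\partial\HdC$. No margin estimate and no $(u,v)$ fallback is needed. For $m\neq 0,\pm1$ the trace is strictly larger than $3$.

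\textbf{Item (iv).} Your suspicion is right: $\iota$ followed by $A^m$ returns $\mathcal{B}_0^{\star}\cap\mathcal{B}_m^{\diamond}$ to itself, and $I_2$ only pairs $m$ with $-m-1$. However, the $(u,v)$-computation at the end of the proof of Proposition~\ref{prop:kminus} is actually for $\mathcal{B}_0^{+}\cap\mathcal{B}_m^{\star}$; the sentence there mentioning $\mathcal{B}_0^{\star}\cap\mathcal{B}_m^{\diamond}$ is a slip. The paper carries out a separate parametrization and lower-bound estimate for $\mathcal{B}_0^{\star}\cap\mathcal{B}_m^{\diamond}$ (denominator $D_2$, function $f_2$). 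Your proposal leaves that computation entirely undone.
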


\begin{proof}
\textbf{Proof of Item~\ref{item:kstar:star-star}.} 
We consider the case \( 1 \leq m \leq \frac{n}{2} \). A direct computation yields:
\[
\operatorname{tr}(A^m S^2) = \frac{2 - s^2 - 2 c_{2m}}{s^2}.
\]
For \( m = \pm 1 \), we have \( \operatorname{tr}(A^m S^2) = 3 \), indicating that \( A^{\pm 1} S^2 \) is parabolic. Observe the identities:
\[
\mathcal{B}_0^{\star} = \mathcal{B}(o, S^2 A^{\pm 1}(o)), \quad
\mathcal{B}_{\pm 1}^{\star} = \mathcal{B}(o, A^{\pm 1} S^2(o)).
\]
Consequently, \( \mathcal{B}_0^{\star} \) is tangent to \( \mathcal{B}_{\pm 1}^{\star} \) at the parabolic fixed point of \( A^{\pm 1} S^2 \) on \( \partial \HdC \). For other values of \( m \), we obtain the lower bound:
\[
\operatorname{tr}(A^m S^2) \ge \frac{2 - s^2 - 2 c_4}{s^2} = \frac{4 s_2^2 - s^2}{s^2} = 16 c^2 - 1 > 3.
\]
By Phillips's theorem, the Dirichlet domain for the cyclic group \( \langle A^m S^2 \rangle \) has faces \( \mathcal{B}_0^{\star} \) and \( \mathcal{B}_{m}^{\star} \) that do not intersect.

\textbf{Proof of Item~\ref{item:kstar:star-plus}.} 
This follows directly from Proposition~\ref{prop:kminus}~\ref{item:kminus:plus-plus}.

\textbf{Proof of Item~\ref{item:kstar:star-minus}.} 
This follows by applying the symmetry \( I_2 \) to the previous result.

\textbf{Proof of Item~\ref{item:kstar:star-diamond}.} 
For the coequidistant pair $\mathcal{B}_0^{\star}$ and $\mathcal{B}_{0}^{\diamond}$, 
consider the vector $(o - R(o)) \boxtimes (o - Q(o))$.
A direct computation shows that its Hermitian inner product satisfies 
\(-16(d-1)^2 s^2 < 0\).
Since this Hermitian cross product has negative norm, the intersection 
$\mathcal{B}_0^{\star} \cap \mathcal{B}_{0}^{\diamond}$ is non-empty.
Moreover, since
\[
\mathcal{B}_0^{\star} \cap \mathcal{B}_{-1}^{\diamond} = I_2(\mathcal{B}_0^{\star} \cap \mathcal{B}_{0}^{\diamond}),
\]
the intersection \( \mathcal{B}_0^{\star} \cap \mathcal{B}_{-1}^{\diamond} \) is also non-empty.

By symmetry under \( I_2 \), we restrict our analysis to the intersections \( \mathcal{B}_{0}^{\star} \cap \mathcal{B}_{m}^{\diamond} \) for \( 1 \leq m \leq \lfloor n/2 \rfloor \), ensuring \( s_m \geq 0 \).

Suppose \( p = [z, w, 1]^t \in \mathcal{B}_{0}^{\star} \cap \mathcal{B}_{m}^{\diamond} \). Then the bisector conditions are:
\begin{align*}
1 &= | z(a c + a - i b s) + w(a s + i b (c - 1)) - (2d - 1) |, \\
1 &= | z(a c c_m + a c_m - a s s_m - i (-b c s_m - b c_m s + b s_m)) \\
&\quad + w(-a c s_m - a c_m s - a s_m - i (-b c c_m + b c_m + b s s_m)) - (2d - 1) |.
\end{align*}
Solving this system yields:
\begin{align*}
z &= \frac{1}{D_2} \biggl[ e^{i u} \left( a s_m (c+1) + a c_m s + i b (c_m (1-c) + s s_m) \right) + e^{i v} \left( a s + i b (c - 1) \right) \\
&\quad + (2d - 1) \left( a (c s_m + c_m s + s + s_m) + i b (c + c_m + s s_m - c c_m - 1) \right) \biggr], \\
w &= \frac{1}{D_2} \biggl[ e^{i u} \left( a c_m (c+1) - a s s_m + i b (s_m (c-1) + c_m s) \right) + e^{i v} \left( -a(c+1) + i b s \right) \\
&\quad + (2d - 1) \left( a (c c_m - c + c_m - s s_m - 1) + i b (c s_m + c_m s + s - s_m) \right) \biggr],
\end{align*}
where the denominator is:
\[
D_2 = 4 (d-1) d (c s_m + c_m s).
\]
Define the quantity:
\[
F_2 = |D_2|^2 (|z|^2 + |w|^2 - 1).
\]
We obtain the lower bound:
\[
F_2 \ge \alpha_{2,3} - \sqrt{\alpha_{2,1}^2 + \alpha_{2,2}^2} - 2\sqrt{\beta_{2,1}^2 + \beta_{2,2}^2},
\]
with the coefficients:
\begin{align*}
\alpha_{2,1} &= \left( 4 s (c+1) s_m - 4 c_m c^2 - 4 c_m \right) d^2 + 8 c_m d \\
&\quad + (-8 b^2 s - 4 c s - 4 s) s_m + 4 c_m c^2 - 4 c_m, \\
\alpha_{2,2} &= -8 b a \left( (c^2 - 1) s_m + c s c_m \right), \\
\beta_{2,1} &= -16(d - \tfrac12) \left[ (-\tfrac{s(c+1)s_m}{2} + \tfrac{c_m c^2}{2} + \tfrac{c_m}{2} - 1) d^2 \right. \\
&\quad \left. + (-c_m + 1) d + s(b^2 + \tfrac{c}{2} + \tfrac12) s_m - \tfrac{c_m c^2}{2} + \tfrac{c_m}{2} \right], \\
\beta_{2,2} &= 16 b (d - \tfrac12) a (c s c_m + c^2 s_m - s_m),
\end{align*}
and the constant term:
\begin{align*}
\alpha_{2,3} &= \left[ -32 \left( -\tfrac12 + (c_m - \tfrac12) c \right) s s_m + 32 (c_m - 1) \left( (c_m + \tfrac12) c^2 - \tfrac{c_m}{2} - 1 \right) \right] d^4 \\
&\quad + \left[ 64 s \left( -\tfrac14 + (c_m - \tfrac14) c \right) s_m + (-64 c_m^2 + 16 c_m + 32) c^2 + 32 c_m^2 + 48 c_m - 64 \right] d^3 \\
&\quad + \left[ -32 \left( (c_m + \tfrac38) c + b^2 + \tfrac38 \right) s s_m + (32 c_m^2 + 12 c_m - 16) c^2 - 16 c_m^2 - 52 c_m + 40 \right] d^2 \\
&\quad + \left[ 32 s (b^2 + \tfrac{c}{2} + \tfrac12) s_m - 16 c_m c^2 + 24 c_m - 8 \right] d + 8 d^2 - 8 d \\
&\quad + (-8 b^2 s - 4 c s - 4 s) s_m + 4 c_m c^2 - 4 c_m.
\end{align*}
Define the function:
\[
\begin{aligned}
f_2(c, c_m) &= \frac{(1 - c^2)^4}{4 c^2} \left( \alpha_{2,3} - \sqrt{\alpha_{2,1}^2 + \alpha_{2,2}^2} - 2 \sqrt{\beta_{2,1}^2 + \beta_{2,2}^2} \right) \\
&= 4 + c_m c^8 - c_m c^6 + (8 c_m^2 - 5 c_m) c^4 + (-4 c_m^2 - 3 c_m) c^2 \\
&\quad - c \sqrt{1 - c^2} \sqrt{1 - c_m^2} \left( -3 + c^6 - c^4 + (8 c_m - 5) c^2 \right) \\
&\quad + (4 c^4 - 4) \sqrt{\Gamma_{2,1}(c, c_m)} - 2 (c^2 - 1)^2 \sqrt{\Gamma_{2,2}(c, c_m)},
\end{aligned}
\]
where the auxiliary functions are:
\begin{align*}
\Gamma_{2,1}(c, c_m) &= (c+1) \left[ c^4 c_m + c^2 (2 c_m^2 - 3 c_m - 1) - c_m^2 + 2 \right. \\
&\quad \left. - c \sqrt{1 - c^2} \sqrt{1 - c_m^2} (c^2 + 2 c_m - 3) \right], \\
\Gamma_{2,2}(c, c_m) &= (c+1) \left[ c^2 (2 c_m^2 - 1) - 2 \sqrt{1 - c^2} c c_m \sqrt{1 - c_m^2} - c_m^2 + 1 \right].
\end{align*}
One can establish that \( 0 < \Gamma_{2,1}(c, c_m) < 1 \) and \( 0 < \Gamma_{2,2}(c, c_m) < 1 \) over the relevant domain. This implies the estimate:
\[
f_2(c, c_m) \ge A'_2 + B'_2 \sqrt{(1 - c^2)(1 - c_m^2)} \ge A'_2 + B'_2 (1 - c^2)(1 - c_m^2),
\]
with the polynomial coefficients:
\begin{align*}
A'_2 &= c^8 c_m - c^6 c_m + c^4 c_m (8 c_m - 5) + 4 c^4 - c^2 c_m (4 c_m + 3) - 2 (c^2 - 1)^2, \\
B'_2 &= -c \left( c^6 - c^4 + c^2 (8 c_m - 5) - 3 \right).
\end{align*}
The final expression is a polynomial in \( c \) and \( c_m \). Standard techniques from elementary calculus confirm its non-negativity over the domain \( c \in \left( \cos\frac{\pi}{7}, 1 \right) \) and \( c_m \in \left( -1, 2 c^2 - 1 \right) \), which completes the proof.

\end{proof}

By symmetry \( I_2 \), we also have:

\begin{prop} \label{prop:kdiamond}
For each \( m \) modulo \( n \), the following hold:
\begin{enumerate}[(i)]
    \item \label{item:kdiamond:diamond-diamond} 
        \(\mathcal{B}_0^{\diamond} \cap \mathcal{B}_{m}^{\diamond} = \emptyset\);
    \item \label{item:kdiamond:diamond-plus} 
        \(\mathcal{B}_0^{\diamond} \cap \mathcal{B}_{m}^{+} = \emptyset\) except when \( m = 0, 1 \);
    \item \label{item:kdiamond:diamond-minus} 
        \(\mathcal{B}_0^{\diamond} \cap \mathcal{B}_{m}^{-} = \emptyset\) except when \( m = 0, 1 \);
    \item \label{item:kdiamond:diamond-star} 
        \(\mathcal{B}_0^{\diamond} \cap \mathcal{B}_{m}^{\star} = \emptyset\) except when \( m = 0, 1 \).
\end{enumerate}
\end{prop}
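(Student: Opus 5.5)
The plan is to transfer Proposition~\ref{prop:kstar} to the $\diamond$-bisectors using the anti-holomorphic involution $\iota$ from Proposition~\ref{prop:symmetry}, rather than redoing any of the analytic estimates. The text introduces the statement as following by the symmetry $I_2$. However, the identity $\iota(\mathcal{B}_k^{\star})=\mathcal{B}_{-k}^{\diamond}$ converts $\star$ into $\diamond$ directly, whereas $I_2$ preserves the $\star$ family. So I will mainly use $\iota$.

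The key facts are as follows. $\iota$ is an isometry of $\hc$ that extends to $\partial\hc$. It is a bijection and an involution, so it carries intersections to intersections and the empty set to the empty set. It fixes $o=(0,0,1)^{T}$. By Proposition~\ref{prop:symmetry}, and with all indices taken modulo $n$, we have
\[
\iota(\mathcal{B}_k^{+})=\mathcal{B}_{1-k}^{+},\quad
\iota(\mathcal{B}_k^{-})=\mathcal{B}_{-k}^{-},\quad
\iota(\mathcal{B}_k^{\star})=\mathcal{B}_{-k}^{\diamond},\quad
\iota(\mathcal{B}_k^{\diamond})=\mathcal{B}_{-k}^{\star}.
\]
The last identity follows from the third by applying $\iota$ again, since $\iota^2=\mathrm{id}$.

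I will apply $\iota$ to each item of Proposition~\ref{prop:kstar} in turn:
\begin{itemize}
\item[(i)] $\iota(\mathcal{B}_0^{\star}\cap\mathcal{B}_{-m}^{\star})=\mathcal{B}_0^{\diamond}\cap\mathcal{B}_{m}^{\diamond}$. Proposition~\ref{prop:kstar}~\ref{item:kstar:star-star} gives that this is empty.
\item[(ii)] $\iota(\mathcal{B}_0^{\star}\cap\mathcal{B}_{1-m}^{+})=\mathcal{B}_0^{\diamond}\cap\mathcal{B}_{m}^{+}$. By Proposition~\ref{prop:kstar}~\ref{item:kstar:star-plus}, this is empty unless $1-m\in\{0,1\}$, that is, unless $m\in\{0,1\}$.
\item[(iii)] $\iota(\mathcal{B}_0^{\star}\cap\mathcal{B}_{-m}^{-})=\mathcal{B}_0^{\diamond}\cap\mathcal{B}_{m}^{-}$. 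By Proposition~\ref{prop:kstar}~\ref{item:kstar:star-minus}, this is empty unless $-m\in\{0,n-1\}$, that is, unless $m\in\{0,1\}$.
\item[(iv)] $\iota(\mathcal{B}_0^{\star}\cap\mathcal{B}_{-m}^{\diamond})=\mathcal{B}_0^{\diamond}\cap\mathcal{B}_{m}^{\star}$. By Proposition~\ref{prop:kstar}~\ref{item:kstar:star-diamond}, this is empty unless $-m\in\{0,n-1\}$, that is, unless $m\in\{0,1\}$.
\end{itemize}
In each case the exceptional indices land exactly on the ones listed in the statement.

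The only real obstacle is bookkeeping. I must make sure the index formulas of Proposition~\ref{prop:symmetry} are correct modulo $n$, since only two of them are verified there explicitly. As a check, I would also derive item~(i) independently. Conjugating by $I_2$ gives $I_2(\mathcal{B}_0^{\diamond}\cap\mathcal{B}_m^{\diamond})=\mathcal{B}_{-1}^{\diamond}\cap\mathcal{B}_{-m-1}^{\diamond}$. Applying $A$, this becomes $\mathcal{B}_0^{\diamond}\cap\mathcal{B}_{-m}^{\diamond}$, which is consistent with the $\iota$ computation. A similar cross-check via $I_2$ and $A$ applies to items~(ii)--(iv).

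The meaning of ``$=\emptyset$'' in item~(i) needs care. For $m=\pm1$, Proposition~\ref{prop:kstar} asserts tangency at a parabolic fixed point on $\partial\hc$. Under $\iota$ this tangency point is carried to the fixed point of $A^{\mp1}Q$, which matches the Ford-domain picture in Proposition~\ref{prop:pair-disjoint44pp}. So item~(i) should be read exactly as item~\ref{item:kstar:star-star} of Proposition~\ref{prop:kstar} is read.
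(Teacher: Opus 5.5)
Your proof is correct. It differs from the paper's in the one place that matters. The paper disposes of this proposition with the phrase ``by symmetry $I_2$''. You instead transfer Proposition~\ref{prop:kstar} item by item through the anti-holomorphic involution $\iota$, and yours is the choice that works.

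$I_2$ and the rotation $A$ both preserve the $\star$-family and the $\diamond$-family; $I_2$ only swaps $+$ with $-$. They therefore suffice for items (ii)--(iv). Item (ii) follows from Proposition~\ref{prop:kminus}~\ref{item:kminus:plus-diamond} after a rotation. Item (iii) follows from the same item after applying $I_2$ and then a rotation. Item (iv) follows from Proposition~\ref{prop:kstar}~\ref{item:kstar:star-diamond} after a rotation. They can never reach item (i), because a $\diamond$--$\diamond$ intersection stays a $\diamond$--$\diamond$ intersection, and no earlier proposition treats that case. Read literally, the paper's justification for (i) needs either $\iota$ or a new Phillips-type trace estimate for $A^mQ$. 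Note also that the paper obtained Proposition~\ref{prop:kminus}~\ref{item:kminus:plus-diamond} itself via $\iota$.

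Your $\iota$-transfer gives (i) directly from Proposition~\ref{prop:kstar}~\ref{item:kstar:star-star}. Your index bookkeeping, $1-m\in\{0,1\}$ and $-m\in\{0,n-1\}$, lands exactly on $m\in\{0,1\}$. The index formulas you were unsure about are correct. They follow from
\[
\iota A\iota=A^{-1},\quad \iota S\iota=I_1I_3=AS,\quad \iota S^{-1}\iota=I_3I_1=S^{-1}A^{-1},\quad \iota R\iota=Q,
\]
together with $\iota(o)=o$ and the fact that $\iota$ preserves $|\langle\cdot,\cdot\rangle|$.

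One small correction. Your ``independent'' cross-check of item (i) via $I_2$ and $A$ is not a derivation. It only shows that the set of exceptional $m$ is invariant under $m\mapsto -m$, which is exactly the limitation of $I_2$ described above.
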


Now we study intersections of bisectors:

\begin{prop} \label{prop:bisector-S0plus-Gdisk}
The intersections 
\( \mathcal{B}_{0}^{+} \cap \mathcal{B}_{0}^{-} \),
\( \mathcal{B}_{0}^{+} \cap \mathcal{B}_{n-1}^{-} \),
\( \mathcal{B}_{0}^{+} \cap \mathcal{B}_{0}^{\star} \), and
\( \mathcal{B}_{0}^{+} \cap \mathcal{B}_{n-1}^{\diamond} \)
are Giraud disks.
\end{prop}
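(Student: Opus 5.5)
All four bisectors in each pair are equidistant from the common point $o$: $\mathcal{B}_0^{+}=\mathcal{B}(o,S(o))$, $\mathcal{B}_0^{-}=\mathcal{B}(o,S^{-1}(o))$, $\mathcal{B}_{n-1}^{-}=\mathcal{B}(o,A^{-1}S^{-1}(o))$, $\mathcal{B}_0^{\star}=\mathcal{B}(o,R(o))$ and $\mathcal{B}_{n-1}^{\diamond}=\mathcal{B}(o,A^{-1}Q(o))$. So each pair is coequidistant, and I will use Goldman's criterion. Two coequidistant bisectors $\mathcal{B}(o,x)$ and $\mathcal{B}(o,y)$ (with $o,x,y$ not on a common complex line) meet in a Giraud disk exactly when the three points $o,x,y$ span $\mathbb{C}^{2,1}$ and the Hermitian cross product $\mathbf{v}=(\mathbf{o}-\mathbf{x})\boxtimes(\mathbf{o}-\mathbf{y})$ is negative, i.e.\ $\langle \mathbf{v},\mathbf{v}\rangle<0$. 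Here the lifts are normalized so that $\langle\mathbf{o},\mathbf{o}\rangle=\langle\mathbf{x},\mathbf{x}\rangle=\langle\mathbf{y},\mathbf{y}\rangle$. In that case the intersection is a smooth disk, namely the Giraud disk containing the point $\mathbb{P}(\mathbf{v})$, which is equidistant from all three points. This reduces the proposition to four explicit sign computations, in the same spirit as the proof of Proposition~\ref{prop:kminus}\ref{item:kminus:plus-star}.

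\textbf{Step 1: using symmetry to cut down the cases.} Proposition~\ref{prop:kminus}\ref{item:kminus:plus-star} already records $\langle v_1',v_1'\rangle=-4(d-1)^2s^2<0$ for $v_1'=(o-S(o))\boxtimes(o-R(o))$. This handles $\mathcal{B}_0^{+}\cap\mathcal{B}_0^{\star}$. Next apply the antiholomorphic symmetry $\iota$ from Proposition~\ref{prop:symmetry}, combined with $A$ and $I_2$, to move pairs around. For example, $\iota$ sends $\mathcal{B}_0^{\star}$ to $\mathcal{B}_0^{\diamond}$ and $\mathcal{B}_0^{+}$ to $\mathcal{B}_1^{+}$, and then $A^{-1}$ takes $\mathcal{B}_1^{+}\cap\mathcal{B}_0^{\diamond}$ to $\mathcal{B}_0^{+}\cap\mathcal{B}_{n-1}^{\diamond}$. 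Hence $\mathcal{B}_0^{+}\cap\mathcal{B}_{n-1}^{\diamond}$ is also a Giraud disk, since isometries send Giraud disks to Giraud disks. I would check whether a similar composition of $\iota$, $I_2$ and powers of $A$ relates $\mathcal{B}_0^{+}\cap\mathcal{B}_0^{-}$ to $\mathcal{B}_0^{+}\cap\mathcal{B}_{n-1}^{-}$. If it does not, I will treat the two cases separately.

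\textbf{Step 2: the remaining direct computations.} For $\mathcal{B}_0^{+}\cap\mathcal{B}_0^{-}$ I compute $(\mathbf{o}-S\mathbf{o})\boxtimes(\mathbf{o}-S^{-1}\mathbf{o})$, using the explicit matrix $S=I_2I_3$. The lifts are automatically normalized, because $\mathbf{o}=(0,0,1)^T$ and $S\in\mathbf{SU}(2,1)$. The norm should come out as a rational expression in $c,s,d,a,b$, which simplifies using \eqref{definition:ab} and $d=1/s^2$. By analogy with $v_1'$, I expect a clean negative multiple of $(d-1)^2s^2$. For $\mathcal{B}_0^{+}\cap\mathcal{B}_{n-1}^{-}$ I do the same with $A^{-1}S^{-1}\mathbf{o}$.

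There is a useful way to see the geometry of these two cases. The points $o$, $S(o)$, $S^{-1}(o)$ lie on an orbit of the order-$4$ elliptic element $S$, and similarly $A^{-1}S^{-1}=(SA)^{-1}$ has order $4$, since $(AS)^4=\mathrm{id}$. So each triple consists of three points of an order-$4$ orbit, and the equidistant point should be the fixed point $\kappa$ of $S$, respectively $\tilde\varkappa$. This matches Table~\ref{s0+} and Proposition~\ref{prop:44ppGirauddisk}. The fixed point of a regular elliptic element lies in $\hc$, so it is a negative vector equidistant from the three points. That already gives nonemptiness and pins down $\mathbf{v}$ up to scale, which is a good consistency check on the computation.

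\textbf{Main obstacle.} The hard part is the case whose answer is not a clean monomial. This is likely $\mathcal{B}_0^{+}\cap\mathcal{B}_{n-1}^{-}$, or whichever pair is not reducible by symmetry. There the norm may be an expression like the one for $\langle v_2',v_2'\rangle$, with nested square roots in $c$. To handle it, I would simplify with $d=1/s^2$ and $a^2,b^2$ from \eqref{definition:ab}, reducing to a one-variable inequality in $c\in(\cos\frac{\pi}{7},1)$ (or $c\ge\cos\frac{\pi}{5}$ if all $n\ge5$ are wanted). I would then verify the sign by bounding the radicals from above, as was done for $v_2'$, or by clearing radicals and locating the roots of the resulting polynomial. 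Finally, I need to confirm non-degeneracy: the three points must not lie on a common complex line. This is immediate, since $\mathbf{v}\neq0$.
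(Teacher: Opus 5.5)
Your proposal is correct and is essentially the paper's proof. The paper also uses that each pair of bisectors is coequidistant and exhibits the negative vector $p_1(\pi,\pi)=(o-S(o))\boxtimes(o-S^{-1}(o))$, of norm $-4c^2/s^4$, and then calls the other three cases analogous.

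Two small remarks. First, negativity of this particular $\mathbf{v}$ is sufficient but not necessary for nonemptiness, since $\mathbf{v}$ depends on the phases of the lifts. Second, your anticipated hard case does not arise: $A^{-1}\iota$ maps $\mathcal{B}_0^{+}\cap\mathcal{B}_0^{-}$ onto $\mathcal{B}_0^{+}\cap\mathcal{B}_{n-1}^{-}$. In fact your fixed-point observation settles nonemptiness in all four cases without computation. Because $S(o)=SA(o)$, $A^{-1}S^{-1}(o)=(SA)^{-1}(o)$ and $A^{-1}QA=(SA)^{2}$, each relevant triple lies in an orbit of $S$ or of $SA$.
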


\begin{proof}
The intersection \( \mathcal{B}^{+}_{0} \cap \mathcal{B}^{-}_{0} \) can be parameterized as:
\[
p_1(u, v) = \left( o + e^{ui} S(o) \right) \boxtimes \left( o + e^{vi} S^{-1}(o) \right), \quad u, v \in [0, 2\pi].
\]
Since \( \mathcal{B}_0^{+} \) and \( \mathcal{B}_{0}^{-} \) are equidistant bisectors, it suffices to exhibit a point in their intersection. For example:
\[
\langle p_1(\pi, \pi), p_1(\pi, \pi) \rangle = -\frac{4 c^2}{s^4} < 0.
\]
The other cases are analogous.
\end{proof}

\begin{lem} \label{lem:bisector-balanced}
The closure of the intersection \( \mathcal{B}_1^{+} \cap \mathcal{B}_{0}^{\star} \cap \mathcal{B}_{0}^{-} \) in \( \overline{\HdC} \) is the fixed point of the transformation \( A S^2 = I_1 I_3 I_2 I_3 \).
\end{lem}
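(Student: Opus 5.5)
Write $P = AS^2 = I_1I_3I_2I_3$, which is parabolic, and let $p_{AS^2}\in\partial\HdC$ be its fixed point. The plan is to rewrite the three bisectors as bisectors between points of a single $P$-orbit. With $R=S^2$, we have
\[
\mathcal{B}_0^{-}=\mathcal{B}(o,S^{-1}(o)),\qquad
\mathcal{B}_0^{\star}=\mathcal{B}(o,S^{2}(o)),\qquad
\mathcal{B}_1^{+}=\mathcal{B}(o,AS(o)) .
\]
Let $q=S^{-1}(o)$ and apply $S$ to the triple intersection. Then
\[
S(\mathcal{B}_0^{-})=\mathcal{B}(S(o),o),\qquad S(\mathcal{B}_0^{\star})=\mathcal{B}(S(o),S^3(o))=\mathcal{B}(S(o),q),\qquad S(\mathcal{B}_1^{+})=\mathcal{B}(S(o),SAS(o)).
\]
This is not yet a $P$-orbit, so I will instead use the identities in the proof of Proposition~\ref{prop:kstar}, namely $\mathcal{B}_0^{\star}=\mathcal{B}(o,S^2A(o))=\mathcal{B}(o,P^{-1}\cdots)$, which is really $\mathcal{B}(o,(AS^2)^{-1}(o))$ up to $A(o)=o$. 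Since $A$ fixes $o$, we have $S^2A(o)=S^2(o)$ and $(AS^2)^{-1}(o)=S^{-2}A^{-1}(o)=S^2(o)$, so $\mathcal{B}_0^{\star}=\mathcal{B}(o,P^{-1}(o))$. Similarly, $AS(o)$ and $S^{-1}(o)=S^{-1}A^{-1}(o)$ satisfy $P(S^{-1}(o))=AS^2S^{-1}(o)=AS(o)$. Hence, setting $q=S^{-1}(o)$, we get $\mathcal{B}_0^{-}=\mathcal{B}(o,q)$ and $\mathcal{B}_1^{+}=\mathcal{B}(o,P(q))$.

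A point $p$ in the triple intersection therefore satisfies
\[
|\langle \mathbf p,o\rangle|=|\langle \mathbf p,P^{-1}(o)\rangle|=|\langle \mathbf p,q\rangle|=|\langle \mathbf p,P(q)\rangle| .
\]
Now apply $P$ to the first equality: the bisector $\mathcal{B}(P(o),o)$ and $\mathcal{B}(o,P^{-1}(o))$ form the Phillips pair for $P$. Since $\operatorname{tr}P=3$, Proposition~\ref{prop:Phillips} says these two bisectors are disjoint in $\HdC$ and asymptotic only at $p_{AS^2}$. The task is to show that the triple intersection lies inside such a pair.

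The concrete step is this. From $|\langle \mathbf p,q\rangle|=|\langle \mathbf p,Pq\rangle|$ we get $P^{-1}p\in\mathcal{B}(P^{-1}q,q)$, which is not immediately helpful. So I expect to fall back on direct computation: parameterize $\mathcal{B}_0^{-}\cap\mathcal{B}_0^{\star}$ as a Giraud disk, in the same way as in Proposition~\ref{prop:bisector-S0plus-Gdisk}, by
\[
p(u,v)=\bigl(o+e^{ui}S^{-1}(o)\bigr)\boxtimes\bigl(o+e^{vi}S^2(o)\bigr),
\]
and then impose the third equation $|\langle p,o\rangle|=|\langle p,AS(o)\rangle|$. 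The difference of the two sides becomes a trigonometric expression $G(u,v)$. I will show that $G=0$ together with $\langle p,p\rangle\le 0$ forces $\langle p,p\rangle=0$ and $p\propto p_{AS^2}$. I would aim to factor $\langle p(u,v),p(u,v)\rangle$ restricted to $\{G=0\}$ as minus a sum of squares times a positive constant, for instance by writing the constraint as $e^{iu}$ being determined by $e^{iv}$ and reducing to a single variable.

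The main obstacle is this algebraic verification. My first attempt will be to exploit the symmetry $\iota$ composed with a suitable group element that preserves $p_{AS^2}$. That map should swap $\mathcal{B}_1^{+}$ with $\mathcal{B}_0^{-}$, forcing any intersection point onto its fixed $\mathbb{R}$-plane and reducing the problem to a two-real-variable inequality, handled as in Case 1 of Proposition~\ref{prop:kminus}. Finally, I will check directly that $p_{AS^2}$ lies on all three spinal spheres, using the fact that parabolic tangency gives membership in $\mathcal{B}_0^{\star}$. This shows the closure is exactly that point.
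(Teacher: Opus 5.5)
Your rewriting is correct: with $q=S^{-1}(o)$ and $P=AS^2$ one has $\mathcal{B}_0^-=\mathcal{B}(o,q)$, $\mathcal{B}_0^\star=\mathcal{B}(o,P^{-1}(o))$ and $\mathcal{B}_1^+=\mathcal{B}(o,P(q))$. The Giraud-torus parametrization of $\mathcal{B}_0^-\cap\mathcal{B}_0^\star$ also captures every point of the triple intersection in $\overline{\HdC}$. However, the entire content of the lemma is the inequality you then defer: $\langle p,p\rangle\ge 0$ on $\{G=0\}$, with equality only at $p_{AS^2}$. You give no argument for it. Note also that you need a \emph{nonnegative} expression there; writing $\langle p,p\rangle$ as minus a sum of squares would prove the opposite. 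Because $\{G=0\}$ reaches $\partial\HdC$ exactly at $p_{AS^2}$, the inequality is degenerate, so amplitude bounds of the kind used in Proposition~\ref{prop:kminus} will not close it. The Phillips remark does not help either. $\mathcal{B}(o,P(o))=\mathcal{B}_1^\star$ is not one of the three bisectors, and showing that the triple intersection lies on it is as hard as the lemma itself.

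The symmetry shortcut fails, for two reasons. First, the symmetry is not as you describe. $I_1$ exchanges $\mathcal{B}_0^-$ and $\mathcal{B}_1^+$ but sends $\mathcal{B}_0^\star$ to $\mathcal{B}_1^\star$. The involution $\tau=I_3\circ\iota$ does preserve the triple intersection: it swaps $o\leftrightarrow S^{-1}(o)$ and $AS(o)\leftrightarrow R(o)$, and satisfies $\tau P\tau=P^{-1}$. But it fixes $\mathcal{B}_0^-$ rather than exchanging it with $\mathcal{B}_1^+$. Second, and more seriously, invariance of a set under an involution does not place its points on the fixed $\mathbb{R}$-plane. In Case~1 of Proposition~\ref{prop:kminus} one is proving emptiness of a Giraud disk, and an involution of a closed disk has a fixed point. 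Here, by contrast, a curve in the triple intersection could be moved by the involution without fixed points.

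Concretely, $\mathrm{Fix}(\tau)$ does lie in $\mathcal{B}(o,S^{-1}(o))\cap\mathcal{B}(AS(o),R(o))$. But the extended intersection also contains a $\tau$-invariant complex line on which $\tau$ is not the identity, and one must check separately that this line misses $\overline{\HdC}$. The missing idea is exactly the pairing the paper uses. $\mathcal{B}(AS(o),R(o))$ and $\mathcal{B}(o,S^{-1}(o))$ form a balanced pair, so by Theorem~3.4 of \cite{Acosta2019} their intersection is precisely a real plane $\mathfrak{m}$ together with a complex line $\mathfrak{l}$. The paper shows $\mathfrak{l}$ has positive norm. On $\mathfrak{m}=\{x\gamma_1+y\gamma_2+p_{AS^2}\}$, the remaining condition $|\langle q,o\rangle|=|\langle q,AS(o)\rangle|$ becomes a real quadric $\Phi(x,y)=0$, and $\Psi-4\Phi$ is positive definite. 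This gives $\langle q,q\rangle\ge 0$ on the triple intersection, with equality only at $x=y=0$, that is, at $p_{AS^2}$.
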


\begin{proof}
We first observe that the bisectors \( \mathcal{B}(A S(o), R(o)) \) and 
\( \mathcal{B}(o, S^{-1}(o)) \) form a balanced pair in the sense of 
page~2611 of~\cite{Acosta2019}. By Theorem~3.4 of~\cite{Acosta2019}, 
the intersection of any balanced pair of bisectors consists of a real 
plane \( \mathfrak{m} \) together with a complex line \( \mathfrak{l} \). 

Define the vectors:
\[
\gamma_1 = A S(o) \boxtimes R(o), \quad \gamma_2 = o \boxtimes S^{-1}(o).
\]
Their explicit coordinate expressions are:
\[
\begin{aligned}
\gamma_1 &= \begin{bmatrix} -3 a d s + a s + i b c d - i b c + i b d \\ -a c d + a c + a d - 3 i b d s + i b s \\ 2 i a b c - 3 d^2 s + 4 d s - s \end{bmatrix}, \quad
\gamma_2 &= \begin{bmatrix} i b \\ a \\ 0 \end{bmatrix}.
\end{aligned}
\]
We compute the following inner products:
\[
\begin{aligned}
\langle \gamma_1, o \rangle &= -2 i a b c + 3 d^2 s - 4 d s + s, \\
\langle \gamma_1, S^{-1}(o) \rangle &= -2 i a b c + (-1 + 4 d - 3 d^2) s, \\
\langle \gamma_2, A S(o) \rangle &= 2 i a b c - d^2 s + s, \\
\langle \gamma_2, R(o) \rangle &= (d^2 - 1) s + 2 i a b c.
\end{aligned}
\]
These computations show that \( \gamma_1 \in \mathcal{B}(o, S^{-1}(o)) \) and \( \gamma_2 \in \mathcal{B}(A S(o), R(o)) \), confirming that we have a balanced pair.

Now consider points on the complex line \( \mathfrak{l} \), which can be parameterized as \( q_{\mu} = \gamma_1 + \mu \gamma_2 \), where \( \mu = x + y i \in \mathbb{C} \). We compute:
\[
\begin{aligned}
|\langle q_{\mu}, o \rangle|^2 &= 4 (d-1)^2 d \left( 2 (d-1) s^2 + 1 \right), \\
|\langle q_{\mu}, A S(o) \rangle|^2 &= 4 (d-1)^2 d (x^2 + y^2).
\end{aligned}
\]
Therefore, \( q_{\mu} \) lies in the triple intersection if and only if \( |\mu|^2 = r^2 = 2 (d-1) s^2 + 1 \). However, the Hermitian form evaluates to:
\[
\begin{aligned}
\langle q_{\mu}, q_{\mu} \rangle &= (d-1) \left[ 20 d^2 s^2 + d (-12 s^2 + x^2 + 6 x + y^2 + 1) + x^2 - 2 x + y^2 + 1 \right] \\
&= 20 d^2 s^2 - 12 d s^2 - 8 d + d(x+3)^2 + d y^2 + (x-1)^2 + y^2 > 0.
\end{aligned}
\]
This positive value indicates that no point on \( \mathfrak{l} \) lies in \( \overline{\HdC} \). In particular, while \( \gamma_2 \in \mathfrak{l} \), it does not belong to \( \mathcal{B}(A S(o), R(o)) \).

Now we examine the real plane \( \mathfrak{m} \). Define the families of points:
\[
q_{x,y} = x \gamma_1 + y \gamma_2 + p_{AS^2}, \quad q'_{x,y} = x \gamma_1 + y \gamma_2.
\]
Then \( \mathfrak{m} = \{ q_{x,y} : x, y \in \mathbb{R} \} \cup \{ q'_{x,y} : x, y \in \mathbb{R},\ (x,y) \neq (0,0) \} \). 

For points in the first family, we find:
\[
|\langle q_{x,y}, A S(o) \rangle|^2 - |\langle q_{x,y}, o \rangle|^2 = \frac{4 c^4 \Phi(x,y)}{(1 - c^2)^3},
\]
where 
\[
\Phi(x,y) = (2 c^2 + 1) x^2 + y (c^3 - c^2 - c - y + 1) + 3 (c+1)(c-1)^2 x.
\]
The bisector condition requires \( \Phi(x,y) = 0 \). 

Additionally, the Hermitian form gives:
\[
\langle q_{x,y}, q_{x,y} \rangle = \frac{c^2 \Psi(x,y)}{(c^2 - 1)^2},
\]
where
\[
\begin{aligned}
\Psi(x,y) &= (11 c^2 + 10) x^2 + 2 x \left( 6 c^3 + c^2 (y - 6) - 6 c + 2 (y + 3) \right) \\
&\quad + y \left( 4 c^3 - c^2 (y + 4) - 4 c + 2 (y + 2) \right).
\end{aligned}
\]

Now consider the combination:
\[
\begin{aligned}
\Psi(x,y)&=\Psi(x,y) - 4\Phi(x,y)= (3 c^2 + 6) x^2 + (2 c^2 + 4) x y + (6 - c^2) y^2 \\
&= \left( \sqrt{3 c^2 + 6} \cdot x + \frac{c^2 + 2}{\sqrt{3 c^2 + 6}} \cdot y \right)^2 + \frac{4 (4 - c^2)}{3} \cdot y^2.
\end{aligned}
\]
For \( \frac{1}{2} < c < 1 \), we have \( 3c^2 + 6 > 0 \) and \( 4 - c^2 > 0 \), so \( \Psi(x,y) - 4\Phi(x,y) \ge 0 \) for all real \( x, y \), with equality only at \( x = y = 0 \).

For points in the second family \( q'_{x,y} \), one can verify that they do not lie
 in the triple intersection. Therefore, the only point common to all three bisectors 
 is \(  p_{AS^2} \), which completes the proof.
\end{proof}

\begin{prop} \label{prop:bisector-S0plus-empty1}
The intersection \( \mathcal{B}_0^{+} \cap \mathcal{B}_{n-1}^{\star} \) (respectively \( \mathcal{B}_0^{+} \cap \mathcal{B}_{0}^{\diamond} \)) is contained in the half-space bounded by \( \mathcal{B}_{n-1}^{-} \) (respectively \( \mathcal{B}_0^{-} \)) that does not contain the origin. Moreover, \( \mathcal{B}_0^{+} \cap \mathcal{B}_{n-1}^{\star} \) (respectively \( \mathcal{B}_0^{+} \cap \mathcal{B}_{0}^{\diamond} \)) is tangent to \( \mathcal{B}_{n-1}^{-} \) (respectively \( \mathcal{B}_0^{-} \)) on \( \partial \HdC \) at the parabolic fixed point of \( S^2 A \) (respectively \( S A S \)).
\end{prop}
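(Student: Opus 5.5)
First I reduce to a single case. Apply the anti-holomorphic symmetry $\iota$ of Proposition~\ref{prop:symmetry}, combined with $I_2$ and powers of $A$. We have $\iota(\mathcal{B}_{n-1}^{\star})=\mathcal{B}_{1}^{\diamond}$ and $\iota(\mathcal{B}_0^{+})=\mathcal{B}_1^{+}$. Conjugating by $A^{-1}$ then carries the pair $(\mathcal{B}_0^{+},\mathcal{B}_{n-1}^{\star})$ to $(\mathcal{B}_0^{+},\mathcal{B}_0^{\diamond})$, after checking with the identities of Proposition~\ref{prop:symmetry} that the companion bisector $\mathcal{B}_{n-1}^{-}$ goes to $\mathcal{B}_0^{-}$ and that the half-space containing $o$ is preserved (both $\iota$ and $A$ fix $o$). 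So it suffices to treat $\mathcal{B}_0^{+}\cap\mathcal{B}_{n-1}^{\star}$ against $\mathcal{B}_{n-1}^{-}$. In that case the relevant parabolic is $S^2A$. Its trace is $3$ by the computation $\operatorname{tr}(A^{\pm1}S^2)=3$ in the proof of Proposition~\ref{prop:kstar}, and $S^2A$ is conjugate to $AS^2$.

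Next I parametrize the intersection. The bisectors $\mathcal{B}_0^{+}=\mathcal{B}(o,S(o))$ and $\mathcal{B}_{n-1}^{\star}=\mathcal{B}(o,A^{-1}R(o))$ share the point $o$, so they are coequidistant, and their intersection is a Giraud disk. I parametrize it exactly as in Proposition~\ref{prop:bisector-S0plus-Gdisk}:
\[
p(u,v)=\bigl(o+e^{iu}S(o)\bigr)\boxtimes\bigl(o+e^{iv}A^{-1}R(o)\bigr),\qquad u,v\in[0,2\pi],
\]
restricted to the region where $\langle p(u,v),p(u,v)\rangle\le 0$. On this torus I then study the function
\[
G(u,v)=|\langle p,A^{-1}S^{-1}(o)\rangle|^2-|\langle p,o\rangle|^2 .
\]
Here $A^{-1}S^{-1}A\cdot A^{-1}(o)$ is written so that it represents $\mathcal{B}_{n-1}^{-}$, using $A^{-1}(o)=o$. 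The claim is that $G\le 0$ on the closed Giraud disk. Because of the sign convention of Definition~\ref{def:Dirichlet}, this says the disk lies on the side of $\mathcal{B}_{n-1}^{-}$ away from $o$. The claim also says $G=0$ only at the parabolic fixed point of $S^2A$ on $\partial\hc$.

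The strategy for proving $G\le0$ mirrors the proof of Lemma~\ref{lem:bisector-balanced}. Write $H(u,v)=\langle p,p\rangle$, which is a trigonometric polynomial, as is $G$. Look for a constant $\lambda>0$ such that $G-\lambda H$ (or $\lambda H-G$, depending on the sign convention) is a sum of squares, or at least is controlled by amplitude bounds of the form $\alpha\cos u+\beta\sin u\ge-\sqrt{\alpha^2+\beta^2}$ as in Proposition~\ref{prop:kminus}. Then on the region $H\le 0$ we get $G\le \lambda H\le 0$, with equality forcing $H=0$ together with a degenerate square. That places equality on $\partial\hc$ at a single point, which must be the fixed point of $S^2A$. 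To confirm this I check directly that $p_{S^2A}$ lies on all three bisectors: $S^2A$ maps $\mathcal{B}(o,(S^2A)^{-1}o)$ to $\mathcal{B}(S^2A\,o,o)$, and $S^2A$ fixes that point. The tangency then follows, because the pair $\mathcal{B}_{n-1}^{\star}=\mathcal{B}(o,S^2A(o))$ and $\mathcal{B}_{n-1}^{-}$ relates to Phillips' Proposition~\ref{prop:Phillips} for the parabolic $S^2A$, whose bisectors are asymptotic at its fixed point.

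The main obstacle is finding the multiplier $\lambda$, possibly a function of $c$ alone, that makes the quadratic-trigonometric form definite uniformly for $c\in(\cos\frac{\pi}{7},1)$. I would first try to find it by restricting to the real plane fixed by a suitable antiholomorphic symmetry, in the same way as in Lemma~\ref{lem:bisector-balanced}. If no clean square decomposition appears, I would fall back on the amplitude-bound method of Proposition~\ref{prop:kminus}, reducing to a polynomial inequality in $c$ on that interval.
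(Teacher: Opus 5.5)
Your symmetry reduction via $A^{-1}\circ\iota$ is fine and matches the paper's. The gap is the central claim: that $G\le 0$ on the closed Giraud disk, with equality only at $\operatorname{Fix}(S^2A)$. You do not prove it; you only describe a search for a multiplier $\lambda$, and neither version of that search is likely to work as stated.

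The inequality is sharp: $G$ and $H$ both vanish at the boundary point $\operatorname{Fix}(S^2A)$. So a certificate $\lambda H-G\ge 0$ on the whole $(u,v)$-torus would need $\nabla G=\lambda\nabla H$ at that point and then global nonnegativity, and no lossless S-lemma guarantees that here. The amplitude bounds $\alpha\cos u+\beta\sin u\ge-\sqrt{\alpha^2+\beta^2}$ of Proposition~\ref{prop:kminus} bound each trigonometric term separately. They work only when there is slack; there they prove a strict inequality. For a quantity whose minimum is exactly $0$ they will generically give a negative lower bound.

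The missing idea is that no global inequality is needed. Lemma~\ref{lem:bisector-balanced}, translated by $A^{-1}$, gives
$\mathcal{B}_0^{+}\cap\mathcal{B}_{n-1}^{\star}\cap\mathcal{B}_{n-1}^{-}=A^{-1}(\mathcal{B}_1^{+}\cap\mathcal{B}_0^{\star}\cap\mathcal{B}_0^{-})$. This is the single point $A^{-1}(p_{AS^2})=\operatorname{Fix}(S^2A)\in\partial\hc$. The closed Giraud disk minus this boundary point is connected and disjoint from $\mathcal{B}_{n-1}^{-}$, so $G$ has constant sign on it. Evaluating $G$ at one point of negative norm then fixes the sign. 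The paper uses $p'=(e^{i\pi/4}o+S(o))\boxtimes(e^{-i\pi/4}o+A^{-1}R(o))$, which also shows the disk is nonempty. Tangency at $\operatorname{Fix}(S^2A)$ follows immediately.

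Your justification of the tangency point also contains an error. Since $A(o)=o$, we have $\mathcal{B}(o,S^2A(o))=\mathcal{B}(o,S^2(o))=\mathcal{B}_0^{\star}$, not $\mathcal{B}_{n-1}^{\star}$; in fact $\mathcal{B}_{n-1}^{\star}=\mathcal{B}(o,(S^2A)^{-1}(o))$. So Phillips' Proposition~\ref{prop:Phillips} applied to the parabolic $S^2A$ concerns the pair $\mathcal{B}_0^{\star}$, $\mathcal{B}_{n-1}^{\star}$ (item~\ref{item:kstar:star-star} of Proposition~\ref{prop:kstar}). It says nothing about $\mathcal{B}_{n-1}^{-}$.

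Likewise, $S^2A$ fixing $\xi=\operatorname{Fix}(S^2A)$ with unimodular eigenvalue only gives $\xi\in\mathcal{B}_0^{\star}\cap\mathcal{B}_{n-1}^{\star}$. That $\xi$ also lies on $\mathcal{B}_0^{+}$ and $\mathcal{B}_{n-1}^{-}$, and is their only common point with the disk, is exactly what the balanced-pair computation in Lemma~\ref{lem:bisector-balanced} establishes. Your argument would have to reproduce it, not assume it.
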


\begin{proof}
We first observe the relation
\[
\mathcal{B}_0^{+} \cap \mathcal{B}_{-1}^{\star} \cap \mathcal{B}_{-1}^{-} = A^{-1} \left( \mathcal{B}_1^{+} \cap \mathcal{B}_{0}^{\star} \cap \mathcal{B}_{0}^{-} \right).
\]
By Lemma~\ref{lem:bisector-balanced}, the intersection on the right-hand side is the single point \( p_{AS^2} \), so the intersection on the left is the single point \( A^{-1}(p_{AS^2}) \in \partial \HdC \).

Now define the point
\[
p' = \left( e^{\frac{\pi}{4} i} o + S(o) \right) \boxtimes \left( e^{-\frac{\pi}{4} i} o + A^{-1} R(o) \right).
\]
A computation shows that
\[
\langle p', p' \rangle = \frac{c^2 \varphi(c)}{(c^2 - 1)^2},
\]
where the function \( \varphi(c) \) is given by
\[
\varphi(c) = 2\sqrt{2} c^4 - (\sqrt{2} - 10) c^2 - 10\sqrt{2} + 16 - c \sqrt{4 - c^2} \left[ 4(\sqrt{2} - 1) + 2(2 + \sqrt{2}) c^2 \right].
\]
One can verify that \( \varphi(c) < 0 \) for the relevant values of \( c \), which implies that
 \( p' \in \mathcal{B}_0^{+} \cap \mathcal{B}_{-1}^{\star} \).

Next we examine the position of \( p' \) relative to the bisector \( \mathcal{B}_{-1}^{-} \). We compute
\[
\begin{aligned}
& |\langle p', o \rangle|^2 - |\langle p', A^{-1} S^{-1} o \rangle|^2 \\
&= \frac{2 c^4 \left( \sqrt{2} c^4 - 2 (\sqrt{2} - 2) \sqrt{4 - c^2} c - (2 + \sqrt{2}) \sqrt{4 - c^2} c^3 - 4\sqrt{2} + 4 \right)}{(c^2 - 1)^3}.
\end{aligned}
\]
The expression in parentheses is positive, so the entire difference is positive. 
This means that \( p' \) lies in the half-space bounded by \( \mathcal{B}_{-1}^{-} \) that
 does not contain the origin. Since \( \mathcal{B}_0^{+} \cap \mathcal{B}_{-1}^{\star} \) 
 is a Giraud disk and \( p' \) is an interior point, the entire disk must lie in this half-space.

Finally, applying the anti-holomorphic isometry \( \iota \) followed by the rotation \( A \), we conclude that \( \mathcal{B}_0^{+} \cap \mathcal{B}_{-1}^{\diamond} \) lies in the half-space bounded by \( \mathcal{B}_0^{-} \) that does not contain the origin. Moreover, this intersection is tangent to \( \mathcal{B}_0^{-} \) on \( \partial \HdC \) at the parabolic fixed point of \( S A S \).
\end{proof}

\begin{prop} \label{prop:cross1}
\begin{itemize}
    \item The intersection \( \mathcal{B}_{0}^{+} \cap \mathcal{B}_0^{-} \cap \mathcal{B}_{0}^{\star} \) is a union of two geodesics crossing at the fixed point of \( S \) in \( \HdC \);
    \item The intersection \( \mathcal{B}_{0}^{+} \cap \mathcal{B}_{n-1}^{-} \cap \mathcal{B}_{n-1}^{\diamond} \) is a union of two geodesics crossing at the fixed point of \( A^{-1} Q A \) in \( \HdC \).
\end{itemize}
\end{prop}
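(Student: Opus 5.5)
The plan is to exploit the fact that each triple intersection is cut out by three bisectors whose centers are permuted by a finite-order elliptic element fixing an interior point. We then reduce to an explicit computation on a 2-dimensional set where the equations become quadratic.

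For the first item, note that $S$ has order $4$ and fixes a point $\kappa\in\HdC$. The three bisectors are
\[
\mathcal{B}_0^{+}=\mathcal{B}(o,S(o)),\quad \mathcal{B}_0^{-}=\mathcal{B}(o,S^{-1}(o)),\quad \mathcal{B}_0^{\star}=\mathcal{B}(o,S^2(o)).
\]
Hence the triple intersection is exactly the set of points equidistant from the four points $o,S(o),S^2(o),S^3(o)$, and this set is $S$-invariant. In particular it contains $\kappa$.

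We then parametrize $\mathcal{B}_0^{+}\cap\mathcal{B}_0^{-}$, which is a Giraud disk by Proposition~\ref{prop:bisector-S0plus-Gdisk}, as in that proof:
\[
p(u,v)=\bigl(o+e^{ui}S(o)\bigr)\boxtimes\bigl(o+e^{vi}S^{-1}(o)\bigr).
\]
The extra condition $|\langle p(u,v),o\rangle|=|\langle p(u,v),R(o)\rangle|$ becomes a single real trigonometric equation $G(u,v)=0$ on the torus of parameters. We also impose the negativity constraint $\langle p(u,v),p(u,v)\rangle\le 0$.

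I expect $G$ to factor, after using $S^4=\mathrm{id}$ and the explicit matrices, into a product of two factors, each linear in suitable trigonometric combinations, schematically of the form $\sin\frac{u+v}{2}\cdot\sin\frac{u-v}{2}$ up to phase shifts. Each factor then defines a curve in the Giraud disk. The two curves meet at the parameter corresponding to $\kappa$. The paper computed that at $(\pi,\pi)$ the vector has negative norm, which suggests this is $\kappa$.

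Next, we check that each curve is a real geodesic. I would identify each one with the intersection of the Giraud disk with an $S$-invariant (or $S$-conjugated) $\mathbb{R}$-plane or complex line through $\kappa$. Alternatively, we can apply Proposition~\ref{prop:complex-bisector}: a complex line orthogonal to a complex slice meets a bisector in a geodesic. A cleaner route is to show that each curve is the fixed set of an anti-holomorphic involution (a conjugate of $\iota$ composed with a power of $S$) that preserves all three bisectors. The fixed $\mathbb{R}$-plane meets the Giraud disk in a geodesic through $\kappa$. Finally, evaluating the norm at the curve endpoints shows that they reach $\partial\HdC$; these ideal endpoints should be $v_1,w_1,x_1,y_1$.

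For the second item, apply $A^{-1}$. The triple $\mathcal{B}_1^{+},\mathcal{B}_0^{-},\mathcal{B}_0^{\diamond}$ consists of the bisectors from $o$ to $AS(o)$, $S^{-1}(o)$ and $Q(o)=(AS)^2(o)$. Since $(AS)^{-1}=S^{-1}A^{-1}$ and $A^{-1}$ fixes $o$, we have $S^{-1}(o)=(AS)^{-1}(o)$. So these are the bisectors from $o$ to the orbit of $o$ under the order-$4$ element $AS$. The argument of the first item then applies verbatim with $S$ replaced by $AS$, giving two geodesics crossing at the fixed point of $AS$. Translating back by $A^{-1}$ gives the fixed point of $A^{-1}QA$, which shares the fixed point of $A^{-1}(AS)A=SA$.

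The main obstacle is the explicit factorization of $G(u,v)$ together with controlling the negativity region, which is needed to confirm that each factor gives a complete geodesic rather than a segment. If the factorization is messy, I would instead locate the anti-holomorphic symmetries first and then show that $G=0$ has no other solutions in the disk, using a convexity/Hessian argument like the one in Proposition~\ref{prop:kminus}.
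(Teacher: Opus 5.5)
Your outline is the paper's: the box-product parametrization $p(u,v)$ of $\mathcal{B}_0^{+}\cap\mathcal{B}_0^{-}$, one trigonometric equation for $\mathcal{B}_0^{\star}$, discarding what lies outside $\overline{\HdC}$, and symmetry for the second item. However, the two steps that carry the proof are mispredicted or left unsupported. First, the equation does not split into two diagonal factors. The paper computes
\[
|\langle p(u,v),o\rangle|^2-|\langle p(u,v),S^2(o)\rangle|^2=-\frac{8c^4}{s^6}\bigl(1+\cos u+\cos v+\cos(u-v)\bigr)=-\frac{32c^4}{s^6}\cos\tfrac{u}{2}\cos\tfrac{v}{2}\cos\tfrac{u-v}{2}.
\]
So the zero set has three components, $u=\pi$, $v=\pi$ and $u-v=\pi$. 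The third lies entirely outside $\overline{\HdC}$ (there $\langle p,p\rangle=8c^2/s^4>0$). It is therefore not true that each factor defines a curve in the Giraud disk; that component has to be excluded by this norm computation.

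The genuine gap is the geodesic step. Your preferred route asserts that the fixed $\mathbb{R}$-plane of some anti-holomorphic involution meets the Giraud disk in a geodesic. That is not a general fact: inside an $\mathbb{R}$-plane, a bisector condition $|\langle p,x\rangle|=|\langle p,y\rangle|$ cuts out a conic of the Klein model, which is in general not a line. You also do not say which involution you mean or why its fixed set contains a component.

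The missing idea is the one the paper uses. You list Proposition~\ref{prop:complex-bisector} only as an alternative, without identifying the complex lines or checking its hypothesis. The surviving components are coordinate lines of the parametrization:
\begin{itemize}
\item Every $p(\pi,v)$ is orthogonal to $v'=o-S(o)$, so $\{u=\pi\}=C_1\cap\mathcal{B}_0^{-}$, where $C_1=(v')^{\perp}$ is a complex slice of $\mathcal{B}_0^{+}$.
\item Likewise $\{v=\pi\}=C_2\cap\mathcal{B}_0^{+}$, where $C_2=(v'')^{\perp}$ with $v''=o-S^{-1}(o)$.
\end{itemize}
The decisive computation is $\langle v',v''\rangle=0$, together with $\langle v',v'\rangle=\langle v'',v''\rangle=2(d-1)>0$. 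Hence each complex line is orthogonal to a complex slice of the other bisector, and Proposition~\ref{prop:complex-bisector} makes both components real geodesics.

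They meet at $p(\pi,\pi)=v'\boxtimes v''$, and this is the fixed point $\kappa$ of $S$. The lift $S=I_2I_3$ satisfies $S\kappa=\kappa$, since both reflections act by $-1$ on their mirrors. Therefore $\langle\kappa,S^{\pm1}(o)\rangle=\langle\kappa,o\rangle$, i.e.\ $\kappa\perp v',v''$.

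For the second item, your reduction to the order-four element $AS=I_1I_3$ is correct. You do not need to redo the computation, though. As the paper notes, $\iota$ fixes $o$, conjugates $S$ to $AS$, and sends $(\mathcal{B}_0^{+},\mathcal{B}_0^{-},\mathcal{B}_0^{\star})$ to $(\mathcal{B}_1^{+},\mathcal{B}_0^{-},\mathcal{B}_0^{\diamond})$; then apply $A^{-1}$.
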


\begin{proof}
We prove the first item. Points in \( \mathcal{B}^{+}_{0} \cap \mathcal{B}^{-}_{0} \) can be parameterized as:
\[
p_2(u, v) = \left( o + e^{ui} S(o) \right) \boxtimes \left( o + e^{vi} S^{-1}(o) \right), \quad u, v \in [0, 2\pi].
\]
Assume \( p_2(u, v) \in \mathcal{B}^{*}_{0} \), so:
\[
|\langle p_2(u, v), o \rangle| = |\langle p_2(u, v), S^2(o) \rangle|.
\]
Then:
\[
|\langle p_2(u, v), o \rangle|^2 - |\langle p_2(u, v), S^2(o) \rangle|^2 = -\frac{8 c^4}{s^6} \left( 1 + \cos u + \cos v + \cos(u - v) \right).
\]
Solutions of the above equation are: \( u = \pi \), \( v = \pi \), or \( u - v = \pi \). For \( u - v = \pi \), \( \langle p_2(u, v), p_2(u, v) \rangle = \frac{8 c^2}{s^4} > 0 \), so not in \( \HdC \).

Let \( v' = o - S(o) \), \( v'' = o - S^{-1}(o) \). Then \( \langle v', v' \rangle = \langle v'', v'' \rangle = 2(d-1) > 0 \), so \( v', v'' \) are polar vectors of complex slices \( C_1 \subset \mathcal{B}_{0}^{+} \), \( C_2 \subset \mathcal{B}_{0}^{-} \). Since \( \langle v', v'' \rangle = 0 \), by Proposition~\ref{prop:complex-bisector}, the intersections \( C_1 \cap \mathcal{B}_{0}^{-} \) and \( C_2 \cap \mathcal{B}_{0}^{+} \) are real geodesic segments. They cross at \( p_2(\pi, \pi) \), the fixed point of \( S \). That is, the solutions \( u = \pi \), \( v = \pi \)  gives a union of two geodesics crossing at the fixed point of \( S \) in \( \HdC \).

The second item follows by applying \( \iota \) and \( A^{-1} \).
\end{proof}

Next, we consider intersections of \( \mathcal{B}_0^{\star} \) with other bisectors (analogous for \( \mathcal{B}_0^{\diamond} \) by symmetry):

\begin{prop} \label{prop:empty2}
\begin{itemize}
    \item \( \mathcal{B}_0^{\star} \cap \mathcal{B}_{1}^{+} \) is contained in the half-space bounded by \( \mathcal{B}_0^{-} \) not containing original  \( o \), and is tangent to \( \mathcal{B}_0^{-} \) at the parabolic fixed point of \( A S^2 \) on \( \partial \HdC \);
    \item \( \mathcal{B}_0^{\star} \cap \mathcal{B}_{n-1}^{-} \) is contained in the half-space bounded by \( \mathcal{B}_0^{+} \) not containing \( o \), and is tangent to \( \mathcal{B}_0^{+} \) at the parabolic fixed point of \( A^{-1} S^2 \) on \( \partial \HdC \);
    \item \( \mathcal{B}_0^{\star} \cap \mathcal{B}_{0}^{\diamond} \) is contained in the half-space bounded by \( \mathcal{B}_0^{-} \) not containing \( o \);
    \item \( \mathcal{B}_0^{\star} \cap \mathcal{B}_{n-1}^{\diamond} \) is contained in the half-space bounded by \( \mathcal{B}_0^{+} \) not containing \( o \).
\end{itemize}
\end{prop}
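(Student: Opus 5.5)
The first two items follow from Proposition~\ref{prop:bisector-S0plus-empty1} by symmetry, and the last two follow by the same method as that proposition, with the mirror of the balanced-pair argument of Lemma~\ref{lem:bisector-balanced} supplying the tangency.

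\textbf{First two items.} Apply the symmetry $\iota$ composed with a power of $A$, or $I_2$, to Proposition~\ref{prop:bisector-S0plus-empty1}. That proposition says $\mathcal{B}_0^{+}\cap\mathcal{B}_{n-1}^{\star}$ lies in the closed half-space of $\mathcal{B}_{n-1}^{-}$ away from $o$, touching $\mathcal{B}_{n-1}^{-}$ only at a parabolic fixed point. Applying $A$ moves this to $\mathcal{B}_1^{+}\cap\mathcal{B}_0^{\star}$ relative to $\mathcal{B}_0^{-}$, which is exactly the first item. Since $A$ fixes $o$, the side of the half-space is preserved. Lemma~\ref{lem:bisector-balanced} confirms that the closure of the triple intersection is the single point $p_{AS^2}$, the fixed point of $AS^2$. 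For the second item, apply $I_2$, which fixes $o$ (it is diagonal on $o=(0,0,1)^T$). By Proposition~\ref{prop:symmetry} it sends $\mathcal{B}_1^{+}$ to $\mathcal{B}_{-1}^{-}$, fixes $\mathcal{B}_0^{\star}$, and sends $\mathcal{B}_0^{-}$ to $\mathcal{B}_0^{+}$ (using $I_2$ an involution). It also conjugates $AS^2$ into a conjugate of $A^{-1}S^2$, so the tangency point becomes the parabolic fixed point of $A^{-1}S^2$. I would check the identity $I_2 AS^2 I_2 = A^{-1}(\cdot)$ explicitly to match the stated element.

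\textbf{Last two items.} For the third item, use that $\mathcal{B}_0^{\star}\cap\mathcal{B}_0^{\diamond}$ is a nonempty Giraud disk; Proposition~\ref{prop:kstar} gives a negative Hermitian cross product. I would produce an explicit interior point
\[
p''=\bigl(e^{i\alpha}o+R(o)\bigr)\boxtimes\bigl(e^{i\beta}o+Q(o)\bigr)
\]
with $\langle p'',p''\rangle<0$, taking for instance $\alpha=\beta=\pi$. Then I would compute $|\langle p'',o\rangle|^2-|\langle p'',S^{-1}(o)\rangle|^2$ as a function of $c$ and show it has the sign placing $p''$ on the non-$o$ side. Connectedness of the Giraud disk then reduces the claim to showing $\mathcal{B}_0^{\star}\cap\mathcal{B}_0^{\diamond}\cap\mathcal{B}_0^{-}\cap\overline{\HdC}=\emptyset$. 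I would do this as in Lemma~\ref{lem:bisector-balanced}: parameterize the Giraud disk by $(u,v)\in[0,2\pi]^2$ and show that the difference function $|\langle p(u,v),o\rangle|^2-|\langle p(u,v),S^{-1}o\rangle|^2$ has constant sign wherever $\langle p(u,v),p(u,v)\rangle\le 0$, bounding trigonometric terms by amplitude as in Proposition~\ref{prop:kminus}. The fourth item then follows from the third by applying $I_2$, which exchanges $\mathcal{B}_0^{\diamond}$ with $\mathcal{B}_{-1}^{\diamond}$ and $\mathcal{B}_0^{-}$ with $\mathcal{B}_0^{+}$ while fixing $\mathcal{B}_0^{\star}$.

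\textbf{Main obstacle.} The hard step is the disjointness in the third item. Unlike the first item, no balanced-pair structure or parabolic tangency is available. A uniform sign estimate in $c\in(\cos\frac{\pi}{7},1)$ together with the $(u,v)$ parameters must therefore be carried out, probably via the same amplitude bounds followed by a polynomial positivity check.
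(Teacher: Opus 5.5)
Your treatment of the first, second and fourth items is exactly the paper's. The first item is Proposition~\ref{prop:bisector-S0plus-empty1} transported by $A$, and the second and fourth follow from the first and third via $I_2$. Your conjugation check also works: $I_2AI_2=A^{-1}$ and $I_2S^2I_2=S^{-2}=S^2$, so $I_2AS^2I_2=A^{-1}S^2$.

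The gap is in the third item, which is the only part with real content. Your reduction (a far-side interior point, connectedness of the Giraud disk, and emptiness of $\mathcal{B}_0^{\star}\cap\mathcal{B}_0^{\diamond}\cap\mathcal{B}_0^{-}\cap\overline{\HdC}$) is logically sound. However, you never supply the estimate, and the tool you point to cannot do it alone.

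In Proposition~\ref{prop:kminus}, amplitude bounds are applied to a single function, $|z|^2+|w|^2-1$, which is positive on the whole intersection of two bisectors. Here the side function is not of one sign on the full (extended) Giraud torus. For $p=[z,w,1]^t$ on $\mathcal{B}_0^{\star}\cap\mathcal{B}_0^{\diamond}$ one has $|\langle p,o\rangle|=1$ and $|\langle p,S^{-1}(o)\rangle|=|za-wbi-d|$. A direct computation gives $za-wbi-d=\tfrac{M}{2}e^{i\phi}+\tfrac{c^2}{2}$ with $M=(2-c^2)\cos\theta-c\sqrt{4-c^2}\sin\theta\in[-2,2]$. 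So $|za-wbi-d|$ takes every value in $[0,(2+c^2)/2]$, on both sides of $1$. Any argument must therefore genuinely couple the constraint $\langle p,p\rangle\le 0$ with the side condition, and your proposal does not say how.

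The missing idea, which is what the paper does, runs as follows. The two defining equations have the form $|U-V|=|U+V|=1$, where $U=z(ac+a)+iwb(c-1)-(2d-1)$ and $V=zsbi-was$. Hence $U=\cos\theta\, e^{i\phi}$ and $V=i\sin\theta\, e^{i\phi}$, and $(z,w)$ is affine in $e^{i\phi}$. One then forms the positive combination
\[
\bigl(|z|^2+|w|^2-1\bigr)-K\bigl(|za-wbi-d|^2-1\bigr),\qquad K=\frac{2d-1}{(d-1)^2}>0,
\]
with $K$ chosen exactly so that the $\cos\phi$ terms cancel. What remains is a positive multiple of
\[
c\sqrt{4-c^2}\sin 2\theta+(c^2-1)\cos 2\theta+3c^2+1\;\ge\;1+3c^2-\sqrt{1+2c^2}\;>\;0 .
\]
Thus $|z|^2+|w|^2\le 1$ forces $|za-wbi-d|<1$, so the whole closed disk lies strictly on the $S^{-1}(o)=I_3(o)$ side of $\mathcal{B}_0^{-}$.

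This Lagrange-multiplier (S-lemma type) step handles the ball constraint and the side condition simultaneously and eliminates one angle. It makes your interior point and connectedness argument unnecessary, and the bound holds uniformly in $c$.
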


\begin{proof}
The first item follows from Proposition~\ref{prop:bisector-S0plus-empty1} via the element \( A \). For the third, let \( p = [z, w, 1]^t \in \mathcal{B}_0^{\star} \cap \mathcal{B}_{0}^{\diamond} \). Then:
\begin{align*}
1 &= | z(a c + a - s b i) + w(a s + i b (c - 1)) - (2d - 1) |, \\
1 &= | z(a c + a + s b i) + w(-a s + i b (c - 1)) - (2d - 1) |.
\end{align*}
Adding and subtracting yields:
\begin{align*}
1 &= | z(a + a c) + i w b (c - 1) - (2d - 1) |^2 + | z s b i - w a s |^2, \\
0 &= 2 \Re \left( \left( z(a + a c) + i w b (c - 1) - (2d - 1) \right) (-\bar{z} s b i - \bar{w} a s ) \right).
\end{align*}
Write:
\begin{align*}
z(a c + c) + w b i (c - 1) - (2d - 1) &= \cos\theta \, e^{i\phi}, \\
z s b i - w a s &= i \sin\theta \, e^{i\phi}.
\end{align*}
Solving:
\begin{align*}
z &= \frac{ -a s (\cos\theta e^{i\phi} + 2d - 1) + b (c - 1) \sin\theta e^{i\phi} }{ -a^2 s (1 + c) + b^2 s (c - 1) }, \\
w &= \frac{ i a (1 + c) \sin\theta e^{i\phi} - i s b (\cos\theta e^{i\phi} + 2d - 1) }{ -a^2 s (1 + c) + b^2 s (c - 1) }.
\end{align*}
The point \( p \) is closer to \( I_3(o) \) than to \( o \) if and only if  \( 1 > |z a - w b i - d| \). One estimates:
\[
\begin{aligned}
&2(d-1)^3 s^2 \left( |z|^2 + |w|^2 - 1 - \frac{(2d-1)(d+1)}{(d-1)^2(1+d)} ( |z a - w b i - d|^2 - 1) \right) \\
&= 2 a b c s \sin(2\theta) + (-d^2 s^2 + d + s^2 - 1) \cos(2\theta) + 5 d^2 s^2 - 8 d s^2 - d + 3 s^2 + 1 \\
&= \frac{c^2 \left( c \sqrt{4 - c^2} \sin(2\theta) + (c^2 - 1) \cos(2\theta) + 3 c^2 + 1 \right)}{1 - c^2} \\
&\ge \frac{c^2}{1 - c^2} \left( 1 + 3 c^2 - \sqrt{1 + 2 c^2} \right) \ge 0.
\end{aligned}
\]
So \( p \) satisfies \( |z a - w b i - d| < 1 \), i.e., lies in the half-space closer to \( I_3(o) \) than to \( o \).

The second and fourth items follow from the first and third by the element  \( I_2 \).
\end{proof}

\begin{prop} \label{prop:disc2}
The intersection \( \mathcal{B}_0^{\star} \cap \mathcal{B}_{0}^{+} \) is a Giraud disk.
\end{prop}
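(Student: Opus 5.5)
We will prove the statement with the same strategy used for Proposition~\ref{prop:bisector-S0plus-Gdisk}. The key structural fact is a cospinal/coequidistant observation. The bisectors $\mathcal{B}_0^{\star}=\mathcal{B}(o,S^2(o))$ and $\mathcal{B}_0^{+}=\mathcal{B}(o,S(o))$ share the point $o$ in their definitions. Hence they form a coequidistant pair: both are equidistant from $o$, the other centers being $S(o)$ and $S^2(o)$.

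By the standard theory of coequidistant bisectors (Goldman), such an intersection is either empty or a Giraud disk. It is a smooth, topologically embedded disk, whose closure in $\overline{\HdC}$ is a closed disk with boundary a circle on $\partial\HdC$. Moreover, the Giraud disk is nonempty exactly when the Hermitian cross product of the two difference vectors is negative. This mirrors the argument already used for $\mathcal{B}_0^{+}\cap\mathcal{B}_0^{\star}$ in the proof of Proposition~\ref{prop:kminus}\ref{item:kminus:plus-star}.

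Concretely, the steps are as follows.

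\textbf{Step 1.} Write the intersection as the image of the torus
\[
p(u,v)=\bigl(o+e^{ui}S(o)\bigr)\boxtimes\bigl(o+e^{vi}S^2(o)\bigr),\qquad u,v\in[0,2\pi].
\]
Every point of the intersection has this form, since it is orthogonal to two vectors in the respective spans.

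\textbf{Step 2.} Exhibit a single negative point of this family. The natural candidate is the cross product
\[
v_1'=(o-S(o))\boxtimes(o-R(o)),
\]
which corresponds to $u=v=\pi$. The paper has already computed
\[
\langle v_1',v_1'\rangle=-4(d-1)^2s^2<0,
\]
so $v_1'$ represents a point of $\HdC$ lying on both bisectors, and the intersection is nonempty.

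\textbf{Step 3.} Invoke Giraud's theorem on coequidistant bisectors to conclude that the nonempty intersection is a Giraud disk.

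The only step that needs care is confirming that the two bisectors are genuinely distinct and coequidistant, rather than cospinal or coincident. This is immediate, because $o$, $S(o)$ and $S^2(o)$ are three distinct points. They are distinct since $S$ has order $4$ and $o$ is not fixed by $S$ or $S^2$: indeed $\langle o-S(o),o-S(o)\rangle=2(d-1)>0$. The three points are also not on a common complex line, which follows from the cross product being a nonzero negative vector.

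I expect the main, mild, obstacle to be this genericity check. If Giraud's theorem is applied in its form requiring the three centers not to be contained in a complex line, I would verify it by noting that $v_1'\neq 0$ and is negative. In the degenerate case the cross product would be null or the intersection would be a complex slice, so negativity rules this out.
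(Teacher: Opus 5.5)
Your proposal is correct and is essentially the paper's own argument. The paper likewise uses that $\mathcal{B}_0^{\star}$ and $\mathcal{B}_0^{+}$ are coequidistant to reduce to nonemptiness, parametrizes the intersection by $(o+e^{ui}S(o))\boxtimes(o+e^{vi}R(o))$, and checks that the point at $u=v=\pi$, i.e.\ $(o-S(o))\boxtimes(o-R(o))$, has negative norm.

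Only the ``if'' direction of your ``exactly when'' criterion is needed; the converse depends on the choice of lifts, since rescaling $S(o)$ or $R(o)$ by a unit scalar moves the $u=v=\pi$ point around the Giraud torus. The paper records this norm as $-4c^2/s^4$, not the $-4(d-1)^2s^2$ you quote from Proposition~\ref{prop:kminus}, but only its sign matters.
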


\begin{proof}
Since \( \mathcal{B}_0^{\star} \) and \( \mathcal{B}_{0}^{+} \) are coequidistant, it suffices to show non-emptiness. Parameterize the intersection \( \mathcal{B}_0^{\star} \cap \mathcal{B}_{0}^{+} \) by:
\[
p_3(u, v) = \left( o + e^{ui} S(o) \right) \boxtimes \left( o + e^{vi} R(o) \right).
\]
Then \( \langle p_3(\pi, \pi), p_3(\pi, \pi) \rangle = -\frac{4 c^2}{s^4} < 0 \).
\end{proof}

\begin{prop} \label{prop:tangent2}
The intersection \( \mathcal{B}_0^{\star} \cap \mathcal{B}_{0}^{+} \) is tangent to \( \mathcal{B}_{n-1}^{-} \) at the parabolic fixed point of \( S^2 A \) on \( \partial \HdC \).
\end{prop}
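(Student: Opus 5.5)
We have to show that the Giraud disk $\mathcal{B}_0^{\star}\cap\mathcal{B}_0^{+}$ (Proposition~\ref{prop:disc2}) meets $\mathcal{B}_{n-1}^{-}$ only at one ideal point, the fixed point of $S^2A$, and otherwise lies on one side of $\mathcal{B}_{n-1}^{-}$. The plan is to reduce this to Lemma~\ref{lem:bisector-balanced} and then settle the sidedness with one sample point, as in the proof of Proposition~\ref{prop:bisector-S0plus-empty1}.

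\textbf{Step 1 (the triple intersection).} Apply $A$ and the relations $A\mathcal{B}_k^{\bullet}=\mathcal{B}_{k+1}^{\bullet}$. We need a triple of bisectors that maps onto the one in Lemma~\ref{lem:bisector-balanced}. The natural choice is to use $I_2$: by Proposition~\ref{prop:symmetry}, $I_2(\mathcal{B}_0^{+})=\mathcal{B}_0^{-}$, $I_2(\mathcal{B}_0^{\star})=\mathcal{B}_0^{\star}$ and $I_2(\mathcal{B}_{n-1}^{-})=I_2(\mathcal{B}_{-1}^{-})=\mathcal{B}_{1}^{+}$. Hence
\[
I_2\bigl(\mathcal{B}_0^{\star}\cap\mathcal{B}_0^{+}\cap\mathcal{B}_{n-1}^{-}\bigr)=\mathcal{B}_1^{+}\cap\mathcal{B}_0^{\star}\cap\mathcal{B}_0^{-}.
\]
By Lemma~\ref{lem:bisector-balanced}, the closure of the right-hand side is exactly the parabolic fixed point $p_{AS^2}$. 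So the triple intersection on the left is the single ideal point $I_2(p_{AS^2})$.

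It remains to identify this point. We have $I_2AS^2I_2=I_2I_1I_2I_2I_3I_2I_3I_2=I_2I_1I_3I_2I_3I_2$. This is conjugate, via $I_2$, to $I_1I_3I_2I_3I_2I_2=AS^2$ rewritten. More directly, $I_2(AS^2)^{-1}I_2$ should equal $S^2A$ up to the order-two relations, which we check with a short word computation. Either way, $I_2(p_{AS^2})$ is the parabolic fixed point of $S^2A$ (equivalently of $A^{-1}S^2$ after conjugating, consistent with the second item of Proposition~\ref{prop:empty2}).

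\textbf{Step 2 (one side).} Since the Giraud disk $\mathcal{B}_0^{\star}\cap\mathcal{B}_0^{+}$ is connected and its interior misses $\mathcal{B}_{n-1}^{-}$ by Step 1, the function $|\langle p,o\rangle|^2-|\langle p,A^{-1}S^{-1}(o)\rangle|^2$ has constant sign on the disk away from that ideal point. To fix the sign, evaluate it at the interior point $p_3(\pi,\pi)$ from Proposition~\ref{prop:disc2}, or at the fixed point of $S$, which lies on the disk by Proposition~\ref{prop:cross1}. We expect a closed form in $c$ that is visibly positive for $c>\cos(\pi/7)$. This places the disk in the half-space of $\mathcal{B}_{n-1}^{-}$ not containing $o$, or in the closed side, whichever the statement requires.

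\textbf{Step 3 (tangency).} The closures of the disk and of $\mathcal{B}_{n-1}^{-}$ meet only at the one ideal point found in Step 1, and the disk stays on one side. This is precisely tangency at the fixed point of $S^2A$ on $\partial\HdC$.

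\textbf{Main obstacle.} The main obstacle is Step 1's bookkeeping: verifying the exact symmetry indices and confirming the word identity that identifies $I_2(p_{AS^2})$ with the fixed point of $S^2A$. If $I_2$ does not map the triple cleanly, a fallback is to use $\iota$ composed with a power of $A$, or to redo the balanced-pair computation of Lemma~\ref{lem:bisector-balanced} directly for this triple.
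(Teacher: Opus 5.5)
Your argument is correct and uses the same symmetry as the paper, but it rests on a slightly different, and more appropriate, ingredient.

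The paper's one-line proof transports the first item of Proposition~\ref{prop:empty2} by $I_2$. Taken literally, that yields the second item of Proposition~\ref{prop:empty2}: that $\mathcal{B}_0^{\star}\cap\mathcal{B}_{n-1}^{-}$ is tangent to $\mathcal{B}_0^{+}$. These are the same three bisectors, but with the roles of $\mathcal{B}_0^{+}$ and $\mathcal{B}_{n-1}^{-}$ exchanged, so the citation really only supplies the location of the common point.

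You instead use that the triple intersection is symmetric in the three bisectors. You identify it via $I_2$ and Lemma~\ref{lem:bisector-balanced} as $\{I_2(p_{AS^2})\}$. You then get one-sidedness for the pair actually in the statement from the connectedness of the closed Giraud disk of Proposition~\ref{prop:disc2} minus one boundary point. That is the cleaner and more complete justification.

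Your word check is right: $I_2(AS^2)^{-1}I_2=I_2\,(I_3I_2I_3I_1)\,I_2=S^2A$, so $I_2(p_{AS^2})$ is the fixed point of $S^2A$. The sentence before it (``conjugate, via $I_2$, to \dots\ $AS^2$ rewritten'') is circular and can be deleted.

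The sample-point computation in Step 2 is not needed, since the statement does not specify a side. If you do carry it out, do not expect the positive sign of Proposition~\ref{prop:bisector-S0plus-empty1}. Here $|\langle \mathbf{p},o\rangle|^2-|\langle \mathbf{p},A^{-1}S^{-1}(o)\rangle|^2<0$ on the disk, for example at the fixed point of $S$. So the disk lies on the side of $\mathcal{B}_{n-1}^{-}$ containing $o$, as it must for $\mathfrak{s}_0^{+}\cap\mathfrak{s}_0^{\star}$ to be a genuine ridge of $\mathcal{D}$. Among the three pairwise intersections through this ideal point, only $\mathcal{B}_0^{\star}\cap\mathcal{B}_{n-1}^{-}$ lies on the far side of the remaining bisector.
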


\begin{proof}
This follows from the first item of Proposition~\ref{prop:empty2} via  \( I_2 \). 
\end{proof}

\subsection{The combinatorics of the sides}
\begin{defn}\label{def:ridge}
	A \emph{ridge} is defined to be the 2-dimensional connected intersection of two sides.
\end{defn}

From the preceding propositions, one can determine the combinatorial structure of the ridges and sides, 
analogous to Propositions~4.15 and~4.16 in \cite{JWX2023} (Propositions \ref{prop:44pp3face0star} and \ref{prop:44pp3face0diamond} here).  
In fact, the combinatorics of these ridges and sides are essentially identical.  
We summarize the conclusions as follows.

\begin{prop}
The ridges 
\(
\mathfrak{s}_{0}^{+}\cap \mathfrak{s}_{0}^{-},\;
\mathfrak{s}_{0}^{+}\cap \mathfrak{s}_{n-1}^{-},\;
\mathfrak{s}_{0}^{+}\cap \mathfrak{s}_{0}^{\star},\;
\mathfrak{s}_{0}^{+}\cap \mathfrak{s}_{n-1}^{\diamond}
\)
are each topologically the union of two sectors.
\end{prop}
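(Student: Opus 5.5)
Each of the four ridges lies in a Giraud disk (Proposition~\ref{prop:bisector-S0plus-Gdisk}). The plan is to cut that disk by the only other bisectors that can meet it, and to show that the part surviving in $\mathcal{D}$ is a pair of sectors meeting at a single interior point. We treat $\mathfrak{s}_{0}^{+}\cap \mathfrak{s}_{0}^{-}$ and $\mathfrak{s}_{0}^{+}\cap \mathfrak{s}_{0}^{\star}$ in detail. The ridges $\mathfrak{s}_{0}^{+}\cap \mathfrak{s}_{n-1}^{-}$ and $\mathfrak{s}_{0}^{+}\cap \mathfrak{s}_{n-1}^{\diamond}$ then follow by applying $\iota$ followed by $A^{-1}$, using Proposition~\ref{prop:symmetry}. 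This is the same device already used in Propositions~\ref{prop:cross1} and~\ref{prop:bisector-S0plus-empty1}.

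\textbf{Step 1: reduce to one extra bisector.} Take the ridge in $\mathcal{B}_{0}^{+}\cap\mathcal{B}_{0}^{-}$. By Propositions~\ref{prop:kminus}, \ref{prop:kstar} and \ref{prop:kdiamond}, the only other bisectors meeting $\mathcal{B}_0^{+}$ are $\mathcal{B}_{n-1}^{-}$, $\mathcal{B}_0^{\star}$, $\mathcal{B}_{n-1}^{\star}$ and $\mathcal{B}_{n-1}^{\diamond}$. We rule out all of these except $\mathcal{B}_0^{\star}$ as follows:
\begin{itemize}
\item $\mathcal{B}_{n-1}^{\star}$ and $\mathcal{B}_{0}^{\diamond}$ cut $\mathcal{B}_0^{+}$ only in the region beyond $\mathcal{B}_{n-1}^{-}$, respectively $\mathcal{B}_0^{-}$, by Proposition~\ref{prop:bisector-S0plus-empty1}.
\item Those two bisectors and $\mathcal{B}_{n-1}^{\diamond}$ also meet $\mathcal{B}_0^{-}$ only in the forbidden region, which follows from the $I_2$-images of Propositions~\ref{prop:kstar}, \ref{prop:kdiamond} and~\ref{prop:empty2}.
\item $\mathcal{B}_{n-1}^{-}$ is disjoint from $\mathcal{B}_{0}^{-}$ by the $I_2$-image of Proposition~\ref{prop:kminus}\ref{item:kminus:plus-plus}.
\end{itemize}
Hence inside the Giraud disk $\mathcal{B}_{0}^{+}\cap\mathcal{B}_{0}^{-}$ the ridge is exactly the closed region where $|\langle \mathbf p,o\rangle|\le|\langle \mathbf p,S^2(o)\rangle|$.

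\textbf{Step 2: the cut locus.} We use the parametrization $p_2(u,v)$ of the Giraud disk from Proposition~\ref{prop:cross1}. There the difference of the two squared moduli equals $-\frac{8c^4}{s^6}(1+\cos u+\cos v+\cos(u-v))$. This factors as $-\frac{32c^4}{s^6}\cos\frac u2\cos\frac v2\cos\frac{u-v}2$. Inside $\overline{\hc}$ the factor $\cos\frac{u-v}{2}$ has constant sign, because $u-v=\pi$ is disjoint from the ball. So the sign is governed by $\cos\frac u2\cos\frac v2$. The zero set consists of the two geodesics $u=\pi$ and $v=\pi$ crossing at the fixed point $\kappa$ of $S$. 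The admissible set is two opposite quadrants of the disk, that is, two closed sectors meeting only at $\kappa$, with ideal boundary arcs ending at the four points $v_1,w_1,x_1,y_1$.

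\textbf{Step 3: the other ridges.} For $\mathfrak{s}_{0}^{+}\cap \mathfrak{s}_{0}^{\star}$ we repeat the argument with the parametrization $p_3(u,v)$ of Proposition~\ref{prop:disc2}. The cutting bisector is now $\mathcal{B}_0^{-}$, and the same triple intersection of Proposition~\ref{prop:cross1} gives two crossing geodesics. The tangency to $\mathcal{B}_{n-1}^{-}$ only at the parabolic point on $\partial\hc$ (Proposition~\ref{prop:tangent2}) does not cut the disk in its interior. It only pinches one sector at the ideal vertex $u_1$.

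\textbf{Main obstacle.} The hardest part is Step~1: certifying that no additional bisector in $\mathcal{D}$ slices these Giraud disks. This requires careful bookkeeping of the half-space statements of Propositions~\ref{prop:bisector-S0plus-empty1} and~\ref{prop:empty2} and of their images under $I_2$, $\iota$ and $A$. If one of these containments is missing, we would first try to fill it with a sample-point computation as in Proposition~\ref{prop:bisector-S0plus-empty1}, using the connectedness of the relevant Giraud disk.
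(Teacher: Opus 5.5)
Your plan matches what the paper has in mind. The paper gives no separate argument: it only asserts that the ridge structure follows from the preceding Giraud-disk, crossing-geodesic, half-space and tangency propositions, by analogy with the Ford domain. Your factorization $1+\cos u+\cos v+\cos(u-v)=4\cos\frac{u}{2}\cos\frac{v}{2}\cos\frac{u-v}{2}$ is a useful addition, since it explains why the two geodesics of Proposition~\ref{prop:cross1} leave two \emph{opposite} quadrants.

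\textbf{Step 1.} The gaps are in the bookkeeping.
\begin{enumerate}
\item Your second bullet is false for $\mathcal{B}_0^{\diamond}$. The set $\mathcal{B}_0^{\diamond}\cap\mathcal{B}_0^{-}$ carries the genuine ridge $\mathfrak{s}_0^{\diamond}\cap\mathfrak{s}_0^{-}$, which appears in the ridge cycle in the proof of Theorem~\ref{thm:44nDirichlet}. This is harmless only because your first bullet already disposes of $\mathcal{B}_0^{\diamond}$. Note also that $\mathcal{B}_0^{\diamond}$ is missing from your list of bisectors meeting $\mathcal{B}_0^{+}$.
\item For $\mathcal{B}_{n-1}^{\diamond}$ your citations do not give what you need. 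Propositions~\ref{prop:kstar} and~\ref{prop:empty2}, and their $I_2$-images, concern $\mathcal{B}_0^{\star}$. Proposition~\ref{prop:kdiamond}\ref{item:kdiamond:diamond-minus}, conjugated by $A$ in the case $m=1$, actually allows $\mathcal{B}_0^{-}\cap\mathcal{B}_{n-1}^{\diamond}\neq\emptyset$. The correct input is the $I_2$-image of the $\diamond$-half of Proposition~\ref{prop:bisector-S0plus-empty1}. Since $I_2$ fixes $o$, swaps $\mathcal{B}_0^{+}$ and $\mathcal{B}_0^{-}$, and sends $\mathcal{B}_0^{\diamond}$ to $\mathcal{B}_{n-1}^{\diamond}$, the set $\mathcal{B}_0^{-}\cap\mathcal{B}_{n-1}^{\diamond}$ lies on the far side of $\mathcal{B}_0^{+}$.
\item Knowing that a bisector misses the Giraud disk only places the disk in \emph{one} of its closed half-spaces. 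To conclude that the ridge is exactly $\{|\langle \mathbf p,o\rangle|\le|\langle \mathbf p,S^2(o)\rangle|\}$, you must also check that this is the half-space containing $o$. By connectedness this reduces to a check at a single point such as $\kappa$.
\end{enumerate}

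\textbf{Step 3.} This cannot be obtained by literally repeating Step 1. On the disk $\mathcal{B}_0^{+}\cap\mathcal{B}_0^{\star}$, the only information about $\mathcal{B}_0^{\diamond}$ comes from Proposition~\ref{prop:bisector-S0plus-empty1} and the third item of Proposition~\ref{prop:empty2}. These place $\mathcal{B}_0^{+}\cap\mathcal{B}_0^{\diamond}$ and $\mathcal{B}_0^{\star}\cap\mathcal{B}_0^{\diamond}$ beyond $\mathcal{B}_0^{-}$, so $\mathcal{B}_0^{\diamond}$ may well cross this Giraud disk, and the ``one extra bisector'' reduction is unavailable.

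Instead, cut by $\mathcal{B}_0^{-}$ first. The two resulting sectors lie on the $o$-side of $\mathcal{B}_0^{-}$, hence miss $\mathcal{B}_0^{\diamond}$ in $\hc$ (including at $\kappa$), and their union is connected. The remaining bisectors are handled as follows:
\begin{itemize}
\item $\mathcal{B}_{n-1}^{-}$ and $\mathcal{B}_{n-1}^{\diamond}$ are excluded by the second and fourth items of Proposition~\ref{prop:empty2};
\item $\mathcal{B}_{n-1}^{\star}$ is excluded by Proposition~\ref{prop:kstar}\ref{item:kstar:star-star};
\item the same side check at $\kappa$ is needed here too.
\end{itemize}

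You need not recompute the sign analysis with $p_3$. The map $S^{-1}$ sends $\mathcal{B}_0^{+}\cap\mathcal{B}_0^{\star}$ onto $\mathcal{B}_0^{+}\cap\mathcal{B}_0^{-}$. It turns the condition $|\langle \mathbf p,o\rangle|\le|\langle \mathbf p,S^{-1}(o)\rangle|$ into $|\langle \mathbf q,o\rangle|\le|\langle \mathbf q,S^{2}(o)\rangle|$, so Step 2 transfers directly.

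With these repairs, together with your $\iota$, $A^{-1}$ reduction for the remaining two ridges, the argument goes through.
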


\begin{prop}
The side \( \mathfrak{s}_{0}^{+} \) is a topological solid cylinder, bounded by the ridges
\[
\mathfrak{s}_{0}^{+}\cap \mathfrak{s}_{0}^{-},\qquad
\mathfrak{s}_{0}^{+}\cap \mathfrak{s}_{n-1}^{-},\qquad
\mathfrak{s}_{0}^{+}\cap \mathfrak{s}_{0}^{\star},\qquad
\mathfrak{s}_{0}^{+}\cap \mathfrak{s}_{n-1}^{\diamond}.
\]
\end{prop}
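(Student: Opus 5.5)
The plan is to show that $\mathfrak{s}_0^{+}$ is cut out of $\mathcal{B}_0^{+}$ by only four other bisectors, and then to read off its topology from the ridges on its boundary. The statement is the analogue of Proposition~\ref{prop:44pp3face0plus}\,(iii) for $\mathcal{D}$. The one difference is that $\mathfrak{s}_0^{+}$ is now regarded as a 3-ball in $\overline{\hc}$ whose boundary carries two ideal pieces (a quadrangle and an octagon). This is the ``solid cylinder'' description: a ball pinched along ideal regions.

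\textbf{Step 1: only four bisectors matter.} By Propositions~\ref{prop:kminus}, \ref{prop:kstar} and \ref{prop:kdiamond}, the only bisectors among the $4n$ that meet $\mathcal{B}_0^{+}$ are
\[
\mathcal{B}_0^{-},\quad \mathcal{B}_{n-1}^{-},\quad \mathcal{B}_0^{\star},\quad \mathcal{B}_{n-1}^{\star},\quad \mathcal{B}_{n-1}^{\diamond},\quad \mathcal{B}_0^{\diamond}.
\]
By Proposition~\ref{prop:bisector-S0plus-empty1}, the intersections $\mathcal{B}_0^{+}\cap\mathcal{B}_{n-1}^{\star}$ and $\mathcal{B}_0^{+}\cap\mathcal{B}_0^{\diamond}$ lie on the far side of $\mathcal{B}_{n-1}^{-}$, respectively $\mathcal{B}_0^{-}$, touching only at a parabolic fixed point. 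They therefore contribute no ridge. Consequently
\[
\mathfrak{s}_0^{+}=\mathcal{B}_0^{+}\cap\{\text{closed half-spaces of } \mathcal{B}_0^{-},\mathcal{B}_{n-1}^{-},\mathcal{B}_0^{\star},\mathcal{B}_{n-1}^{\diamond} \text{ containing } o\},
\]
and its frontier in $\mathcal{B}_0^{+}$ consists of the four listed ridges.

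\textbf{Step 2: describe the four pieces inside the bisector.} Identify $\mathcal{B}_0^{+}\cup\partial_\infty\mathcal{B}_0^{+}$ with a closed 3-ball via its slice decomposition. Each of the four intersections is a Giraud disk (Proposition~\ref{prop:bisector-S0plus-Gdisk}), i.e.\ a properly embedded disk in this ball that separates it into two half-balls. For the pairs, I will use Proposition~\ref{prop:cross1}:
\begin{itemize}
\item the disks for $\mathcal{B}_0^{-}$ and $\mathcal{B}_0^{\star}$ meet along two geodesics crossing at the fixed point $\kappa$ of $S$;
\item the disks for $\mathcal{B}_{n-1}^{-}$ and $\mathcal{B}_{n-1}^{\diamond}$ meet along two geodesics crossing at the fixed point of $A^{-1}QA$.
\end{itemize}
Each crossing pair of geodesics cuts each of the two disks into four sectors. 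The half-space conditions keep exactly two opposite sectors of each, which recovers the preceding proposition (two sectors per ridge). The pair at $\kappa$ is disjoint from the pair at $A^{-1}(\varkappa)$, because the remaining cross-intersections are empty or tangential by Step 1. The region of the ball on the $o$-side of all four disks is then obtained by removing two ``X-shaped wedges'' around two disjoint crossing points. In each wedge complex, the kept sectors are arranged as in Figure~\ref{figure:s0plusminus}. The result is a 3-ball whose boundary consists of the eight sectors together with the ideal part $\partial_\infty\mathfrak{s}_0^{+}$, and I will verify this by comparing vertex lists with Table~\ref{s0+}.

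\textbf{Step 3 and main obstacle.} The hard part is showing that the chosen sectors are really the ones on the correct side of the other two disks, so that no further cutting occurs. Concretely, for $\mathcal{B}_0^{+}\cap\mathcal{B}_0^{-}$ I must show that the two sectors bounded by the geodesics through $\kappa$ are not crossed by $\mathcal{B}_{n-1}^{-}$ or $\mathcal{B}_{n-1}^{\diamond}$. My approach is to test one interior point per sector using the parametrisation $p_2(u,v)$ from Proposition~\ref{prop:cross1}, checking the sign of $|\langle p,o\rangle|^2-|\langle p,g(o)\rangle|^2$. I will then argue by connectedness, since no other bisector meets the sector by Step 1. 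This mirrors the argument in Proposition~\ref{prop:bisector-S0plus-empty1}.

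Finally, on the ideal boundary I will match the eight vertex points with $p_1,\dots,y_1$ via the tangency points: $A^{-1}(p_{AS^2})$ and the fixed point of $SAS$ play the roles of $u_1$ and $t_1$. This confirms that the combinatorics coincide with those of $\mathfrak{s}_0^{+}$ for $\mathcal{F}$.
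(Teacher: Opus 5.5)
Your route is the one the paper intends: the paper gives no separate argument, only that the side structure follows from the preceding intersection propositions, as in \cite{JWX2023}. However, the step showing that the two crossing configurations do not interfere is unsupported. You say the pair at $\kappa$ is disjoint from the pair at $A^{-1}(\varkappa)$ ``by Step 1''. In Step 3 your connectedness argument relies on ``no other bisector meets the sector by Step 1''. But Step 1 only records which bisectors meet $\mathcal{B}_0^{+}$. It says nothing about how $\mathcal{B}_0^{-}$ and $\mathcal{B}_0^{\star}$ meet $\mathcal{B}_{n-1}^{-}$ and $\mathcal{B}_{n-1}^{\diamond}$ inside $\mathcal{B}_0^{+}$. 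This is not a formality. Unlike the Ford case, where the corresponding isometric spheres are merely tangent, here $\mathcal{B}_0^{\star}\cap\mathcal{B}_{n-1}^{-}$ and $\mathcal{B}_0^{-}\cap\mathcal{B}_{n-1}^{\diamond}$ are genuinely nonempty. They are the exceptional cases in Proposition~\ref{prop:kstar}\,\ref{item:kstar:star-minus} and Proposition~\ref{prop:kdiamond}\,\ref{item:kdiamond:diamond-minus}. So one test point per sector proves nothing until you know these intersections avoid the sectors. What you actually need is:
\begin{itemize}
\item $\mathcal{B}_0^{-}\cap\mathcal{B}_{n-1}^{-}=\emptyset$, the $I_2$-image of Proposition~\ref{prop:kminus}\,\ref{item:kminus:plus-plus};
\item $\mathcal{B}_0^{\star}\cap\mathcal{B}_{n-1}^{\diamond}$ and $\mathcal{B}_0^{\star}\cap\mathcal{B}_{n-1}^{-}$ lie beyond $\mathcal{B}_0^{+}$, the latter touching it only at the fixed point of $A^{-1}S^{2}$ (second and fourth items of Proposition~\ref{prop:empty2}, cf.\ Proposition~\ref{prop:tangent2});
\item the image of that last fact under $\iota$ followed by $A^{-1}$, which gives $\mathcal{B}_0^{+}\cap\mathcal{B}_0^{-}\cap\mathcal{B}_{n-1}^{\diamond}=\{\text{fixed point of } SAS\}$.
\end{itemize}
These come from Lemma~\ref{lem:bisector-balanced} and Proposition~\ref{prop:bisector-S0plus-empty1}, transported by $I_2$, $\iota$ and $A$. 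So the gap is repairable from the paper, but it is precisely the new input the Dirichlet case requires, and it cannot be extracted from Step 1.

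Relatedly, the two configurations are not disjoint in $\overline{\hc}$. The Giraud disks $\mathcal{B}_0^{+}\cap\mathcal{B}_0^{-}$ and $\mathcal{B}_0^{+}\cap\mathcal{B}_{n-1}^{\diamond}$ share the ideal point $t_1$, and $\mathcal{B}_0^{+}\cap\mathcal{B}_0^{\star}$ and $\mathcal{B}_0^{+}\cap\mathcal{B}_{n-1}^{-}$ share $u_1$. This contact is what produces the claimed shape. The four sectors around $\kappa$ form a disk with ideal boundary vertices $v_1,u_1,w_1,x_1,y_1,t_1$. The four around $A^{-1}(\varkappa)$ form a disk with ideal boundary vertices $p_1,u_1,s_1,t_1,r_1,q_1$. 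Only because these two disks meet exactly in $\{t_1,u_1\}$ is the rest of the sphere $\partial\mathfrak{s}_0^{+}$ a quadrangle and an octagon meeting in two points; if they were disjoint it would be an annulus. You do name $t_1,u_1$ as tangency points at the end, but that information has to enter Step 2, not only the final vertex matching.
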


\begin{prop}
Each of the ridges 
\(
\mathfrak{s}_{0}^{\star}\cap \mathfrak{s}_{0}^{+}
\)
and 
\(
\mathfrak{s}_{0}^{\star}\cap \mathfrak{s}_{0}^{-}
\)
is topologically the union of two sectors.
\end{prop}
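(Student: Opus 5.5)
The plan is to transport the argument for the Ford-domain analogue (Proposition~\ref{prop:44pp3face0star}(ii)) to $\mathcal{D}$, using the bisector facts already established. By the symmetry $I_2$ of Proposition~\ref{prop:symmetry}, which maps $\mathcal{B}_0^{\star}$ to itself and exchanges $\mathcal{B}_0^{+}$ and $\mathcal{B}_0^{-}$, it suffices to treat $\mathfrak{s}_0^{\star}\cap\mathfrak{s}_0^{+}$.

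The first step is to determine which bisectors can cut the Giraud disk $\mathcal{B}_0^{\star}\cap\mathcal{B}_0^{+}$ of Proposition~\ref{prop:disc2}. By Propositions~\ref{prop:kminus}, \ref{prop:kstar} and~\ref{prop:kdiamond}, the only other bisectors meeting both $\mathcal{B}_0^{\star}$ and $\mathcal{B}_0^{+}$ are among $\mathcal{B}_0^{-}$, $\mathcal{B}_{n-1}^{-}$, $\mathcal{B}_{n-1}^{\star}$, $\mathcal{B}_0^{\diamond}$, $\mathcal{B}_{n-1}^{\diamond}$ and $\mathcal{B}_1^{+}$. I would discard most of these using the half-space statements:
\begin{itemize}
\item Proposition~\ref{prop:tangent2} and the second item of Proposition~\ref{prop:empty2} show that $\mathcal{B}_{n-1}^{-}$ touches the disk only at the parabolic fixed point of $S^2A$ on $\partial\HdC$.
\item The third and fourth items of Proposition~\ref{prop:empty2} show that the contributions of $\mathcal{B}_0^{\diamond}$ and $\mathcal{B}_{n-1}^{\diamond}$ lie outside $\mathcal{D}$.
\item Proposition~\ref{prop:kstar}(i) shows that $\mathcal{B}_{n-1}^{\star}$ is disjoint from $\mathcal{B}_0^{\star}$.
\item Proposition~\ref{prop:kstar}(ii) shows that $\mathcal{B}_1^{+}$ meets $\mathcal{B}_0^{\star}$ only in a region excluded by the first item of Proposition~\ref{prop:empty2}.
\end{itemize}
Hence the ridge is the part of the Giraud disk on the $o$-side of the single bisector $\mathcal{B}_0^{-}$.

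The second step is to describe this cut. By Proposition~\ref{prop:cross1}, $\mathcal{B}_0^{+}\cap\mathcal{B}_0^{-}\cap\mathcal{B}_0^{\star}$ is a union of two geodesics crossing at the fixed point $\kappa$ of $S$. Their four endpoints on $\partial\HdC$ are the analogues of $v_1,w_1,x_1,y_1$.

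These two geodesics divide the Giraud disk, a topological closed disk in $\overline{\HdC}$, into four sectors. In the parametrization $p_3(u,v)$, the sign of $|\langle p,o\rangle|^2-|\langle p,S^{-1}(o)\rangle|^2$ is given by a trigonometric expression of the same type as $1+\cos u+\cos v+\cos(u-v)$ in the proof of Proposition~\ref{prop:cross1}. That expression factors as $4\cos\frac u2\cos\frac v2\cos\frac{u-v}2$, so the sign alternates across the sectors. Therefore exactly two opposite sectors lie on the $o$-side of $\mathcal{B}_0^{-}$, and these form $\mathfrak{s}_0^{\star}\cap\mathfrak{s}_0^{+}$.

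The main obstacle is confirming the sign alternation and ensuring that no further bisector cuts the two surviving sectors. For the sign, I would evaluate at a test point $p_3(u,v)$ in each sector, as was done for $p'$ in Proposition~\ref{prop:bisector-S0plus-empty1}. For the second point, I would check that along the boundary circle of the disk, apart from the tangency point, every remaining bisector inequality is strict. A continuity argument together with the disjointness results then shows that the two sectors lie in $\mathcal{D}$ and meet only at $\kappa$.
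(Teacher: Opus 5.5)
Your proposal is correct and takes the paper's route, made explicit. The paper only asserts that the ridge structure follows from the bisector-intersection results of Section~\ref{section:dirichlet}, as in the Ford case. Your reduction to the single cutting bisector $\mathcal{B}_0^{-}$, with the sign analysis transported from Proposition~\ref{prop:cross1}, is exactly that deduction. The transport is legitimate, since $S$ maps $\mathcal{B}_0^{+}\cap\mathcal{B}_0^{-}$ onto $\mathcal{B}_0^{+}\cap\mathcal{B}_0^{\star}$ and turns the $\mathcal{B}_0^{\star}$-test into the $\mathcal{B}_0^{-}$-test.

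One small correction: $\mathcal{B}_{n-1}^{\star}$ is not disjoint from $\mathcal{B}_0^{\star}$. The proof of Proposition~\ref{prop:kstar}(i) shows they are tangent at the fixed point of $A^{-1}S^{2}=(S^{2}A)^{-1}$, which is the same ideal vertex where $\mathcal{B}_{n-1}^{-}$ touches the disk, so nothing changes. The remaining side checks also reduce to strict inequalities at $\kappa$ alone, because $\kappa$ lies in both closed sectors.
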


\begin{prop}
The side \( \mathfrak{s}_{0}^{\star} \) is a topological solid light cone, bounded by the ridges
\(
\mathfrak{s}_{0}^{\star}\cap \mathfrak{s}_{0}^{+}
\)
and 
\(
\mathfrak{s}_{0}^{\star}\cap \mathfrak{s}_{0}^{-}
\).
\end{prop}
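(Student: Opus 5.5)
The plan is to first pin down which bisectors can contribute to the boundary of \( \mathfrak{s}_{0}^{\star} \), then determine the shape of each ridge, and finally assemble the side. By Proposition~\ref{prop:kstar}, \( \mathcal{B}_0^{\star} \) is disjoint from every other bisector in the family except \( \mathcal{B}_0^{+}, \mathcal{B}_1^{+}, \mathcal{B}_0^{-}, \mathcal{B}_{n-1}^{-}, \mathcal{B}_0^{\diamond}, \mathcal{B}_{n-1}^{\diamond} \). The \( \star \)-bisectors \( \mathcal{B}_{\pm 1}^{\star} \) are excluded because they meet \( \mathcal{B}_0^{\star} \) only tangentially, at the parabolic fixed points of \( A^{\pm1}S^2 \) on \( \partial \HdC \). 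Next I would use Proposition~\ref{prop:empty2} to discard the four intersections with \( \mathcal{B}_1^{+}, \mathcal{B}_{n-1}^{-}, \mathcal{B}_0^{\diamond}, \mathcal{B}_{n-1}^{\diamond} \). Each of these lies in the closed half-space bounded by \( \mathcal{B}_0^{-} \) or \( \mathcal{B}_0^{+} \) away from \( o \), so it meets \( \mathcal{D} \) in at most isolated ideal tangency points. Hence the only 2-dimensional ridges of \( \mathfrak{s}_{0}^{\star} \) are \( \mathfrak{s}_0^{\star}\cap\mathfrak{s}_0^{+} \) and \( \mathfrak{s}_0^{\star}\cap\mathfrak{s}_0^{-} \).

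To describe these two ridges, I would start from Proposition~\ref{prop:disc2}: \( \mathcal{B}_0^{\star}\cap\mathcal{B}_0^{+} \) is a Giraud disk. Its image under \( I_2 \) (using Proposition~\ref{prop:symmetry}) gives the corresponding statement for \( \mathcal{B}_0^{\star}\cap\mathcal{B}_0^{-} \). By Proposition~\ref{prop:cross1}, the triple intersection \( \mathcal{B}_0^{+}\cap\mathcal{B}_0^{-}\cap\mathcal{B}_0^{\star} \) consists of two geodesics crossing at the fixed point of \( S \). These geodesics cut the Giraud disk \( \mathcal{B}_0^{\star}\cap\mathcal{B}_0^{+} \) into four sectors. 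The half-space condition for \( \mathcal{B}_0^{-} \) selects two opposite sectors, giving the \emph{union of two sectors} asserted in the preceding proposition. I would verify which pair survives by evaluating \( |\langle p_3(u,v),o\rangle|^2-|\langle p_3(u,v),S^{-1}(o)\rangle|^2 \) on the parametrization from Proposition~\ref{prop:disc2}. I expect this to factor as a product whose sign changes across the lines \( u=\pi \) and \( v=\pi \), just as in the proof of Proposition~\ref{prop:cross1}. I would also check, using Proposition~\ref{prop:tangent2} and the tangencies above, that no other bisector cuts into these sectors except at ideal points.

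Finally, I would assemble the side. The involution \( R=S^2 \) preserves \( \mathcal{B}_0^{\star} \), fixes the point of \( S \), and exchanges the two ridges. The bisector \( \mathcal{B}_0^{\star} \) is a 3-ball foliated by complex slices over its spine, and the two crossing geodesics through \( \mathrm{Fix}(S) \) form the apex of the cone. The region of \( \mathcal{B}_0^{\star} \) bounded by two pairs of opposite sectors sharing a common vertex is a double cone, i.e.\ a solid light cone. Its ideal boundary is the two triangles \( \{u_1,v_1,w_1\} \) and \( \{x_1,y_1,u_2\} \), matching Proposition~\ref{prop:44pp3face0star}. The main obstacle is rigorously justifying the topology. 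The key step is showing that the region of the Giraud disk selected by the half-space is exactly two opposite sectors, with no extra components near \( \partial\HdC \). For that step I would first try a monotonicity argument along the complex slices, reducing the question to a sign computation in the \( (u,v) \) coordinates.
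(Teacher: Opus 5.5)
Your outline follows the paper's route. The paper gives no separate argument for this statement. It reads it off Propositions~\ref{prop:kstar}, \ref{prop:empty2}, \ref{prop:disc2}, \ref{prop:cross1} and \ref{prop:tangent2}, by analogy with Propositions~4.15--4.16 of \cite{JWX2023}. Your first two steps are correct:
\begin{itemize}
\item $\mathcal{B}^{\star}_{\pm1}$ touch $\mathcal{B}^{\star}_0$ only at ideal points;
\item the four intersections in Proposition~\ref{prop:empty2} lie beyond $\mathcal{B}^{\pm}_0$ except at ideal tangencies;
\item the Giraud disks and the two geodesics through $\kappa=\mathrm{Fix}(S)$ are as you describe.
\end{itemize}

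\textbf{The gap is in your last step.} Two pairs of opposite sectors sharing a vertex do not by themselves bound a double cone. Write $\gamma_1=h_1\cup h_3$ and $\gamma_2=h_2\cup h_4$ for the two geodesics of Proposition~\ref{prop:cross1}, split at $\kappa$. Suppose $\mathcal{B}_0^{\star}\cap\mathcal{B}_0^{+}$ keeps the sectors $(h_1,h_2)$ and $(h_3,h_4)$, while $\mathcal{B}_0^{\star}\cap\mathcal{B}_0^{-}$ keeps the complementary sectors $(h_2,h_3)$ and $(h_4,h_1)$. Then the four sectors glue into a single disk with $\kappa$ in its interior. The resulting region of $\mathcal{B}_0^{\star}$ is a 3-ball with connected ideal boundary, not a solid light cone. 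Compare $\{z\ge |xy|\}$ with $\{-xy\le z\le xy\}$ in $\mathbb{R}^3$.

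What you must show is that both ridges select sectors bounded by the same pairs $\{h_1,h_2\}$ and $\{h_3,h_4\}$. Then each $\mathfrak{s}_0^{+}$-sector is glued to an $\mathfrak{s}_0^{-}$-sector along both of its edges. This gives two cones meeting only at $\kappa$.

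\textbf{The fix uses your involution $R$.} Your phrase ``exchanges the two ridges'' is not enough; you need the following.
\begin{itemize}
\item The lift $S=I_2I_3$ acts by $1$ on the lift of $\kappa$ and by $\pm i$ on its orthogonal complement. So $R=S^2$ is the geodesic symmetry at $\kappa$, with derivative $-\mathrm{Id}$.
\item Hence $R$ preserves each $\gamma_j$ and swaps $h_1\leftrightarrow h_3$ and $h_2\leftrightarrow h_4$.
\item Since $|\langle p,o\rangle|=|\langle p,R(o)\rangle|$ on $\mathcal{B}_0^{\star}$, $R$ swaps the $o$-sides of $\mathcal{B}_0^{+}$ and $\mathcal{B}_0^{-}$ inside $\mathcal{B}_0^{\star}$.
\item Therefore $R$ carries the selected $+$-sector $(h_1,h_2)$ to a selected $-$-sector $(h_3,h_4)$. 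This is exactly the required matching.
\end{itemize}
Also, the apex of the cone is the point $\kappa$. The two geodesics are the seams where the two ridges meet, not the apex.

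\textbf{A minor correction to your expected factorization.} For the lifts used in Proposition~\ref{prop:cross1}, one has $o+R(o)=S(o)+S^{-1}(o)$. From this,
\[
|\langle p_3,o\rangle|^2-|\langle p_3,S^{-1}(o)\rangle|^2=8\,|\langle p_3,o\rangle|^2\cos\tfrac{u}{2}\,\sin\tfrac{v}{2}\,\sin\tfrac{u-v}{2}.
\]
So in the $p_3$-coordinates the two geodesics are $u=\pi$ and $u=v$, not $v=\pi$. The line $v\equiv 0$ lies in the positive-definite plane $(o+R(o))^{\perp}$, so it misses $\overline{\HdC}$. The sign pattern around $\kappa$ still alternates, as you expected.
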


\subsection{Proof of Theorem~\ref{thm:44nDirichlet}}

We apply the Poincar\'e polyhedron theorem to show that $\mathcal{D}$ is the Dirichlet
domain for the coset of $\langle A\rangle$ in $\Sigma$ with center at $o$.  
The reader is referred to \cite{DPP:2016} for a detailed description of the Poincar\'e polyhedron theorem.
The side-pairing maps of $\mathcal{D}$ are
\[
A^{k}SA^{-k}, \qquad A^{k}RA^{-k}, \qquad A^{k}QA^{-k}
\]
for $k \in \mathbb{Z}$.
The map $A^{k}SA^{-k}$ sends the side $\mathfrak{s}_{k}^{-}$ to $\mathfrak{s}_{k}^{+}$,
while $A^{k}RA^{-k}$ (respectively $A^{k}QA^{-k}$) fixes the side 
$\mathfrak{s}_{k}^{\star}$ (respectively $\mathfrak{s}_{k}^{\diamond}$).

\medskip
\noindent\textbf{Ridge cycles.}
The cycle around the ridges $\mathfrak{s}_{k}^{+}\cap\mathfrak{s}_{k}^{-}$ and 
$\mathfrak{s}_{k}^{\star}\cap\mathfrak{s}_{k}^{\pm}$ is
\begin{equation*}
\begin{aligned}
&(\mathfrak{s}_{k}^{-}\cap\mathfrak{s}_{k}^{+},\;
  \mathfrak{s}_{k}^{-},\;
  \mathfrak{s}_{k}^{+})
 \xrightarrow{\;A^{k}SA^{-k}\;}
 (\mathfrak{s}_{k}^{\star}\cap\mathfrak{s}_{k}^{+},\;
  \mathfrak{s}_{k}^{\star},\;
  \mathfrak{s}_{k}^{+})
\\[4pt]
&\xrightarrow{\;A^{k}S^{2}A^{-k}\;}
 (\mathfrak{s}_{k}^{\star}\cap\mathfrak{s}_{k}^{-},\;
  \mathfrak{s}_{k}^{\star},\;
  \mathfrak{s}_{k}^{-})
 \xrightarrow{\;A^{k}SA^{-k}\;}
 (\mathfrak{s}_{k}^{-}\cap\mathfrak{s}_{k}^{+},\;
  \mathfrak{s}_{k}^{-},\;
  \mathfrak{s}_{k}^{+})
\end{aligned}
\end{equation*}

Similarly, the ridge cycle involving 
$\mathfrak{s}_{k}^{-}\cap\mathfrak{s}_{k+1}^{+}$ and 
$\mathfrak{s}_{k}^{\diamond}\cap\mathfrak{s}_{k}^{\pm}$ is
\begin{equation*}
\begin{aligned}
&(\mathfrak{s}_{k}^{-}\cap\mathfrak{s}_{k+1}^{+},\;
  \mathfrak{s}_{k}^{-},\;
  \mathfrak{s}_{k+1}^{+})
 \xrightarrow{\;A^{k}(AS)^{-1}A^{-k}\;}
 (\mathfrak{s}_{k}^{\diamond}\cap\mathfrak{s}_{k}^{-},\;
  \mathfrak{s}_{k}^{\diamond},\;
  \mathfrak{s}_{k}^{-})
\\[4pt]
&\xrightarrow{\;A^{k}(AS)^{-2}A^{-k}\;}
 (\mathfrak{s}_{k}^{\diamond}\cap\mathfrak{s}_{k+1}^{+},\;
  \mathfrak{s}_{k}^{\diamond},\;
  \mathfrak{s}_{k+1}^{+})
 \xrightarrow{\;A^{k}(AS)^{-1}A^{-k}\;}
 (\mathfrak{s}_{k}^{-}\cap\mathfrak{s}_{k+1}^{+},\;
  \mathfrak{s}_{k}^{-},\;
  \mathfrak{s}_{k+1}^{+})
\end{aligned}
\end{equation*}

One can also check that all cycle transformations fixing a given parabolic fixed point are non-loxodromic.
By the Poincar\'e polyhedron theorem, we prove Theorem  \ref{thm:44nDirichlet}, and in turn  Item~\ref{item:44np1} of Theorem~\ref{thm:44np}.

%By the Poincar\'e polyhedron theorem, we obtain item~\eqref{item:44np1} of Theorem~\ref{thm:44np}

As a corollary, we have the following theorem.

\begin{thm} \label{thm:local}
The Ford domain of $\Delta_{4,4,\infty;\infty}$ 
and the Dirichlet domain of $\Delta_{4,4,n;\infty}$
have the same local combinatorics and topology.
\end{thm}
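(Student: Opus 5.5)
The plan is to build an explicit combinatorial dictionary between the two polyhedra and check that it respects all incidences. Identify the side $\mathfrak{s}_k^{j}$ of $\mathcal{F}$ (isometric sphere of $A^k g A^{-k}$, $g\in\{S,S^{-1},R,Q\}$) with the side $\mathfrak{s}_k^{j}$ of $\mathcal{D}$ (bisector $\mathcal{B}(o,A^kgA^{-k}(o))$), with $k\in\mathbb{Z}$ for $\mathcal{F}$ read modulo $n$ for $\mathcal{D}$. Because both domains are invariant under $\langle A\rangle$ and under the symmetries $I_2$ and $\iota$ (Proposition~\ref{prop:symmetry} for $\mathcal{D}$, and the analogous symmetries for $\mathcal{F}$ from \cite{JWX2023}), it suffices to compare the local structure around $\mathfrak{s}_0^{+}$ and $\mathfrak{s}_0^{\star}$; every other side is an image of one of these.

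\emph{Step 1: adjacency of sides.} Propositions~\ref{prop:kminus}, \ref{prop:kstar} and \ref{prop:kdiamond} say which bisectors meet $\mathcal{B}_0^{+}$ and $\mathcal{B}_0^{\star}$. Together with Propositions~\ref{prop:bisector-S0plus-empty1} and \ref{prop:empty2}, they show that the only intersections contributing ridges are $\mathfrak{s}_0^{+}\cap\mathfrak{s}_0^{-}$, $\mathfrak{s}_0^{+}\cap\mathfrak{s}_{-1}^{-}$, $\mathfrak{s}_0^{+}\cap\mathfrak{s}_0^{\star}$, $\mathfrak{s}_0^{+}\cap\mathfrak{s}_{-1}^{\diamond}$ and $\mathfrak{s}_0^{\star}\cap\mathfrak{s}_0^{\pm}$. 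The other pairs are either empty, or they meet only outside $\mathcal{D}$, or they are tangent at a parabolic fixed point of $A^{\pm1}S^2$ or $SAS$. This reproduces Proposition~\ref{prop:pair-disjoint44pp}, with one systematic difference: the tangencies $\mathcal{I}^\star_0\cap\mathcal{I}^\star_{\pm1}$ and $\mathcal{I}^\diamond_0\cap\mathcal{I}^\diamond_{\pm1}$ at the fixed points of $A^{\pm1}R$ and $A^{\pm1}Q$ persist, because these products are still parabolic (trace $3$ in the proof of Proposition~\ref{prop:kstar}). So the vertex points $u_k$ and $t_k$ survive as ideal tangency points.

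\emph{Step 2: ridges and their ideal vertices.} Proposition~\ref{prop:bisector-S0plus-Gdisk} and Proposition~\ref{prop:disc2} show that each ridge lies in a Giraud disk. Proposition~\ref{prop:cross1} and Lemma~\ref{lem:bisector-balanced} give the triple intersections: two geodesics crossing at the fixed point of $S$ (resp.\ of $A^{-1}QA$), and single ideal points. Because of these crossings, each ridge splits into two sectors, matching Tables~\ref{s0+} and \ref{s0-} with $\kappa$ and $\tilde\varkappa$ placed at the same positions. Intersecting with $\partial\hc$, the ideal points $v_1,w_1,x_1,y_1$ are the endpoints of the geodesics in $\mathcal{B}_0^+\cap\mathcal{B}_0^-\cap\mathcal{B}_0^\star$, and $p_1,\dots,s_1$ arise the same way. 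From this one reads off that $\partial_\infty\mathfrak{s}_0^{+}$ is a quadrangle and an octagon pinched at two points, and that $\partial_\infty\mathfrak{s}_0^{\star}$ is two triangles, exactly as in Propositions~\ref{prop:44pp3face0plus} and \ref{prop:44pp3face0star}.

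\emph{Step 3: side pairings and ridge cycles.} These are given by the same words in $A$ and $S$ in both settings, compare Theorem~\ref{thm:44pFord} and the proof of Theorem~\ref{thm:44nDirichlet}. The dictionary therefore intertwines the gluings, and the local topology near every ridge and every ideal vertex coincides.

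The main obstacle is Step 2, specifically certifying that the cyclic order of ideal vertices on each spinal sphere is the same as for the Ford domain. The planned approach is continuity in the parameter $\theta\in(\pi/3,\arccos(\cos(\pi/n)/2)]$ of Section~\ref{subsection:1-dim}. Along this family no new tangencies or intersections appear, by the uniform estimates of Propositions~\ref{prop:kminus} and \ref{prop:kstar}, so the combinatorics cannot change. One should then check that the Ford picture at $\theta=\pi/3$ is the limit of the Dirichlet picture, where the finite cyclic $\mathbb{Z}_n$ action becomes the $\mathbb{Z}$ action. The essential difference is global, not local: $\mathcal{D}_\infty$ closes up around the fixed point set of $A$, so it is a solid torus rather than $\mathbb{R}^3$ minus a thickened line.
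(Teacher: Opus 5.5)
Your Steps 1--3 follow the paper's route: the bisector computations of Section~\ref{section:dirichlet} give the ridges, their two-sector structure and the ideal vertices, and the side-pairings and ridge cycles are the same words in $A$ and $S$ as for $\mathcal{F}$. The gap is in how you propose to handle the step you yourself call the main obstacle, the cyclic order of ideal vertices: continuity in $\theta$ does not work. For $\theta$ strictly between $\pi/3$ and $\arccos(\cos(\pi/n)/2)$, the element $A=I_1I_2$ is elliptic of order different from $n$, and generically of infinite order. So the family $\mathcal{B}(o,A^kgA^{-k}(o))$, $k\in\mathbb{Z}$, contains bisectors with $A^m$ arbitrarily close to the identity. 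For those $m$, $c_m$ is near $1$, outside the ranges in which the estimates of Propositions~\ref{prop:kminus} and~\ref{prop:kstar} are proved. Indeed $\mathcal{B}_0^{+}$ and $A^m(\mathcal{B}_0^{+})$ then nearly coincide and intersect. Along your path there is no locally finite polyhedron (the groups are not discrete), let alone a constant combinatorics. Moreover, the Ford domain is not a member of the family, since the centre $o_\theta$ runs into $\partial\hc$ as $\theta\to\pi/3$. Making ``limit'' precise would need a rescaling argument that you do not supply.

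More decisively, the claim that ``no new intersections appear'' is false even for the finitely many bisectors with indices in $\{-1,0,1\}$. In $\mathcal{F}$, $\mathcal{I}_0^{+}$ is tangent to $\mathcal{I}^{\star}_{-1}$ and to $\mathcal{I}^{\diamond}_0$, and $\mathcal{I}^{\star}_0\cap\mathcal{I}^{\diamond}_k=\emptyset$ (Proposition~\ref{prop:pair-disjoint44pp}). In $\mathcal{D}$, by contrast, $\mathcal{B}_0^{+}\cap\mathcal{B}^{\star}_{-1}$, $\mathcal{B}_0^{+}\cap\mathcal{B}^{\diamond}_0$ and $\mathcal{B}^{\star}_0\cap\mathcal{B}^{\diamond}_0$ are nonempty Giraud disks (see the proofs of Propositions~\ref{prop:kminus}, \ref{prop:kstar} and~\ref{prop:bisector-S0plus-empty1}). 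The star--star and diamond--diamond tangencies you list as the ``difference'' are already present in $\mathcal{F}$. The combinatorics of $\mathcal{D}$ nevertheless agree with those of $\mathcal{F}$ because these new disks lie in the half-spaces of $\mathcal{B}^{-}_{-1}$, resp.\ $\mathcal{B}^{-}_{0}$, not containing $o$, touching them at most at the parabolic points $u_1$, $t_1$ (Propositions~\ref{prop:bisector-S0plus-empty1} and~\ref{prop:empty2}). That is a direct estimate at each $n$, which a continuity argument cannot see. The cyclic order should likewise be read off directly from the explicit parametrizations, which are uniform in $c$. For example, in Proposition~\ref{prop:cross1} the defining difference equals $-\frac{32c^4}{s^6}\cos\frac{u}{2}\cos\frac{v}{2}\cos\frac{u-v}{2}$. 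Hence the geodesics $u=\pi$ and $v=\pi$ cut the Giraud disk $\mathcal{B}_0^{+}\cap\mathcal{B}_0^{-}$ into four sectors, and the same two opposite ones lie on the $o$-side of $\mathcal{B}^{\star}_0$ for every $n$. Doing this ridge by ridge, together with the hiding statements, is what yields Tables~\ref{s0+}, \ref{s0-} and the analogues of Propositions~\ref{prop:44pp3face0plus}--\ref{prop:44pp3face0diamond} for $\mathcal{D}$. This is the structure the paper reads off Section~\ref{section:dirichlet} before applying the Poincar\'e polyhedron theorem with identical side-pairings and ridge cycles.
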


\section{The proof of the second part of Theorem \texorpdfstring{\ref{thm:44np}}{} }
\label{section:proof}

With the preparations from Sections~\ref{section:forddomain} and 
\ref{section:dirichlet}, we now prove Theorem~\ref{thm:44np}\,\ref{item:44np2}. 
Recall that $\mathcal{F}$ is the Ford domain of the even subgroup of 
$\Delta_{4,4,\infty;\infty}$ described in Section~\ref{section:forddomain}, and 
$\mathcal{D}$ is the Dirichlet domain of the even subgroup of 
$\Delta_{4,4,n;\infty}$ introduced in Section~\ref{section:dirichlet}.  
Let $\mathscr{M}_{4,4,\infty;\infty}$ and $\mathscr{M}_{4,4,n;\infty}$ denote 
the 3--manifolds at infinity of the even subgroups of 
$\Delta_{4,4,\infty;\infty}$ and $\Delta_{4,4,n;\infty}$, respectively.

The ideal boundary 
\[
\mathcal{F}_{\infty}=\overline{\mathcal{F}} \cap \partial\mathbf{H}^2_{\mathbb{C}}
\]
is topologically $(\mathbb{R}^{2}\!-\! int(\mathbb{D}^{2})) \times \mathbb{R}$.  
The quotient space of $\mathcal{F}_{\infty}$ associated to $\Delta_{4,4,\infty;\infty}$ is precisely 
$\mathscr{M}_{4,4,\infty;\infty}$, identified  as $s782$.  
More specifically, denote by $\partial\mathcal{F}_{\infty}$ the boundary of 
$\mathcal{F}_{\infty}$; it is topologically homeomorphic to 
$\partial\mathbb{D}^{2}\times \mathbb{R}$.  
The Ford domain structure of $\Delta_{4,4,\infty;\infty}$ induces a natural 
2-cell decomposition of $\partial\mathcal{F}_{\infty}$.  
In particular, $\partial\mathcal{F}_{\infty}$ is the union of the triangles, 
quadrangles, and octagons shown in Figure~\ref{figure:44iiford}.  
Each of these 2-cells is a connected component of 
\(
\mathfrak{s}^{j}_{i}\cap \partial\mathbf{H}^2_{\mathbb{C}}
\)
for some $i\in \mathbb{Z}$ and 
$j\in \{+, -, *, \diamond\}$.

Consider the ideal boundary 
\[
\mathcal{D}_{\infty}=\overline{\mathcal{D}}\cap \partial\mathbf{H}^2_{\mathbb{C}}
\]
associated with the group $\Delta_{4,4,n;\infty}$.  
The quotient space of $\mathcal{D}_{\infty}$ is precisely the 3-manifold at infinity
$\mathscr{M}_{4,4,n;\infty}$.  
Let $\partial \mathcal{D}_{\infty}$ denote the boundary of $\mathcal{D}_{\infty}$;  
we will show that it is topologically homeomorphic to 
$\partial \mathbb{D}^2 \times \mathbb{S}^1$.  
The Dirichlet domain structure of $\Delta_{4,4,n;\infty}$ induces a natural 
2-cell decomposition of $\partial \mathcal{D}_{\infty}$.  
In particular, $\partial \mathcal{D}_{\infty}$ is the union of the triangles, 
quadrangles, and octagons shown in Figure~\ref{figure:446dirichlet}.  
Each of these 2–cells is a connected component of 
\(
s^{j}_{i}\cap \partial\mathbf{H}^2_{\mathbb{C}}
\)
for some $i\in \mathbb{Z}_{n}$ and 
$j\in \{+, -, *, \diamond\}$.

For the group $\Delta_{4,4,\infty;\infty}$, there is a free 
$\mathbb{Z}\cong \langle I_{1}I_{2}\rangle$-action on 
$\mathcal{F}_{\infty}$.  
We denote the quotient 
\[
T_{\mathcal{F}}=\mathcal{F}_{\infty}/\langle I_{1}I_{2}\rangle,
\]
which is homeomorphic to $(\mathbb{R}^{2}-int(\mathbb{D}^{2}))\times\mathbb{S}^{1}$ 
and hence to $\mathbb{T}^{2}\times[0,\infty)$.

Let $V_{\mathcal{F}}$ be a small closed neighborhood of the torus boundary 
$\partial T_{\mathcal{F}}$ in $T_{\mathcal{F}}$.  
Via the identification 
\[
T_{\mathcal{F}} \cong \mathbb{T}^{2}\times[0,\infty),
\]
we regard $V_{\mathcal{F}}$ as $\mathbb{T}^{2}\times[0,1]$.  
The closure of $T_{\mathcal{F}}-V_{\mathcal{F}}$ in $T_{\mathcal{F}}$ will be 
denoted by $W_{\mathcal{F}}$, and corresponds to 
$\mathbb{T}^{2}\times[1,\infty)$ under the same identification.
Since $\mathcal{F}$ is a Ford domain for $\Delta_{4,4,\infty;\infty}$, its 
boundary $\partial\mathcal{F}_{\infty}$ inherits a system of side-pairings.  
This side-pairing descends to $T_{\mathcal{F}}$ and, in particular, to one 
boundary component of $V_{\mathcal{F}}$.  
Hence the 2-cell decomposition on this boundary, consisting of triangles, 
quadrangles, and octagons, is equipped with a corresponding side-pairing. 
The quotient of $V_{\mathcal{F}}$ by this side–pairing is the 3-manifold 
\[
s782 - \mathbb{T}^{2}\times (1,\infty),
\]
which is obtained by truncating one cusp of $s782$.  
By gluing back $W_{\mathcal{F}}$, we recover $s782$.
The torus $\mathbb{T}^{2}\times\{1\}$ corresponds to the first cusp $C_{0}$ 
described in Section~\ref{section:intro}.  
The second cusp $C_{1}$ corresponds to the tangent points $t_{i}$ (and $u_{i}$) 
on $\partial\mathcal{F}_{\infty}$, as illustrated in 
Figure~\ref{figure:44iiford}.

%$$T_\mathcal{F}=\mathbb{T}^2 \times \{0\}$. 

For the group $\Delta_{4,4,n;\infty}$, there is a free 
$\mathbb{Z}_{n}\cong \langle I_{1}I_{2}\rangle$-action on 
$\mathcal{D}_{\infty}$.  
Since $I_{1}I_{2}$ can be conjugated to the standard form in 
(\ref{I1I2-conjugate}), and the torus boundary 
$\partial\mathcal{D}_{\infty}$ is $I_{1}I_{2}$-invariant, this boundary torus 
is unknotted.  
We denote the quotient
\[
T_{\mathcal{D}}=\mathcal{D}_{\infty}/\langle I_{1}I_{2}\rangle,
\]
which will be shown to be homeomorphic to 
$(\mathbb{S}^{2}-\operatorname{int}(\mathbb{D}^{2}))\times \mathbb{S}^{1}$,  
where $\mathbb{S}^{2}$ is the one point compactification of $\mathbb{R}^{2}$.

Let $V_{\mathcal{D}}$ be a small closed neighbourhood of the torus boundary 
$\partial T_{\mathcal{D}}$ in $T_{\mathcal{D}}$, and denote by $W_{\mathcal{D}}$ 
the closure of $T_{\mathcal{D}}-V_{\mathcal{D}}$ in $T_{\mathcal{D}}$.  
Topologically, $W_{\mathcal{D}}$ is $\mathbb{D}^{2}\times\mathbb{S}^{1}$.

Since $\mathcal{D}$ is a Dirichlet domain for the even subgroup $\Delta_{4,4,n;\infty}$, a system 
of side-pairings is defined on $\partial \mathcal{D}_{\infty}$.  
This induces a corresponding side-pairing on $T_{\mathcal{D}}$, and in particular 
on one boundary component of $V_{\mathcal{D}}$.  
More precisely, the 2-cell decomposition-consisting of triangles, quadrangles, 
and octagons—on this boundary inherits the side-pairing.  
The quotient of $V_{\mathcal{D}}$ under this side–pairing is also the 3-manifold
\[
s782 - \mathbb{T}^{2}\times (1,\infty),
\]
which is exactly the same quotient obtained from $V_{\mathcal{F}}$ in the 
previous paragraph.  
Gluing back $W_{\mathcal{D}}$, we obtain the 3-manifold 
$\mathscr{M}_{4,4,n;\infty}$.

Comparing the constructions, the difference between 
$\mathscr{M}_{4,4,\infty;\infty}$ and $\mathscr{M}_{4,4,n;\infty}$ lies in the 
pieces
\[
W_{\mathcal{F}}\cong \mathbb{T}^{2}\times[1,\infty)
\qquad\text{and}\qquad
W_{\mathcal{D}}=\mathbb{D}^{2}\times \mathbb{S}^{1}.
\]
We now determine the curve on the boundary of $W_{\mathcal{D}}$ that bounds an 
essential disk in $W_{\mathcal{D}}$.

%Consider the group  $$\mathbb{Z}_4 *\mathbb{Z}_4=\big\langle a, b\big| \begin{array}{c}    a^4=b^4=id,%\\ [3 pt]
%\end{array}\big\rangle.$$
%When we add the relations $b^4=id$ and $a=c$ in the presentation of $\pi_{1}(s782)$, we get a natural surjective homomorphism from $\pi_{1}(s782)$ to $\mathbb{Z}_4 *\mathbb{Z}_4$. Here $\mathbb{Z}_4 *\mathbb{Z}_4$ is the abstract group of the even subgroup of $\Delta_{4,4,n;\infty}$.
%From the proof in \cite{jwx}, the meridian $\mathcal{M}_1$ in the  cusp $C_1$  corresponds to unipotent elemm_1ent of the first parabolic element in $\Delta_{4,4,\infty;\infty}$, 
%which is the meridian  $\mathcal{M}_1$. (That is, $\mathcal{M}_1$ corresponds to $A=I_1I_2$ in Section \ref{section:forddomain})

Since the bounding torus of $\mathcal{D}_{\infty}$ admits a natural 2--cell
decomposition, we assign unit length to each one cell.  
Consider the closed curve $\mathscr{C}_{n}$ in this torus, whose total length is
\[
3(n-1)+2+2 = 3n+1.
\]
The curve is the union of
\[
\bigcup_{i=1}^{n-1} \bigl([u_i,w_i] \cup [w_i,x_i] \cup [x_i,u_{i+1}]\bigr)
\]
and
\[
[u_n,v_n] \cup [v_n,t_n] \cup [t_n,y_n] \cup [y_n,u_1].
\]
The position of $\mathscr{C}_{n}$ in the bounding torus of $\mathcal{D}_{\infty}$
is shown in Figure~\ref{figure:446dirichlet}.

\begin{prop} \label{prop:diskindirichlet}
In Figure~\ref{figure:445i}, the embedded curve $\mathscr{C}_{n}$ on the 
bounding torus of $\mathcal{D}_{\infty}$ bounds an essential disk in 
$\mathcal{D}_{\infty}$.
\end{prop}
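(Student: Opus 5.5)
We will build the disk explicitly as a union of pieces lying on the ideal boundaries of the sides and in $\mathcal{D}_\infty$, using the rotational structure around the fixed circle of $A$. Conjugating $A=I_1I_2$ to the diagonal form (\ref{I1I2-conjugate}), $A$ acts on $\partial\hc=\mathbb{S}^3$ as a rotation with two fixed circles: the boundary circle $\mathcal{C}_o$ of the complex line $\{z_2=0\}$ and the boundary circle $\mathcal{C}'_o$ of $\{z_1=0\}$. Both pass through the configuration symmetrically. From Figures~\ref{figure:446dirichlet} and \ref{figure:448realistic}, the union of the spinal spheres $\partial_\infty\mathfrak{s}^j_k$ forms a necklace around one of these circles, say $\mathcal{C}_o$. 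Then $\mathcal{D}_\infty$ is the complementary solid torus, whose core is isotopic to the other circle $\mathcal{C}'_o$. An essential disk in $\mathcal{D}_\infty$ is therefore a meridian disk: a disk that meets $\mathcal{C}'_o$ transversally in exactly one point and has boundary on $\partial\mathcal{D}_\infty$. The first step is to check this picture rigorously: $\partial\mathcal{D}_\infty$ is an $A$-invariant torus, and $\mathcal{D}_\infty$ contains $\mathcal{C}'_o$ but misses $\mathcal{C}_o$. Both facts follow from Propositions~\ref{prop:kminus}--\ref{prop:kdiamond} together with a direct evaluation of $|\langle p,o\rangle|$ versus $|\langle p,g(o)\rangle|$ on the two circles.

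The second step is to show that $\mathscr{C}_n$ is isotopic on $\partial\mathcal{D}_\infty$ to a meridian, namely a curve bounding a disk in the complementary region, i.e.\ a disk punctured once by $\mathcal{C}'_o$. Here we use the combinatorics. The curve $\mathscr{C}_n$ consists of the $n-1$ repeated blocks $[u_i,w_i]\cup[w_i,x_i]\cup[x_i,u_{i+1}]$ running along the ``bottom'' edges near the $\star$-triangles, plus one correction block $[u_n,v_n]\cup[v_n,t_n]\cup[t_n,y_n]\cup[y_n,u_1]$ that crosses through the tangency point $t_n$ to the ``top'' row. We will compute its homology class in $H_1(\partial\mathcal{D}_\infty)=\mathbb{Z}^2$ with respect to two curves:
\begin{itemize}
\item the longitude $\lambda$, the $A$-orbit direction, e.g.\ the concatenation $\bigcup_i[u_i,\dots,u_{i+1}]$ along the bottom row;
\item the meridian $\mu$, a small loop in the torus around the necklace, e.g.\ $[u_1,v_1]\cup[v_1,t_1]\cup[t_1,s_1]\cup[s_1,u_1]$ around the quadrangle of $\partial_\infty\mathfrak{s}^+_0$ (up to a sign to be fixed).
\end{itemize}
The class of $\mathscr{C}_n$ will come out as $\lambda\pm\mu$ traversed appropriately.

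In the third step we show that this class has linking number $\pm1$ with $\mathcal{C}'_o$ and linking number $0$ with $\mathcal{C}_o$, after possibly adjusting for the framing. On an unknotted torus in $\mathbb{S}^3$, these two conditions characterize the curves bounding disks on the side containing $\mathcal{C}'_o$. To compute the linking numbers, we use that $A$ rotates by $2\pi/n$ around $\mathcal{C}_o$. The $n-1$ regular blocks together sweep out an angle of $(n-1)\cdot 2\pi/n$, and the special block through $t_n$ contributes the last $2\pi/n$ while shifting once across the necklace. This shift is what makes the class the correct one, rather than the pure longitude $\lambda$, which is the meridian of the complementary solid torus only if the torus is framed standardly. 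The parametrization of the bisectors in the ball model lets us locate the points $u_i,v_i,t_i,\dots$ explicitly by their arguments, so the winding can be read off.

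Finally, given that the curve is nontrivial on the torus and null-homotopic in the solid torus $\mathcal{D}_\infty$, Dehn's lemma supplies an embedded essential disk. Alternatively, we can construct the disk directly as the cone from the point $\mathcal{C}'_o\cap\{\text{a suitable }\mathbb{R}\text{-circle}\}$. The main obstacle is the second and third steps: the torus framing is not standard, so the naive longitude is not the disk-bounding curve, and the winding across the necklace must be identified exactly from the figure combinatorics. I would first verify this by direct numerical computation for $n=7$ and $n=8$, then argue that the pattern persists by the $\iota$ and $I_2$ symmetries of Proposition~\ref{prop:symmetry}.
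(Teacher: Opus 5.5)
\textbf{The first step asserts something false, and the linking-number test in your second and third steps depends on it.}

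A minor point first: $A$ fixes no point of $\partial\hc$. Its eigenvectors for $e^{\pm 2\pi i/n}$ are the positive vectors $(1,\pm i,0)^{T}$. So your circles $\mathcal{C}_o,\mathcal{C}'_o$, the boundaries of the complex lines through $o$ and $(1,\pm i,0)^{T}$, are only invariant (rotated by $\pm 2\pi/n$). This is why $\langle A\rangle$ acts freely on $\mathcal{D}_\infty$.

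The serious problem is symmetry. $I_2$ fixes $o$ and permutes the bisectors defining $\mathcal{D}$ (Proposition~\ref{prop:symmetry}), so $I_2(\mathcal{D}_\infty)=\mathcal{D}_\infty$. Since $I_2AI_2=A^{-1}$, one finds $I_2(1,i,0)^{T}=-(c+is)(1,-i,0)^{T}$, so $I_2$ exchanges the two invariant circles. Hence $\mathcal{D}_\infty$ contains both circles or neither, and the configuration ``$\mathcal{C}'_o\subset\mathcal{D}_\infty$, $\mathcal{C}_o\cap\mathcal{D}_\infty=\emptyset$'' never occurs.

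Concretely, take $S(o)=(ca+isb,\,sa-icb,\,d)^{T}$ and the point $\mathbf{p}=(e^{i\pi/n}/\sqrt2,\,-ie^{i\pi/n}/\sqrt2,\,1)^{T}$ on the second circle. Then $|\langle\mathbf{p},S(o)\rangle|=d-(a+b)/\sqrt2$. This is $<1=|\langle\mathbf{p},o\rangle|$ exactly when $8\sin^2(\pi/n)>1$. So for $5\le n\le 8$ both circles cross $\partial\mathcal{D}_\infty$, and for $n\ge 9$ both lie in $\mathcal{D}_\infty$. In no case are they the cores of the two complementary solid tori.

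What actually decides the question is the linking number of $\mathscr{C}_n$ with a core $K$ of the necklace $\mathbb{S}^3\setminus\operatorname{int}\mathcal{D}_\infty$. The curve $\mathscr{C}_n$ bounds a meridian disk of $\mathcal{D}_\infty$ if and only if it is the preferred (linking-number-zero) longitude of that unknotted solid torus. Your proposal never produces such a $K$.

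\textbf{The combinatorial part also needs repair.}

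Your $\mu=[u_1,v_1]\cup[v_1,t_1]\cup[t_1,s_1]\cup[s_1,u_1]$ is the boundary of the quadrangle 2-cell $\{t_1,v_1,u_1,s_1\}$, so it is null-homologous on the torus. A genuine meridian of the necklace is, for example, the loop $[p_1,q_1]\cup[q_1,r_1]\cup[r_1,s_1]\cup[s_1,u_1]\cup[u_1,p_1]$ bounding the disk $E$ of Section~\ref{section:forddomain}. It passes through both the octagon and the quadrangle of the pinched annulus $\partial_\infty\mathfrak{s}^+_0$.

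Push across the following cells (indices mod $n$): the triangle $\{u_n,v_n,w_n\}$, the octagon and quadrangle of $\partial_\infty\mathfrak{s}^-_{n-1}$, and the triangle $\{t_n,p_{n+1},q_{n+1}\}$. This shows that the block of $\mathscr{C}_n$ through $t_n$ differs from a regular block by exactly one copy of this loop. Hence $[\mathscr{C}_n]=[\lambda]\pm[\partial E]$, where $\lambda=\bigcup_{i=1}^{n}([u_i,w_i]\cup[w_i,x_i]\cup[x_i,u_{i+1}])$ is the $A$-invariant curve.

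The whole content of the proposition is then that this single $\partial E$ cancels $\operatorname{lk}(\lambda,K)$. That is precisely the sign issue of Remark~\ref{remark:slope}. Your proposal only announces it (``will come out as'', ``can be read off''). Checking $n=7,8$ numerically cannot be propagated to other $n$ by $\iota$ and $I_2$, since these act within a single group.

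One way to supply the missing step: compare $\lambda$ and $K$ with orbits of the one-parameter group of real rotations containing $A$. In eigen-coordinates this is $(w_1,w_2)\mapsto(e^{it}w_1,e^{-it}w_2)$, and any two distinct orbits are Hopf fibres with linking number $\pm1$.

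For reference, the paper itself gives no independent argument here; it only refers to the first part of Proposition~7.8 of \cite{Acosta2019}.
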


\begin{proof}
The proof is similar to the first part of Proposition~7.8 in 
\cite{Acosta2019}.
\end{proof}

%We note that $C_n \cap A(C_n)$ is non-empty, 

%Consider the fundamental group of $s782$ with presentation as in Section \ref{section:intro}. 

Recall the isomorphism 
\[
\phi: \pi_1(\mathscr{M}_{4,4,\infty;\infty}) = \langle f_1, f_2, f_5 \rangle 
\longrightarrow \pi_1(s782),
\]
introduced in Section~\ref{section:forddomain}, where
\[
\phi(f_1) = c^{-1}a, \qquad 
\phi(f_2) = bc, \qquad \text{and} \qquad
\phi(f_5) = c.
\]  
Here, $f_2$ corresponds to $A = I_1 I_2$, and $f_5$ corresponds to  
$S^{-1}$, as described in Section~\ref{section:forddomain}.

The 3--manifold $s782$ has two cusps.  
For the second cusp $C_1$, a meridian--longitude system is given by
\[
(\mathcal{m}_1, \mathcal{l}_1) = (bc, ba).
\]  
Moreover, $s782$ admits the dihedral group $D_4$ as its symmetry group, which 
contains an involution exchanging the two cusps.  
Hence, the two cusps of $s782$ are symmetric.  
For the purpose of Theorem~\ref{thm:44np}, the meridian--longitude system is taken 
to be $(\mathcal{m}_1, \mathcal{l}_1)$ for the cusp $C_1$.

% The  2-cell decomposition of   the bounding %torus of $\partial_{\infty} D_{n}$ descends to a  2-cell decomposition of   the bounding torus of $\partial  W_{n}$.  The curve $C_{n}$ in  $\partial_{\infty} D_{n}$ descends to a curve in $\partial  W_{n}$, which bounds a disk in $W_{n}$.  We can identify $\partial  W_{n}$
%and one cusp of  $s782$ together with the 2-cell decompositions. From Proposition \ref{prop:diskinDirichlet},   the curve bounds a disk in    $W_{n}$ is identified with the curve 
%   in the first cusp  of  $s782$ which corresponds to $f_2 f^n_1$ (or $f^{-1}_2f^n_1$) bounds a disk in the solid tours $W_{n}$. 
% So it corresponds to slope $\mathcal{L}_1+(n-1) \mathcal{M}_1$ (or $-\mathcal{L}_1+(n+1) \mathcal{M}_1$).   This ends the proof of %Theorem \ref{thm:44np}. 

The 2--cell decomposition of the bounding torus of $\mathcal{D}_{\infty}$ 
descends naturally to a 2--cell decomposition of the bounding torus of $T_{\mathcal{D}}$.  
Similarly, the curve $\mathscr{C}_{n}$ in $\mathcal{D}_{\infty}$ descends to a curve in $T_{\mathcal{D}}$.  
Consequently, the corresponding curve in $T_{\mathcal{D}}$ bounds an essential disk in $W_{\mathcal{D}}$.  

We may identify $\partial W_{\mathcal{D}}$ with a cusp of $s782$, together with the 
2--cell decomposition.  
By Proposition~\ref{prop:diskindirichlet}, the curve bounding a disk in $W_{\mathcal{D}}$ 
corresponds to the curve in the first cusp of $s782$ represented by $f_2 f_1^n$, 
which bounds a disk in the solid torus $W_{\mathcal{D}}$.  
Therefore, this curve corresponds to the slope 
\[
\mathcal{l}_1 + (n-1)\,\mathcal{m}_1.
\]  
This completes the proof of Theorem~\ref{thm:44np}.

\begin{rem} \label{remark:slope}
From Sections~\ref{section:forddomain} and~\ref{section:dirichlet}, 
readers can easily observe that the 3--manifold at infinity of the even subgroup 
$\Delta_{4,4,n;\infty}$ is obtained from the 3--manifold $s782$ by Dehn filling.  
However, computing the precise slope of this Dehn filling is somewhat subtle, as 
illustrated in Theorem~\ref{thm:44np}.  
For instance, at first glance, both slopes
\[
-\mathcal{m}_1 + (n+1)\mathcal{l}_1 \quad \text{and} \quad 
\mathcal{m}_1 + (n-1)\mathcal{l}_1
\] 
might appear reasonable:

\begin{itemize}
\item The 3--manifold $s782$ is not a link complement in the 3--sphere, 
so there is no canonical meridian--longitude system.  
We therefore adopt the system provided by SnapPy.
\item We need to compare curves in three tori: the boundary of $\mathcal{D}_{\infty}$, 
the boundary of $T_{\mathcal{D}}$, and the boundary of 
$s782 - \mathbb{T}^2 \times (1,\infty)$.
\item In Section~\ref{section:group}, we compute the fundamental group of the 
3--manifold at infinity of the even subgroup $\Delta_{4,4,5;\infty}$ directly from 
Proposition~\ref{prop:diskindirichlet}.
\end{itemize}
\end{rem}

\section{The fundamental group of the 3-manifold at infinity of \texorpdfstring{$\Delta_{4,4,5;\infty}$}{}} 
\label{section:group}

Let $\mathscr{M}_{4,4,5;\infty}$ denote the 3--manifold at infinity of the 
even subgroup of $\Delta_{4,4,5;\infty}$.  
In this section, we identify the manifold $\mathscr{M}_{4,4,5;\infty}$ directly 
using the key Proposition~\ref{prop:diskindirichlet}.  
This section also serves as a double check of a small part of 
Theorem~\ref{thm:44np}~\ref{item:44np2}.  
Moreover, the method may be useful in other contexts, so we prefer to include it here.  
The argument involves lengthy but routine edge-cycle computations. 
We present the complete 2-cell decomposition, the side-pairings, and 
the resulting relations; we omit only the routine bookkeeping of 
vertex identifications, which are recorded on our scratch paper.

\begin{prop} \label{thm:445p}
The 3--manifold $\mathscr{M}_{4,4,5;\infty}$ is a 1--cusped hyperbolic 3--manifold, 
obtained from the 2--cusped 3--manifold $s782$ in SnapPy by Dehn filling on the 
second cusp along the slope
\[
4\mathcal{m}_1 + \mathcal{l}_1,
\] 
where $(\mathcal{m}_1, \mathcal{l}_1)$ denotes the meridian--longitude system 
provided by SnapPy.
\end{prop}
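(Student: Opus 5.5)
The plan is to compute $\pi_1(\mathscr{M}_{4,4,5;\infty})$ directly from the Dirichlet domain $\mathcal{D}$ of the even subgroup of $\Delta_{4,4,5;\infty}$, and then match it with the Dehn filling of $s782$. We work in $T_{\mathcal{D}}=\mathcal{D}_\infty/\langle A\rangle$, where $A=I_1I_2$ now has order $5$. First we take, exactly as in Section~\ref{section:forddomain}, the fundamental domain $L$ for the $\langle A\rangle$-action bounded by the disks $E$ and $A(E)$. This is now a wedge of the solid torus $\mathcal{D}_\infty$ rather than a slab of $\mathcal{F}_\infty$. In $L$ we take the cutting disk $D$ with boundary through $u_1,v_1,w_1,x_1,y_1,u_2$. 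The new feature is that in $\mathcal{D}_\infty$ the core of the solid torus is not a cusp: the disk bounded by $\mathscr{C}_5$ from Proposition~\ref{prop:diskindirichlet} is essential. Cutting $L$ along $D$ therefore still gives a $3$-ball $N'$. However, the faces of $\partial N'$ coming from $D$ and $E$ are now compact disks, with no puncture at $q_\infty$, and there is one additional $2$-cell recording that $\mathscr{C}_5$ bounds. By Theorem~\ref{thm:local}, the 2-cells of $\partial N'$ coming from $\partial\mathcal{D}_\infty$ and their side-pairings $f_3,\dots,f_6$ are combinatorially identical to those in Figure~\ref{figure:edgecircle4iii} and Table~\ref{vertices_corresponding44ii}. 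Only $f_1$ and $f_2$ (with $f_2^5$ now relevant) and the new face change.

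Next we recompute the edge cycles. Those of Table~\ref{table:edgecircle4pp} that do not pass through the former cusp edge $[q_\infty,\cdot]$ survive unchanged, giving the same relations in $f_1,\dots,f_6$. The edge cycle of $e_1$, which encoded the commutator $[f_1,f_2]$ of the cusp torus $C_1$, is replaced. The boundary of the meridian disk of Proposition~\ref{prop:diskindirichlet}, read in $T_{\mathcal D}$, traverses the $\langle A\rangle$-translates of $[u_i,w_i]\cup[w_i,x_i]\cup[x_i,u_{i+1}]$ (each crossing $D$ once and contributing $f_1$) together with one passage through $t_5$, which contributes $f_2$. We expect this to yield the single new relation $f_2f_1^{5}=1$, with appropriate orientation conventions. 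The main bookkeeping obstacle is to verify this word carefully, tracking which copy $D_\pm$ and which face is crossed at each step. In doing so we must decide between the candidates $f_2f_1^5$ and $f_2^{-1}f_1^5$, which is exactly the ambiguity flagged in Remark~\ref{remark:slope}.

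We then obtain a presentation of $\pi_1(\mathscr{M}_{4,4,5;\infty})$ by adding this relation to the Magma-simplified presentation of $\pi_1(\mathscr{M}_{4,4,\infty;\infty})$ in $x_1,x_2,x_3$, and apply $\phi$. We get
\[
\phi(x_2x_1^{5})=bc\,(c^{-1}a)^{5}.
\]
Using the relations of $\pi_1(s782)$, we check that this element is conjugate in the $C_1$ peripheral subgroup $\langle bc,ba\rangle$ to $\mathcal{m}_1^{4}\mathcal{l}_1=(bc)^4ba$, up to inversion. This reduction is done by a Magma or SnapPy word computation, in the spirit of the identification of $\phi$.

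Finally, we confirm the identification of the manifold. Since $\mathscr{M}_{4,4,5;\infty}$ is obtained from $s782$ by filling $C_1$ (by Theorem~\ref{thm:local} and the $V/W$ decomposition of Section~\ref{section:proof}), it is the filling along the slope whose boundary curve bounds the disk, namely $4\mathcal{m}_1+\mathcal{l}_1$. As an independent check, we compare the group obtained above with SnapPy's fundamental group of $s782(0,0)(4,1)$ via an explicit isomorphism found in Magma. We also use SnapPy to verify that this filling is hyperbolic with one cusp. This last check guards against an orientation or sign error in the previous step, which is the part we expect to be most delicate.
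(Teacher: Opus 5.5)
Your strategy is different from the paper's, and its key step, the identification of the filling word, is wrong. The check you propose would fail.

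\textbf{The filling word.} You have the roles of $f_1$ and $f_2$ reversed. A segment of $\mathscr{C}_5$ running from $u_i$ to $u_{i+1}=A(u_i)$ crosses one fundamental domain of the $\langle A\rangle$-action, so in $T_{\mathcal D}$ it records the pairing $E\to A(E)$, namely $f_2$, not $f_1$. The tube winding at $t_5$ is what carries the $f_1$-component.

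A covering argument shows your candidates are impossible. The map $\mathcal{D}_\infty\to T_{\mathcal D}$ is a connected $5$-fold cyclic cover whose monodromy on $H_1(\partial T_{\mathcal D})$ sends $f_2\mapsto A$ and $f_1\mapsto 1$. A meridian disk of $T_{\mathcal D}$ lifts, so its boundary has $f_2$-exponent divisible by $5$. Hence neither $f_2f_1^5$ nor $f_2^{-1}f_1^5$ can bound a disk.

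Concretely, $c^{-1}a=(bc)^{-1}(ba)=\mathcal{m}_1^{-1}\mathcal{l}_1$, and $\mathcal{m}_1,\mathcal{l}_1$ commute. So $\phi(x_2x_1^5)=\mathcal{m}_1^{-4}\mathcal{l}_1^{5}$ and $\phi(x_2^{-1}x_1^5)=\mathcal{m}_1^{-6}\mathcal{l}_1^{5}$, and neither is $(\mathcal{m}_1^{4}\mathcal{l}_1)^{\pm1}$. By contrast, $\phi(x_2^5x_1)=(bc)^5c^{-1}a=(bc)^4ba=\mathcal{m}_1^4\mathcal{l}_1$ as words. The relation realizing the stated slope is therefore $f_2^5f_1$. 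The genuine ambiguity is the sign of the single tube winding, $f_2^5f_1^{\pm1}$, not the one you describe. The expression $f_2f_1^n$ in Section 6 has to be read as $f_2^nf_1$ for the slope $(n-1)\mathcal{m}_1+\mathcal{l}_1$ to come out; your proposal inherits that misprint.

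\textbf{The cell-level setup.} It does not transfer either. In Section 4 the disks $E$, $A(E)$ and $D$ all pass through $q_\infty$, and $\partial E$ goes once around the annulus $\partial\mathcal{F}_\infty$. In $\mathcal{D}_\infty$ there is no such point. The same curve $\partial E$ has algebraic intersection $\pm1$ with the meridian $\mathscr{C}_5$ of the solid torus $\mathcal{D}_\infty$, so it is a longitude and bounds no disk there. Consequently there is no wedge $L$, and no ball $N'$ obtained by capping off the Ford faces.

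This is why the paper works directly in the quotient solid torus $T_{\mathcal D}$. It draws a meridian curve $c_5$ on $\partial T_{\mathcal D}$. It subdivides the octagons and quadrangles into $Y_i,S^-_i$ ($i\le 8$) and $X_j,S_j$ ($j\le 9$) so that $b_1,c_1,g,h$ respect the cut. It then reads off the edge-cycle relations and simplifies them in Magma to exactly SnapPy's one-relator presentation of the filling. It concludes via prime decomposition and the Poincar\'e conjecture.

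If you drop the $N'$ construction and argue only through the $V/W$ decomposition of Section 6 plus van Kampen for Dehn filling, your route becomes a valid and much shorter alternative once the word is corrected to $f_2^5f_1$. Its whole content is then that identification, and it no longer gives the independent confirmation the paper's direct computation is meant to provide.
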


To prove Proposition~\ref{thm:445p}, we first consider the solid torus 
$T_{\mathcal{D}}$ with the 2--cell decomposition of its boundary 
$\partial T_{\mathcal{D}}$ described in Section~\ref{section:proof}.  
The 3--manifold $\mathscr{M}_{4,4,5;\infty}$ is obtained as the quotient of 
$T_{\mathcal{D}}$ by the side--pairings on $\partial T_{\mathcal{D}}$.  
We then cut $T_{\mathcal{D}}$ into a 3--ball along a disk, keeping track of the 
side--pairings, in order to compute the fundamental group of 
$\mathscr{M}_{4,4,5;\infty}$.

The local 2--cell decomposition of the torus $\partial \mathcal{D}_{\infty}$, 
equipped with the $\mathbb{Z}_n = \langle I_1 I_2 \rangle$--action, is identical 
to the 2--cell decomposition of the annulus $\partial \mathcal{F}_{\infty}$ 
with the $\mathbb{Z} = \langle I_1 I_2 \rangle$--action.  
We then take a fundamental domain of the $\langle I_1 I_2 \rangle$--action on 
the torus $\partial \mathcal{D}_{\infty}$, which is a compact annulus 
$\mathcal{A}$ pinched at the point $u_2$.  
See Figure~\ref{figure:445idomain}, which can be viewed as part of both 
Figures~\ref{figure:44iiford} and~\ref{figure:446dirichlet}.

Let $X=AS^{-1}A^{-1}$ and $Y=ASA^{-1}$.  
Since $\mathcal{D}_{\infty}$ covers $T_{\mathcal{D}}$, we use $\mathfrak{s}(S) \cap T_{\mathcal{D}}$ in $T_{\mathcal{D}}$ to denote the image of $\mathfrak{s}(S) \cap \mathcal{D}_{\infty} \cap \mathcal{A}$ to simplify notation.  
Thus, $\mathfrak{s}(S) \cap T_{\mathcal{D}}$  is a quadrangle and $\mathfrak{s}(ASA^{-1}) \cap T_{\mathcal{D}}$ is an octagon.
With these notations, in Figure \ref{figure:445idomain}, $\partial T_{\mathcal{D}}$ is a torus consisting of:
 \begin{itemize}[-]
    \item  two octagons of $\mathfrak{s}(S^{-1}) \cap T_{\mathcal{D}}$ and $\mathfrak{s}(ASA^{-1}) \cap T_{\mathcal{D}}$, denoted by $S^{-}$ and $Y$, respectively;

    %\item   the octagon   $\mathfrak{s}(ASA^{-1}) \cap T_{\mathcal{D}}$, we denote it by $Y$;

    \item two quadrangles of $\mathfrak{s}(AS^{-1}A^{-1}) \cap T_{\mathcal{D}}$ and $\mathfrak{s}(S) \cap T_{\mathcal{D}}$, denoted by $X$ and $S$, respectively;

    %\item   the quadrangle   $\mathfrak{s}(S) \cap T_{\mathcal{D}}$, we denote it by $S$;

    \item the two triangle component of $\mathfrak{s}(Q) \cap T_{\mathcal{D}}$, denoted by $Q_{+}$ with vertices $\{t_1,p_2,q_2\}$ and $Q_{-}$ with vertices $\{r_2,s_2,t_2\}$;

    \item one triangle $\mathfrak{s}(R) \cap T_{\mathcal{D}}$, we denote it by $R$;

\item    one triangle   $\mathfrak{s}(ARA^{-1}) \cap T_{\mathcal{D}}$. We denote it by $ARA^{-1}$.
\end{itemize}
In other words, we  have a 2--cell decomposition of $\partial T_{\mathcal{D}}$, 
and we denote the 2--cells by $X$, $S$, $R$, etc., even though these symbols 
also represent group elements.

Note that the projections of the octagonal components 
\[
\mathfrak{s}(AS^{-1}A^{-1}) \cap \partial \mathcal{D}_{\infty} \quad \text{and} \quad 
\mathfrak{s}(S^{-1}) \cap \partial \mathcal{D}_{\infty}
\] 
coincide in $\partial T_{\mathcal{D}}$.  
Here we use $\mathfrak{s}(AS^{-1}A^{-1}) \cap \partial \mathcal{D}_{\infty}$, 
as it provides a more convenient description of the boundary of the essential disk in 
$T_{\mathcal{D}}$.

By gluing together the left and right zigzag paths in Figure~\ref{figure:445idomain}, 
we obtain an annulus.  
Then, gluing the upper and lower paths (now circles) produces a torus.  
This torus is precisely the boundary of the solid torus $T_{\mathcal{D}}$ described 
in Section~\ref{section:proof}.

The side pairings on $\partial T_{\mathcal{D}}$ are given by
\begin{equation*}\label{sidepair:W5}
    \begin{aligned}
        b_1 = RA^{-1} &: ARA^{-1} \longrightarrow R,\\
        c_1 = Q &: Q^+ \longrightarrow Q^-,\\
        g = S^{-1}A^{-1} &: Y \longrightarrow S^-,\\
        h = AS^{-1} &: S \longrightarrow X.
    \end{aligned}
\end{equation*}

From these side pairings, we obtain the 3--manifold $\mathscr{M}_{4,4,5;\infty}$.  
The correspondences of the vertices of the polygons under these maps are shown in 
Table~\ref{vertices_corresponding445i}.

\begin{table}[htbp]
\centering
\caption{2--cells and their vertices under the side maps on $\partial T_{\mathcal{D}}$.}
\begin{tabular}{|c|c|c|}
\hline 
Maps & Vertices (domain) & Vertices (image) \\
\hline  
$b_1$ & $u_2, v_2, w_2$ & $u_2, x_1, y_1$ \\
\hline
$c_1$ & $t_1, p_2, q_2$ & $t_2, r_2, s_2$ \\
\hline
$g$ & $w_2, x_2, y_2, t_2, r_2, q_2, p_2, u_2$ & $v_1, w_1, x_1, u_2, s_2, r_2, q_2, t_1$ \\
\hline
$h$ & $t_1, v_1, u_1, s_1$ & $u_3, y_2, t_2, p_3$ \\
\hline
\end{tabular}
\label{vertices_corresponding445i}
\end{table}

However, the relationship between the boundary of an arbitrary essential proper disk in 
$T_{\mathcal{D}}$ and the 2--cell decomposition of $\partial T_{\mathcal{D}}$ described above 
is not ideal.  
By this, we mean that there appears to be no essential curve $\delta$ in 
$\partial T_{\mathcal{D}}$ such that:
\begin{itemize}
    \item $\delta$ bounds a disk in $T_{\mathcal{D}}$;
    \item the set of arcs $\delta \cap x$, for all 
    $x \in \{X, Y, S, S^{-}, R, AR^{-1}, Q_{+}, Q_{-}\}$ in the 2--cell decomposition 
    of $\partial T_{\mathcal{D}}$, is invariant under the side--pairing maps 
    $\{b_1, c_1, g, k\}$ on $\partial T_{\mathcal{D}}$.
\end{itemize}

This observation makes the proof of Proposition~\ref{thm:445p} somewhat delicate.  
Therefore, we need to subdivide the quadrangles and octagons in 
Figure~\ref{figure:445idomain} into smaller components in order to accommodate a proper 
essential disk in $T_{\mathcal{D}}$, see Figure~\ref{figure:445i}.

 \begin{figure}[htb]
	\begin{center}
		\begin{tikzpicture}
		\node at (0,0) {\includegraphics[width=14cm,height=10cm]{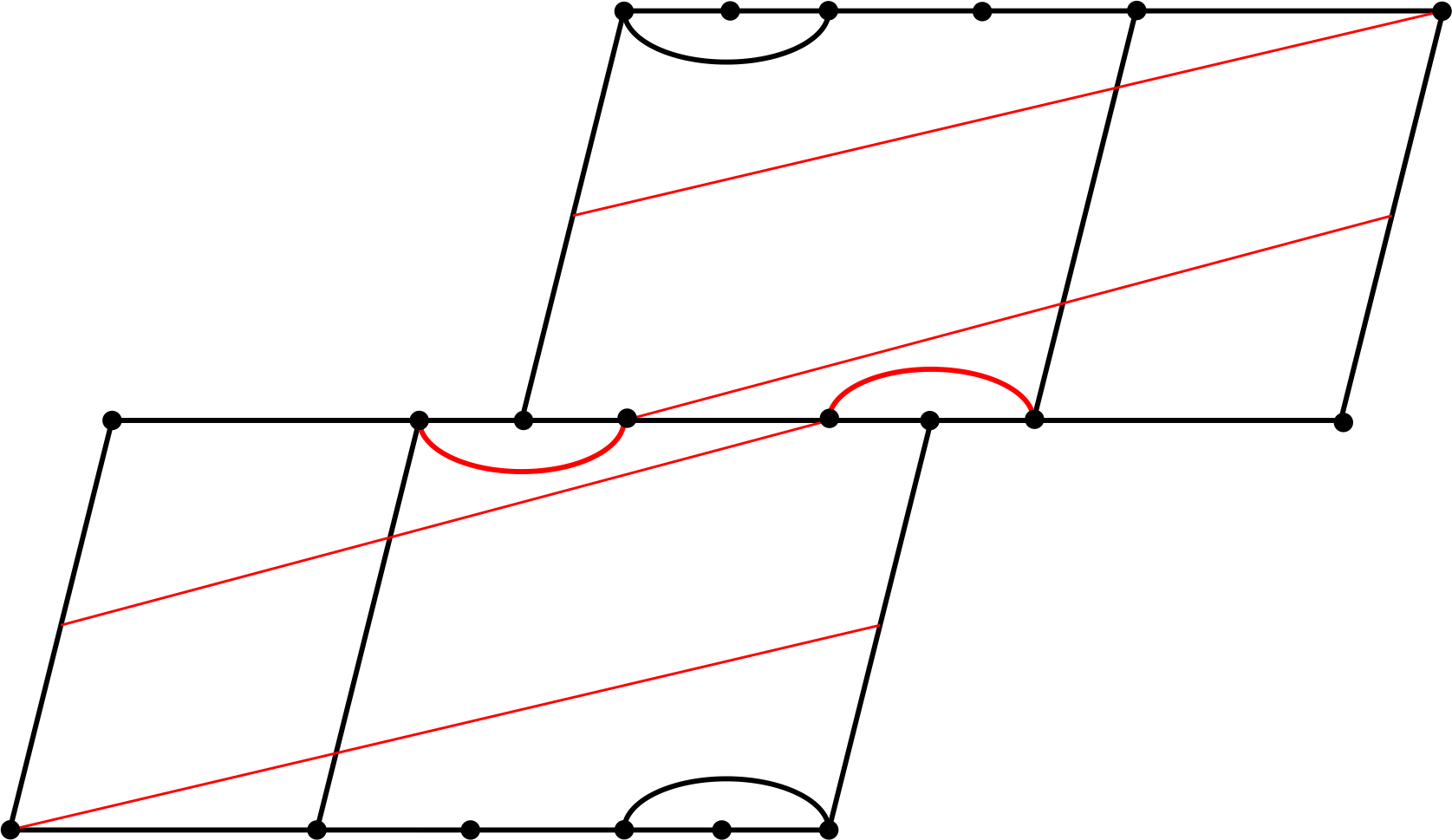}};

    \node at (-1.3,5.25){\large $u_2$};

    \node at (-0,5.25){\large $v_2$};

    \node at (1,5.25){\large $w_2$};

    \node at (2.2,5.25){\large $x_2$};

    \node at (4.2,5.25){\large $y_2$};
    
    \node at (7.0,5.25){\large $u_3$};
    \node at (-7.0,-5.3){\large $u_1$};

    \node at (-4.0,-5.3){\large $v_1$};

    \node at (-2.6,-5.3){\large $w_1$};
    \node at (-1.0,-5.3){\large $x_1$};
    \node at (0.0,-5.3){\large $y_1$};
    
    \node at (1.3,-5.3){\large $u_2$};

    \node at (-5.7,0.4){\large $s_1$};
    \node at (-3.2,0.4){\large $t_1$};
       \node at (-2.2,0.4){\large $p_2$};

 \node at (2.2,-0.4){\large $s_2$};
    \node at (3.2,-0.4){\large $t_2$};
        \node at (5.7,-0.4){\large $p_3$};
    \node at (1.3,2.3){\large $X$};
    \node at (4.8,3.0){\large $Y$};
    \node at (-4.8,-3.0){\large $S$};
    \node at (-1.3,-2.3){\large $S^-$};

    \node at (0.3,-4.6){\small $R$};
    \node at (-0.2,4.65){\small $ARA^{-1}$};
    \node at (-1.8,-0.3){\small $Q_+$};
    \node at (1.5,0.3){\small $Q_-$};
\end{tikzpicture}
\end{center}
\caption{A fundamental domain $\mathcal{A}$ of the $\langle I_1I_2\rangle $-action on the 
torus $\partial \mathcal{D}_{\infty}$, which projects to the torus $\partial T_{\mathcal{D}}$. 
This provides a  2-cell decomposition of $\partial T_{\mathcal{D}}$ together  with the boundary 
curve (in red) of an essential disk in $T_{\mathcal{D}}$. }
\label{figure:445idomain}
\end{figure}

When we glue together the left and right zigzag paths, and then the upper and lower paths 
in Figure~\ref{figure:445i}, we obtain the torus $\partial T_{\mathcal{D}}$.  
According to Proposition~\ref{prop:diskindirichlet}, the red arcs in 
Figure~\ref{figure:445i} are glued together to form a simple closed curve $c_5$ 
in $\partial T_{\mathcal{D}}$.  
This curve $c_5$ bounds a disk $D$ in $T_{\mathcal{D}}$.  
Cutting $T_{\mathcal{D}}$ along $D$ produces a 3--ball $N$.

The 3-manifold  $\mathscr{M}_{4,4,5;\infty}$  is obtained as the quotient of 
the 3--ball $N$.  
Figure~\ref{figure:445i} provides a 2--cell decomposition of $\partial N$.

\begin{itemize}
    \item The red arcs decompose the octagon labeled $S^{-}$ in 
    Figure~\ref{figure:445idomain} into three parts;

    \item The red arcs decompose the octagon labeled $Y$ in 
    Figure~\ref{figure:445idomain} into three parts;

    \item There is a side--pairing $g$ between these octagons.  
    Under this pairing, the red arcs in $S^{-}$ are mapped to the three blue arcs 
    in $Y$, and the red arcs in $Y$ are mapped to the three blue arcs in $S^{-}$;

    \item In total, each octagon is subdivided into eight smaller parts by the red 
    and blue arcs. We denote them by $Y_i$ and $S^{-}_i$ for $i = 1, 2, \dots, 8$;

    \item Similarly, each quadrangle labeled $X$ and $S$ is subdivided into nine 
    smaller parts, denoted by $X_i$ and $S_i$ for $i = 1, 2, \dots, 9$.
\end{itemize} 
Additionally, there are thirty-eight new star-shaped vertices in Figure~\ref{figure:445i} 
compared with Figure~\ref{figure:445idomain}.  
For instance, the three new vertices along the arc $[u_2, v_2]$ 
(an edge of the triangle labeled $ARA^{-1}$) arise from the intersections 
between $[x_1, u_2]$ and the blue arcs in Figure~\ref{figure:445i}.  
We will not label these star-shaped vertices, but several edges in 
Figure~\ref{figure:445i} will be labeled.

We remark that Figure~\ref{figure:445i} is a topological illustration.  
Nevertheless, it is straightforward to verify that there exist geometric realizations 
of the red arcs within the 2--cells $X$, $Y$, $S$, and $S^{-}$ such that their images 
under $c_1$, $g$, and $h$ intersect the red arcs transversely and minimally.  
In other words, one can realize the red and blue arcs in Figure~\ref{figure:445i} 
geometrically without any unnecessary bigon complements.  
Hence, the geometric realization preserves the same intersection pattern as depicted 
in Figure~\ref{figure:445i}.

We can then assemble the small disks in Figure~\ref{figure:445i} together with 
$D_+$ and $D_-$ into a 2--sphere, which provides a 2-cell decomposition of $\partial N$.  
This decomposition induces side-pairings on $\partial N$.  

In particular, the side-pairing 
\[
g = S^{-1}A^{-1}: Y \longrightarrow S^{-}
\] 
in $\partial T_{\mathcal{D}}$ splits into eight side-pairings on $\partial N$.  
Similarly, the side-pairing 
\[
h = AS^{-1}: S \longrightarrow X
\] 
in $\partial T_{\mathcal{D}}$ splits into nine side-pairings on $\partial N$.

\begin{figure}[htbp]
\begin{center}
\begin{tikzpicture}
\node at (0,0) {\includegraphics[width=14cm,height=8cm]{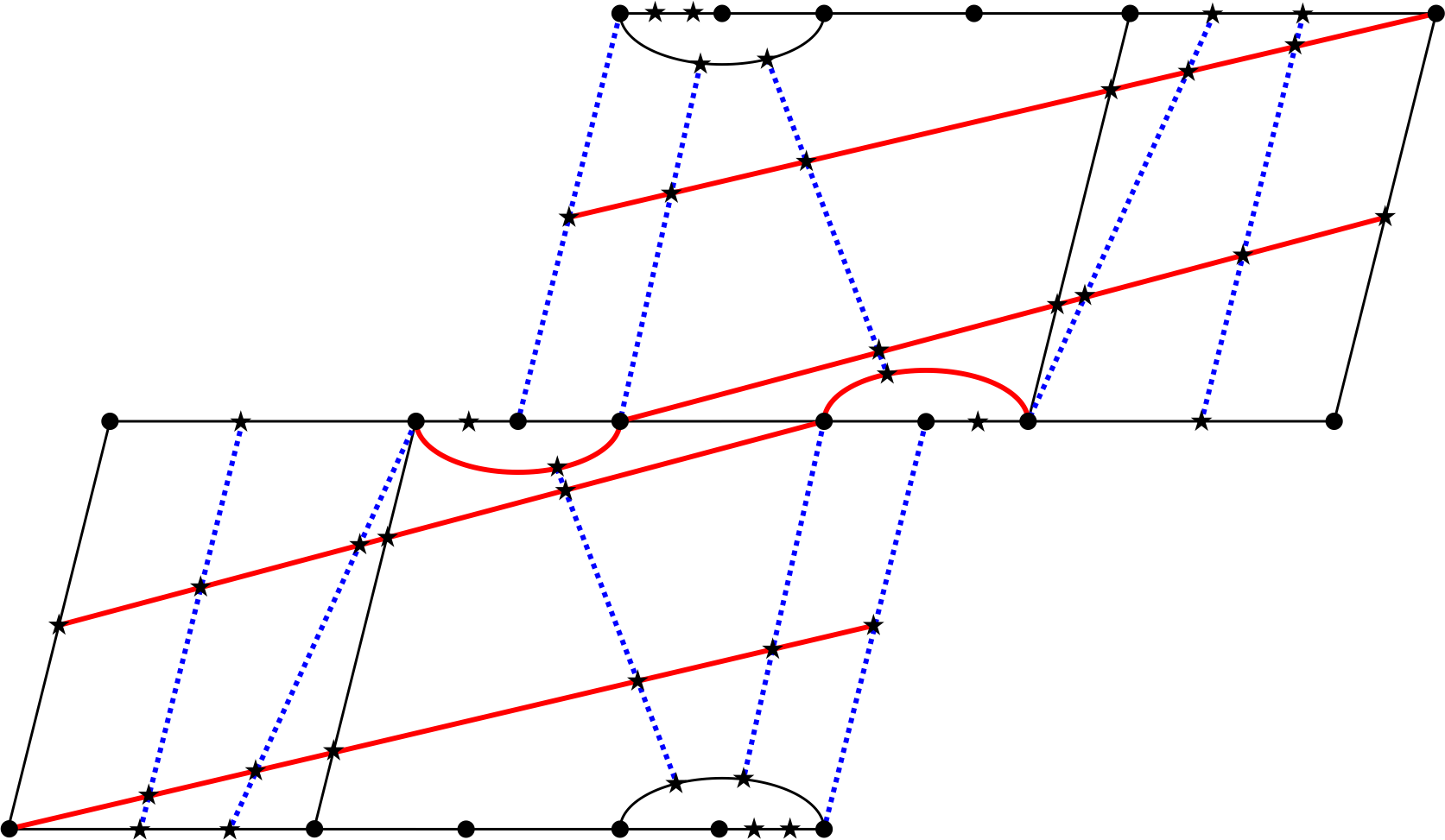}};
    \node at (-1.4,4.25){\large $u_2$};
    \node at (7.0,4.25){\large $u_3$};
    \node at (-7.0,-4.2){\large $u_1$};
    \node at (1.45,-4.2){\large $u_2$};
    
\node at (-0.9,2.9){\small $Y_1$};
\node at (-1.25,1.3){\small $Y_2$};
\node at (0.30,0.1){\tiny $Y_3$};
\node at (-0.25,1.1){\small$Y_4$};
\node at (-0.00,2.6){\small$Y_5$};
\node at (1.6,3.0){\small$Y_6$};
\node at (1.8,1.8){\small$Y_7$};
    \node at (2.6,0.6){\tiny $Y_8$};

    \node at (5.5,0.9){\small $X_1$};
 \node at (5.9,2.2){\small $X_2$};
  \node at (6.0,3.8){\tiny $X_3$};
  \node at (5.0,3.7){\tiny $X_4$};
   \node at (4.5,1.9){\small $X_5$};
   \node at (4.2,0.7){\small $X_6$};
   \node at (3.2,0.9){\tiny $X_7$};
   \node at (3.9,2.7){\tiny $X_8$};
   \node at (4.2,3.5){\tiny $X_9$};

\node at (-5.4,-0.9){\small $S_1$};
\node at (-4.2,-0.9){\small $S_2$};
\node at (-3.2,-0.8){\tiny $S_3$};
\node at (-3.9,-2.7){\tiny $S_4$};
\node at (-4.9,-2.7){\small $S_5$};
\node at (-6.2,-2.7){\small $S_6$};
\node at (-6.1,-3.8){\tiny $S_7$};
\node at (-5.12,-3.7){\small $S_8$};
\node at (-4.2,-3.6){\small $S_9$};

\node at (-2.4,-0.7){\tiny $S^{-}_1$};
\node at (-0.3,-0.2){\tiny $S^{-}_2$};
\node at (1.3,-0.9){\small $S^{-}_3$};
\node at (-0.2,-1.3){\small$S^{-}_4$};

\node at (-2.2,-1.9){\small$S^{-}_5$};
\node at (-2.2,-3.3){\small$S^{-}_6$};

    \node at (-0.2,-2.7){\small$S^{-}_7$};
    \node at (0.9,-2.6){\small$S^{-}_8$};

     \node at (-0.9,4.1){\tiny$1$};
      \node at (-0.5,4.1){\tiny$2$};
       \node at (-0.18,4.1){\tiny$3$};
        \node at (0.5,4.1){\tiny$4$};
         \node at (1.7,4.1){\tiny$5$};
          \node at (3.5,4.1){\tiny$6$};
           \node at (4.3,4.1){\tiny$7$};
            \node at (5.0,4.1){\tiny$8$};
             \node at (6.1,4.1){\tiny$9$};

    \node at (-6.4,-4.1){\tiny$1$};
      \node at (-5.2,-4.1){\tiny$2$};
       \node at (-4.3,-4.1){\tiny$3$};
        \node at (-3.5,-4.1){\tiny$4$};
          \node at (-1.7,-4.1){\tiny$5$};
            \node at (-0.5,-4.1){\tiny$6$};
              \node at (0.15,-4.1){\tiny$7$};
                \node at (0.5,-4.1){\tiny$8$};
                  \node at (0.85,-4.1){\tiny$9$};

                    \node at (3.28,2.1){\tiny$70$};

                      \node at (-0.05,-3.17){\tiny$19$};
     
\end{tikzpicture}
\end{center}
\caption{The 2-cell decomposition of the 2-sphere boundary of $N$ to get the 3-manifold $\mathscr{M}_{4,4,5; \infty}$. When calculating edge circles, the edge labeled by $i$ is referred to as edge $e_i$. }
\label{figure:445i}
\end{figure}

Based on the above analysis and in comparison with the side-pairings on 
$\partial T_{\mathcal{D}}$, the side-pairings on $\partial N$ are as follows:
\begin{equation*}\label{sidepair:M5}
    \begin{aligned}
        b_1 = RA^{-1} &: ARA^{-1} \longrightarrow R;\\
        c_1 = Q &: Q^+ \longrightarrow Q^-;\\
        g_i = S^{-1}A^{-1} &: Y_i \longrightarrow S^{-}_i, \quad i = 1, 2, \dots, 8;\\
        h_j = AS^{-1} &: S_j \longrightarrow X_j, \quad j = 1, 2, \dots, 9;\\
        k &: D^+ \longrightarrow D^-.
    \end{aligned}
\end{equation*}

We then have Table~\ref{edgecircle445i}, which lists all the edge circles 
and their corresponding relations for the 3-manifold $\mathscr{M}_{4,4,5; \infty}$.  
As an example, consider the edge circle
\[
e_2 \xrightarrow{h_8} e_{70} 
      \xrightarrow{g_7} e_{19} 
      \xrightarrow{b^{-1}} e_2.
\]
This edge circle yields the relation $b_1^{-1} g_7 h_8$, 
as shown in the second row of the left subtable in Table~\ref{edgecircle445i}.  
All edge labels are recorded on our scratch paper; readers may ignore labels with 
large numbers, such as $e_{48}$.

\begin{table}[ht]
	\centering
	\caption{The cycle relations for the 3-manifold $\mathscr{M}_{4,4,5; \infty}$.}
	\begin{adjustbox}{minipage=0.31\textwidth, valign=t}
		\centering
		\begin{tabular}{c|c}
			\toprule
			\textbf{edge} & \textbf{cycle relation}\\
			\midrule
			$e_{1}$ & $b_1^{-1}g_8h_7$ \\  [1 ex]
			$e_{2}$ & $b_1^{-1}g_7h_8$ \\  [1 ex]
			$e_{3}$ & $b_1^{-1}g_6h_9$ \\  [1 ex]
			$e_{4}$ & $b_1^{-1}g_6^{-2}$ \\  [1 ex]
			$e_{7}$ & $h_9g_6b_1^{-1}$ \\  [1 ex]
			$e_{8}$ & $h_4g_5b_1^{-1}$ \\  [1 ex]
			$e_{9}$ & $h_3g_1b_1^{-1}$ \\  [1 ex]
			$e_{10}$ & $g_1^{-1}kc_1^{-1}h_2^{-1}$ \\  [1 ex]
			$e_{11}$ & $g_2^{-1}kc_1^{-1}h_1^{-1}$ \\  [1 ex]
			$e_{12}$ & $c_1^{-1}kg_3^{-1}h_1^{-1}$ \\  [1 ex]
			$e_{13}$ & $c_1^{-1}kg_8^{-1}h_6^{-1}$ \\  [1 ex]
			$e_{21}$ & $g_6k^{-1}g_7^{-1}$ \\ 
			\bottomrule
		\end{tabular}
	\end{adjustbox}
	\hfill
	\begin{adjustbox}{minipage=0.31\textwidth, valign=t}
		\centering
		\begin{tabular}{c|c}
			\toprule
			\textbf{edge} & \textbf{cycle relation}\\
			\midrule
			$e_{22}$ & $g_7k^{-1}g_8^{-1}$ \\  [1 ex]
			$e_{23}$ & $g_3g_8^{-1}k$ \\  [1 ex]
			$e_{24}$ & $g_4g_7^{-1}k$ \\  [1 ex]
			$e_{25}$ & $g_5k^{-1}g_4^{-1}$ \\  [1 ex]
			$e_{26}$ & $g_4k^{-1}g_3^{-1}$ \\  [1 ex]
			$e_{27}$ & $g_2g_4^{-1}k$ \\  [1 ex]
			$e_{30}$ & $g_1g_5^{-1}k$ \\  [1 ex]
			$e_{31}$ & $g_1k^{-1}g_2^{-1}$ \\  [1 ex]
			$e_{33}$ & $h_2^{-1}kh_3$ \\  [1 ex]
			$e_{34}$ & $h_1^{-1}kh_2$ \\  [1 ex]
			$e_{35}$ & $h_1^{-1}h_6k$ \\
			\bottomrule
		\end{tabular}
	\end{adjustbox}
	\hfill
	\begin{adjustbox}{minipage=0.31\textwidth, valign=t}
		\centering
		\begin{tabular}{c|c}
			\toprule
			\textbf{edge} & \textbf{cycle relation}\\
			\midrule
			$e_{36}$ & $h_2^{-1}h_5k$ \\  [1 ex]
			$e_{37}$ & $h_3^{-1}h_4k$ \\  [1 ex]
			$e_{39}$ & $h_5^{-1}kh_4$ \\  [1 ex]
			$e_{40}$ & $h_6^{-1}kh_5$ \\  [1 ex]
			$e_{41}$ & $h_6^{-1}h_7k$ \\  [1 ex]
			$e_{42}$ & $h_7^{-1}kh_8$ \\  [1 ex]
			$e_{43}$ & $h_8^{-1}kh_9$ \\  [1 ex]
			$e_{44}$ & $h_5^{-1}h_9k$ \\  [1 ex]
			$e_{45}$ & $h_4^{-1}h_9k$ \\  [1 ex]
			$e_{47}$ & $g_5g_6^{-1}k$ \\  [1 ex]
			$e_{48}$ & $g_2^{-1}g_3^{-1}c_1$ \\
			\bottomrule
		\end{tabular}
	\end{adjustbox}
	\label{edgecircle445i}
\end{table}

Using Magma, we can simplify the presentation to obtain
\[
\pi_1(\mathscr{M}_{4,4,5; \infty}) 
= \left\langle x_1, x_2 \ \bigg| \ x_1^{-1} x_2^4 x_1^{-1} x_2^4 x_1^{-1} x_2^{-1} x_1^3 x_2^{-1} \right\rangle,
\]
where $x_1 = g_6$ and $x_2 = k$.  

Let $L$ be the 3-manifold in the SnapPy census \cite{CullerDunfield:2014} obtained from $s782$ by Dehn filling along the slope $4\mathcal{m}_1 + \mathcal{l}_1$.  
It is hyperbolic with volume $4.7517019655$, and
\[
\pi_1(L) = \left\langle a, b \ \big| \ a^{-1} b^4 a^{-1} b^4 a^{-1} b^{-1} a^3 b^{-1} \right\rangle.
\]  

It is now clear that there is an isomorphism between 
$\pi_1(\mathscr{M}_{4,4,5; \infty})$ and $\pi_1(L)$.  
By the prime decomposition theorem for 3-manifolds \cite{Hempel:2004}, 
$\mathscr{M}_{4,4,5; \infty}$ is the connected sum of $L$ with a closed 3-manifold $L'$ having trivial fundamental group.  
By the solution of the Poincar\'e conjecture\cite{Perelman2002, Perelman2003a, Perelman2003b}, $L'$ is homeomorphic to the 3-sphere.  
Thus, $\mathscr{M}_{4,4,5; \infty}$ is homeomorphic to $L$, completing the proof of Proposition~\ref{thm:445p}.

\end{document}